\documentclass[12pt]{article}
\usepackage[T1]{fontenc}  
\usepackage{enumerate} 
\usepackage{agzt_macros}
 
\usepackage{a4wide}

\newcommand{\bb}[1]{{\color{blue} #1}}
 
\DeclareMathOperator{\dplus}{\circ \kern -0.33em\circ}

\DeclareRobustCommand{\mk}{\genfrac{(}{)}{0pt}{}}

\newcommand{\BARI}{\operatorname{BARI}}
\newcommand{\ARI}{\operatorname{ARI}}
\newcommand{\ekma}{\operatorname{EKMA}}
\newcommand{\biekma}{\operatorname{BEKMA}}
\newcommand{\carma}{\operatorname{CARMA}}
\newcommand{\bicarma}{\operatorname{BCARMA}}

\usepackage[OT2,T1]{fontenc}

\newcommand{\ganit}{\mathit{ganit}}
\newcommand{\ganitpoc}{\mathit{ganit_{poc}}}
\newcommand{\Adari}{\operatorname{Ad}_{ari}}
\newcommand{\adari}{\operatorname{ad}_{ari}}

\newcommand{\arit}{\mathit{arit}}
\newcommand{\axit}{\mathit{axit}}
\newcommand{\ari}{\mathit{ari}}
\newcommand{\uri}{\mathit{uri}}

\newcommand{\pil}{\mathit{pil}}
\newcommand{\lopil}{\mathit{lopil}}

\newcommand{\lu}{\mathit{lu}}

\newcommand{\swap}{\mathit{swap}}
\newcommand{\push}{\mathit{push}}
\newcommand{\anti}{\mathit{anti}}
\renewcommand{\neg}{\mathit{neg}}

\renewcommand{\Pi}{\mathit{Pi}}

\newcommand{\Ad}{\mathit{Ad}}
\newcommand{\asna}{\mathit{asna}}
\newcommand{\ganitpic}{\mathit{ganit}_{pic}} 
\newcommand{\preari}{\mathit{preari}}
\newcommand{\urit}{\mathit{urit}}

\title{Moulds, Bimoulds and some Lie algebras}
\author{Annika Burmester, Ulf K\"uhn, Leila Schneps }
\date{\today}

\begin{document}
\maketitle
\begin{abstract}
The Lie algebra of multiple zeta values is realized as the space $\ARI^{\operatorname{pol}}_{\underline{al}\ast \underline{il}}$ of alternal moulds whose swap is alternil up to a constant mould, equipped with the $ari$ bracket. In this paper, we study the larger space $\BARI_{\underline{il},swap}^{\operatorname{pol}}$ of alternil, swap-invariant bimoulds, which is conjecturally the Lie algebra for multiple $q$-zeta values. We propose an explicit formula for a Lie bracket $uri$ on this space. Moreover, we prove that the Lie algebra $\ARI^{\operatorname{pol}}_{\underline{al}\ast \underline{il}}$ embeds into $\BARI_{\underline{il},swap}^{\operatorname{pol}}$, with the $ari$ bracket translating directly into the $uri$ bracket. Finally, we examine the associated-depth graded of $\BARI_{\underline{il},swap}^{\operatorname{pol}}$, extending the known depth-graded setup for $\ARI_{\underline{al}\ast \underline{il}}^{\operatorname{pol}}$.
\end{abstract}

\tableofcontents
\section{Introduction}
 
In recent years, the theory of moulds as introduced by Ecalle has become increasingly important and has thus lost its surrounding mystique.
Although Ecalle wrote a large part of his work for bimoulds, most of his applications relate to moulds.
Our present work is an attempt to apply the theory of bimoulds to multiple $q$-zeta values and thus bring the full power of Ecalle's ideas to bear.

Of central importance in the theory of multiple zeta values is the Lie algebra $\big( \ARI^{\operatorname{pol}}_{\underline{al}*\underline{il}}, ari\big)$. Its elements are 
alternal polynomial moulds whose swapees are alternil up to a constant mould and in depth 1 only even polynomials are allowed. Furthermore, $ari$ denotes the Lie bracket.  More precisely, 
denote by $\Z$ the algebra of multiple zeta values and consider the generating series of shuffle regularized multiple zeta values
\begin{align*}
\mathfrak{z}(u_1,\ldots,u_d)&=\sum_{k_1,\ldots,k_d\geq1} \zeta_{\shuffle}(k_1,\ldots,k_d)\ (u_1+\cdots+u_d)^{k_1-1}(u_2+\cdots+u_d)^{k_2-1}\cdots u_d^{k_d-1}\\
&\in \Z[[u_1,\ldots,u_d]].
\end{align*}
For simplicity of the presentation in this introduction, we assume that $\Z$ satisfies the extended double shuffle relations and no more relations, cf \cite[Conjecture 1]{IKZ}. Then, one shows that the family of these generating series
considered modulo products and $\zeta(2)$ 
decomposes into products given by a representative  $\bar \zeta$ for a class of $\Z$ modulo products and $\zeta(2)$ and sequences of polynomials $F_{\bar \zeta} \in \ARI_{\underline{al}\ast \underline{il}}^{\operatorname{pol}}$. i.e. we have
\begin{align}\label{eq:fct_eqn_to_alil}
 (\mathfrak{z}(u_1,\ldots,u_d))_{d\geq0}  \equiv \sum_{\bar \zeta}  \bar\zeta  F_{\bar \zeta}.   
\end{align}

Our previous assumption\footnote{In more technical terms this assumption is nothing other then that the algebra of formal multiple zeta values equals $\Z$.}   then implies an algebra isomorphism, which is called Ecalle's theorem in \cite{Racinet_thesis}, 
\begin{align}\label{eq:ecalle_iso}
\Z\simeq \Q[\zeta(2)]\otimes \mathcal{U}(\ARI^{\operatorname{pol}}_{\underline{al}*\underline{il}})^\vee.
\end{align}
Another big conjecture claims that 
this Lie algebra is a free Lie algebra generated by a family of elements $\widehat\xi_{2n+1}$, $n\geq1$, i.e.,
\begin{align}\label{eq:free-generation_conj}
\big( \ARI^{\operatorname{pol}}_{\underline{al}*\underline{il}}, ari\big) \overset{?}{=} 
\Lie\big(  \{\widehat\xi_{2n+1}\}_{n\ge 1}\,; ari \big). 
\end{align}

Here   we use the notation that 
$\Lie( S  ; [\,,\,] )$  denotes the Lie subalgebra of $(S', [\,,\,])$ generated with respect to the Lie bracket $[\,,\,]$
by the elements of  the subset $S\subset S'$. In the cases we consider the ambient set $S'$ will be clear and therefore suppressed.

In this article we focus on the space 
$\BARI^{\operatorname{pol}}_{\underline{il},swap}$ of alternil, swap-invariant, polynomial bimoulds with even depth $1$ component, since the elements of this space occur in the context of multiple $q$-zeta values. 
We also study its depth graded version $\BARI^{\operatorname{pol}}_{\underline{al},swap}$. 
There are various ways to describe the algebra $\Z_q$ of multiple $q$-zeta values, see for example \cite{BaKue_dimconj}. Here, we focus on the combinatorial bi-multiple Eisenstein series $G\binom{k_1,\ldots,k_d}{m_1,\ldots,m_d},\ k_1,\ldots,k_d\geq1, m_1,\ldots,m_d\geq0$, which are introduced in \cite{BaBu_CMES} and which span the space $\Z_q$. We consider the generating series
\begin{align*}
\mathfrak{g}\binom{u_1,\ldots,u_d}{v_1,\ldots,v_d}&=\sum_{\substack{k_1,\ldots,k_d\geq1 \\ m_1,\ldots,m_d\geq0}} G\binom{k_1,\ldots,k_d}{m_1,\ldots,m_d} \frac{u_1^{k_1-1}}{(k_1-1)!}v_1^{m_1}\cdots \frac{u_d^{k_d-1}}{(k_d-1)!}v_d^{m_d} \\
&\in \Z_q[[u_1,v_1,\ldots,u_d,v_d]].
\end{align*}
The combinatorial bi-multiple Eisenstein series satisfy the bi-stuffle product formula and swap invariance. For simplicity of the presentation we assume that there are no further relations. 
Then  the family of these generating series
considered modulo products and $G\binom{2}{0}, G\binom{4}{0}, G\binom{6}{0}$ 
decomposes into products given by a representative  $\bar \zeta_q$ for a class of $\Z_q$ modulo products and $G\binom{2}{0}, G\binom{4}{0}, G\binom{6}{0}$ and sequences of polynomials $F_{\bar \zeta_q} \in
\BARI_{\underline{il},swap}^{\operatorname{pol}}$, i.e.
\[  
\left(\mathfrak{g}\binom{u_1,\ldots,u_d}{v_1,\ldots,v_d}\right)_{d\geq0}
\equiv \sum_{\bar \zeta_q} \bar \zeta_q \,F_{\bar \zeta_q}
\]

Therefore, we want to study the space $\BARI_{\underline{il},swap}^{\operatorname{pol}}$, where the components are  polynomials with rational coefficients. In Definition \ref{def:uri}, we give the definition of the uri Lie bracket 
on $\BARI^{\operatorname{rat}}_{il}$ and explain why we expect that $uri$ restricts to a Lie bracket on various subspaces and in particular to $\BARI_{\underline{il},swap}^{\operatorname{pol}}$. Similar to the Ecalle isomorphism \eqref{eq:ecalle_iso} for multiple zeta values, we expect for multiple $q$-zeta values an algebra isomorphism
\begin{align}\label{eq:ecalle_Zq_conj}
 \Z_q \simeq \widetilde{M}_\Q(\operatorname{Sl}_2(\mathbb{Z})) \otimes \mathcal{U}(\BARI_{\underline{il},swap}^{\operatorname{pol}})^\vee,   
\end{align}
where we used that the algebra $\Q\big[G\binom{2}{0},G\binom{4}{0},G\binom{6}{0}\big]$ is equal to the algebra $\widetilde{M}_\Q(\operatorname{Sl}_2(\mathbb{Z}))$ of quasi-modular forms with rational coefficients. The isomorphism \eqref{eq:ecalle_Zq_conj} will be proven in \cite{BB26} assuming the previously mentioned simplification for $\Z_q$.

One of our main results is to identify the Lie algebra $\big( \ARI^{\operatorname{pol}}_{\underline{al}*\underline{il}}, ari \big)$  with a Lie algebra  inside
$\big( \BARI^{\operatorname{rat}}_{il}, uri \big)$. In addition, that image is   contained in the space of swap invariant, and polynomial bimoulds.

\begin{Theorem}  \label{thm:uri_alil}
Define the map
\begin{align*}  
\iota:\ARI^{\operatorname{pol}}_{\underline{al}*\underline{il}}&\rightarrow \BARI^{\operatorname{pol}}_{\underline{il},swap}\cr
A&\mapsto A+swap(A)+C_A,  
\end{align*} 
where $C_A$ denotes the constant mould such that $swap(A)+C_A$ is alternil. For $A,B\in \ARI_{\underline{al}*\underline{il}}^{\operatorname{pol}}$, we have
\[
\iota\big(ari(A,B)\big)=uri\big(\iota(A),\iota(B)\big).
\]
In particular, the tuple
$\Big( \iota\big(\ARI_{\underline{al}*\underline{il}}^{\operatorname{pol}}\big), uri \Big)$ is a
  Lie algebra.    
\end{Theorem}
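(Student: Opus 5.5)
The plan is to reduce everything to a comparison of the two halves of $\iota(A)$, using the structure of $uri$ from Definition~\ref{def:uri}. Moulds are bimoulds depending only on the upper variables $u_i$, and their swaps depend only on the lower variables $v_i$. I will therefore write $\iota(A)=A+\widetilde A$ with $\widetilde A:=swap(A)+C_A$, and similarly for $B$. Expanding $uri(\iota(A),\iota(B))$ bilinearly gives four kinds of terms:
\begin{itemize}
\item[(i)] $uri(A,B)$, with both arguments $u$-moulds;
\item[(ii)] $uri(\widetilde A,\widetilde B)$, with both arguments $v$-moulds (plus constants);
\item[(iii)] the mixed terms $uri(A,\widetilde B)+uri(\widetilde A,B)$.
\end{itemize}
The target is $ari(A,B)+swap(ari(A,B))+C_{ari(A,B)}$, and I will match the terms piece by piece.

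For (i), I expect that $uri$ is built from the $ari$-type operations (the $arit$ derivations) acting on the upper row. On bimoulds that do not depend on the $v$-variables, the flexion terms should collapse to the usual $ari$ bracket, giving $uri(A,B)=ari(A,B)$.

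For (ii), I will use that $uri$ is defined symmetrically with respect to $swap$, so that $swap$ intertwines the upper-row and lower-row parts of the bracket. Then $uri(\widetilde A,\widetilde B)=swap\big(uri'(A,B)\big)$ modulo constant-mould contributions. The key input here is the classical fact, which I assume from the earlier sections, that for $A,B\in\ARI_{\underline{al}*\underline{il}}$ one has
\[
swap(ari(A,B)) = ari_{swap}\big(swap A + C_A,\, swap B+C_B\big) \quad\text{up to a constant mould.}
\]
This is Ecalle's $ari/ari_{swap}$ compatibility, the statement that $\ARI_{\underline{al}*\underline{il}}$ is an $ari$-Lie algebra. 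I will also need to identify the resulting constant as $C_{ari(A,B)}$. Since the result is alternil and constant moulds are determined by alternility, this should follow from uniqueness of $C$.

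For (iii), I must show that the mixed terms contribute nothing beyond what is needed. The idea is that the derivation-type pieces of $uri$ act on one row only: operations on the $u$-row applied to a $v$-only mould vanish, up to flexion terms that cancel in pairs. Constant moulds $C_B$ should be killed because $arit$-type derivations annihilate constant moulds. Together with the matching pair from $(\widetilde A,B)$, the terms of the form $uri(A,swap B)$ would then either cancel or assemble into the constant correction.

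I expect step (iii), and the bookkeeping of constants in (ii), to be the main obstacle. Explicit flexion computations depth by depth are needed there, so I would first verify the identity in low depth and then look for a general cancellation argument based on the row-separation of the operators in Definition~\ref{def:uri}.

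Finally, the Lie algebra statement follows once $\iota$ is shown to be injective. Injectivity is clear because the $A$-part of $\iota(A)$ recovers $A$ by setting $v=0$ in positive depth. Then $\iota$ transports the Jacobi identity and antisymmetry of $ari$ to $uri$ on the image, which is also closed under $uri$ by the displayed identity.
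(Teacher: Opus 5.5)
There is a genuine gap. Your bilinear expansion is the right skeleton, and the paper's proof is organized the same way. Your final Lie-algebra step, and identifying the constant as $C_{ari(A,B)}$ by uniqueness, are acceptable once the rest is in place. The problem is in (ii) and (iii), which carry all the content: the mechanisms you propose there are false.

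First, $uri$ is not ``defined symmetrically with respect to $swap$''. It is $ari$ conjugated by $ganit_{pic}$, while $swap\circ ganit_R=ganit_{asna(R)}\circ swap$ (Proposition~\ref{ganitprop}). Moreover, $ari$ commutes with $swap$ on push-invariant moulds (Proposition~\ref{prop:pushinv_swap}), not in general. Whether $uri$ even preserves swap invariance is the content of Conjecture~\ref{conj:uri_il_swap}. The ``classical fact'' you cite (with an $ari_{swap}$ that is never defined) is a statement about $ari$. To turn it into one about $uri(swap(A),swap(B))=ganit_{pic}\bigl(ari(ganit_{poc}(swap(A)),ganit_{poc}(swap(B)))\bigr)$ you must know $ganit_{poc}(swap(A))$, and your proposal never computes it.

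Second, there is no row separation. $ganit_{poc}$ turns a $u$-mould into a genuine bimould: $ganit_{poc}(P_i)$ equals $(u_1+\cdots+u_d)^i/((v_1-v_2)\cdots(v_{d-1}-v_d))$ in depth $d$. Mixed terms also do not cancel for general $v$-moulds. For $A=(a(u_1),0,\ldots)$ and $N=(n(v_1),0,\ldots)$, the depth-$2$ component of $uri(A,N)$ is $a(u_1+u_2)\bigl(n(v_1-v_2)-n(v_2-v_1)\bigr)$, which is nonzero for odd $n$. So $uri(A,swap(B))=0$ and $uri(swap(A),swap(B))=swap(ari(A,B))$ are special properties of $\ARI^{\operatorname{pol}}_{\underline{al}*\underline{il}}$, and depth-by-depth flexion bookkeeping will not produce them in all depths. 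Your reason for the constants is also off: $arit(A)(C)=lu(A,C)\neq 0$ (Lemma~\ref{aritu_of_v}). What actually vanishes is $ari(A,C)=arit(C)(A)$, because $C$ is push invariant.

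The missing idea is to express $ganit_{poc}$ on both halves of $\iota(A)$ through the $ari$-automorphism $\operatorname{Ad}_{ari}(pil)$.
\begin{itemize}
\item On polynomial $u$-moulds, $ganit_{poc}=\operatorname{Ad}_{ari}(pil)$ (Proposition~\ref{prop:ganit_poc_Ad_ari}). This, not a collapse of flexions, is what justifies your step (i). Alternatively, use Theorem~\ref{thm:ganit_with_urit} together with the vanishing of $\sum_s\prod_{k\neq s}(v_k-v_s)^{-1}$ for $v$-independent $B$.
\item On swaps, Ecalle's second fundamental identity \eqref{eq:ecalle_fundamental_applied} gives $ganit_{poc}(swap(B))=\operatorname{Ad}_{ari}(pil)\bigl(swap(B')\bigr)$, where $B'=\operatorname{Ad}_{ari}(pal)^{-1}(B)\in\ARI^{\operatorname{rat}}_{\underline{al}\ast\underline{al}}$.
\end{itemize}
After applying $ganit_{poc}$, each term becomes $\operatorname{Ad}_{ari}(pil)$ of an $ari$-bracket among $A$, $B$, the constants, and the push-invariant $v$-moulds $swap(A')$, $swap(B')$. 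Note these are not $swap(A)$, $swap(B)$, which are in general not push invariant. Lemma~\ref{lem:ari_vs_push} then kills the mixed and constant terms exactly. Finally, Proposition~\ref{prop:pushinv_swap}, followed by the fundamental identity applied to $ari(A,B)\in\ARI_{\underline{al}*\underline{il}}$, gives $uri(swap(A),swap(B))=swap(ari(A,B))$.
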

In  $\ARI_{\underline{al}*\underline{il}}^{\operatorname{pol}}$  alternal 
moulds are multiplied by the ari bracket, and it is laboriously demonstrated that in this multiplication, the swap  of the product remains  alternil. 
However, our construction of $\iota\big(\ARI_{\underline{al}*\underline{il}}^{\operatorname{pol}}\big)$ equipped with the uri bracket provides an explicit description for a Lie bracket on the alternil parts.

We may summarize our previous discussion in the following diagram
\[
\xymatrix{ 
\Z / (\zeta(2)) \ar@{<-}[rr]_{q \to 1}\ar@{-->}[d] && \Z_q / \widetilde{M}_\Q(\operatorname{Sl}_2(\mathbb{Z})) \ar@{-->}[d]\\
\big( \ARI^{\operatorname{pol}}_{\underline{al}*\underline{il}},\, ari \big)  \ar@{^{(}->}_\iota[rr] &&     
\big( \BARI^{\operatorname{pol}}_{\underline{il},swap} ,\, uri \big),
} 
\]
where the Lie algebra morphism $\iota$ in the lower row should  correspond to $q\to 1$.


It follows from well-established results in mould theory that the $uri$ bracket preserves alternility. Our work is based on the following Conjecture \ref{conj:intro_ilswap}.

\begin{Conjecture} \label{conj:intro_ilswap}
   $\big( \BARI_{\underline{il},swap}^{\operatorname{pol}},uri\big)$ is a Lie subalgebra 
of $\big( \BARI^{\operatorname{rat}}_{il},uri\big)$. 
\end{Conjecture}

We present evidence both proven and experimental towards this conjecture and related conjectures. In particular, in this article we show  
\begin{Theorem} \label{thm:intro_ilpol}
$\big(\BARI_{il}^{\operatorname{pol}},uri\big)$ is a Lie subalgebra of $\big( \BARI^{\operatorname{rat}}_{il},uri\big)$.
\end{Theorem}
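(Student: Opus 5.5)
Since the paper already records that the $uri$ bracket preserves alternility on $\BARI^{\operatorname{rat}}_{il}$, the only thing left to prove is polynomiality: if $A,B\in\BARI^{\operatorname{pol}}_{il}$, then every depth component of $uri(A,B)$ is a polynomial. This is not automatic. The bracket $uri$, like $ari$ and the related brackets $\preari$ and $\urit$ of Definition~\ref{def:uri}, is built from flexion operations on bimoulds. Its formula contains terms where a bimould is evaluated at differences of the lower variables $v_i-v_j$, and, because of the $il$-type flexion units, terms with denominators such as $\frac{1}{v_i-v_j}$ or $\frac{1}{u_i+\dots+u_j}$. For a general rational bimould these denominators survive. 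I would therefore write $uri(A,B)=\urit(B)\cdot A-\urit(A)\cdot B+\mathit{lu}$-type terms, following the shape of Definition~\ref{def:uri}, and sort the resulting summands by denominator.

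The core step is a cancellation argument for each possible pole. I would fix a potential pole, say a linear form $\ell=v_i-v_{i+1}$ (or $u_i$, depending on the flexion unit in the definition). For each summand whose denominator contains $\ell$, I would compute its residue along $\ell=0$. These residues will have the form $A(\dots)\,B(\dots)$ or $B(\dots)\,A(\dots)$, evaluated at arguments that coincide once $\ell=0$. I expect the residues to cancel in pairs: the upper-flexion and lower-flexion contributions of $\urit$ should match with opposite signs, and the $B\leftrightarrow A$ antisymmetry should handle the rest. If the pairing is not exact, the fallback is to use alternility of $A$ and $B$ (the $il$-symmetry relations) to rewrite sums of residues as sums over contractions that vanish. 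This is where the hypothesis $A,B\in\BARI_{il}$, and not merely polynomiality, would enter.

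A cleaner alternative, which I would try first, avoids residues. The idea is to express $uri$ through operators that visibly preserve polynomials, by writing $\urit(A)$ as a difference-quotient operator. A term of the shape
\[
\frac{A(\dots,v_i,\dots)-A(\dots,v_j,\dots)}{v_i-v_j}
\]
is a polynomial whenever $A$ is, by the factor theorem. So the plan is to regroup the summands of the defining formula into such divided differences; if this works, polynomiality is immediate. Once polynomiality is known, the remaining Lie-algebra properties are inherited. Antisymmetry and the Jacobi identity hold in $\BARI^{\operatorname{rat}}_{il}$, and closure under alternility is the quoted result, so $\BARI^{\operatorname{pol}}_{il}$ is a Lie subalgebra.

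The main obstacle is the regrouping into divided differences, or equivalently the residue cancellation. It requires careful bookkeeping of the flexion indices, the left and right parts $\lceil$, $\rceil$ of each factorization, in every depth. It may only succeed after the alternility relations are used, and that would have to be verified by induction on depth rather than term by term.
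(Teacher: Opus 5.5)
\textbf{Verdict: genuine gap.} Your reduction is right: only polynomiality is at stake, since the Lie axioms and alternility come from Proposition~\ref{prop:uri_alternil}. Your architecture is also the paper's: split $\uri$ into $\urit$-terms plus an $\lu$-term, then show the $\urit$-terms are polynomial via divided differences. But polynomiality, which is the entire content of the statement, is left as an expectation. Moreover, your first step does not follow the way you claim. The decomposition $\uri(A,B)=\urit(B)(A)-\urit(A)(B)+\lu(A,B)$, with an explicit $\urit$ whose poles are under control, is not a matter of ``the shape of Definition~\ref{def:uri}''. That definition conjugates $\ari$ by $\ganitpoc$ and $\ganitpic$, and already $\ganitpoc(P_{i,j})$ equals $(u_1+\cdots+u_n)^i v_1^j/\bigl((v_1-v_2)\cdots(v_{n-1}-v_n)\bigr)$ in depth $n$. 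So a priori there are poles in every depth.

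Three ingredients are missing:
\begin{enumerate}
\item[(1)] $\ganitpic$ is a $mu$-automorphism with inverse $\ganitpoc$ (Lemma~\ref{lem:ganit_mu_auto}, Proposition~\ref{prop:ganit_pic_poc}). Hence the $\lu$-part of $\uri(A,B)$ is exactly $\lu(A,B)$, and $\ganitpic\circ\arit(\ganitpoc(B))\circ\ganitpoc$ is a $mu$-derivation. It therefore suffices to evaluate it on the depth-one generators $P_{i,j}$, with $B$ a $mu$-product of such generators. This replaces your depth-by-depth bookkeeping of all flexion terms.
\item[(2)] The explicit evaluation on these generators is Theorem~\ref{thm:ganit_with_urit}. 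It is proved in Appendix~\ref{subsec:proof_urit} by summing over the subsets $I$ in $\ganitpic$ and letting the inner terms cancel. It shows that the conjugated derivation is the operator $\urit(B)$ of Definition~\ref{def:urit}, whose only denominators sit in Lagrange sums $\sum_s\prod_{k\neq s}(v_k-v_s)^{-1}F(v_s)$.
\item[(3)] The identity $\sum_{s=1}^r a_s^m\prod_{k\neq s}(a_s-a_k)^{-1}=h_{m-r+1}(a_1,\ldots,a_r)$ holds, where $h$ is the complete homogeneous symmetric polynomial (zero if $m<r-1$). This makes those Lagrange sums polynomial (Proposition~\ref{prop:urit_polynomial}).
\end{enumerate}
Your two-point divided difference is the case $r=2$ of (3). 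The sums in Definition~\ref{def:urit} run over $r+1$ points with $r$ arbitrary, so you need the iterated version. And without (2) there is no formula on which to carry out the regrouping at all.

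Some of your heuristics also point the wrong way. Alternility plays no role in polynomiality: (1)--(3) hold for arbitrary $A,B\in\BARI^{\operatorname{pol}}$. So $\ganitpic\bigl(\ari(\ganitpoc(A),\ganitpoc(B))\bigr)$ is polynomial for all polynomial bimoulds, and your fallback to the alternility relations with an induction on depth is a detour. In the explicit formula, no cancellation is needed between the $A$- and $B$-terms, nor between upper- and lower-flexion contributions. The poles cancel inside each Lagrange sum, so $\urit(B)(A)$ and $\urit(A)(B)$ are separately polynomial. Finally, denominators of the form $u_i+\cdots+u_j$ never occur. The bimoulds $pic$ and $poc$ depend only on the $v$-variables and enter through lower flexions, so all denominators are differences $v_i-v_j$.
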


From Proposition \ref{prop:swap_upto_3} together with Theorem \ref{thm:intro_ilpol}
then follows that Conjecture \ref{conj:intro_ilswap} holds up to depth $3$.
In Appendix \ref{app:uri_swap_alternative} we give  further evidence for the compatibility of the $uri$ bracket and the $swap$ invariance in terms of a reformulation  in terms of mould theory.

\begin{Conjecture} \label{conj:intro_mystic_derivation}
 The map
$\widehat\delta : \BARI^{\operatorname{pol}} \to \BARI^{\operatorname{pol}} $ given by Definition \ref{def:mystic_der} is a derivation on the Lie algebra $\big(\BARI_{\underline{il},swap}^{pol},uri\big)$ of weight 2. Its associated depth-graded  map equals the ari derivation from Definition \ref{def:ari_derivation}.
\end{Conjecture}

A refinement of Conjecture \ref{conj:intro_ilswap}, which by means of \eqref{eq:ecalle_Zq_conj}   perfectly complements the dimension conjectures for multiple q-zeta values \cite{BaKue_dimconj}, is as follows.

\begin{Conjecture} \label{conj:intro_ilswap_refined}
\begin{enumerate}
\item\label{conj:intro_ilswap_3} Define the elements $\widehat\xi_{2n+1,0} = \iota( \widehat\xi_{2n+1} )$, $n \ge 0$. Then, we have in analogy to \eqref{eq:free-generation_conj}
\[
\big(\BARI_{\underline{il},swap}^{\operatorname{pol}} , uri \big)  =  \Lie\big(   \{\widehat\delta^m(\widehat\xi_{2n+1,0})\}_{m,n\ge 0}; uri \big).
\]
\item For the dimensions of the homogeneous subspaces of the universal enveloping algebra of $(\BARI_{\underline{il},swap}^{\operatorname{pol}},uri)$, we have
\begin{align*}
\sum_{k\geq0} \dim \mathcal{U}(\BARI_{\underline{il},swap}^{\operatorname{pol}})_{k} \,x^k =\frac{1}{1-\mathsf{D}(x)\mathsf{O}_1(x)+\mathsf{D}(x)\sum_{k\geq4}  \dim(M_k\oplus S_k)x^k},
\end{align*}
where $\mathsf{D}(x)=\frac{1}{1-x^2}$, $\mathsf{O}_1(x)=\frac{x}{1-x^2}$, and $M_k$, $S_k$ denote the space of modular forms and cusp forms for $\operatorname{Sl}_2(\mathbb{Z})$.
\end{enumerate}
\end{Conjecture}

The reformulation of the above conjecture into the non-commutative setup (see \cite{Bu_thesis},  \cite{BuKu_postlie}) allows testing for weights up to $14$ with Pari/GP. 

For the associated depth-graded Lie algebra, which is a Lie subalgebra of 
$\big( \BARI_{\underline{al},swap}^{\operatorname{pol}}, ari\big)$,  we have the following.

A distinguished Lie subalgebra is given by the Lie algebra of the ekma bimoulds.
These arise from Ecalle's ekma moulds $\xi_{2n+1}$ after applying the canonical derivation $\delta$ for the ari Lie bracket, cf. Definition \ref{def:ari_derivation}. Precisely, let $\xi_{1,0}=(1,0,\ldots)$ and $\xi_{2n+1,0}=\iota(\xi_{2n+1})=(u_1^{2n}+v_1^{2n},0,\ldots)$, $n\geq1$, and set
\[
\biekma = \Lie \big( \{\delta^m(\xi_{2n+1,0})\}_{m,n \ge 0}; ari \big).
\]
Those ekmas $\xi_{a,b}$ satisfy quadratic relations whose number equals the square of the dimension of spaces modular forms. Actually a conceptual way to describe these relations is via even bi-period polynomials, which correspond to pairs of even period polynomials $f,g\in W_k^+$ and pairs of odd period polynomials $f,g\in W_k^-$. We elaborate this point of view, which was shown to one of us by Don Zagier in 2017, in Appendix \ref{app:bi_period}.
To our knowledge this is the first occurrence of a generating series, which involves the square $(\dim M_k)^2$. 

\begin{Theorem} \label{thm_intro:bekma}
Define the coefficients $e_{k,d}$ by
\begin{align*}
\sum_{k,d\geq0} e_{k,d} x^k y^d  
&=\frac{1}{ 1- \mathsf{D}(x)\mathsf{O}_1(x)y    + \mathsf{D}(x) \big( \sum_{k \ge 4} (\dim M_{k})^2  \, x^k  \big) y^2 -  \mathsf{D}(x) x \mathsf{S}(x) y^3}.
\end{align*}
Then, we have
\begin{align*}
\dim \mathcal{U}(\biekma)_{k,d} &= e_{k,d}   \quad \mbox{ for }  d=1,2  \mbox{ and all } k\ge 0\, .   
\end{align*}
Here, we used the notation from Conjecture \ref{conj:intro_ilswap_refined} and   
$\mathsf{S}(x) = \sum_{k\ge 12} \dim S_k \, x^k$.
\end{Theorem}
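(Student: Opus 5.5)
Since $\biekma$ is generated by $\delta^m(\xi_{2n+1,0})$ for $m,n\ge 0$, which all live in depth $1$, and $ari$ is additive in depth, $\mathcal{U}(\biekma)$ is bigraded by weight $k$ and depth $d$. The plan is to compute $\dim\mathcal{U}(\biekma)_{k,d}$ for $d=1,2$ by Poincaré–Birkhoff–Witt. In depth $\le 2$ we have
\[
\mathcal{U}_{k,1}=\mathfrak{g}_{k,1},\qquad \mathcal{U}_{k,2}=\mathfrak{g}_{k,2}\oplus \mathrm{Sym}^2(\mathfrak{g}_{\bullet,1})_k,
\]
where $\mathfrak{g}=\biekma$. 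It therefore suffices to determine $\dim\mathfrak{g}_{k,1}$ and $\dim\mathfrak{g}_{k,2}$, and then to check that the $y^1$ and $y^2$ coefficients of the stated generating series agree. The $y^3$ term $\mathsf{D}(x)x\mathsf{S}(x)y^3$ does not contribute in these depths and can be ignored.

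In depth $1$, I compute $\delta$ explicitly on depth-$1$ bimoulds. By Definition~\ref{def:ari_derivation}, $\delta$ raises weight by $2$. Starting from $u_1^{2n}+v_1^{2n}$ (and from $1$ for $n=0$), I check that the images $\delta^m(\xi_{2n+1,0})$ are linearly independent in each weight: as polynomials in $u_1,v_1$ they form a triangular family with respect to, for instance, the degree in $v_1$. The count of generators in weight $k$ is then the number of pairs $(m,n)$ with $2n+1+2m=k$, whose generating series is $\mathsf{D}(x)\mathsf{O}_1(x)$. This gives the $y^1$ coefficient, since the expansion of $1/(1-\mathsf{D}\mathsf{O}_1 y+\dots)$ has $y$-coefficient exactly $\mathsf{D}(x)\mathsf{O}_1(x)$.

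In depth $2$, the space $\mathfrak{g}_{k,2}$ is the image of $\Lambda^2(\mathfrak{g}_{\bullet,1})_k\to \mathfrak{g}_{k,2}$ under $ari$. So $\dim\mathfrak{g}_{k,2}=\dim\Lambda^2(\mathfrak{g}_{\bullet,1})_k-r_k$, where $r_k$ is the dimension of the space of quadratic relations. Expanding the series to order $y^2$ gives the coefficient
\[
(\mathsf{D}\mathsf{O}_1)^2-\mathsf{D}\sum_{k\ge4}(\dim M_k)^2x^k .
\]
Since $\mathrm{Sym}^2+\Lambda^2$ of the depth-$1$ part has generating series $(\mathsf{D}\mathsf{O}_1)^2$, the claim reduces to
\[
\sum_k r_k x^k=\mathsf{D}(x)\sum_{k\ge4}(\dim M_k)^2x^k .
\]
To prove this, I write the bracket of depth-$1$ elements $ari(f,g)$ in depth $2$ in terms of the polynomial components, which is an explicit bilinear formula in $(u_1,v_1),(u_2,v_2)$. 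A relation $\sum c_{ij}\,ari(\delta^{a}\xi_i,\delta^{b}\xi_j)=0$ then becomes a functional equation on a two-variable polynomial. I then invoke the bi-period polynomial picture of Appendix~\ref{app:bi_period}: solutions of this equation in weight $k$ with no $\delta$ applied correspond to even bi-period polynomials. These are built from pairs $(f,g)\in W_k^+\times W_k^+$ and pairs in $W_k^-\times W_k^-$. By Eichler–Shimura, $\dim W_k^{+}=\dim M_k$ and $\dim W_k^-=\dim S_k$, and with the appropriate identification (one extra Eisenstein direction, as in Zagier's remark) the total count is $(\dim M_k)^2$. Finally, I show that $\delta$ is a derivation, so applying $\delta^m$ to a relation gives a relation. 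I also show that these $\delta$-images are independent and exhaust all relations, which accounts for the factor $\mathsf{D}(x)$.

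The main obstacle is the exhaustiveness and independence in the depth-$2$ relation count. One must show that every relation among the brackets of the depth-$1$ generators comes from a bi-period polynomial shifted by a power of $\delta$, and that the resulting relations are independent. My first attempt is to filter by the $v$-degree (or the $\delta$-exponent), so that the leading part of the functional equation reduces to the classical Ihara–Goncharov period-polynomial equation in the $u$-variables and in the $v$-variables separately. Linear independence would then follow from a triangularity argument, and the exact dimension from Eichler–Shimura. Low weights, where $k\le 3$ or the $\xi_{1,0}$ generator enters, I would check by hand.
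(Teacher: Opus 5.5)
Your overall architecture matches the paper's, which derives the theorem from Lemma \ref{lem:HPS_eq_depth1} and Theorem \ref{thm:HPS_eq_depth2}. By PBW, the cases $d=1,2$ reduce to two counts. The first is the depth-one generators $(u_1v_1)^m(u_1^{2n}+v_1^{2n})$, which are visibly independent and give $\mathsf{D}(x)\mathsf{O}_1(x)$. The second is the kernel of $ari\colon\Lambda^2(\biekma_1)\to\biekma_2$, which is the space $\mathcal{W}^\epsilon_k$ of even bi-period polynomials; you must show it has generating series $\mathsf{D}(x)\sum_{k\ge4}(\dim M_k)^2x^k$. That second count is the real content of the theorem, and your proposal does not establish it.

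\textbf{The primitive count.} Note first that all relations of weight $k$ form $\mathcal{W}^\epsilon_k$, not only those ``with no $\delta$ applied''; the ones not coming from lower weight are the harmonic ones, $\mathcal{P}^\epsilon_k$. Your count of these is asserted rather than derived. Pairs in $W_k^+\times W_k^+$ and $W_k^-\times W_k^-$ give $(\dim M_k)^2+(\dim S_k)^2$, which is five instead of four in weight $12$, and ``one extra Eisenstein direction'' does not repair this. What actually happens is that the $u_i\leftrightarrow v_i$ symmetry of the generators becomes $P(X^t)=P(X)$. Consider $\rho(P)(a,b;c,d)=P(ac,ad,bd,-bc)$, which is injective on harmonic polynomials. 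Under $\rho$, the maps $X\mapsto XU$ and $X\mapsto XS$ act only on $(c,d)$, while $X\mapsto X^t$ swaps $(a,b)$ with $(c,d)$. So the primitive relations are $\mathrm{Sym}^2W_k^+\oplus\mathrm{Sym}^2W_k^-$, of dimension $\binom{m+1}{2}+\binom{m}{2}=m^2$ with $m=\dim M_k=\dim S_k+1$.

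\textbf{Independence and exhaustion.} The step you call the main obstacle is left open, and the route you sketch would not work. The substitutions $X\mapsto XU$ and $X\mapsto XS$ act on the row $(u_1,u_2)$ and on the row $(-v_2,v_1)$ by the same matrix simultaneously. A component of fixed $u$- and $v$-degree therefore satisfies period equations for the diagonal action on a tensor product, not separate classical period equations in $u$ and in $v$. Moreover, $P(X^t)=P(X)$ exchanges $u_2$ and $-v_2$, so it does not respect the $v$-degree at all.

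The missing observation is that on the depth-two model, $\delta$ acts as multiplication by $q=u_1v_1+u_2v_2=\det X$. Every substitution defining $\mathcal{W}^\epsilon_k$ is an isometry of $q$, so both $q$ and its adjoint $\Delta$ commute with them. Hence the harmonic decomposition $\mathcal{V}_k=\ker\Delta\oplus q\,\mathcal{V}_{k-2}$ of Lemma \ref{lem:decomp} restricts to $\mathcal{W}^\epsilon_k=\mathcal{P}^\epsilon_k\oplus q\,\mathcal{W}^\epsilon_{k-2}$ (Lemma \ref{lem:primitive_subspace}). This gives independence and exhaustion of the $\delta$-shifted relations at once, and with it the factor $\mathsf{D}(x)$.
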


We expect no further relations or intersection of relations in $\biekma$ in depth $\geq 3$, which can be reformulated as follows. 

\begin{Conjecture}\label{conj:q_ekma_dim_intro}
The numbers $e_{k,d}$ in Theorem \ref{thm_intro:bekma}  satisfy for all $k,d\geq0$
\[
\dim \mathcal{U}(\biekma)_{k,d}=e_{k,d}.
\]
\end{Conjecture}

We also checked this conjecture for $d$ up to $6$ and $k$ in dependence of $d$ within the range of computability.

The dimension conjecture \ref{conj:intro_ilswap_refined}.2 implies that not all quadratic relations satisfied by the ekmas $\xi_{a,b}$ lift  to relation for their alternil, swap-invariant extension $\widehat\xi_{a,b}=\widehat\delta^b \,(\widehat\xi_{a-b,0})$. Using this observation we construct alternal, swap-invariant bimoulds $\chi_{f,g}$ indexed by pairs of even period polynomials $f,g\in W_k^+$ or odd period polynomials $f,g\in W_k^-$.  This is closely related to  Ecalle's construction of his carma  moulds.


\begin{Conjecture}\label{conj:bari_ilsw} We have
\[
\gr_D \big( \BARI_{\underline{il},swap}^{\operatorname{pol}} , uri \big)
\cong \Lie\big(\{\delta^m(\xi_{2n+1,0})\}_{m,n \ge0} \cup \{\delta^m(\chi_{f,g})\}_{f,g \in W^\pm,m\geq0}; ari \big).
\]
The generating series of the dimensions of the homogeneous subspaces of the universal enveloping algebra of $\gr_D \big( \BARI_{\underline{il},swap}^{\operatorname{pol}} , uri \big)$ is given by
\begin{align*}
\sum_{k,d\geq0} \dim \mathcal{U}&\Big(\gr_D \big( \BARI_{il,swap}^{\operatorname{pol}} , uri \big)\Big)_{k,d} \,x^k y^d \\
&\hspace{2cm}=\frac{1}{1-b_1(x)y + b_2(x) y^2  - b_3(x) y^3 -  b_4(x)y^4 +b_5(x) y^5}.
\end{align*}
Here, we used the notation $\mathsf{D}(x)=\frac{1}{1-x^2}$, $\mathsf{O}_1(x)=\frac{x}{1-x^2}$ and $\mathsf{S}(x) = \sum_{k\ge 12} \dim S_k \, x^k$ as before and set
\begin{align*}
b_1(x)&= \mathsf{D}(x)\, \mathsf{O}_1(x)=\frac{1}{1-x^2}\frac{x}{1-x^2},    \\
b_2(x)&= \mathsf{D}(x)   \sum_{k \ge 4} ( \dim  M_{k}  )^2  \, x^k  =\frac{1}{1-x^2}  \left( \frac{  1+ x^{12}  }{1-x^{12}} 
\frac{  1   }{(1-x^4)(1-x^6)} -1 \right), \\
b_3(x)&=\mathsf{D}(x)\, x\, \mathsf{S}(x)=\frac{x}{1-x^2}\frac{x^{12}}{(1-x^4)(1-x^6)} ,\\
b_4(x) &=  \mathsf{D}(x) \, \sum_{k \ge 12} (\dim  S_{k} )^2  \, x^k = \frac{1}{1-x^2} \frac{  1+ x^{12}  }{1-x^{12}} 
\frac{  x^{12}   }{(1-x^4)(1-x^6)},\\
b_5(x) &=  \mathsf{D}(x) \,x\, \mathsf{S}(x)=\frac{x}{1-x^2}\frac{x^{12}}{(1-x^4)(1-x^6)}. 
\end{align*}
\end{Conjecture}

As a summary, we expect the following picture of the previously introduced (conjectural) Lie algebras
\[
\xymatrix{ 
\text{weight-graded} & \big( \ARI^{\operatorname{pol}}_{\underline{al}*\underline{il}},\, ari \big)  \ar@{^{(}->}_{\iota\quad}[r] \ar@{-->}[d] &  
\big( \BARI^{\operatorname{pol}}_{\underline{il},swap} ,\, uri \big)\ar@{-->}[d] \ar@{-->}[dr]\\
\shortstack{weight-graded and \\ depth-graded} & \big( \ARI^{\operatorname{pol}}_{\underline{al},\underline{al}} ,\, ari \big)    \ar@{^{(}->}_{\gr_D\iota\qquad}[r] &  \big( \gr_D \BARI^{\operatorname{pol}}_{\underline{il},swap},\, ari \big) \ar@{^{(}->}[r]  & \big( \BARI^{\operatorname{pol}}_{\underline{al},swap},\, ari \big).
} 
\]
One expects that the downward arrow on the left stands for a surjective map. In contrast, 
explicit calculations imply that the rightmost inclusion on the bottom line is strict.

 Finally we point to the fact, that in \cite{BuKu_postlie} we study in particular the
 uri Lie bracket in terms of post-Lie structures on free Lie algebras. The appendix of loc cit. contains a dictionary  between these points of view. We plan to explore this further in \cite{BuKue26}.

Many of the conjectures presented here in this paper have been developed in the last decade and several versions of them had been presented at various places before. 
One purpose of this paper is to publish these conjectures  and report on the progress towards establishing them.
Besides new progress we focus here  on unpublished results from the thesis of the first author \cite{Bu_thesis}, as well as results that can also be found in the master thesis of N. Confurius \cite{Con_thesis}.
While finishing this article we were informed that H. Bachmann and H. Kawamura have proved the complete Conjecture \ref{conj:intro_ilswap}, including Conjecture \ref{conj:preuri_with_swap}, independently \cite{BaKa}. 
Moreover, H. Bachmann \cite{Ba_sl2} showed that the derivation $\widehat\delta$ from Definition \ref{def:mystic_der} can be written as
\[
\widehat\delta  = 
ganit_{pic}\circ \delta\circ ganit_{poc} + 
uri(-, \delta_{BvI}(\mathbf{1}_{mu}))
 \]
and proved Conjecture \ref{conj:intro_mystic_derivation}. He studied $\widehat\delta$ as part of an $\mathfrak{sl}_2$-action on $\BARI_{\underline{il},swap}^{\operatorname{pol}}$.

\textbf{Acknowledgments.} We thank Henrik Bachmann, Niclas Confurius, Dominique Manchon, and Don Zagier for helpful discussions on the topics presented in this paper. The first author was supported by the Klaus Tschira Boost Fund.

\section{Moulds and Bimoulds}

Let $R$ be any commutative $\Q$-algebra. A sequence
\begin{align*}
M=(M_d)_{d\geq0}=\left(M_0,M_1\binom{u_1}{v_1},M_2\binom{u_1,u_2}{v_1,v_2},\ldots\right) \in \prod_{d\geq0} R[u_1,v_1,\ldots,u_d,v_d]
\end{align*}
is called a polynomial bimould. 
We will often omit the subscript $d$ if the depth is clear from the number of variables, i.e., we write $M\binom{u_1,\ldots u_d}{v_1,\ldots, v_d} $ instead of $M_d\binom{u_1,\ldots u_d}{v_1,\ldots, v_d}$. In addition we set $M(\emptyset)= M_0$.  We often abbreviate
\begin{align*}
w=(w_1,\ldots,w_d)=\binom{u_1,\ldots,u_d}{v_1,\ldots v_d}.
\end{align*}

\begin{Definition}
We denote the space of all polynomial bimoulds $M=(M_d)_{d\geq0}$ with $M_0=0$ and only finitely nonzero components $M_d$ by $\BARI^{\operatorname{pol}}$. For a bimould $M \in \BARI^{\operatorname{pol}}$ we omit the vanishing first entry $M_0$ and just write 
$M=(M_d)_{d\geq1}$. 
\end{Definition}

At some points it will be necessary to consider sequences
\begin{align*}
M=(M_d)_{d\geq0}=\left(M_0,M_1\binom{u_1}{v_1},M_2\binom{u_1,u_2}{v_1,v_2},\ldots\right) \in \prod_{d\geq0} R(u_1,v_1,\ldots,u_d,v_d).
\end{align*}
The set of those bimoulds we denote by $\BARI^{\operatorname{rat}}$.   Most results hold for both $\BARI^{\operatorname{pol}}$ and $\BARI^{\operatorname{rat}}$; in this case, we simply use the unspecified notation $\BARI$. Statements in which we use the notations $\BARI^ {\operatorname{pol}}$ or $\BARI^{\operatorname{rat}}$ either do not apply to the other space, or it is not clear whether they also apply to the other space.

\begin{Definition} A polynomial mould $M$ is a polynomial bimould that is only a function of the variables $u_i$, i.e. a polynomial mould is a sequence
\begin{align*}
M=(M_d)_{d\geq0}=(M_0,M_1(u_1),M_2(u_1,u_2),\ldots)\in \prod_{d\geq0} R[u_1,\ldots,u_d].
\end{align*}
We denote the set of all polynomial moulds $M=(M_d)_{d\geq0}$ with $M_0=0$ and only finitely many nonzero components by $\ARI^{\operatorname{pol}}$.

Similarly, a polynomial $v$-mould $M$ is a polynomial bimould that is only a function of the variables $v_i$. We denote the set of all polynomial $v$-moulds $M=(M_d)_{d\geq0}$ with $M_0=0$ and only finitely many nonzero components by $\overline{\ARI}^{\operatorname{pol}}$. As for bimoulds, sometimes it is necessary to consider moulds in $\prod_{d\geq0} R(u_1,\ldots,u_d)$ resp. $v$-moulds in $\prod_{d\geq0} R(v_1,\ldots,v_d)$, we denote those spaces by $\ARI^{\operatorname{rat}}$ resp. $\overline{\ARI}^{\operatorname{rat}}$. As before, we simply write $\ARI$ (and similarly $\overline{\ARI}$) when a statement applies to both $\ARI^{\operatorname{pol}}$ and $\ARI^{\operatorname{rat}}$.
\end{Definition}

\begin{Example} 
 \label{ex:ekma} There are the ekma moulds $\xi_{2n+1}\in\ARI^{\operatorname{pol}}$, $n\geq1$, given by
\begin{align*}
\xi_{2n+1}=(u_1^{2n},0,0,\ldots).
\end{align*}
\end{Example}

\begin{Definition} \label{def:weight_depth} A bimould $M\in \BARI$ is homogeneous of \emph{depth} $d$ if $M_d$ is the only non-trivial component of $M$. Furthermore, $M\in\operatorname{BARI}$ is homogeneous of \emph{weight} $k$ if each component $M_d$ is homogeneous of degree $k-d$. We denote the homogeneous space of bimoulds of weight $k$ and depth $d$ by
\[
\BARI_{k,d}.
\]
\end{Definition}
We will use the same notation for subspaces of $\BARI$.

The notion of depth defines a decreasing filtration on the space $\BARI$,
\[
\operatorname{Fil}_d(\BARI) = \{
A = (A_i)_{i \ge 0} \,|\, A_i= 0 \mbox{ for all } i < d\}.
\]
The space $\BARI^{\operatorname{pol}}$ is equal to
its associated depth-graded via the
map
\begin{align}\label{def:gr_D_map}
\gr_D:  \BARI^{\operatorname{pol}}  \to \BARI^{\operatorname{pol}}
\end{align}
given by sending a bimould 
$A= ( 0,\ldots,0, A_d, A_{d+1}, \ldots)\in \operatorname{Fil}_d(\BARI^{\operatorname{pol}})$ (where $A_d\neq 0$) to $(0,\ldots,0,A_d,0,\ldots)$. Observe that $\gr_D$ is not linear, for example we have for $A_2\neq B_2$
\begin{align*}
&\gr_D((A_1,A_2,0,\ldots)-(A_1,B_2,\ldots))=\gr_D((0,A_2-B_2,0,\ldots))=A_2-B_2\neq 0, \\
&\gr_D((A_1,A_2,\ldots))-\gr_D((A_1,B_2,\ldots))=A_1-A_1=0.
\end{align*}
Let $(\BARI^{\operatorname{pol}},[-,-])$ be a filtered Lie algebra, i.e.
\[
\big[\operatorname{Fil}_d(\BARI^{\operatorname{pol}}) , \operatorname{Fil}_{d'}(\BARI^{\operatorname{pol}})\big] 
\subseteq \operatorname{Fil}_{d+d'}(\BARI^{\operatorname{pol}}).
\]
Then its associated depth-graded Lie algebra $\gr_D (\BARI^{\operatorname{pol}},[-,-])$ is the space $\BARI^{\operatorname{pol}}$ equipped with the induced depth-graded Lie bracket
\begin{align} \label{eq:depth-graded_bracket}
\gr_D[A,B]=[A,B] \mod \operatorname{Fil}_{d+d'+1}(\BARI^{\operatorname{pol}}),
\end{align}
were $A\in \operatorname{Fil}_d(\BARI^{\operatorname{pol}}),\ B\in\operatorname{Fil}_{d'}(\BARI^{\operatorname{pol}})$.

\subsection{Basic symmetries and operations}

\begin{Definition} For a bimould $M\in\BARI$, define
\[
swap(M)\binom{ 
u_1,u_2,\ldots,u_d}{v_1,v_2,\ldots,v_d}
= M \binom{v_d,v_{d-1}-v_d,\ldots,v_1-v_2}{u_1+\cdots+u_d,u_1+\cdots+u_{d-1},\ldots,u_1}. 
\]
A bimould $M$ is called $swap$ invariant if $swap(M)=M$.
\end{Definition}      

\begin{Definition} \label{def:push} For a bimould $M\in\BARI$, set
\[
push(M)\binom{ 
u_1,u_2,\ldots,u_d}{v_1,v_2,\ldots,v_d}
= M \binom{-u_1-\cdots -u_d,u_1,\ldots,u_{d-1}}{-v_d,v_1-v_d,\ldots,v_{d-1}-v_d}. 
\]
Similarly, $M$ is called $push$ invariant if $push(M)=M$.
\end{Definition}     

\begin{Definition} Let $\Q\langle Y\rangle$ be the free noncommutative algebra spanned by the alphabet $Y=\{y_{k,m}\mid k\geq1,m\geq0\}$. 
We endow $\Q\langle Y\rangle$ either with the shuffle product $\shuffle$ from \eqref{eq:def_shuffle} or with the stuffle product $*$ from \eqref{eq:def_stuffle}.
 
\begin{enumerate}
    \item A bimould $A\in\BARI$ is \emph{alternal}, if there is a coefficient map $\varphi_\shuffle:\Q\langle Y\rangle \to R$ such that 
    \[
    \varphi_\shuffle(w_1\shuffle w_2)=0 \quad \text{ for all } w_1,w_2\in Y^*\backslash\{1\}
    \]
    and
    \[
    A\binom{u_1,\ldots,u_d}{v_1,\ldots,v_d}=\sum_{\substack{k_1,\ldots,k_d\geq1 \\ m_1,\ldots,m_d\geq0}} \varphi_\shuffle(y_{k_1,m_1}\cdots y_{k_d,m_d}) \frac{u_1^{m_1}}{m_1!}v_1^{k_1-1}\cdots \frac{u_d^{m_d}}{m_d!} v_d^{k_d-1}.
    \]
 We denote the space of alternal bimoulds by $\BARI_{al}$. By restricting to the variables $u_i$, we obtain the definition of alternal moulds and we denote the space accordingly by $\ARI_{al}$.
    \item A bimould $A\in\BARI$ is \emph{alternil}, if there is a coefficient map $\varphi_\ast:\Q\langle Y\rangle \to R$ such that 
    \[
    \varphi_\ast(w_1\ast w_2)=0 \quad \text{ for all } w_1,w_2\in Y^*\backslash\{1\}
    \]
    and
    \[
    A\binom{u_1,\ldots,u_d}{v_1,\ldots,v_d}=\sum_{\substack{k_1,\ldots,k_d\geq1 \\ m_1,\ldots,m_d\geq0}} \varphi_\ast(y_{k_1,m_1}\cdots y_{k_d,m_d}) \frac{u_1^{m_1}}{m_1!}v_1^{k_1-1}\cdots \frac{u_d^{m_d}}{m_d!} v_d^{k_d-1}.
    \]
  The space of alternil bimoulds is denoted by $\BARI_{il}$. By restricting to the variables $v_i$, we get the definition of alternil $v$-moulds and we denote the space accordingly by $\overline{\ARI}_{il}$.
\end{enumerate}
\end{Definition}
For more details and a comparison to alternative descriptions  for these two properties we refer to Appendix \ref{app:bimoulds_quasish}.

\begin{Remark} \label{rem:il_to_al}
The alternility condition in depth $d$ only involves terms of depth $\leq d$.
Therefore the map given by \eqref{def:gr_D_map}  induces a natural map  
\begin{align*}
\gr_D: \BARI_{il}\to \BARI_{al}.
\end{align*}
\end{Remark}

In the later sections, we will consider the following specific spaces which all have two symmetries. Some of these spaces were already studied in \cite{ecalle} and \cite{SchnepsARI}.

\begin{Definition} \label{def:spaces}
We set
\begin{align*}
\ARI_{\underline{al}\ast\underline{il}} &=\left\{ A\in \ARI_{al} \ \middle|\ \begin{matrix} swap(A)+C_A \text{ alternil for some constant mould } C_A,\\
\ A_1 \text{ non-constant and even}\end{matrix}\right\}, \\
\ARI_{\underline{al}/\underline{al}}&=\{ A\in \ARI_{al} \mid swap(A) \text{ alternal},\ A_1 \text{ non-constant and even}\}, \\
\ARI_{\underline{al}\ast\underline{al}}&=\left\{ A\in \ARI_{al}\ \middle|\ \begin{matrix} swap(A)+C_A \text{ alternal for some constant mould } C_A,\\ A_1 \text{ non-constant and even} \end{matrix}\right\},
\end{align*}
and in the same way we define the spaces $\overline{\ARI}_{\underline{al}/\underline{al}}$ and $\overline{\ARI}_{\underline{al}\ast\underline{al}}$ of $v$-moulds. Furthermore, let
\begin{align*}
\BARI_{\underline{al},swap}&=\{A\in \BARI\mid A \text{ alternal, swap invariant},\ A_1 \text{ even}\}, \\
\BARI_{\underline{il},swap}&=\{A\in \BARI\mid A \text{ alternil, swap invariant},\ A_1 \text{ even}\}.
\end{align*}
\end{Definition}

\subsection{The ari bracket}

The space $\BARI$ is an algebra with the componentwise addition and the multiplication given by
\begin{align*}
mu(A,B)(w_1,\ldots,w_d)=\sum_{j=0}^d A(w_1,\ldots,w_j)B(w_{j+1},\ldots,w_d).
\end{align*}
In particular, $\BARI$ is a Lie algebra with the commutator bracket
\begin{align} \label{eq:def_lu}
lu(A,B)=mu(A,B)- mu(B,A).
\end{align}
It is easy to see that the algebra $(\BARI^{\operatorname{pol}},mu)$ is generated by the depth $1$ bimoulds
\begin{align} \label{eq:P_km}
P_{k,m}=(u^kv^m,0,\ldots),\qquad k, m\geq0.
\end{align}
Hence, it suffices to define algebra morphisms on $(\BARI^{\operatorname{pol}},mu)$ on the generators $P_{k,m}$, or more generally, on depth $1$ bimoulds and of course the same applies to derivations.

Decompose $ w=\binom{u_1,\ldots,u_d}{v_1,\ldots,v_d}$
into $w=abc$ with 
\[  
a= \binom{u_1,\ldots,u_r}{v_1,\ldots,v_r}, \quad
b=\binom{u_{r+1},\ldots,u_{r+s}}{v_{r+1},\ldots,v_{r+s}}, \quad
c=\binom{u_{r+s+1},\ldots,u_d}{v_{r+s+1},\ldots,v_d}.
\] 
Their flexions are defined by
$ \lceil c =c $  and 
$ a\rceil  =a $ if $b=\emptyset$, 
$ b\rfloor  =b $ if $c=\emptyset$, 
$ \lfloor b =b $ if $a=\emptyset$ 
and else by
\begin{align*}
&b\rfloor = \binom{u_{r+1},\ldots,u_{r+s}}{v_{r+1}-v_{r+s+1},\ldots,v_{r+s}-v_{r+s+1}}, \quad
\lceil c=\binom{u_{r+1}+\cdots+u_{r+s+1},u_{r+s+2},\ldots,u_d}{v_{r+s+1},v_{r+s+2},\ldots,v_d}, \\
&a\rceil=\binom{u_1,\ldots,u_{r-1},u_r+u_{r+1}+\cdots+u_{r+s}}{v_1,\ldots,v_{r-1},v_r}, \quad
\lfloor b=\binom{u_{r+1},\ldots,u_{r+s}}{v_{r+1}-v_r,\ldots,v_{r+s}-v_r}.
\end{align*}
\begin{Definition} \label{def:arit} For a bimould $B\in \BARI$ the operator $arit(B)$ is given by
\begin{align*}
arit(B)(A)(w)=\sum_{\substack{w=abc \\ b,c\neq \emptyset}} A(a\lceil c) B(b\rfloor)-\sum_{\substack{w=abc \\ a,b\neq \emptyset}} A(a\rceil c) B(\lfloor b).
\end{align*}
\end{Definition}
 
\begin{Remark} \label{rem:arit_depth1} In fact, $arit(B)$ is a Lie derivation with respect to  $lu$ and its value on a generator $P_{i,j}$ of $(\BARI^{\operatorname{pol}},mu)$ is given by

  \begin{align*}
&arit(B)(P_{i,j})\binom{u_1,\ldots,u_n}{v_1,\ldots,v_n}\\
&=(u_1+\cdots+u_n)^i\Bigg[v_n^j B\binom{u_1,\ldots,u_{n-1}}{v_1-v_n,\ldots,v_{n-1}-v_n}-v_1^j B\binom{u_2,\ldots,u_n}{v_2-v_1,\ldots,v_n-v_1}\Bigg]\\
\end{align*}
\end{Remark}

\begin{Definition} 
The ari bracket of two bimoulds $A,B\in\BARI$ is defined as
\[
ari(A,B)=arit(B)(A)-arit(A)(B)+lu(A,B).
\]
\end{Definition}      
Explicitly, we have 
\begin{align} \label{eq:ari_expl}
ari(A,B)  (w) &= \underset{\substack{ w=abc \\ b \neq \emptyset}}{\sum} \Big(A( a \lceil c) B( b \rfloor) - 
B( a \lceil c) A( b \rfloor)\Big)
\\
&\hspace{5cm}-\underset{\substack{ w=abc \\ a,b \neq \emptyset}}{\sum} \Big(A( a \rceil c) B(  \lfloor b) - 
B( a \rceil c) A(  \lfloor b)\Big). \nonumber
\end{align}

\begin{Definition} \label{def:preari}
For two bimoulds $A,B\in\BARI$, denote
\begin{align*}
preari(A,B)=arit(B)(A)+mu(A,B).
\end{align*}
\end{Definition}
With this operator, we can rewrite the ari bracket as 
\[ari(A,B)=preari(A,B)-preari(B,A).\]

\begin{Example}
Let $A,B\in \BARI$. Then  
$ari(A,B)$ is in depth $2$ given by
\begin{align*}
ari(A,B)\binom{u_1,u_2}{v_1,v_2}&= 
lu(A,B)\binom{u_1+u_2,u_1}{v_2,v_1-v_2} +
lu(A,B)\binom{u_2,u_1+u_2}{v_2-v_1,v_1} \\
&\hspace{0.4cm} +lu(A,B)\binom{u_1,u_2}{v_1,v_2}
\end{align*}
\end{Example}

\begin{Definition} \label{def:ad_ari}
For $A\in \BARI$, define the operator $\operatorname{ad}_{ari}$ on $\BARI$ by
\begin{align*}
\operatorname{ad}_{ari}(A)(B)=ari(A,B).
\end{align*}
\end{Definition} 

The pair $(\BARI,ari)$ is a Lie algebra. By restricting to the variables $u_i$, one obtains the corresponding operations on $\ARI$. In particular, the pair $(\operatornamewithlimits{ARI},ari)$ is also a Lie algebra.

\begin{Proposition} \label{prop:al_ari}
The pairs $(\BARI_{al},ari)$ and $(\ARI_{al},ari)$ are Lie algebras.
\end{Proposition}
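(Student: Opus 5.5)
Since $(\BARI,ari)$ is already a Lie algebra, it suffices to show that $\BARI_{al}$ is closed under $ari$. The mould case then follows by restricting to the variables $u_i$: the bracket and the alternality condition on $\ARI$ are obtained from those on $\BARI$ by forgetting the $v_i$. We work with the coefficient description. Identify a bimould $A$ with the formal series $\sum_w \varphi_A(w)\,w$ in the completed algebra $\Q\langle\langle Y\rangle\rangle$, so that the depth $d$ component corresponds to words of length $d$. Alternality of $A$ means precisely that this series is primitive for the shuffle coproduct $\Delta_\shuffle$, i.e. a Lie series. This holds because $\varphi_A$ vanishes on nontrivial shuffles exactly when the series is annihilated by the dual pairing with $\shuffle$, which is Ree's criterion.

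The plan is to split $ari(A,B)=lu(A,B)+arit(B)(A)-arit(A)(B)$ and treat the two kinds of terms separately.

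\emph{The $lu$ term.} Under the identification, $mu$ corresponds to concatenation of words. The exponential-type generating variables $u_j^{m_j}/m_j!\,v_j^{k_j-1}$ are designed so that this holds: a product $A(w_1..w_j)B(w_{j+1}..w_d)$ has coefficients given by the concatenation of the coefficient words. Hence $lu(A,B)$ is the commutator of two Lie series, and it is again a Lie series. Equivalently, the classical fact that alternal moulds are closed under $lu$ is obtained by expanding $lu(A,B)(w\shuffle w')$ and using the alternality of $A$ and $B$.

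\emph{The $arit$ terms.} By Remark \ref{rem:arit_depth1}, $arit(B)$ is a derivation of $lu$, and it is determined by its values on the generators $P_{i,j}$. Since the free Lie algebra is generated by the letters and a derivation of $lu$ preserves Lie elements as soon as it sends generators to Lie elements, it suffices to check that $arit(B)(P_{i,j})$ is alternal for alternal $B$. (More precisely, one works with the Lie algebra generated topologically by the depth-one bimoulds.) The formula in Remark \ref{rem:arit_depth1} expresses $arit(B)(P_{i,j})$ as $(u_1+\dots+u_n)^i$ multiplied by the difference of $B$ evaluated after a translation of the $v$-variables, taken at the last or at the first position. We would verify alternality of each of the two pieces directly. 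The $u$-dependence only enters through the total sum $u_1+\dots+u_n$, which is symmetric and therefore factors through any shuffle. In the $v$-variables, the translated copy $B(\cdot;\,v-v_n)$ extended by $v_n^j$ in the last slot behaves, under shuffles $w\shuffle w'$, like the shuffle of $B$ with a single letter. The sum over all positions of the distinguished letter then telescopes against the alternality of $B$. This is the standard computation showing that $\mathit{arit}$ of an alternal mould preserves alternality, which is originally due to Ecalle and appears in Schneps' ARI notes.

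The main obstacle will be this last step. One has to check the shuffle-vanishing for $arit(B)(P_{i,j})$ (or, equivalently, for $arit(B)(A)$ directly via the flexion formula of Definition \ref{def:arit}), and the bookkeeping of the flexions $\lceil c$ and $b\rfloor$ under shuffles is delicate. The first approach we would try is to prove the shuffle identity for $A(a\lceil c)B(b\rfloor)$ summed over all decompositions, by grouping the shuffles of $w$ and $w'$ according to which letters fall into the block $b$. The block $b$ must itself be a shuffle of a subword of $w$ and a subword of $w'$, so alternality of $B$ kills every term except those in which $b$ comes entirely from one of the two words. The remaining terms then cancel against the analogous terms coming from $a\rceil c$ and $\lfloor b$.
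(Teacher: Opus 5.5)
Your argument is sound for polynomial bimoulds. Alternal elements of $\BARI^{\operatorname{pol}}$ are exactly the Lie polynomials in $\mathbb{Q}\langle Y\rangle$, with $mu$ becoming concatenation, and $arit(B)$ is a $mu$-derivation. The check on the generators also goes through: group the shuffles by where the distinguished last, resp.\ first, letter sits. The terms whose remaining block mixes both words vanish by alternality of $B$, and the surviving boundary terms cancel in pairs. This is essentially the Lie-theoretic route the paper alludes to in Remark \ref{rem:ari_postLie}; the paper's own proof is just a citation of Salerno--Schneps.

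The gap is one of scope. In this paper an unadorned $\BARI$ or $\ARI$ means both the polynomial and the rational version, and the rational case is the one used later. Proposition \ref{prop:uri_alternil} applies the statement to $ganit_{poc}(A)$ and $ganit_{poc}(B)$, which have denominators as in \eqref{eq:specialform}. Neither pillar of your proof survives there:
\begin{itemize}
\item A component like $1/(v_1-v_2)$ has no coefficient expansion, so there is no Lie-series picture and Ree's criterion is unavailable. Alternality has to be read as the functional equations $\sum_{\gamma\in sh(\alpha,\beta)}A(\gamma)=0$.
\item $(\BARI^{\operatorname{rat}},mu)$ is not generated by depth-one bimoulds, so nothing reduces to the $P_{i,j}$.
\end{itemize}

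What is needed is the direct functional computation from your last paragraph, carried out for arbitrary alternal $A,B$. As written, that sketch never uses alternality of $A$, and this cannot be avoided. For $A=mu(P_{0,0},P_{0,0})$ and $B=P_{1,0}$ one finds $arit(B)(A)(w_1,w_2,w_3)=u_1-u_3$, whose sum over $sh((w_1),(w_2,w_3))$ is $2(u_2-u_3)\neq 0$.

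The complete bookkeeping runs as follows. Take a term with $\gamma=abc\in sh(\alpha,\beta)$. Call the first letter of $c$ (first sum), resp.\ the last letter of $a$ (second sum), the \emph{neighbour} of $b$; it is the only letter outside $b$ that the flexions touch.
\begin{itemize}
\item[(i)] If $b$ meets both words, sum over the interleavings inside $b$. The $A$-factor is unchanged, since it sees $b$ only through the sum of its $u$'s. The result is a shuffle sum of $B$ at translated letters, which vanishes by alternality of $B$.
\item[(ii)] If $b$ lies in one word and its neighbour lies in the same word, sum over the positions of the other word's letters. This gives a fixed $B$-factor times a shuffle sum of $A$ over $\alpha'$ and $\beta$ (or $\alpha$ and $\beta'$), where the primed word has $b$ absorbed into the neighbour. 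It vanishes by alternality of $A$.
\item[(iii)] If $b\subset\alpha$ and its neighbour is $x\in\beta$ (or vice versa), the first-sum term for $\gamma=\cdots b\,x\cdots$ equals the second-sum term for $\gamma'=\cdots x\,b\cdots$. Both are $A$ at the word with $u_x$ replaced by $u_x+\sum_{w_t\in b}u_t$, times $B$ at $b$ translated by $v_x$. Since $\gamma\mapsto\gamma'$ is a bijection of shuffles, these cancel.
\end{itemize}
Combine this with $\sum_{\gamma\in sh(\alpha,\beta)}mu(A,B)(\gamma)=A(\alpha)B(\beta)+A(\beta)B(\alpha)$, which is symmetric in $A$ and $B$. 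This proves closure under $ari$ for arbitrary, in particular rational, components, with no coefficient maps or generators needed. The mould case then follows exactly as you say.
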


\begin{proof}
See \cite[Appendix A]{SalernoSchneps}.
\end{proof}

\begin{Remark} \label{rem:ari_postLie}
We also have the finer statement that $(\BARI_{al}^{\operatorname{pol}},lu,-arit)$ is a post-Lie algebra. A proof in terms of non-commutative polynomials is given in \cite{BuKu_postlie} and the translation into bimoulds in \cite{Bu_depthgraded}. For every post-Lie algebra $(\mathfrak{g},[-,-],\triangleright)$, another Lie bracket on $\mathfrak{g}$ is given by
\[
\{f,g\}= f \triangleright g - g \triangleright f +[f,g].
\]
The new Lie bracket for $(\BARI_{al}^{\operatorname{pol}},lu,-arit)$ is exactly the ari bracket, thus we obtain Proposition \ref{prop:al_ari} also with this approach.
\end{Remark}

\begin{Proposition} \label{prop:alal_push}
A mould $A$ which is either an element of $\ARI_{\underline{al}/\underline{al}}$, 
$\ARI_{\underline{al}\ast\underline{al}}$ or $ \overline{\ARI}_{\underline{al}\ast\underline{al}}$ is $push$ invariant.
\end{Proposition}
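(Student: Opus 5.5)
The plan is to use the classical identity expressing $push$ through $swap$ and the mould operation
\[
mantar(M)(w_1,\ldots,w_d)=(-1)^{d-1}M(w_d,\ldots,w_1).
\]
On moulds (and likewise on $v$-moulds, with $swap$ sending $u$-moulds to $v$-moulds and back) one has
\[
push = mantar\circ swap\circ mantar\circ swap .
\]
This is a direct verification on variables. Apply $swap$, reverse the arguments, apply $swap$ again, and reverse again. The resulting linear substitution of the $u_i$ is exactly the one of Definition \ref{def:push}, and the two signs $(-1)^{d-1}$ cancel. I would first check this in depths $1$ and $2$ and then write the general substitution.

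The second ingredient is that every alternal mould is $mantar$-invariant. This follows because the antipode of the shuffle Hopf algebra vanishes on alternal coefficient maps up to sign: the relation $\sum_{i}(-1)^i (y_{1}\cdots y_{i})^{rev}\shuffle y_{i+1}\cdots y_d=0$ gives $\varphi_\shuffle(y_d\cdots y_1)=(-1)^{d-1}\varphi_\shuffle(y_1\cdots y_d)$. So $mantar(A)=A$ for $A\in \ARI_{al}$.

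\textbf{The case $\ARI_{\underline{al}/\underline{al}}$.} Here both $A$ and $swap(A)$ are fixed by $mantar$. Hence
\[
push(A)=mantar\,swap\,mantar\,swap(A)=mantar\,swap\,swap(A)=mantar(A)=A .
\]

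\textbf{The case $\ARI_{\underline{al}\ast\underline{al}}$.} Write $S=swap(A)$, so that $S+C$ is alternal for a constant mould $C=(c_d)$. Then $mantar(S)=S+C-mantar(C)$. A constant mould is $swap$-invariant, because $swap$ only changes the variables. Using $swap\circ swap=\mathrm{id}$ and $mantar(A)=A$, we get
\[
push(A)=mantar\bigl(A+C-mantar(C)\bigr)=A+mantar(C)-C .
\]
Now $mantar(C)$ agrees with $C$ in odd depth and equals $-C$ in even depth. So $push(A)-A$ is the constant $-2c_d$ in each even depth $d$ and zero otherwise.

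To kill this constant, I evaluate at $u_1=\cdots=u_d=0$. The substitution defining $push$ fixes the origin, and $A$ is polynomial. Hence $push(A)(0)=A(0)$, which forces $c_d=0$ in even depth, and therefore $push(A)=A$. The same computation works verbatim for $\overline{\ARI}_{\underline{al}\ast\underline{al}}$, with the roles of $u$ and $v$ exchanged by $swap$.

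In depth $1$ the statement reduces to $A_1(-u_1)=A_1(u_1)$, which is exactly the evenness assumption. This is consistent with the general identity.

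\textbf{Main obstacle.} I expect the bookkeeping in the identity $push=mantar\circ swap\circ mantar\circ swap$ to be the step needing the most care. The composite substitution must be tracked correctly, including the different forms $swap$ takes on $u$-moulds versus $v$-moulds. The constant-mould correction is the only other subtle point, and the evaluation-at-zero trick handles it. That trick relies on polynomiality, or at least on regularity at the origin; for $\ARI^{\operatorname{rat}}$ I would restrict to moulds regular at $0$.
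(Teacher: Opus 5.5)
Your key identity is off by a $neg$. With the paper's conventions, tracking the substitutions gives
\[
mantar\circ swap\circ mantar\circ swap\,(A)(u_1,\ldots,u_d)=A(u_1+\cdots+u_d,-u_1,\ldots,-u_{d-1}).
\]
So the composite is $neg\circ push$, not $push$; composing in the other order gives $neg\circ push^{-1}$. Depth $1$ already shows this. There $mantar$ is trivial and $swap$ is an involution, so the composite is the identity, whereas $push(A)_1(u_1)=A_1(-u_1)$. Your depth-one ``consistency check'' is therefore a contradiction, not a confirmation. Hence your argument only yields $push(A)=neg(A)$.

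A symptom is that you never use the evenness of $A_1$, which is indispensable: $(u_1,0,0,\ldots)$ is alternal with alternal swap but is not push invariant. The problem is not confined to depth one either. For odd $n$, the mould concentrated in depth $3$ with $A_3=(u_1+u_3)^n$ is $mantar$-invariant and has $mantar$-invariant swap, which is all your argument uses. Yet $push(A)=-A$.

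What is missing is that $A$ is even, $neg(A)=A$, in every depth; this is Lemma \ref{lem:al_swap_parity}. The paper obtains both this lemma and the proposition by citing \cite[Lemma 2.5.5]{SchnepsARI}. Part of it follows from the corrected identity. Since $push$ has order $d+1$ in depth $d$ and commutes with $neg$, we get $A=(neg\circ push)^{d+1}(A)=neg^{d+1}(A)$, which equals $neg(A)$ for even $d$. In depth $1$, evenness is the hypothesis. In odd depth $d\ge 3$, however, evenness cannot come from the reversal symmetries alone, by the example above. It needs a genuine further argument using the full shuffle relations of $A$ and $swap(A)$.

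A secondary issue concerns the constant. Evaluation at $0$ removes it only for polynomial moulds, whereas the paper applies the proposition to rational ones, e.g.\ $\operatorname{Ad}_{ari}(pal)^{-1}(B)$ in the proof of Lemma \ref{lem:uri_alil}. A finite-order argument works there. If $g(A)=A+K$ with $g=neg\circ push$ and $K$ constant, then $g(K)=K$ and $g^{2(d+1)}=\mathrm{id}$ force $2(d+1)K=0$.
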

\begin{proof}
  See \cite[Lemma 2.5.5]{SchnepsARI}.   
\end{proof} 

\begin{Proposition} \label{prop:pushinv_swap} For push invariant moulds $A,B\in \ARI$, we have
\[
swap\big(ari(A,B)\big)=ari\big(swap(A),swap(B)\big).
\]
\end{Proposition}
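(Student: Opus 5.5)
The plan is to prove the identity by direct comparison in each depth $d$, using the explicit formula \eqref{eq:ari_expl}. For a mould $A\in\ARI$ (a function of the $u_i$ only), $swap(A)$ is a $v$-mould. For $v$-moulds the upper flexions $a\rceil$, $\lceil c$ act trivially, because only the $u$'s are modified by them. The lower flexions $b\rfloor$, $\lfloor b$ become the relevant ones. So I will first write $ari(swap(A),swap(B))(w)$ as a sum over decompositions $w=abc$ in which only the lower flexions appear. Then I will compute $swap(ari(A,B))(w)$ by substituting the swap change of variables
\[
u_i\mapsto v_{d+1-i}-v_{d+2-i},\qquad v_i\mapsto u_1+\cdots+u_{d+1-i}
\]
into \eqref{eq:ari_expl}.

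The key step is to understand what swap does to a decomposition $w=abc$. Swap reverses the order of the letters, so $w=abc$ corresponds to the reversed decomposition $w'=c'b'a'$. A direct check on the variables should show the following correspondences for a $u$-mould $M$:
\begin{itemize}
\item $M(a\lceil c)$ becomes $swap(M)$ evaluated at the concatenation of the reversed outer blocks, with $u$-sums turning into the $v$-variables.
\item $M(b\rfloor)$ becomes $swap(M)$ at a lower-flexed inner block.
\end{itemize}
Matching the terms of $swap(ari(A,B))$ with those of $ari(swap A,swap B)$ in this way, I expect most terms to pair off exactly. The leftovers should be of two kinds:
\begin{itemize}
\item the boundary terms coming from the asymmetry of the index conditions in Definition \ref{def:arit}, namely $b,c\neq\emptyset$ versus $a,b\neq\emptyset$;
\item the terms where the flexion of the inner block is taken relative to a neighbour on the other side.
\end{itemize}
For example, the term with $c=\emptyset$ in the first sum becomes, after swap, a term with the inner block at the left end. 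There its arguments appear shifted by $-v_d$ or involve $-u_1-\cdots-u_d$.

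I expect these leftover terms to group into expressions of the form
\[
swap(A)(\ldots)\,\big(swap(B)-swap(push(B))\big)(\ldots) \;-\; (A\leftrightarrow B).
\]
Equivalently, they are expressions in $A-push(A)$, since the rotated arguments $\binom{-u_1-\cdots-u_d,\,u_1,\ldots,u_{d-1}}{\ldots}$ are precisely those in Definition \ref{def:push}. So the intermediate claim I would aim for is a general identity
\[
swap(ari(A,B))-ari(swap A,swap B)=\Phi(A,\,B-push(B))-\Phi(B,\,A-push(A))
\]
for an explicit bilinear $\Phi$. Push invariance of $A$ and $B$ then makes the right-hand side vanish, which gives the statement.

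The main obstacle is the bookkeeping in the matching step: tracking how the $v$-differences introduced by $\lfloor b$ and $b\rfloor$ transform under swap, and verifying that the unmatched terms really reorganize into push-differences rather than something else. I would first verify the claim in depths $2$ and $3$ by hand. In depth $2$ this can be done directly from the depth-2 formula in the example above. These small cases should reveal the exact form of $\Phi$ before I attempt the general reindexing argument.

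If the direct route becomes unwieldy, the fallback is to avoid the bracket and work with the derivation: prove $swap\circ arit(B)\circ swap^{-1}=arit(swap B)$ on push-invariant arguments, using Remark \ref{rem:arit_depth1}. There it suffices to check the generators $P_{i,0}$ together with the fact that $swap$ intertwines $mu$ with the reversed product, up to the same push corrections. I have not checked this last intertwining property in the paper.
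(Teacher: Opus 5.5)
The paper gives no argument of its own here; it cites Schneps, Lemma 2.4.1. Your direct route is the natural one and it works. It amounts to proving the mould case of the identity $swap(preari(P,Q))=mu(swap P,swap Q)+axit(swap Q,-push(swap Q))(swap P)$, which the paper quotes from Schneps in the appendix. However, the proposal stops exactly at the only nontrivial step, and your forecast of where the discrepancies sit is partly wrong, so you would be hunting for cancellations that do not occur.

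Concretely, put $x_i=v_{d+1-i}-v_{d+2-i}$ (with $v_{d+1}=0$), so that $x_1+\cdots+x_k=v_{d+1-k}$. Use $M(y_1,\ldots,y_m)=swap(M)(z_1,\ldots,z_m)$ with $z_j=y_1+\cdots+y_{m+1-j}$. For $w=abc$ with $|a|=p$ and $|b|=q\ge 1$ one gets
\[
A(a\lceil c)\,B(b)=swap(A)(a'c')\,swap(B)(b'\rfloor),\qquad |a'|=|c|,\ |b'|=q,\ |c'|=p .
\]
This is a bijection onto the whole first sum of $ari(swap A,swap B)$, boundary terms included. For instance, the $c=\emptyset$ term becomes the $a'=\emptyset$ term, with the ordinary lower flexion relative to $v_{d-p+1}$, not a shift by $-v_d$. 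So there are no boundary leftovers.

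For $p\ge 1$ the second sum transforms with a shift by one letter:
\[
A(a\rceil c)\,B(b)=swap(A)(a''c'')\,push(swap B)(\lfloor b''),\qquad |a''|=|c|+1,\ |b''|=q,\ |c''|=p-1 .
\]
This is again a bijection, now onto the second sum of $ari(swap A,swap B)$. Therefore
\[
swap(ari(A,B))-ari(swap A,swap B)=-\sum_{\substack{w=abc\\ a,b\neq\emptyset}}\Big(swap(A)(ac)\,\big(push(swap B)-swap B\big)(\lfloor b)-(A\leftrightarrow B)\Big),
\]
so your $\Phi$ is an $anit$-type term. It involves $push(swap B)=swap(push^{-1}B)$ rather than $swap(push B)$. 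This is harmless, since $push^{-1}B=B$ if and only if $push(B)=B$. Writing out these two substitutions is the entire proof.

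Your fallback, however, rests on false claims. $swap$ intertwines neither $arit(B)$ nor $mu$ separately, only their sum $preari$. Under the substitution above, the $mu$-terms ($c=\emptyset$) of $preari(A,B)$ become $a'=\emptyset$ terms of $arit(swap B)(swap A)$. Conversely, the $a=\emptyset$ terms of $arit(B)(A)$ become $mu(swap A,swap B)$.

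Already for $A,B$ concentrated in depth $1$ with $B$ even, the two sides differ:
\begin{itemize}
\item $swap(arit(B)(A))(v_1,v_2)=A(v_1)\big(B(v_2)-B(v_1-v_2)\big)$, whereas $arit(swap B)(swap A)(v_1,v_2)=A(v_2)B(v_1-v_2)-A(v_1)B(v_1-v_2)$;
\item $swap(mu(A,B))(v_1,v_2)=A(v_2)B(v_1-v_2)$, which is not $mu(swap B,swap A)(v_1,v_2)=B(v_1)A(v_2)$, not even up to push corrections.
\end{itemize}
Hence $swap\circ arit(B)\circ swap$ is not a $mu$-derivation, and a check on the generators $P_{i,0}$ would prove nothing.
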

\begin{proof}
See \cite[Lemma 2.4.1]{SchnepsARI}
\end{proof}

\begin{Lemma}\label{lem:al_swap_parity}
For a mould $A\in \ARI_{\underline{al}/\underline{al}}$, we have for all $d\geq1$
\[
A(u_1,\ldots,u_d)=A(-u_1,\ldots,-u_d).
\]
\end{Lemma}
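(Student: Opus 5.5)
We need to prove: for $A\in \ARI_{\underline{al}/\underline{al}}$, $A$ is even in each depth, i.e. $A(-u)=A(u)$.

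Known facts: alternal moulds satisfy $A(u_1,\dots,u_d) = (-1)^{d-1}A(u_d,\dots,u_1)$; this is the standard consequence of alternality, obtained via the anti-symmetry of Lie-like elements. By Proposition \ref{prop:alal_push}, $A$ is push invariant. We combine these with the swap-side symmetries.

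The plan is to derive parity from a composition of symmetries. Alternality of $A$ gives $A = -\,(-1)^{d}\,\mathrm{anti}(A)$ in depth $d$, where $\mathrm{anti}(A)(u_1,\dots,u_d)=A(u_d,\dots,u_1)$. Alternality of $swap(A)$, a $v$-mould, gives the same identity for $swap(A)$ in the $v$-variables, namely $swap(A)(v_1,\dots,v_d)=(-1)^{d-1}swap(A)(v_d,\dots,v_1)$. We translate this second identity back to $u$-variables.

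For a mould, $swap(A)(v_1,\dots,v_d)=A(v_d,v_{d-1}-v_d,\dots,v_1-v_2)$. Substitute $v_i=u_1+\dots+u_{d+1-i}$, so that $A(u_1,\dots,u_d)=swap(A)(u_1+\dots+u_d,\dots,u_1)$. Reversing the $v$-arguments then gives
\[
A(u_1,\ldots,u_d)=(-1)^{d-1}\,swap(A)(u_1,u_1+u_2,\ldots,u_1+\cdots+u_d).
\]
Unfolding the swap, this becomes $(-1)^{d-1}A(u_1+\dots+u_d,\,-u_d,\,-u_{d-1},\dots,-u_2)$. Applying alternality (reversal) of $A$ once more yields
\[
A(u_1,\dots,u_d)=A(-u_2,\dots,-u_d,\;u_1+\dots+u_d).
\]
The signs $(-1)^{d-1}$ cancel in this step.

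Compare this with push invariance, $A(u_1,\dots,u_d)=A(-u_1-\dots-u_d,u_1,\dots,u_{d-1})$, or equivalently, after inverting, $A(u_1,\dots,u_d)=A(u_2,\dots,u_d,-u_1-\dots-u_d)$. Writing $\sigma(u)$ for this inverted push, the two identities together give $A(u)=A(-\sigma(u))$ for all $u$. Since $\sigma$ is a bijective linear substitution and $A$ is $\sigma$-invariant, we conclude $A(-u)=A(u)$. In depth $1$ the statement reduces to the evenness of $A_1$, which is part of the definition; this case is also consistent with the formula.

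The main obstacle is bookkeeping: the index reversals and the exact form of the inverse push must be tracked carefully so that the two expressions really coincide up to a global sign of the arguments. I would first check this in depth $2$ explicitly, where it reads $A(u_1,u_2)=A(-u_2,u_1+u_2)$ against push $A(u_1,u_2)=A(u_2,-u_1-u_2)$. I would then write the general linear substitutions as matrices and verify that they differ by $-\mathrm{id}$.
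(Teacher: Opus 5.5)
\textbf{Verdict: the computation is correct, but the argument is circular in substance, and the key step for odd depth is missing.}

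Every identity in your chain checks out. The reversal symmetry $A(u_d,\ldots,u_1)=(-1)^{d-1}A(u_1,\ldots,u_d)$ of $A$ and of $swap(A)$ gives $A(u_1,\ldots,u_d)=A(-u_2,\ldots,-u_d,u_1+\cdots+u_d)$, i.e.\ $push(A)=neg(A)$. Push invariance then formally gives $neg(A)=A$.

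The gap is in the input. Your identity holds for \emph{every} bialternal mould. So for $A\in\ARI_{\underline{al}/\underline{al}}$, push invariance and evenness are literally the same condition. Invoking Proposition~\ref{prop:alal_push} therefore assumes the lemma in another form. The paper cites the same lemma of \cite{SchnepsARI} for both statements, and any proof of push invariance for these moulds is already a proof of evenness. What you have shown is that the two statements are equivalent, not that either holds.

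The missing idea is an actual reason why the odd part of $A$ vanishes. The dihedral symmetries you use cannot supply it on their own. On the odd part, $push$ acts as $-1$, and this contradicts $push^{d+1}=\mathrm{id}$ only when $d$ is even. For odd $d\ge 3$ there are nonzero odd functions with all the reversal/push symmetries you use; they are excluded only by the shuffle relations themselves.

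Here is how to close the gap.
\begin{enumerate}
\item Split $A=A^++A^-$ with $A^\pm=\frac12\bigl(A\pm neg(A)\bigr)$. Since $neg$ commutes with $swap$ and preserves alternality, $A^-$ is again bialternal. Your identity then gives $push(A^-)=-A^-$ in each depth $d$.
\item For $d$ even: $A^-=push^{d+1}(A^-)=(-1)^{d+1}A^-=-A^-$, so $A^-=0$.
\item For $d\ge 3$ odd, use one genuine shuffle relation.
\begin{enumerate}
\item Put $u_0=-(u_1+\cdots+u_d)$. Then $A^-(u_1,\ldots,u_d)$, viewed as a function of the cyclic word $(u_0,u_1,\ldots,u_d)$ read with $u_0$ omitted, changes sign under each rotation.
\item Start from $\sum_{j=1}^{d}A^-(u_2,\ldots,u_j,u_1,u_{j+1},\ldots,u_d)=0$. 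Rotate each term until $u_1$ is the omitted entry; this costs a sign $(-1)^j$, which is well defined because $d+1$ is even.
\item The $j$-th term becomes $(-1)^jA^-(c^{j-1}y)$, where $y=(u_2,\ldots,u_d,u_0)$ is a free tuple and $c(y_1,\ldots,y_d)=(y_2,\ldots,y_d,y_1)$. Hence $\sum_{i=0}^{d-1}(-1)^iA^-(c^iy)=0$ for all $y$.
\item Add this relation at $y$ and at $c(y)$. Because $d$ is odd, every term cancels except $2A^-(y)$, so $A^-=0$.
\end{enumerate}
\item Depth $1$ is the hypothesis that $A_1$ is even.
\end{enumerate}
This proves the lemma directly. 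Proposition~\ref{prop:alal_push} for $\ARI_{\underline{al}/\underline{al}}$ then follows from your identity, rather than the other way round.
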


\begin{proof}
See \cite[Lemma 2.5.5]{SchnepsARI}.   
\end{proof} 

\begin{Theorem}
The pair $(\ARI_{\underline{al}/\underline{al}},ari)$ is a Lie algebra.
\end{Theorem}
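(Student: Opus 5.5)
To show that $(\ARI_{\underline{al}/\underline{al}},ari)$ is a Lie algebra, it suffices to prove closure of $\ARI_{\underline{al}/\underline{al}}$ under $ari$. The Jacobi identity and antisymmetry are inherited from $(\ARI,ari)$, and linearity of the defining conditions is clear. So take $A,B\in\ARI_{\underline{al}/\underline{al}}$ and consider $C=ari(A,B)$. Three properties have to be checked: that $C$ is alternal, that $swap(C)$ is alternal, and the condition on the depth $1$ component.

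\textbf{Step 1: alternality of $C$.} Since $A,B\in\ARI_{al}$, Proposition \ref{prop:al_ari} gives $C\in\ARI_{al}$ directly.

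\textbf{Step 2: alternality of $swap(C)$.} By Proposition \ref{prop:alal_push}, both $A$ and $B$ are $push$ invariant. Proposition \ref{prop:pushinv_swap} then yields
\[
swap(C)=ari\big(swap(A),swap(B)\big).
\]
Here $swap(A)$ and $swap(B)$ are alternal $v$-moulds by assumption. The $v$-mould analogue of Proposition \ref{prop:al_ari} (the $v$-restriction of $(\BARI_{al},ari)$) shows that their $ari$ bracket is again alternal. One point needs attention: $ari$ on $v$-moulds means the restriction of the bimould $ari$ to bimoulds depending only on the $v_i$, and this restriction preserves $\overline{\ARI}$. This holds because the flexions $\lfloor b$ and $b\rfloor$ only subtract $v$'s, and $A(a\lceil c)$ keeps its lower row $v$'s. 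Hence $swap(C)$ is alternal.

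\textbf{Step 3: the depth $1$ condition.} The depth $1$ component of $ari(A,B)$ is $A_1B_1-B_1A_1=0$, since the $mu$-commutator vanishes in depth $1$ and $arit$ contributes only from depth $\geq 2$. So the "non-constant and even" condition must be read as a condition that is vacuous or automatically satisfied when $C_1=0$; I take the definition in this sense, namely that $A_1$ is even with no nonzero constant term. Evenness of the higher components, should it be required, follows from Lemma \ref{lem:al_swap_parity} applied after closure, or alternatively from the fact that the $ari$ bracket commutes with $neg$.

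\textbf{Main obstacle.} The main subtlety is Step 2: making sure that Proposition \ref{prop:pushinv_swap}, stated for moulds, correctly yields the $ari$ bracket of $v$-moulds on the swapped side, and that alternality is preserved there. If needed, I would verify the latter directly by viewing $v$-moulds inside $\BARI$ and applying Proposition \ref{prop:al_ari} to $\BARI_{al}$.
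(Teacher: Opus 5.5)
Your proposal is correct and is essentially the argument of the result the paper cites instead of proving (\cite[Theorem 2.5.6]{SchnepsARI}): alternality of $ari(A,B)$ from Proposition \ref{prop:al_ari}, push invariance from Proposition \ref{prop:alal_push}, then $swap(ari(A,B))=ari(swap(A),swap(B))$ by Proposition \ref{prop:pushinv_swap}, which is alternal as a bracket of alternal bimoulds in $\BARI_{al}$. Reading ``non-constant'' as ``no constant term'' is the right interpretation, since $ari(A,B)$ vanishes in depth $1$; the reason is $A_0=B_0=0$, not a cancelling commutator $A_1B_1-B_1A_1$ as you wrote.
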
 

\begin{proof}
See \cite[Theorem 2.5.6]{SchnepsARI}. 
\end{proof}

\begin{Lemma} \label{lem:orth_rel} Let $A\in\ARI$ and denote $\xi_1=(1,0,0,\ldots) \in \BARI$. Then, we have
\[
ari(A,\xi_{1})=0.
\]
\end{Lemma}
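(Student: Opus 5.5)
The plan is a direct computation from the explicit formula \eqref{eq:ari_expl} with $B=\xi_1$. It uses two facts. First, $\xi_1$ is supported in depth $1$ with value $1$, so $\xi_1(x)=1$ if $x$ has length $1$ and $0$ otherwise. Second, $A$ is a mould, so only the upper ($u$-)rows of the flexions matter; the $v$-shifts in $b\rfloor$ and $\lfloor b$ are irrelevant. Also $A_0=0$, since $A\in\ARI$. Fix a depth $d\ge1$ and $w=(w_1,\dots,w_d)$. I will sort the four sums in \eqref{eq:ari_expl} according to which factor is evaluated on $\xi_1$. In each case this forces a length-one condition.

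\emph{Terms with $\xi_1(b\rfloor)$ or $\xi_1(\lfloor b)$.} Here $b=w_i$ is a single letter.
\begin{itemize}
\item In the first sum, with $c\neq\emptyset$ (so $i\le d-1$), we have $a\lceil c=(u_1,\dots,u_{i-1},u_i+u_{i+1},u_{i+2},\dots,u_d)$. This gives $+A$ evaluated with the adjacent letters $i,i+1$ merged.
\item In the first sum, with $c=\emptyset$ (so $i=d$), we get $+A(u_1,\dots,u_{d-1})$.
\item In the third sum ($a\ne\emptyset$, so $i\ge2$), $a\rceil c=(u_1,\dots,u_{i-2},u_{i-1}+u_i,u_{i+1},\dots,u_d)$. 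This gives $-A$ with the letters $i-1,i$ merged.
\end{itemize}
After reindexing, the merged terms from the first and third sums run over the same set of adjacent pairs $(j,j+1)$, $1\le j\le d-1$, with opposite signs, so they cancel. The only survivor is $+A(u_1,\dots,u_{d-1})$.

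\emph{Terms with $\xi_1(a\lceil c)$ or $\xi_1(a\rceil c)$.} Here we need $|a|+|c|=1$.
\begin{itemize}
\item In the second sum, the choice $a=\emptyset$, $c=w_d$ gives $-A(b\rfloor)=-A(u_1,\dots,u_{d-1})$. This cancels the survivor above.
\item In the second sum, the choice $a=w_1$, $c=\emptyset$ gives $-A(b\rfloor)=-A(u_2,\dots,u_d)$.
\item In the fourth sum, $a$ must be nonempty, so $a=w_1$ and $c=\emptyset$. This gives $+A(\lfloor b)=+A(u_2,\dots,u_d)$, because $\lfloor b$ has upper row $(u_2,\dots,u_d)$. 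It cancels the previous term.
\end{itemize}

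In depth $d=1$ every surviving term involves $A_0=0$ or $\xi_1(\emptyset)=0$. Hence $ari(A,\xi_1)$ vanishes in every depth.

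The only delicate point is bookkeeping the boundary cases of the flexion conventions. These are the cases where $b$ is the first or last letter, and where $a$ or $c$ is empty, in which $\lceil c$, $a\rceil$ and $\lfloor b$ degenerate. I would check these carefully against the stated conventions $\lceil c=c$ if $a=\emptyset$ and so on. The interior merged terms telescope automatically; the real content lies in the four boundary terms, which cancel pairwise.
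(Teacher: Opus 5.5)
Your proposal is correct and is the same argument as the paper's proof. Both substitute $B=\xi_1$ into \eqref{eq:ari_expl}, use that $\xi_1$ forces a length-one argument and that the mould $A$ sees only the upper rows, so the merged-letter terms telescope and the boundary terms $\pm A(u_1,\ldots,u_{d-1})$ and $\pm A(u_2,\ldots,u_d)$ cancel in pairs. Your bookkeeping is slightly more careful than the paper's display, which writes the $i=d$ term of the first sum loosely and does not treat depth $1$ separately.
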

\begin{proof} As $\xi_1$ is homogeneous of depth $1$, we get with the explicit formula for the ari bracket from \eqref{eq:ari_expl} 
\begin{align*}
ari(A,\xi_{1})(u_1,\ldots,u_d)&= \sum_{i=1}^d A(u_1,\ldots,u_{i-1},u_i+u_{i+1},u_{i+2},\ldots,u_d)\xi_1(u_i) \\
&- \xi_1(u_1) A(u_2,\ldots,u_d)- \xi_1(u_1+\cdots+u_d) A(u_1,\ldots,u_{d-1})
\\
&-\sum_{i=2}^d A(u_1,\ldots,u_{i-2},u_{i-1}+u_i,u_{i+1},\ldots,u_d) \xi_1( u_i)\\ 
&+\xi_1(u_1+\cdots+u_d) A(u_2,\ldots,u_d). 
\end{align*}
As we have $\xi_1(u_1)=1$, all terms cancel and we get $ari(A,\xi_1)=0$.
\end{proof}

\begin{Lemma}\label{lem:ari_vs_push}
Let $A \in \operatorname{ARI}$ and $M \in \overline{ARI}$ be push invariant. Then 
\[ari(A,M) = 0.\]  
\end{Lemma}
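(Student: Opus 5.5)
The plan is to expand $ari(A,M)$ with the explicit formula \eqref{eq:ari_expl} and exploit the fact that $A$ depends only on the upper variables $u_i$ while $M$ depends only on the lower variables $v_i$. As a result, in every term only the $u$-parts of the flexions matter for $A$ and only the $v$-parts matter for $M$. For $w=abc$ with $|a|=r$, $|b|=s$, I will write out the four families of terms:
\[
\text{(I) } A(a\lceil c)M(b\rfloor),\quad \text{(II) } -M(a\lceil c)A(b\rfloor),\quad \text{(III) } -A(a\rceil c)M(\lfloor b),\quad \text{(IV) } M(a\rceil c)A(\lfloor b),
\]
with $b\neq\emptyset$ in (I),(II) and $a,b\neq\emptyset$ in (III),(IV). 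I then cancel them in groups, using $A_0=M_0=0$ to discard terms with an empty argument.

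First, I will treat the terms in which $A$ is evaluated on the block $b$ alone, i.e.\ (II) and (IV). Here $A(b\rfloor)=A(\lfloor b)=A(u_{r+1},\dots,u_{r+s})$. The $v$-part of $a\lceil c$ and of $a\rceil c$ is in both cases $(v_1,\dots,v_r,v_{r+s+1},\dots,v_d)$. Hence (II) and (IV) cancel whenever $a\neq\emptyset$, including the case $c=\emptyset$, where both flexion conventions give $M(a)$. The only surviving terms of (II) are therefore those with $a=\emptyset$ and $c\neq\emptyset$, namely $-M(v_{s+1},\dots,v_d)A(u_1,\dots,u_s)$. These cancel exactly the terms of (I) with $c=\emptyset$ and $a\neq\emptyset$, which equal $A(u_1,\dots,u_r)M(v_{r+1},\dots,v_d)$. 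These are the $lu$-type boundary terms.

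What remains is (I) with $c\neq\emptyset$ together with (III) with $c$ arbitrary. In (I), the first $s+1$ entries of $bc$, say positions $p,\dots,p+s$ with $p=r+1$, are merged into one $u$-variable, and $M$ is evaluated at $(v_p-v_{p+s},\dots,v_{p+s-1}-v_{p+s})$. In (III), the last entry of $a$ together with $b$, positions $p=r,\dots,r+s=p+s$, are merged into the same $u$-variable $u_p+\dots+u_{p+s}$, and $M$ is evaluated at $(v_{p+1}-v_p,\dots,v_{p+s}-v_p)$. So for each consecutive block $\{p,\dots,p+s\}\subseteq\{1,\dots,d\}$ with $s\ge1$ we obtain the common factor $A(u_1,\dots,u_{p-1},u_p+\dots+u_{p+s},\dots,u_d)$ times
\[
M(v_p-v_{p+s},\dots,v_{p+s-1}-v_{p+s})-M(v_{p+1}-v_p,\dots,v_{p+s}-v_p).
\]
I will check that both index ranges indeed run over all such blocks. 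Finally, applying Definition \ref{def:push} for a $v$-mould at $x_j=v_{p+j}-v_p$ gives $push(M)(x_1,\dots,x_s)=M(-x_s,x_1-x_s,\dots,x_{s-1}-x_s)$, which equals the first term above. Push invariance of $M$ then makes each bracket vanish.

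The main obstacle is the bookkeeping of the degenerate flexion conventions when $a$ or $c$ is empty, i.e.\ making sure each boundary term is matched exactly once. I will sanity-check the whole pairing against the depth-2 computation from the Example after \eqref{eq:ari_expl}. There everything reduces to $A(u_1+u_2)\big(M(v_1-v_2)-M(v_2-v_1)\big)$, which vanishes precisely by push invariance in depth $1$, i.e.\ evenness of $M_1$.
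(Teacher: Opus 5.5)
Your proposal is correct and follows essentially the same route as the paper. Both expand via \eqref{eq:ari_expl}, strip the flexions that cannot affect a $u$-mould or a $v$-mould, and then cancel in the same three groups: the $lu$-type boundary terms, the unflexed $A(b)$ terms between the two sums, and the flexed terms, which are paired block by block so that push invariance of $M$ kills each difference. Your index range $p\ge 1$, which covers the case $a=\emptyset$ in the first sum, is in fact slightly more careful than the paper's stated range $1<i<j$.
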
 
\begin{proof}
For $A\in\ARI$ and $M\in \overline{ARI}$, we can remove the lower flexions from $A$ and the upper flexions from $M$ in the formula \eqref{eq:ari_expl} for the ari bracket and obtain
\begin{align}\label{eq:leila1}
ari(A,M)(w)&=\sum_{\substack{w=abc \\ b\ne \emptyset}} 
\Big(A(a\lceil c)M(b\rfloor)-M(ac)A(b)\Big)\\
&\hspace{3cm}
-\sum_{\substack{w=abc \\ a,b\ne \emptyset}}
\Big(A(a\rceil c)M(\lfloor b)-M(ac)A(b)\Big). \nonumber 
\end{align}
Consider the words $\omega$ (including flexions) that can appear in $A$ in the above
sum.  There are several possibilities, which we divide up into cases 
according to whether the word $\omega$ contains flexions or not, and
whether $w_1$ appears as the first letter of $\omega$, or at the beginning of
$\omega$ but within a flexion, or not at the beginning of $\omega$ at all.

{\it $\omega$ contains no flexions and 
$w_1$ appears as the first letter of $\omega$.}  Then $\omega=(w_1,\ldots,w_i)$
for some $i\in \{1,\ldots,r-1\}$.
The word $\omega=(w_1,\ldots,w_i)$ occurs in $A$ only in the first 
sum in \eqref{eq:leila1}, either as $A(a)$ where $c=\emptyset$, with coefficient 
$M(b\rfloor)=M(b)=M(w_{i+1},\ldots,w_r)$, or as $A(b)$ where 
$a=\emptyset$, with coefficient $-M(c)=-M(w_{i+1},\ldots,w_r)$. Thus
for each $i$, these two terms sum to zero.

{\it $\omega$ contains no flexions and $w_1$ does not appear in $\omega$.}
Let $\omega=(w_i,\ldots,w_j)$ with $1<i<j$.  This type of term appears in the 
first sum of \eqref{eq:leila1} as $A(b)$ with coefficient $-M(ac)$, and also in the second
term as $A(b)$, with coefficient $M(ac)$.  Thus again these two terms sum
to zero.

{\it $\omega$ contains a flexion.} In this case the letter $w_1$
necessarily appears in $A$, either inside or not inside the flexion.
These terms occur in \eqref{eq:leila1} as $A(a\lceil c)$ with $c\ne \emptyset$ and
coefficient $M(b\rfloor)$, and as $A(a\rceil c)$ in the second sum
with coefficient $M(\lfloor b)$. 
More precisely, each decomposition 
\[abc=(w_1,\ldots,w_{i-1})(w_i,w_{i+1},\ldots,w_{j-1})(w_j,\ldots,w_r)\]
in the first sum, for $1<i<j$, gives the term 
\begin{align} \label{eq:decomp_A(a lceil c)_M(rfloor b)} 
&A(a\lceil c)M(b \rfloor)=A(u_1,\ldots,u_{i-1},u_i+
\cdots+u_j,u_{j+1},\ldots,u_r)\\
&\hspace{7cm} M(v_i-v_j,v_{i+1}-v_j,\ldots,v_{j-1}-v_j), \nonumber
\end{align}
and pairs up with the decomposition
\[abc=(w_1,\ldots,w_i)(w_{i+1},\ldots,w_j)(w_{j+1},\ldots,w_r)\] in the
second sum which gives the term 
\begin{align} \label{eq:decomp_A(a rceil c)_M(lfloor b)} 
&-A(a\rceil c)M(\lfloor b)=-A(u_1,\ldots,u_i+
\cdots u_j,u_{j+1},\ldots,u_r)\\
&\hspace{7cm}M(v_{i+1}-v_i,v_{i+2}-v_i,\ldots,v_j-v_i). \nonumber
\end{align}
Indeed we claim that the terms \eqref{eq:decomp_A(a lceil c)_M(rfloor b)} and \eqref{eq:decomp_A(a rceil c)_M(lfloor b)} sum to zero, i.e.~that
\begin{align}\label{eq:leila2}
M(v_i-v_j,v_{i+1}-v_j\ldots,v_{j-1}-v_j)=M(v_{i+1}-v_i,v_{i+2}-v_i,\ldots,v_j-v_i).
\end{align} 
By definition of the push operator given in Definition \ref{def:push} we have
\[
push(M)(v_{i+1}-v_i,v_{i+2}-v_i,\ldots,v_j-v_i)=M(v_i-v_j,v_{i+1}-v_j,\ldots,v_{j-1}-v_j)
\]
Since $M$ is push invariant by assumption, we get the equality in \eqref{eq:leila2}, and
all the paired terms cancel out.
\end{proof}

Central for us is the following famous theorem of Ecalle.
\begin{Theorem}\label{thm:alil_lie}
The pair $(\ARI_{\underline{al}\ast\underline{il}},ari)$ is a Lie algebra.
\end{Theorem}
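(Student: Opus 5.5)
Since $\ARI_{\underline{al}\ast\underline{il}}$ is a linear subspace of $\ARI_{al}$, and $(\ARI_{al},ari)$ is a Lie algebra by Proposition \ref{prop:al_ari}, antisymmetry and the Jacobi identity come for free. What must be shown is closure: for $A,B\in\ARI_{\underline{al}\ast\underline{il}}$ we need $swap(ari(A,B))+C$ to be alternil for some constant mould $C$. We also need the depth-1 part of $ari(A,B)$ to be even. This second point is immediate, because $ari(A,B)$ has no depth-1 component: $lu$ and $arit$ both raise depth from depth-1 inputs.

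The key tool is Ecalle's twisted bracket. Writing $A'=swap(A)+C_A$ and $B'=swap(B)+C_B$, I would use the identity
\[
swap\big(ari(A,B)\big)=ari\big(swap(A),swap(B)\big)+\text{correction},
\]
which holds for push-invariant moulds by Proposition \ref{prop:pushinv_swap}. Elements of $\ARI_{\underline{al}\ast\underline{il}}$ are push invariant up to the constant correction. I would check this by adapting the argument of Proposition \ref{prop:alal_push}: the alternility of $swap(A)+C_A$ modulo constants gives push invariance of $A$ itself. Then the proof splits into two parts:
\begin{enumerate}[(i)]
\item Compute $swap(ari(A,B))=ari(swap A,swap B)=ari(A'-C_A,B'-C_B)$. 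The constant moulds $C_A,C_B$ are $v$-moulds, and constants are push invariant. Lemma \ref{lem:ari_vs_push} kills the cross terms with a $u$-mould, but here both arguments are swapped moulds, i.e.\ $v$-moulds. So I would instead use the explicit formula \eqref{eq:ari_expl}, where for $v$-moulds only the lower flexions survive. There the pairing with a constant mould produces $\pm$ the terms $C(ac)A'(b\rfloor)$ minus $C(ac)A'(\lfloor b)$. These do not vanish; they assemble into the operator Ecalle calls $irat$ versus $ari$, or equivalently the twisted bracket $ari_{C}$.
\item Show that $ari(A',B')$ plus these constant-twist terms is alternil when $A',B'$ are alternil. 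This is the genuine content. I would follow Ecalle's route via the bracket $ari$ expressed through $arit$, identify alternility with primitivity for the stuffle coproduct (Appendix \ref{app:bimoulds_quasish}), and show that the relevant operator is a derivation compatible with the stuffle coproduct. Concretely, I would write the $v$-mould version of $arit$ composed with the constant-mould term as a single operator $\widetilde{arit}(B')$. Then I would verify on the generators $y_{k,0}$ that it is a derivation of the stuffle Hopf algebra, in the style of Remark \ref{rem:arit_depth1}. From this, primitive elements map to primitives, and the commutator of primitives is primitive.
\end{enumerate}

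The main obstacle is step (ii), namely checking the derivation property with respect to the stuffle coproduct. Unlike the shuffle case, the stuffle coproduct mixes letters via $y_ky_l\mapsto y_{k+l}$, so the flexion formulas must be matched against contraction terms. I would first do this computation in depth $\le 3$ to fix the exact form of the constant-twist terms, which also pins down the new constant $C=$ the depth-1-constant part of $ari$-type terms, and then run the general induction on depth using the coproduct. If this becomes unmanageable, I would instead cite Ecalle's theorem as proven in \cite{SchnepsARI} and \cite{Racinet_thesis} (via the double shuffle Lie algebra $\mathfrak{dm}_0$ isomorphism), which is how the statement is usually established.
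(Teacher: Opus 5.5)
Your reduction to closure under $ari$ is fine, and so is your remark that $ari(A,B)$ has no depth-one part. But the argument rests on the claim that elements of $\ARI_{\underline{al}\ast\underline{il}}$ are push invariant up to the constant correction, and this is false.

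Take a weight-$3$ element, e.g.\ $\widehat\xi_3$, so $A_1=u_1^2$ and $C_A$ sits in depth $3$. Depth-$2$ alternility of $swap(A)+C_A$ reads $A_2(x,y)+A_2(x+y,-y)+2x+y=0$, with $x=v_2$, $y=v_1-v_2$. Together with antisymmetry this forces $A_2(u_1,u_2)=u_2-u_1$. Indeed $swap(A)_2=v_1-2v_2$, and $(v_1-2v_2)+(v_2-2v_1)+\frac{v_1^2-v_2^2}{v_1-v_2}=0$. But $push(A)_2(u_1,u_2)=A_2(-u_1-u_2,u_1)=2u_1+u_2\neq A_2(u_1,u_2)$.

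The argument behind Proposition \ref{prop:alal_push} does not adapt. Writing $push=neg\circ anti\circ swap\circ anti\circ swap$, it uses that reversal acts on an alternal mould of depth $d$ by $(-1)^{d-1}$, applied to both $A$ and $swap(A)$. For an alternil $swap(A)$, the reversal identity comes from the stuffle antipode and carries divided-difference contraction terms. Moreover $A$ has no parity ($A_1$ even, $A_2$ odd), so $neg$ does not act trivially. Hence Proposition \ref{prop:pushinv_swap} is unavailable, and the identity $swap(ari(A,B))=ari(swap(A),swap(B))$ underlying your step (i) fails.

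Consequently step (ii) is aimed at the wrong object: the discrepancy between $swap(ari(A,B))$ and $ari(swap(A),swap(B))$ is not a twist by $C_A,C_B$. It is driven by the push defect of the swaps; here $swap(A)_2-push(swap(A))_2=3v_1-3v_2$. For $A=\widehat\xi_3$, $B=\widehat\xi_5$ the discrepancy is already nonzero in depth $3$, where constants living in depths $3$ and $5$ cannot contribute at all.

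A posteriori (this uses the theorem, so it is only diagnostic), Lemma \ref{lem:uri_alil} identifies the discrepancy: $swap(ari(A,B))=uri(swap(A),swap(B))$. By the formula in the proof of Corollary \ref{cor:il_pol_Lie}, $uri-ari$ consists of the divided-difference terms $urit(\cdot)^{(d,r)}$ of Definition \ref{def:urit}. So checking that a ``$v$-mould $arit$ plus constant terms'' operator is a stuffle coderivation cannot succeed.

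The missing idea is to move to a push-invariant setting in which alternility of the swap becomes alternality. This is what the proof the paper relies on, \cite[Theorem 4.6.1]{SchnepsARI}, does:
\begin{enumerate}
\item $\operatorname{Ad}_{ari}(pal)$ is an $ari$-automorphism (Lemma \ref{lem:Ad_ari}) carrying $\ARI_{\underline{al}\ast\underline{al}}$ onto $\ARI_{\underline{al}\ast\underline{il}}$ (Proposition \ref{prop:Ad_ari_alal_alil}).
\item On $\ARI_{\underline{al}\ast\underline{al}}$ push invariance genuinely holds (Proposition \ref{prop:alal_push}). So closure there follows from Propositions \ref{prop:al_ari} and \ref{prop:pushinv_swap} and Lemma \ref{lem:ari_vs_push}.
\item Ecalle's identity \eqref{eq:ecalle_fundamental} converts one swap condition into the other. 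This uses that $\operatorname{Ad}_{ari}(pil)$ preserves alternality and that $ganit_{pic}$ sends alternal to alternil bimoulds.
\end{enumerate}
Your fallback of citing Schneps is exactly the paper's proof; the direct route you sketch does not go through.
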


\begin{proof}
See \cite[Theorem 4.6.1]{SchnepsARI}. 
\end{proof}

\begin{Definition}\label{def:ari_derivation}
Let $\delta:\BARI\to \BARI$ be the map given by
\begin{align*}
\delta(A)\binom{u_1,\ldots,u_d}{v_1,\ldots,v_d}=(u_1v_1+\cdots+u_dv_d)A\binom{u_1,\ldots,u_d}{v_1,\ldots,v_d}.
\end{align*}
\end{Definition}

Based on the next theorem, we refer to $\delta$ as the ari derivation.

\begin{Theorem} 
The map $\delta$ is a derivation on the Lie algebra 
$\big( \BARI, ari \big)$, i.e.,
\[ 
\delta(ari(A,B))=ari(\delta(A),B)+ari(A,\delta(B)).
\]
Furthermore, the ari derivation $\delta$ preserves the space $\BARI_{\underline{al},swap}$.
\end{Theorem}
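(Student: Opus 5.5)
The plan is to prove both assertions by direct inspection of the explicit formulas. The mechanism in both cases is that the quadratic form
\[
Q(w)=u_1v_1+\cdots+u_dv_d
\]
behaves additively with respect to the decompositions used in $ari$, and is invariant under the operations defining $\BARI_{\underline{al},swap}$.

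\textbf{Derivation property.} We use the explicit formula \eqref{eq:ari_expl}, which expresses $ari(A,B)(w)$ as a signed sum of products $X(\omega_1)Y(\omega_2)$ with $\{X,Y\}=\{A,B\}$. Here $(\omega_1,\omega_2)$ is either $(a\lceil c,\, b\rfloor)$ or $(a\rceil c,\, \lfloor b)$, including the degenerate cases where some of $a$, $c$ are empty and the flexions are trivial. The key step is the identity
\[
Q(\omega_1)+Q(\omega_2)=Q(w)
\]
for every such pair. For the first type we would check it as follows. Let $c_1=\binom{u_{r+s+1}}{v_{r+s+1}}$ be the first letter of $c$. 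In $a\lceil c$ the first letter of $c$ becomes $\binom{u_{r+1}+\cdots+u_{r+s+1}}{v_{r+s+1}}$, and in $b\rfloor$ every $v_i$ is replaced by $v_i-v_{r+s+1}$. The extra contribution $(u_{r+1}+\cdots+u_{r+s})v_{r+s+1}$ in $Q(a\lceil c)$ exactly cancels the term $-\sum_{i\in b}u_iv_{r+s+1}$ in $Q(b\rfloor)$. The second type is symmetric: the last letter of $a$ absorbs $u$'s of $b$ while keeping $v_r$, and $\lfloor b$ subtracts $v_r$. With this identity in hand,
\[
Q(w)\,X(\omega_1)Y(\omega_2)=(\delta X)(\omega_1)\,Y(\omega_2)+X(\omega_1)\,(\delta Y)(\omega_2).
\]
Summing this over all terms of \eqref{eq:ari_expl} gives $\delta(ari(A,B))=ari(\delta A,B)+ari(A,\delta B)$. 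This step is entirely bookkeeping, and its only delicate point is checking the boundary cases of the flexion conventions.

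\textbf{Preservation of $\BARI_{\underline{al},swap}$.} We check the three defining conditions separately.
\begin{enumerate}
\item \emph{Alternality.} The quadratic form $Q$ is a sum of contributions $u_iv_i$, one for each letter $w_i$. It is therefore invariant under any permutation of the letters, in particular under every shuffle of $(w_1,\ldots,w_j)$ with $(w_{j+1},\ldots,w_d)$. Hence the shuffle-sum relations characterizing alternality for $\delta A$ are $Q(w)$ times those for $A$, so they vanish.
\item \emph{Depth $1$ parity.} In depth $1$, $\delta A_1=u_1v_1A_1$ is again even whenever $A_1$ is even, since $u_1v_1$ is even.
\item \emph{Swap invariance.} It suffices to show $swap(Q)=Q$, because then $swap(\delta A)=Q\cdot swap(A)=\delta A$. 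Applying the substitution of $swap$, and writing $v_{d+1}=0$, we get
\[
swap(Q)=\sum_{i=1}^d (v_{d-i+1}-v_{d-i+2})(u_1+\cdots+u_{i}),
\]
and an Abel summation reorganizes this as $\sum_j u_jv_j$.
\end{enumerate}

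The only step where care is really needed is the additivity identity for $Q$ across all flexion cases, which must be checked including the convention cases where $a$ or $c$ is empty. The index bookkeeping in the Abel summation for $swap(Q)$ also needs attention. Everything else is formal.
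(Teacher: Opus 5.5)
Your proposal is correct and is essentially the paper's argument. The additivity $Q(\omega_1)+Q(\omega_2)=Q(w)$ over the flexion decompositions is exactly the ``straightforward calculation'' the paper leaves implicit for the derivation property. Alternality and swap invariance are derived, as in the paper, from the invariance of $u_1v_1+\cdots+u_dv_d$ under reorderings of the letters and under the swap substitution.

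You do need to fix an index slip in your swap formula. The factor paired with $v_{d-i+1}-v_{d-i+2}$ should be $u_1+\cdots+u_{d-i+1}$, not $u_1+\cdots+u_i$; as you wrote it, $d=2$ gives $u_1v_1+u_2v_1-u_2v_2\neq Q$. With the correct pairing, your Abel summation does give $\sum_{j=1}^d (v_j-v_{j+1})(u_1+\cdots+u_j)=\sum_j u_jv_j$.
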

\begin{proof} The first claim is the consequence of a straightforward calculation. 
The term $u_1v_1+\cdots+u_dv_d$ is invariant under every reordering of the variables $\binom{u_1}{v_1},\ldots,\binom{u_d}{v_d}$. Hence the derivation $\delta$ preserves alternal bimoulds. Moreover, one computes
\begin{align*}
&v_d(u_1+\cdots u_d)+(v_{d-1}-v_d)(u_1+\cdots+u_{d-1})+\cdots+(v_1-v_2)u_1\\
&= v_du_d+v_{d-1}u_{d-1}+\cdots+v_1u_1.
\end{align*}
So, $u_1v_1+\cdots +u_dv_d$ is also invariant under the variable substitution from the operator $swap$, and hence $\delta$ also preserves the space of swap invariant bimoulds.
\end{proof}

\subsection{The gari product}

Let $\operatorname{GBARI}$ be the set of all bimoulds $M=(M_d)_{d\geq0}$ with $M_0=1$. If we have $M_d\in R[u_1,v_1,\ldots,u_d,v_d]$ for all $d\geq1$, we write $M\in \operatorname{GBARI}^{\operatorname{pol}}$ and if $M_d\in R(u_1,v_1,\ldots,u_d,v_d)$ for all $d\geq1$ we write $M\in \operatorname{GBARI}^{\operatorname{rat}}$. As before, $\operatorname{GARI}$ is the set of moulds $M=(M_d)_{d\geq0}$ with $M_0=1$ and $\overline{GARI}$ the set of $v$-moulds with the same condition. 

\begin{Example}\label{ex:pic_poc} 
The bimoulds $pic,\ poc\in \overline{GBARI}^{\operatorname{rat}}$ are defined by 
\begin{align*}
pic\binom{u_1,\ldots,u_r}{v_1,\ldots,v_r}=\frac{1}{v_1\cdots v_r},\qquad poc\binom{u_1,\ldots,u_r}{v_1,\ldots,v_r}=-\frac{1}{v_1(v_1-v_2)\cdots (v_{r-1}-v_r)}.
\end{align*} 
\end{Example}

The pair $(\operatorname{GBARI},mu)$ forms a group. The neutral element $\mathbf{1}_{mu}$ equals $(1,0 , \ldots) \in  \operatorname{GBARI}$.

\begin{Definition}
For a bimould $B\in\operatorname{GBARI}$, define the automorphism $garit_B$ on $\operatorname{GBARI}$ by
\begin{align*}
garit_B(A)&=\sum_{\substack{w=a_1b_1c_1\cdots a_sb_sc_s \\ b_i\neq \emptyset, a_ic_{i+1}\neq\emptyset}} A(\lceil b_1\rceil\cdots \lceil b_s\rceil)B(a_1\rfloor)\cdots B(a_s\rfloor)\\
&\hspace{6cm}\cdot invmu(B)(\lfloor c_1)\cdots invmu(B)(\lfloor c_s).
\end{align*}
Here, $invmu(B)$ denotes the inverse of $B$ with respect to the product $mu$.
The gari product of two bimoulds $A,B\in\operatorname{GBARI}$ is defined as
\begin{align*}
gari(A,B)=mu\big(garit_B(A),B\big).
\end{align*}
\end{Definition}

The pair $(\operatorname{GBARI},gari)$ is a group, which corresponds to the Lie algebra $(\operatorname{BARI},ari)$ via an exponential map.

\begin{Definition}
For a bimould $A\in\BARI$, define
\begin{align*}
\exp_{ari}(A)=\sum_{k\geq0} \frac{1}{k!} \underbrace{preari(A,preari(A,\ldots, preari(A,A)\cdots))}_{k \text{ times}},
\end{align*}
where $preari$ was given in Definition \ref{def:preari}.
\end{Definition}

The map 
\[
\exp_{ari}:\BARI\to\operatorname{GBARI}
\]
is a bijection of sets, and we denote the inverse map by
\begin{align*}
\log_{ari}:\operatorname{GBARI}\to \BARI.
\end{align*}
Note that for $\BARI^{\operatorname{pol}}$, the map $\exp_{ari}$ is only a bijection if we pass to the completion, which consists of bimoulds whose components are power series.

\begin{Definition}
For $A\in\operatorname{GBARI}$, define the operator $\operatorname{Ad}_{ari}$ on $\operatorname{BARI}$ by
\begin{align*}
\operatorname{Ad}_{ari}(A)(B)=gari\big(preari(A,B),invgari(A)\big),
\end{align*}
where $invgari(A)$ denotes the inverse of $A$ with respect to the $gari$ product.
\end{Definition}

\begin{Lemma} \label{lem:Ad_ari}
The group $\operatorname{GBARI}$ acts on the Lie algebra $\operatorname{BARI}$ via the adjoint operator $\operatorname{Ad}_{ari}$, i.e., we have for $G\in\operatorname{GBARI}$ and $A,B\in \BARI$ that
\[
\operatorname{Ad}_{ari}(G)\big(ari(A,B)\big)=ari\big(\operatorname{Ad}_{ari}(G)(A),\operatorname{Ad}_{ari}(G)(B)\big).
\]
\end{Lemma}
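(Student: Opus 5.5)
The plan is to derive the adjoint action from the fact that $\exp_{ari}$ carries the Lie algebra $(\BARI,ari)$ onto the group $(\operatorname{GBARI},gari)$. Concretely, I will show that $\operatorname{Ad}_{ari}(G)$ is the differential of the inner automorphism $H\mapsto gari(G,H,invgari(G))$, and then use that conjugation by $G$ is a group automorphism of $(\operatorname{GBARI},gari)$.

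\textbf{Step 1: identify $preari(A,\cdot)$ with left multiplication to first order.} Work over $R[\varepsilon]/(\varepsilon^2)$. Using the formula for $garit$ and keeping only terms linear in $\varepsilon$, one checks directly that
\[
gari(A,\mathbf{1}_{mu}+\varepsilon B)=A+\varepsilon\, preari(A,B).
\]
In the expansion of $garit_{\mathbf{1}+\varepsilon B}(A)$, only one factor $B(a_i\rfloor)$ or one factor $invmu(B)(\lfloor c_i)=-\varepsilon B(\lfloor c_i)$ can survive. These give exactly the two sums defining $arit(B)(A)$, and the remaining factor $mu(\cdot,\mathbf{1}+\varepsilon B)$ contributes $mu(A,B)$.

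With this identity, the definition of $\operatorname{Ad}_{ari}$ becomes
\[
\mathbf{1}_{mu}+\varepsilon\operatorname{Ad}_{ari}(G)(B)=gari\big(G,\mathbf{1}_{mu}+\varepsilon B,invgari(G)\big).
\]
So $\operatorname{Ad}_{ari}(G)$ is the tangent map of conjugation at the identity.

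\textbf{Step 2: the commutator of infinitesimal elements is $ari$.} Over $R[\varepsilon,\eta]/(\varepsilon^2,\eta^2)$, the group commutator of $\mathbf{1}+\varepsilon A$ and $\mathbf{1}+\eta B$ equals $\mathbf{1}+\varepsilon\eta\,ari(A,B)$. By Step 1, $gari(\mathbf{1}+\varepsilon A,\mathbf{1}+\eta B)$ has $\varepsilon\eta$-coefficient $preari(A,B)$. Antisymmetrizing therefore gives $preari(A,B)-preari(B,A)=ari(A,B)$, which is the rewriting noted after Definition~\ref{def:preari}.

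\textbf{Step 3: conclude.} Conjugation $c_G$ by $G$ is a group automorphism and preserves commutators. Apply it to the commutator from Step 2. The left side is then $\mathbf{1}+\varepsilon\eta\operatorname{Ad}_{ari}(G)(ari(A,B))$. The right side is the commutator of $\mathbf{1}+\varepsilon\operatorname{Ad}_{ari}(G)(A)$ and $\mathbf{1}+\eta\operatorname{Ad}_{ari}(G)(B)$, which by Step 2 is $\mathbf{1}+\varepsilon\eta\, ari(\operatorname{Ad}_{ari}(G)A,\operatorname{Ad}_{ari}(G)B)$. Comparing coefficients proves the lemma.

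\textbf{Main obstacle.} The hard part is Step 1. It requires the associativity of $gari$, i.e.\ that $(\operatorname{GBARI},gari)$ really is a group, which the text asserts without proof; I would cite Ecalle or \cite{SchnepsARI} for it. It also requires careful bookkeeping of the flexions in the $garit$ expansion. Extending scalars to $R[\varepsilon]$ is harmless because all the operations are defined by polynomial identities in the coefficients.
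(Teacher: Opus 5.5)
Your argument is correct. The paper does not prove this lemma itself; it defers to \cite[p.~32]{SchnepsARI}, that is, to the fact that $\operatorname{Ad}_{ari}$ is the adjoint action of $(\operatorname{GBARI},gari)$ on its Lie algebra $(\BARI,ari)$, the correspondence recorded in \eqref{eq:Ad_ad_ari_exp}.

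Your proposal makes that mechanism explicit with dual numbers. The expansion $gari(A,\mathbf{1}_{mu}+\varepsilon B)=A+\varepsilon\,preari(A,B)$ identifies $\operatorname{Ad}_{ari}(G)$ with the tangent map of $gari$-conjugation. The $\varepsilon\eta$-commutator then recovers $ari(A,B)=preari(A,B)-preari(B,A)$.

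The alternative suggested by the paper is to write $G=\exp_{ari}(C)$ and use $\operatorname{Ad}_{ari}(G)=\exp(\operatorname{ad}_{ari}(C))$. This is an automorphism because $\operatorname{ad}_{ari}(C)$ is a derivation of $ari$ by the Jacobi identity. That route, however, presupposes \eqref{eq:Ad_ad_ari_exp} and the surjectivity of $\exp_{ari}$, which for polynomial bimoulds only holds after completion. Your route applies to every $G\in\operatorname{GBARI}$ directly. Its only external input is that $gari$ is associative with two-sided inverses.

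Some bookkeeping corrections follow.

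\begin{itemize}
\item Step~1 does not use associativity; it is a direct first-order expansion. What it does use is that $gari$ is linear in its first argument, since $garit_B$ is linear.
\item Step~1 also relies on the standard constraint $c_ia_{i+1}\neq\emptyset$ on consecutive blocks in $garit$. The condition printed in the paper, $a_ic_{i+1}\neq\emptyset$, must be read this way. Otherwise adjacent $b$-blocks would be allowed and the interior terms would be lost, so the first-order part would not reproduce $arit(B)(A)$.
\item Associativity enters only in Steps~2--3: for the triple product, the two-sided inverse, and the fact that conjugation preserves commutators.
\item In Step~2, the passage to the commutator is the one-line computation $ghg^{-1}h^{-1}=gari\bigl(gh,invgari(hg)\bigr)$ with $gh=hg+\varepsilon\eta\,ari(A,B)$. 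It uses linearity in the first slot and $gari(C,\mathbf{1}_{mu})=C$.
\item The factor you write as $invmu(B)(\lfloor c_i)$ should be $invmu(\mathbf{1}_{mu}+\varepsilon B)(\lfloor c_i)=-\varepsilon B(\lfloor c_i)$.
\end{itemize}
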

\begin{proof}
See \cite[p. 32]{SchnepsARI}
\end{proof}

As the Lie algebra $(\BARI,ari)$ corresponds to the group $(\operatorname{GBARI},gari)$ under the exponential $\exp_{ari}$, we have
\begin{align} \label{eq:Ad_ad_ari_exp}
\operatorname{Ad}_{ari}(\exp_{ari}(A))=\exp(\operatorname{ad}_{ari}(A))
\end{align}

or, equivalently
\begin{align} \label{eq:Ad_ad_ari_log}
\log(\operatorname{Ad}_{ari}(A))=\operatorname{ad}_{ari}(\log_{ari}(A)).
\end{align}
Here, $\exp$ and $\log$ denote the usual exponential and logarithm map on operators and $\operatorname{ad}_{ari}$ was given in Definition \ref{def:ad_ari}.

There are two moulds, which will be important in the following.

\begin{Definition}
\begin{enumerate}
\item Define the $v$-mould $lopil\in \overline{\ARI}^{\operatorname{rat}}$ by
\[ lopil(v_1,\ldots,v_d)=c_d\frac{v_1+\cdots+v_d}{v_1(v_1-v_2)\cdots(v_{d-1}-v_d)v_d},
\]
where the coefficients $c_d$ are obtained from
\begin{align} \label{eq:def_coeffs_lopil}
\Big(\exp\Big(\sum_{d\geq1} c_d x^{d+1} \frac{d}{dx}\Big)\Big)x=1-e^{-x}.
\end{align}
Then, the mould $pil \in \overline{\operatorname{GARI}}^{\operatorname{rat}}$ is given by
\begin{align} \label{eq:pil_lopil}
pil=\exp_{ari}(lopil).
\end{align}
\item Define the mould $pal \in \operatorname{GARI}^{\operatorname{rat}}$ by 
\[
pal=swap(pil).
\]
\end{enumerate}

\begin{Proposition} \label{prop:Ad_ari_alal_alil} The operator $\operatorname{Ad}_{ari}(pal)$ restricts to an isomorphism of Lie algebras
\[
\operatorname{Ad}_{ari}(pal):\ARI_{\underline{al}\ast\underline{al}}^{\operatorname{rat}}\to \ARI_{\underline{al}\ast\underline{il}}^{\operatorname{rat}}.
\]
\end{Proposition}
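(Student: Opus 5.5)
This is Ecalle's theorem, proved in Schneps' ARI notes. The plan has three parts: show that $\operatorname{Ad}_{ari}(pal)$ is a Lie algebra automorphism of $(\ARI^{\operatorname{rat}},ari)$; show that it sends alternal moulds to alternal moulds; and transport the condition on the swap from ``alternal up to constant'' to ``alternil up to constant''.

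The first point follows at once from Lemma \ref{lem:Ad_ari} applied with $G=pal\in\operatorname{GARI}^{\operatorname{rat}}$. The map is invertible, with inverse $\operatorname{Ad}_{ari}(invgari(pal))$. So it only remains to check that it maps $\ARI_{\underline{al}\ast\underline{al}}^{\operatorname{rat}}$ onto $\ARI_{\underline{al}\ast\underline{il}}^{\operatorname{rat}}$. The depth-$1$ parity condition is preserved because $pal_1$ is compatible with it and the depth-$1$ component of $\operatorname{Ad}_{ari}(pal)(A)$ is just $A_1$.

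For alternality, I would use that $pal$ is symmetral. Indeed, $pil=\exp_{ari}(lopil)$ and $lopil$ is alternal, since its depth-$d$ component is the classical alternal rational function. The image of alternal moulds under $\exp_{ari}$ is symmetral, and the swap relation between $pil$ and $pal$ together with known properties gives that $pal$ is symmetral, i.e. $\log_{ari}(pal)$ is alternal. By \eqref{eq:Ad_ad_ari_log}, $\operatorname{Ad}_{ari}(pal)=\exp(\operatorname{ad}_{ari}(\log_{ari} pal))$. Since $(\ARI_{al},ari)$ is a Lie algebra by Proposition \ref{prop:al_ari}, every power of $\operatorname{ad}_{ari}(\log_{ari}pal)$ preserves alternality, and therefore so does the exponential. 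Convergence is automatic, because the operator raises depth.

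The main obstacle is the swap condition. I would use the swap-compatibility identity from \cite{SchnepsARI}:
\[
swap\big(\operatorname{Ad}_{ari}(pal)(A)\big)=\operatorname{Ad}_{iwi}(pil)\big(swap(A)\big),
\]
where $iwi$ denotes the swap-conjugate of ari. This conjugation relation holds exactly on push-invariant moulds, and that is where Proposition \ref{prop:alal_push} comes in; it plays the role that Proposition \ref{prop:pushinv_swap} plays for $ari$ itself. Granting this identity, the remaining step is the $v$-mould fact that $\operatorname{Ad}_{iwi}(pil)$ (equivalently $ganit(pil)$ up to an inner correction) carries alternal $v$-moulds to alternil ones modulo constant moulds. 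This is the defining property of $pil$, encoded in the choice of the coefficients $c_d$ in \eqref{eq:def_coeffs_lopil}. The constant mould $C_A$ is handled with Lemma \ref{lem:ari_vs_push} and Lemma \ref{lem:orth_rel}: constants are push invariant and commute with the relevant operators, so they only change into another constant mould.

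Surjectivity follows by running the same argument for the inverse operator $\operatorname{Ad}_{ari}(invgari(pal))$. I expect the identity $pil\mapsto$ (alternal $\to$ alternil) to be the deep input. Here I would cite \cite[Ch.~4]{SchnepsARI} rather than reprove it.
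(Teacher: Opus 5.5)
Your automorphism and alternality steps are fine in substance. The one exception is that symmetrality of $pal$ is a separate theorem of Ecalle that you must quote. It does not follow from symmetrality of $pil$ via ``the swap relation'', because $swap$ does not preserve symmetrality.

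\textbf{The swap step.} This is where the real gap is.
\begin{itemize}
\item If $iwi$ is the swap-conjugate of $ari$, then $swap\circ\operatorname{Ad}_{ari}(pal)=\operatorname{Ad}_{iwi}(pil)\circ swap$ is a tautology. Your ``remaining step'' (that this operator turns alternal into alternil modulo constants) is then just the proposition restated.
\item Attributing that step to the coefficients $c_d$ of $lopil$ is not an argument, and it names the wrong mechanism. You yourself observe that $lopil$ is alternal. Hence $pil=\exp_{ari}(lopil)$ is symmetral, and $\operatorname{Ad}_{ari}(pil)=\exp(\operatorname{ad}_{ari}(lopil))$ \emph{preserves} alternality; it cannot create alternility.
\item What is missing is Ecalle's second fundamental identity \eqref{eq:ecalle_fundamental}: $swap(\operatorname{Ad}_{ari}(pal)(M))=ganit_{pic}\big(\operatorname{Ad}_{ari}(pil)(swap(M))\big)$ for push invariant $M$. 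In it, the passage from alternal to alternil is done by $ganit_{pic}$ (Proposition~\ref{prop:ganit_pic_poc}), not by $pil$.
\end{itemize}
With this identity the forward inclusion is short. Let $A\in\ARI^{\operatorname{rat}}_{\underline{al}\ast\underline{al}}$ and put $M=A+C_A$.
\begin{itemize}
\item $M$ is push invariant by Proposition~\ref{prop:alal_push}, and $swap(M)=swap(A)+C_A$ is alternal.
\item $\operatorname{Ad}_{ari}(pal)(C_A)=C_A$, by Lemma~\ref{lem:ari_vs_push} applied to the $u$-mould $\log_{ari}(pal)$.
\item Hence $swap(\operatorname{Ad}_{ari}(pal)(A))+C_A=ganit_{pic}\big(\operatorname{Ad}_{ari}(pil)(swap(A)+C_A)\big)$, which is alternil.
\end{itemize}

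\textbf{Surjectivity.} This does not follow by ``running the same argument'' with $\operatorname{Ad}_{ari}(invgari(pal))$.
\begin{itemize}
\item The fundamental identity, and its inverted form \eqref{eq:ecalle_fundamental_applied}, only apply to push invariant moulds or their images under $\operatorname{Ad}_{ari}(pal)$.
\item Elements of $\ARI_{\underline{al}\ast\underline{il}}$ are not push invariant. For the weight-$3$ element with $A_1=u_1^2$, alternility of the swap forces $A_2=u_2-u_1$, whose push is $2u_1+u_2$.
\item So you need an independent proof that $\operatorname{Ad}_{ari}(pal)^{-1}(B)$ is push invariant for every $B\in\ARI^{\operatorname{rat}}_{\underline{al}\ast\underline{il}}$. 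That is a separate, nontrivial part of Ecalle's theorem.
\end{itemize}
The paper reproves none of this; it cites \cite[Theorem 4.6.1]{SchnepsARI} for the whole statement.
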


\begin{proof}
See \cite[Theorem 4.6.1]{SchnepsARI}
\end{proof}
\end{Definition}

\begin{Definition} \label{def:ganit} For a bimould $B\in \operatorname{GBARI}$, define the operator  $ganit_B$ on bimoulds by
\begin{align*}
ganit_B(A)(w)=\sum_{\substack{w=b_1c_1\cdots b_sc_s \\ \text{only } c_s=\emptyset \text{ is allowed}}} A(b_1\rceil \cdots b_s\rceil) B(\lfloor c_1) \cdots B(\lfloor c_s).
\end{align*}
\end{Definition}

\begin{Lemma}\label{lem:ganit_mu_auto}
For $B\in \operatorname{GBARI}$, the operator $ganit_B$ is an automorphism for the $mu$ multiplication on bimoulds.
\end{Lemma}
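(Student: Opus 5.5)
The plan is to verify the two defining properties of an automorphism. First, $ganit_B$ is bijective. Second, $ganit_B(mu(A,A'))=mu(ganit_B(A),ganit_B(A'))$ for all bimoulds $A,A'$.

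\textbf{Bijectivity.} We work depth by depth, using the filtration by depth. In the defining sum for $ganit_B(A)(w)$ with $w$ of depth $d$, the term with $s=d$ has all $b_i$ of length one and all $c_i$ empty. Since $B_0=1$, this term contributes exactly $A(w)$. Indeed, $b_i\rceil$ only alters $b_i$ when a nonempty $c_i$ follows, so here the arguments are unchanged. Every other term involves $A$ evaluated in depth $s<d$, since $\sum_i \ell(b_i)\le d$ and equality forces all $c_i=\emptyset$.

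Hence $ganit_B=\mathrm{id}+(\text{terms lowering the depth of }A)$, that is, a unipotent triangular operator. It is therefore invertible by recursion on depth, and $ganit_B(A)_0=A_0$.

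\textbf{Multiplicativity.} Take $w=(w_1,\ldots,w_d)$ and expand
\[
mu(ganit_B(A),ganit_B(A'))(w)=\sum_{j}ganit_B(A)(w_1,\ldots,w_j)\,ganit_B(A')(w_{j+1},\ldots,w_d).
\]
Each term is indexed by a cut point $j$, together with a decomposition $b_1c_1\cdots b_sc_s$ of the left word with $c_s=\emptyset$ and a decomposition $b'_1c'_1\cdots b'_tc'_t$ of the right word with $c'_t=\emptyset$.

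Concatenating these gives a decomposition of $w$ into $s+t$ blocks in which $c_s$ is empty. By the rule ``only $c_s=\emptyset$ is allowed'', this is exactly the shape of a decomposition for $ganit_B(mu(A,A'))(w)$ in which $mu(A,A')$ is split between the $b$-blocks $s$ and $s+1$.

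\textbf{Matching the flexions.} Here lies the key check, and I expect it to be the main obstacle. The flexion $b_s\rceil$ in the product term is computed with no block following inside the left word, so it equals $b_s$ unchanged. In the combined word it is also followed by an empty $c_s$, so in the flexion $b_s\rceil$ the trailing $u$-sum is empty there as well.

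One must therefore make precise that the flexion $b\rceil$ in $b_1\rceil\cdots b_s\rceil$ refers to absorbing the $u$'s of the immediately following $c$. Likewise, $\lfloor c_i$ refers to subtracting the last $v$ of the preceding $b_i$.

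Both flexions are local to the pairs $(b_i,c_i)$. So the arguments of $A$ and $B$ in the concatenated decomposition coincide with those in the factored one. Conversely, every decomposition of $w$ splits uniquely into these pieces at block boundaries, and we then sum over the cut of $mu(A,A')$ among the $b$-blocks.

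\textbf{Conclusion.} The two sides are equal term by term. Together with bijectivity, this shows that $ganit_B$ is a $mu$-automorphism.
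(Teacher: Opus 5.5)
Your plan (triangularity in depth for bijectivity, direct term matching for multiplicativity) is a reasonable alternative to the paper, which only cites \cite[Proposition 3.5]{Komiyama}. However, your term matching rests on a misreading of the index set in Definition~\ref{def:ganit}. You allow interior blocks $c_i$ to be empty and require the last block $c_s$ to be empty. It is the other way round: all $b_i$ and all $c_i$ with $i<s$ are nonempty, and only $c_s$ is allowed to be empty (it need not be).

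The paper's depth-$2$ example for $ganit_{pic}$ confirms this. The term $\frac{1}{v_2-v_1}A\binom{u_1+u_2}{v_1}$ comes from $b_1=w_1$, $c_1=w_2\neq\emptyset$. The decomposition $b_1=w_1$, $c_1=\emptyset$, $b_2=w_2$ is not admissible, since otherwise $A\binom{u_1,u_2}{v_1,v_2}$ would appear twice.

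In your bijectivity step this only means the diagonal term is $s=1$, $b_1=w$, $c_1=\emptyset$, and the depth of the argument of $A$ is $\sum_i\ell(b_i)$, not $s$. The conclusion survives.

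The multiplicativity step fails in two ways.
\begin{enumerate}
\item The decomposition of the left word $(w_1,\dots,w_j)$ may end with a nonempty $c_s$, and all such terms are missing from your correspondence.
\item When $c_s=\emptyset$, the concatenation $b_1c_1\cdots b_s\,b'_1c'_1\cdots b'_tc'_t$ has an empty interior block, so it is not an admissible decomposition of $w$. The admissible one merges $b_sb'_1$ into a single $b$-block, and the $mu$-cut falls inside that block, not between two $b$-blocks.
\end{enumerate}
Take $n=3$ and $A,A'$ concentrated in depth $1$. The term $ganit_B(A)(w_1,w_2)\,A'(w_3)=A\binom{u_1+u_2}{v_1}B\binom{u_2}{v_2-v_1}A'\binom{u_3}{v_3}$ is not accounted for by your scheme. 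The term $A(w_1)\,ganit_B(A')(w_2,w_3)$ matches the decomposition $b_1=w_1w_2$, $c_1=w_3$ of $ganit_B(mu(A,A'))(w)$, not one with $c_1=\emptyset$.

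The repair is to index the terms of $ganit_B(M)(w)$, for $w=(w_1,\dots,w_n)$, by subsets $I=\{1=i_1<\dots<i_m\}$ of positions lying in $b$-blocks. This is the description used in the proof of Theorem~\ref{thm:ganit_with_urit} in Appendix~\ref{subsec:proof_urit}. A term of $ganit_B(mu(A,A'))(w)$ is then a pair $(I,k)$ with $0\le k\le m$, where $k$ is the position of the $mu$-cut.

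For $1\le k\le m-1$ put $j=i_{k+1}-1$. Then $(I,k)\mapsto\bigl(j,\,I\cap\{1,\dots,j\},\,(I\cap\{j+1,\dots,n\})-j\bigr)$ is a bijection onto the triples indexing $mu(ganit_B(A),ganit_B(A'))(w)$ with $1\le j\le n-1$, both subsets containing $1$.

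The arguments agree because the flexions are local. The $k$-th contracted letter is $\binom{u_{i_k}+\cdots+u_j}{v_{i_k}}$. Since $j+1\in I$, every $c$-block lies on one side of the cut together with the letter $v_{i_r}$ it is measured from. The two cases $j\in I$ (cut inside a $b$-block, left factor ends with $c_s=\emptyset$) and $j\notin I$ (left factor ends with a nonempty $c_s$) are exactly the ones your argument conflates. Finally, $k=0$ and $k=m$ match $j=0$ and $j=n$, since $ganit_B(A)(\emptyset)=A_0$.
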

\begin{proof}
See \cite[Proposition 3.5]{Komiyama}.
\end{proof}

\begin{Example} For the bimoulds $pic,\ poc$ defined 
in Example \ref{ex:pic_poc}  
we have 
\begin{align*}
ganit_{pic}(A)\binom{u_1,u_2}{v_1,v_2}&=A\binom{u_1,u_2}{v_1,v_2}+\frac{1}{v_2-v_1}A\binom{u_1+u_2}{v_1}, \\
ganit_{poc}(A)\binom{u_1,u_2}{v_1,v_2}&=A\binom{u_1,u_2}{v_1,v_2}-\frac{1}{v_2-v_1}A\binom{u_1+u_2}{v_1}.
\end{align*}
\end{Example}

\begin{Proposition} \label{prop:ganit_pic_poc} The map $ganit_{pic}$ restricts to a vector space isomorphism 
\begin{align*}
ganit_{pic}: \BARI_{al}^{\operatorname{rat}}\to \BARI_{il}^{\operatorname{rat}}.
\end{align*}
The inverse map is $ganit_{poc}$.
\end{Proposition}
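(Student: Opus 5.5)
Translate both symmetries into the language of coefficient maps on $\Q\langle Y\rangle$ and compare shuffle with stuffle. The bimould $A$ in depth $d$ is the generating series of $\varphi(y_{k_1,m_1}\cdots y_{k_d,m_d})$, with the $v_i$ carrying the index $k_i$ and the $u_i$ carrying $m_i$. Neither $ganit_{pic}$ nor $ganit_{poc}$ alters the $u$-structure except by merging adjacent blocks $u_{i}+\cdots+u_{j}$ via the flexion $b\rceil$. So the natural plan is to show that $ganit_{pic}$ corresponds, on coefficient maps, to precomposition with an explicit algebra map $\Phi:(\Q\langle Y\rangle,*)\to(\Q\langle Y\rangle,\shuffle)$; alternality of $A$ then becomes alternility of $ganit_{pic}(A)$. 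This is the classical exponential/logarithm correspondence between shuffle and quasi-shuffle (Hoffman's isomorphism), here in the bi-graded setting. Two ingredients are available: $ganit_B$ is an $mu$-automorphism (Lemma \ref{lem:ganit_mu_auto}), and $pic$, $poc$ are explicit.

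The first step is to check that $ganit_{pic}$ and $ganit_{poc}$ are mutually inverse. Both are $mu$-automorphisms, and $(\BARI^{\operatorname{rat}},mu)$ is topologically generated by depth $1$ data. It therefore suffices to verify, first, that $ganit_B\circ ganit_{B'}=ganit_{B''}$ for a suitable composite $B''$, a composition rule for ganit analogous to that of garit, and second, that in our case $B''=\mathbf 1_{mu}$. Concretely this reduces to an identity between $pic$ and $poc$. The depth $2$ example already shows the cancellation $\frac{1}{v_2-v_1}-\frac{1}{v_2-v_1}=0$. In general the identity is the partial fraction statement that the alternating sum over ways of cutting a sequence into blocks of $pic(\lfloor c)\,poc(\ldots)$ telescopes to zero. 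I would prove it by induction on depth, using $\frac1{v_1}\cdot\frac1{v_1-v_2}$-type decompositions.

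The second step is the main one: alternal goes to alternil. I would use the characterization of alternality as vanishing on the shuffles $\shuffle$, respectively of alternility as vanishing on the stuffles $*$, of sequences of letters. In bimould language this is the vanishing of $\sum A(\text{shuffles of } w',w'')$, respectively of that sum plus the contraction terms $A(\ldots,\binom{u_i+u_j}{v_i},\ldots)$ together with their $v$-partial fractions (Appendix \ref{app:bimoulds_quasish}). Expanding $ganit_{pic}(A)$ evaluated on $\sum_{\text{stuffles}}$, the contraction terms of the stuffle should be exactly absorbed by the $pic(\lfloor c)$-weighted block-merging terms of $ganit$. This amounts to checking that the series $\sum_{c}pic(\lfloor c)$ plays the role of $\frac{e^x-1}{x}$ in Hoffman's $\exp$ map, i.e. that depth $1$ of $\log_{mu}$-type data of $pic$ is "grouplike" for the coproduct dual to $*$. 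I expect this matching of denominators $\frac1{v_j-v_i}$ against the stuffle contraction kernel to be the hardest part. I would first verify it in depth $2$ and $3$ by hand, then argue generally by showing that $ganit_{pic}$ intertwines the deconcatenation-compatible coproducts, i.e. $\Delta_*\circ ganit_{pic}=(ganit_{pic}\otimes ganit_{pic})\circ\Delta_\shuffle$ on generating series.

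Finally, the same argument with $poc$ shows that $ganit_{poc}$ sends alternil to alternal; alternatively, since it is the inverse, one only needs to apply Step 1. Together these give the claimed vector space isomorphism $\BARI_{al}^{\operatorname{rat}}\to\BARI_{il}^{\operatorname{rat}}$ with inverse $ganit_{poc}$.
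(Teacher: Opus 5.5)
The paper gives no argument of its own here; it cites Komiyama, Theorem 3.24. So your sketch has to carry the proof, and its main step rests on a false identification. $ganit_{pic}$ is not Hoffman's exponential/logarithm transported to coefficient maps, and it is not induced by any map $\Phi$ on $\Q\langle Y\rangle$ at all. For the polynomial alternal bimould $(1,0,0,\ldots)$ one gets $ganit_{pic}(1,0,0,\ldots)=\bigl(1,\frac{1}{v_2-v_1},\frac{1}{(v_2-v_1)(v_3-v_1)},\ldots\bigr)$. This is not a power series, so it has no coefficient map; on $\BARI^{\operatorname{rat}}$ alternility can only be handled through the functional equations.

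The two operations also differ on bimoulds. Hoffman's $\log$, written on bimoulds, gives in depth $2$ the polynomial divided difference $A(w_1,w_2)-\frac12\bigl(A\binom{u_1+u_2}{v_1}-A\binom{u_1+u_2}{v_2}\bigr)/(v_1-v_2)$. By contrast, $ganit_{pic}(A)(w_1,w_2)=A(w_1,w_2)+A\binom{u_1+u_2}{v_1}/(v_2-v_1)$. Both are alternil in depth $2$, but they are different maps. In general $ganit_{pic}$ keeps, for each merged block, only the Lagrange term at its first node (up to sign, with coefficient $1$), not a symmetric divided difference with constant weight. So the analogy with $(e^x-1)/x$ does not hold.

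Your closing formula $\Delta_*\circ ganit_{pic}=(ganit_{pic}\otimes ganit_{pic})\circ\Delta_{\shuffle}$ is the right statement, but it is the entire content of the proposition, and you give no mechanism for it. Dually it reads as follows. Let $G$ send a word in the letters $w_i$ to the sum over its segmentations into consecutive blocks $(w_i,\ldots,w_j)$ of the words of merged letters $\binom{u_i+\cdots+u_j}{v_i}$, weighted by $\prod_{t=i+1}^{j}(v_t-v_i)^{-1}$, so that $ganit_{pic}(A)(w)=A(G(w))$. One then needs $G(\alpha\ast\beta)=G(\alpha)\shuffle G(\beta)$, where a contracted letter in $\alpha\ast\beta$ stands for $\bigl(\binom{u_a+u_b}{v_a}-\binom{u_a+u_b}{v_b}\bigr)/(v_a-v_b)$. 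This is an algebra map on the alphabet of the points $w_i$, not on $\Q\langle Y\rangle$. It can be checked by hand for lengths $1+1$ and $1+2$. The missing work is to show in general that all blocks mixing letters of $\alpha$ and $\beta$ cancel against the contracted letters, via partial fractions such as $\frac{1}{(v_2-v_1)(v_3-v_1)}+\frac{1}{(v_3-v_1)(v_1-v_2)}=0$.

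Two further points follow from this. First, the resulting identity between stuffle sums of $ganit_{pic}(A)$ and shuffle sums of $A$ holds for every $A$, not only alternal ones. Induction on depth then shows that $ganit_{pic}(A)$ alternil forces $A$ alternal, hence $ganit_{poc}(\BARI_{il}^{\operatorname{rat}})\subset\BARI_{al}^{\operatorname{rat}}$. Your fallback that Step 1 alone suffices here is wrong: mutual inverseness plus $ganit_{pic}(\BARI_{al})\subset\BARI_{il}$ gives injectivity, not surjectivity onto $\BARI_{il}$.

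Second, in Step 1, $(\BARI^{\operatorname{rat}},mu)$ is not generated by depth-one bimoulds: no finite sum of products $f(w_1)g(w_2)$ equals $1/(v_1-v_2)$. The composition law $ganit_B\circ ganit_{B'}=ganit_{gani(B',B)}$ does not need this. But Step 1 then rests entirely on $ganit_{poc}(pic)=invmu(poc)$, i.e.\ $gani(pic,poc)=\mathbf 1_{mu}$, which you only sketch.
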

 
\begin{proof}
See \cite[Theorem 3.24]{Komiyama}.
\end{proof}

The following result is known as Ecalle’s second fundamental identity.
\begin{Theorem} Let $M\in \operatorname{BARI}$ be push invariant, then
\begin{align}\label{eq:ecalle_fundamental} swap (\operatorname{Ad}_{ari}(pal)(M)) = ganit_{pic} \big(\operatorname{Ad}_{ari}(pil)(swap(M))\big).
\end{align}   
\end{Theorem}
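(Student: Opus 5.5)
We propose to reduce the identity to a statement about how $swap$, $ganit$ and the adjoint action interact with the exponential of $ari$. Write $pal=\exp_{ari}(\log_{ari}(pal))$ and $pil=\exp_{ari}(lopil)$. By \eqref{eq:Ad_ad_ari_exp}, both sides of \eqref{eq:ecalle_fundamental} are then exponentials of inner derivations applied to $M$:
\[
swap\big(\exp(\operatorname{ad}_{ari}(\log_{ari}pal))(M)\big) \quad\text{versus}\quad ganit_{pic}\big(\exp(\operatorname{ad}_{ari}(lopil))(swap(M))\big).
\]

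The strategy is to introduce a one-parameter deformation. Replace $pal$ by $pal_t=swap(pil_t)$ with $pil_t=\exp_{ari}(t\,lopil)$. Replace $pic$ by the corresponding family $pic_t$, obtained by rescaling the coefficients $c_d$ (equivalently $x\mapsto tx$ in \eqref{eq:def_coeffs_lopil}). Both sides are then formal power series in $t$ that agree at $t=0$, since $swap(M)=ganit_{\mathbf 1}(swap(M))$. It therefore suffices to show that both sides satisfy the same first-order linear differential equation in $t$.

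\begin{enumerate}
\item On the right-hand side, the derivative is $ganit_{pic_t}$ composed with $\operatorname{ad}_{ari}(lopil)$ applied to $\operatorname{Ad}_{ari}(pil_t)(swap M)$, plus a term coming from $\frac{d}{dt}ganit_{pic_t}$.
\item On the left-hand side, we need a formula for $swap\circ\operatorname{ad}_{ari}(X)\circ swap$, where $X$ is the $t$-derivative generator of $pal_t$.
\end{enumerate}

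The key intermediate lemma is a twisted compatibility: for push invariant $N$,
\[
swap\big(ari(X,N)\big)=ganit_{pic_t}\Big(ari\big(lopil,\,ganit_{poc_t}(swap N)\big)\Big)+(\text{correction}),
\]
where the correction exactly matches $\frac{d}{dt}ganit_{pic_t}$. This generalizes Proposition \ref{prop:pushinv_swap}, which is the statement with no twist.

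For this lemma we also need that push invariance is preserved along the flow, i.e.\ that $\operatorname{Ad}_{ari}(pal_t)(M)$ stays push invariant. We would obtain this from the fact that $lopil$, and hence $pal$, is push invariant (checked directly from the formula for $lopil$), combined with the stability of push invariant bimoulds under $ari$.

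Since $ganit_B$ is a $mu$-automorphism (Lemma \ref{lem:ganit_mu_auto}), it suffices to verify the lemma on depth-one generators and then propagate it using the derivation property of $arit$ (Remark \ref{rem:arit_depth1}).

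The main obstacle is exactly this twisted compatibility: identifying the correction term with the derivative of $ganit_{pic_t}$. This is where the specific defining equation \eqref{eq:def_coeffs_lopil} for the $c_d$ must enter, and where the choice of $pic$ is forced. We would first test it in depth $2$ and $3$ by hand, using the explicit $ganit_{pic}$ formula from the Example, to pin down the correction.

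If this computation becomes unmanageable, the fallback is to invoke the literature: this identity is \cite[Theorem 4.5.2-type statement]{SchnepsARI}, following Ecalle, and Komiyama's treatment \cite{Komiyama}. In that case we would cite the proof there rather than reprove it.
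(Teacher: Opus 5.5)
The paper gives no argument of its own for this identity. It simply cites \cite[Theorem 4.5.2]{SchnepsARI}, so your fallback is exactly what the paper does. The deformation argument you actually sketch is not a proof, however, and several of its supporting claims are false.

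\textbf{Push invariance of $\lopil$ and $pal$.} Neither is push invariant. Comparing $x^2$-coefficients in \eqref{eq:def_coeffs_lopil} gives $c_1=-1/2$, so $\lopil_1=-1/(2v_1)$ and $pal_1=-1/(2u_1)$. But $\push$ acts in depth $1$ by $w_1\mapsto -w_1$, which changes the sign of both.

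\textbf{Push invariance along the flow.} More seriously, $\Adari(pal_t)(M)$ does not stay push invariant. Already at $t=1$, $\Adari(pal)$ carries the push invariant space $\ARI_{\underline{al}\ast\underline{al}}$ onto $\ARI_{\underline{al}\ast\underline{il}}$ (Proposition \ref{prop:Ad_ari_alal_alil}). Elements of the latter are in general not push invariant. For example, alternality plus alternility of the swap force the depth-$2$ component of $\widehat\xi_3=(u_1^2,\ldots)$ to be $u_2-u_1$, and $\push$ sends this to $2u_1+u_2$. So your lemma would have to be applied to $N=\Adari(pal_t)(M)$, which is not push invariant. There, not even the untwisted Proposition \ref{prop:pushinv_swap} is available: it needs both arguments push invariant, and your generator $X$ is not.

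\textbf{Reduction to depth-one generators.} This step has no basis. $\swap$ is not a $mu$-automorphism. $N\mapsto\ari(X,N)$ is not a $mu$-derivation in $N$, because the term $\arit(N)(X)$ is not Leibniz in $N$. And $mu$ does not preserve push invariance.

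\textbf{The deformation itself.} It is not coherently defined. $\exp_{ari}(t\,\lopil)$ multiplies every $c_d$ by $t$, whereas $x\mapsto tx$ in \eqref{eq:def_coeffs_lopil} multiplies $c_d$ by $t^d$, and $pic$ involves no $c_d$ at all. The consistent choice multiplies the depth-$d$ components of $\lopil$, $\pil$, $pal$ and $pic$ by $t^d$. This is induced by the substitution $(u_i,v_i)\mapsto(u_i/t,v_i/t)$, with which $\swap$, $\push$, the flexions, $\ari$ and $\ganit$ are all compatible. Hence the $t$-family is just the original identity conjugated by a rescaling, and its $t$-derivative is an infinitesimal statement equivalent to the theorem itself.

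Your twisted compatibility lemma, with its correction term left unspecified, is exactly that statement. The proposal therefore relocates the entire content of the theorem into an unproved lemma, and the citation is the only complete route it contains.
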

\begin{proof}
See \cite[Theorem 4.5.2]{SchnepsARI}
\end{proof}
Setting $P =\operatorname{Ad}_{ari}(pal)(M)$ in 
\eqref{eq:ecalle_fundamental}, we obtain the 
following version of  \eqref{eq:ecalle_fundamental} which is valid for all bimoulds $P$ that are images of push invariant bimoulds $M$
under $\operatorname{Ad}_{ari}(pal)$:
\begin{align}\label{eq:ecalle_fundamental_applied}
ganit_{poc}(swap(P)) = \operatorname{Ad}_{ari}(pil)\big( swap(\operatorname{Ad}_{ari}(pal)^{-1} (P))\big).   
\end{align}

\subsection{The uri bracket and the mystic derivation}\label{subsec:uri}

\begin{Definition} \label{def:uri} The uri bracket on $\BARI^{\operatorname{rat}}$ is defined by
\[
uri(A,B)=ganit_{pic}\big(ari\big(ganit_{poc}(A),ganit_{poc}(B)\big)\big).
\]
\end{Definition}
An obvious first result is the following.
\begin{Proposition} \label{prop:uri_alternil}
The pair $(\BARI_{il}^{\operatorname{rat}},uri)$ is a Lie algebra.
\end{Proposition}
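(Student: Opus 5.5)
The plan is to transport the Lie algebra structure of $(\BARI_{al}^{\operatorname{rat}},ari)$ to $\BARI_{il}^{\operatorname{rat}}$ along the linear isomorphism of Proposition \ref{prop:ganit_pic_poc}. By that proposition, $ganit_{pic}:\BARI_{al}^{\operatorname{rat}}\to\BARI_{il}^{\operatorname{rat}}$ is a vector space isomorphism with inverse $ganit_{poc}$. By Proposition \ref{prop:al_ari}, $(\BARI_{al},ari)$ is a Lie algebra. I would first make sure that Proposition \ref{prop:al_ari}, and the underlying fact that $ari$ is a Lie bracket, hold for rational bimoulds as well. The formula \eqref{eq:ari_expl} only uses substitutions of variables and products, so the identities are formal and carry over verbatim to $\BARI^{\operatorname{rat}}$.

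Key steps, in order:

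\emph{Well-definedness.} For $A,B\in\BARI_{il}^{\operatorname{rat}}$ we have $ganit_{poc}(A),ganit_{poc}(B)\in\BARI_{al}^{\operatorname{rat}}$. Hence $ari(ganit_{poc}(A),ganit_{poc}(B))\in\BARI_{al}^{\operatorname{rat}}$ by closure of alternal bimoulds under $ari$. Applying $ganit_{pic}$ then gives an element of $\BARI_{il}^{\operatorname{rat}}$. So $uri$ maps $\BARI_{il}^{\operatorname{rat}}\times\BARI_{il}^{\operatorname{rat}}$ into $\BARI_{il}^{\operatorname{rat}}$.

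\emph{Bilinearity and antisymmetry.} These are immediate, since $ganit_{pic}$ and $ganit_{poc}$ are linear (Definition \ref{def:ganit} is linear in $A$) and $ari$ is bilinear and antisymmetric.

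\emph{Jacobi identity.} Write $\phi=ganit_{pic}|_{\BARI_{al}^{\operatorname{rat}}}$, so that $\phi^{-1}=ganit_{poc}|_{\BARI_{il}^{\operatorname{rat}}}$. Then $uri(A,B)=\phi(ari(\phi^{-1}A,\phi^{-1}B))$, and therefore
\[
uri(A,uri(B,C))=\phi\big(ari(\phi^{-1}A,ari(\phi^{-1}B,\phi^{-1}C))\big).
\]
The cyclic sum of these terms is $\phi$ applied to the cyclic $ari$-Jacobiator of $\phi^{-1}A,\phi^{-1}B,\phi^{-1}C$, which vanishes. In other words, $\phi$ becomes a Lie algebra isomorphism $(\BARI_{al}^{\operatorname{rat}},ari)\to(\BARI_{il}^{\operatorname{rat}},uri)$ by construction.

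The only point needing care is the identity $ganit_{poc}\circ ganit_{pic}=\mathrm{id}$ (and the reverse composition) on the relevant spaces. This is exactly what is used when the inner $uri$ is unwrapped inside the outer bracket, and it is supplied by Proposition \ref{prop:ganit_pic_poc}. So I expect no real obstacle beyond checking that the cited results are valid in the rational setting.
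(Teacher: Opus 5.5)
Your proposal is correct and follows the same route as the paper: $uri$ is the $ari$ bracket on $\BARI_{al}^{\operatorname{rat}}$ (Proposition \ref{prop:al_ari}) transported along the linear isomorphism $ganit_{pic}$, whose inverse is $ganit_{poc}$ (Proposition \ref{prop:ganit_pic_poc}), so well-definedness, bilinearity, antisymmetry and the Jacobi identity carry over. You write out the Jacobi computation and the rational-setting check explicitly, whereas the paper only says the bracket axioms hold ``by construction''.
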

\begin{proof}
The ari bracket preserves alternal bimoulds by Proposition \ref{prop:al_ari}.
By Proposition \ref{prop:ganit_pic_poc} the map $ganit_{poc}$ sends alternil to alternal bimoulds and $ganit_{pic}$ is the inverse map. Thus the operator $uri$ is well-defined on alternil bimoulds and by construction the axioms for being a Lie bracket hold.
\end{proof}

\begin{Lemma} \label{lem:grD_ilswap_alswap} We have
\[ \gr_D\big(\BARI_{il}^{\operatorname{rat}},uri\big) \subset \big( \BARI_{al}^{\operatorname{rat}},ari\big).
 \]
\end{Lemma}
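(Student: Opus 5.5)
We have to show two things. First, $\gr_D$ sends $\BARI_{il}^{\operatorname{rat}}$ into $\BARI_{al}^{\operatorname{rat}}$. Second, the depth-graded bracket induced by $uri$ is the $ari$ bracket on these lowest-depth parts. The first point is Remark \ref{rem:il_to_al}. If $A$ is alternil with lowest nonzero component $A_d$, then the stuffle relations in depth $d$ differ from the shuffle relations only by contraction terms. Those terms involve components of depth $<d$, which vanish. So $A_d$ satisfies the shuffle relations in depth $d$, and the lower relations hold trivially. Hence $\gr_D(A)$ is alternal.

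For the bracket, the plan is to show that $ganit_{pic}$ and $ganit_{poc}$ are unipotent with respect to the depth filtration. Concretely, for $A\in\operatorname{Fil}_d(\BARI^{\operatorname{rat}})$ we want
\[
ganit_{pic}(A)\equiv A \mod \operatorname{Fil}_{d+1}, \qquad ganit_{poc}(A)\equiv A \mod \operatorname{Fil}_{d+1}.
\]
This follows from Definition \ref{def:ganit}. In depth $n$, the term with all $c_i=\emptyset$ (so $s=1$, $b_1=w$) is exactly $A(w)$. Every other term evaluates $A$ on the contracted word $b_1\rceil\cdots b_s\rceil$. Since some $c_i\neq\emptyset$ and $pic_0=poc_0=1$, the only contributing terms have $B$-factors of positive depth, and this contracted word has length strictly less than $n$. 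So if $A$ vanishes below depth $d$, then $ganit_B(A)$ vanishes below depth $d$ and agrees with $A$ in depth $d$. The example in depth $2$ illustrates exactly this.

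Next, $ari$ respects the depth filtration:
\[
ari(\operatorname{Fil}_d,\operatorname{Fil}_{d'})\subseteq\operatorname{Fil}_{d+d'}.
\]
Moreover, its depth-$(d+d')$ component depends only on $A_d$ and $B_{d'}$. Both facts are read off from the explicit formula \eqref{eq:ari_expl}: each term is a product of a component of $A$ of length $\ell$ and a component of $B$ of length $n-\ell$. Combining the steps, take alternil $A\in\operatorname{Fil}_d$ and $B\in\operatorname{Fil}_{d'}$. Then $ganit_{poc}(A)\equiv A$ and $ganit_{poc}(B)\equiv B$ modulo higher depth. Hence
\[
ari\big(ganit_{poc}(A),ganit_{poc}(B)\big)\equiv ari(A_d,B_{d'}) \mod \operatorname{Fil}_{d+d'+1}.
\]
Applying $ganit_{pic}$ does not change this lowest component. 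Therefore
\[
\gr_D\, uri(A,B)=ari\big(\gr_D A,\gr_D B\big) \mod \operatorname{Fil}_{d+d'+1},
\]
with the right-hand side in $\BARI_{al}^{\operatorname{rat}}$ by Proposition \ref{prop:al_ari}.

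The main point needing care is the unipotency claim for $ganit$. One must check in Definition \ref{def:ganit} that every nontrivial decomposition really lowers the length of the argument of $A$. It does: each nonempty $c_i$ removes $|c_i|\ge1$ letters from the argument of $A$.
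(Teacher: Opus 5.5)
Your proof is correct and follows the same route as the paper. Remark~\ref{rem:il_to_al} shows that the lowest-depth component of an alternil bimould is alternal, and since $ganit_{pic}$ and $ganit_{poc}$ induce the identity on the associated depth-graded, Definition~\ref{def:uri} gives $\gr_D uri = ari$; you simply spell out the unipotency of $ganit_B$ and the filtration compatibility of $ari$, which the paper leaves implicit.
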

The notion of the associated depth-graded Lie algebra was introduced in \eqref{eq:depth-graded_bracket}.

\begin{proof}
By Remark \ref{rem:il_to_al}, we have
\[
\gr_D\BARI_{il}^{\operatorname{rat}} \subset \BARI_{al}^{\operatorname{rat}}.
\]
Moreover, the operators $\gr_D ganit_{pic}$ and $\gr_D ganit_{poc}$ are both just the identity maps, hence we deduce from Definition \ref{def:uri} that $\gr_D uri= ari$.
\end{proof}
  
The uri bracket can be also build out of a derivation and the $lu$ bracket.
Recall that the algebra $(\BARI^{\operatorname{pol}},mu)$ is generated by the depth $1$ bimoulds $P_{k.m}$ from \eqref{eq:P_km}. We use
the description of the derivation
$arit$ given in Remark \ref{rem:arit_depth1}
to make the following generalisation.
\begin{Definition} \label{def:urit} Let $B\in\BARI^{\operatorname{pol}}$ and $d,r\geq1$. We define the derivation $urit(B)^{(d,r)}$ with respect to $mu$ on depth $1$ bimoulds $A$ as
\begin{align*}
&urit(B)^{(d,r)}(A)\binom{u_1,\ldots,u_{d+r+1}}{v_1,\ldots,v_{d+r+1}}=
A\binom{u_1+\dots+u_{d+r+1}}{v_1} \\
&\hspace{1.4cm}\cdot \Bigg[\sum_{s\in\{1,\ldots,r,d+r+1\}}\prod_{\substack{k\in\{1,\ldots,r,d+r+1\} \\ k\neq s}}\frac{1}{(v_k-v_s)}B\binom{u_{r+1},\ldots,u_{d+r}}{v_{r+1}-v_s,\ldots,v_{d+r}-v_s} \\	
&\hspace{6cm}-\sum_{s=1}^{r+1} \prod_{\substack{k=1 \\ k\neq s}}^{r+1}\frac{1}{(v_k-v_s)}B\binom{u_{r+2},\ldots,u_{d+r+1}}{v_{r+2}-v_s,\ldots,v_{d+r+1}-v_s}\Bigg].
\end{align*}
Then, we set
\[urit(B)(A)\binom{u_1,\ldots,u_n}{v_1,\ldots,v_n}=arit(B)(A)\binom{u_1,\ldots,u_n}{v_1,\ldots,v_n}+\sum_{\substack{d+r+1=n \\ d,r\geq 1}}urit(B)^{(d,r)}(A)\binom{u_1,\ldots,u_n}{v_1,\ldots,v_n}.\] 
\end{Definition}
\begin{Remark} For two bimoulds $A,B\in\BARI$ with arbitrary depths, we have 
\begin{align*}
&urit(B)(A)\binom{u_1,\ldots,u_n}{v_1,\ldots,v_n} =arit(B)(A)\binom{u_1,\ldots,u_n}{v_1,\ldots,v_n}\\
&+\sum_{\substack{d+r+l=n \\ d,r,l\geq1}}\sum_{i=1}^{l} A\binom{u_1,\ldots,u_{i-1},u_i+\dots+u_{d+r+i},u_{d+r+i+1},\ldots,u_{d+r+l}}{v_1,\ldots,v_{i-1},v_i,v_{d+r+i+1},\ldots,v_{d+r+l}} \\
&\hspace{1,3cm}\cdot \Bigg[\sum_{\substack{0\leq s \leq r-1 \\ \text{or} \\ s=d+r }} \prod_{\substack{0\leq k\leq r-1, \ k\neq s \\ \text{or} \\ k=d+r,\ k\neq s}}\frac{1}{(v_{i+k}-v_{i+s})}B\binom{u_{i+r},\ldots,u_{i+d+r-1}}{v_{i+r}-v_{i+s},\ldots,v_{i+d+r-1}-v_{i+s}}\\	
&\hspace{3,7cm}-\sum_{s=0}^{r} \prod_{\substack{k=0 \\ k\neq s}}^{r}\frac{1}{(v_{i+k}-v_{i+s})}B\binom{u_{i+1+r},\ldots,u_{i+d+r}}{v_{i+1+r}-v_{i+s},\ldots,v_{i+d+r}-v_{i+s}}\Bigg].
\end{align*}
\end{Remark}
\begin{Proposition} \label{prop:urit_polynomial} 
For any $B\in\BARI^{\operatorname{pol}}$ the derivation $urit(B)$ preserves the space $\BARI^{\operatorname{pol}}$.
\end{Proposition}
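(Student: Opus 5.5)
The plan is to show that every ingredient of $urit(B)(A)$ is polynomial when $A,B\in\BARI^{\operatorname{pol}}$. The $arit(B)(A)$ part is clearly polynomial, since by Definition \ref{def:arit} it is built from evaluations of $A$ and $B$ at linear combinations of the variables. The only possible denominators come from the correction terms $urit(B)^{(d,r)}(A)$ of Definition \ref{def:urit}.

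\emph{Reduction to depth $1$.} By construction $urit(B)$ is a derivation for $mu$, defined on depth-$1$ bimoulds and extended to all of $\BARI^{\operatorname{pol}}$. The algebra $(\BARI^{\operatorname{pol}},mu)$ is generated by the depth-$1$ bimoulds $P_{k,m}$, and $mu$ of polynomial bimoulds is polynomial. Hence it suffices to show that $urit(B)(A)$ is polynomial for depth-$1$ polynomial $A$, and by linearity in $A$ for $A=P_{k,m}$. The factor $A\binom{u_1+\cdots+u_{d+r+1}}{v_1}$ is then a polynomial. It remains to show that each bracket in $urit(B)^{(d,r)}(A)$ is a polynomial.

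\emph{Divided differences.} Consider the first sum in the bracket. Fix the variables $u_{r+1},\ldots,u_{d+r}$ and $v_{r+1},\ldots,v_{d+r}$, and set
\[
f(x)=B\binom{u_{r+1},\ldots,u_{d+r}}{v_{r+1}-x,\ldots,v_{d+r}-x}.
\]
This is a polynomial in $x$ with coefficients polynomial in the remaining variables. Let the nodes be $x_s=v_s$ for $s\in S=\{1,\ldots,r,d+r+1\}$, so that $|S|=r+1$. Then the first sum equals
\[
\sum_{s\in S}\frac{f(x_s)}{\prod_{k\in S,\,k\neq s}(x_k-x_s)}
=(-1)^{r}\sum_{s\in S}\frac{f(x_s)}{\prod_{k\neq s}(x_s-x_k)}
=(-1)^{r} f[x_s : s\in S].
\]
Here $f[\cdot]$ is the divided difference of order $r$. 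The nodes $v_s$, $s\in S$, are disjoint from the variables $v_{r+1},\ldots,v_{d+r}$ on which the coefficients of $f$ depend, so this identification is legitimate.

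The second sum is treated the same way. This time $f(x)=B\binom{u_{r+2},\ldots,u_{d+r+1}}{v_{r+2}-x,\ldots,v_{d+r+1}-x}$, with nodes $v_1,\ldots,v_{r+1}$. Again the nodes do not occur among the arguments $v_{r+2},\ldots,v_{d+r+1}$.

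\emph{Polynomiality of divided differences.} The key fact is that a divided difference of a polynomial is a polynomial in the nodes. Writing $f(x)=\sum_n c_n x^n$ with polynomial coefficients $c_n$, linearity gives
\[
f[x_0,\ldots,x_r]=\sum_n c_n\, x^n[x_0,\ldots,x_r]
=\sum_{n\geq r} c_n\, h_{n-r}(x_0,\ldots,x_r),
\]
where $h_j$ denotes the complete homogeneous symmetric polynomial. This formula is the standard identity for divided differences of monomials. It follows by induction on $r$ from the recursion $x^n[x_0,\ldots,x_r]=\bigl(x^n[x_1,\ldots,x_r]-x^n[x_0,\ldots,x_{r-1}]\bigr)/(x_r-x_0)$ together with $h_j(x_1,\ldots)-h_j(x_0,\ldots)=(x_r-x_0)h_{j-1}(x_0,\ldots,x_r)$. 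Hence both brackets are polynomials, and so is $urit(B)^{(d,r)}(A)$.

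\emph{Expected obstacle.} The main point requiring care is the sign and index bookkeeping that matches the bracket of Definition \ref{def:urit} exactly with a divided difference. In particular one must check that the shifted arguments $v_j-v_s$ of $B$ depend on $s$ only through a single variable $x=v_s$, and that the node set is disjoint from the other variables. Once this identification is made, the polynomiality argument above completes the proof.
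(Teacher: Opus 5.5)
Your proof is correct and is essentially the paper's argument. The paper also reduces by linearity to a monomial $B_d$ and expands $B_d\binom{\ldots}{v_j-v_s,\ldots}$ in powers of $v_s$. It then cancels the poles with the identity $\sum_{s}a_s^m\prod_{k\neq s}(a_s-a_k)^{-1}=h_{m-r+1}(a_1,\ldots,a_r)$ from Stanley's exercise, which is exactly your divided-difference formula for monomials. Your version differs only in presentation: it spells out the reduction to depth-one $A$ and treats both sums, whereas the paper writes out only the second and calls the first analogous.
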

\begin{proof} By \cite[Exercise 7.4.]{stanley}, for some commutative variables $a_1,\ldots,a_r$ the following holds
\begin{align} \label{eq:stanley}
\sum_{\substack{j_1+\dots+j_r=m+1\\ j_1,\ldots,j_r\geq1}} a_1^{j_1-1}\dots a_r^{j_r-1}=\sum_{s=1}^r \prod_{\substack{k=1 \\ k\neq s}}^r \frac{1}{(a_s-a_k)} a_s^m.
\end{align}
Due to the linearity of $urit$ assume that 
\[B_d\binom{u_1,\ldots,u_d}{v_1,\ldots,v_d}=u_1^{m_1}\cdots u_d^{m_d}v_1^{k_1-1}\dots v_d^{k_d-1}\]
for some $k_1,\ldots,k_d\geq 1,\ m_1,\ldots,m_d\geq 0$. Then, we obtain 
\begin{align*}
&\sum_{s=1}^{r+1} \prod_{\substack{t=1 \\ t\neq s}}^{r+1}\frac{1}{v_t-v_s}B_d\binom{u_{r+2},\ldots,u_{d+r+1}}{v_{r+2}-v_s,\ldots,v_{d+r+1}-v_s} \\
&= u_{r+2}^{m_1}\cdots u_{d+r+1}^{m_d} \sum_{l_1+\cdots+l_d+l=k_1+\cdots+k_d} (-1)^l \binom{k_1-1}{l_1-1}\cdots \binom{k_d-1}{l_d-1} v_{r+2}^{l_1-1}\cdots v_{d+r+1}^{l_d-1} \\
&\hspace{2cm}\cdot\sum_{s=1}^{r+1}\prod_{\substack{t=1 \\ t\neq s}}^{r+1}\frac{1}{v_t-v_s} v_s^l \\
&=u_{r+2}^{m_1}\cdots u_{d+r+1}^{m_d} \sum_{l_1+\cdots+l_d+l=k_1+\cdots+k_d} (-1)^l \binom{k_1-1}{l_1-1}\cdots \binom{k_d-1}{l_d-1}v_{r+2}^{l_1-1}\cdots v_{d+r+1}^{l_d-1} \\
&\hspace{2cm} \cdot\sum_{\substack{j_1+\cdots+j_{r+1}=l+1 \\ j_1,\ldots,j_{r+1}\geq1}} v_1^{j_1-1}\cdots v_{r+1}^{j_{r+1}-1}.
\end{align*}
We showed that the poles in the last sum in the Definition \ref{def:urit} cancel out. A completely analogous calculation shows that the same is true for the first sum. Therefore, $urit(B)(A)$ is a polynomial bimould whenever $A,B$ are polynomial bimoulds.
\end{proof} 
\begin{Remark} Assume that the bimould $B$ is of depth $d$ and write
\[
B\binom{u_1,\ldots,u_d}{v_1,\ldots,v_d}=\sum_{\substack{\mathbf{m}\in\mathbb{Z}_{\geq0}^d \\ \mathbf{k}\in \mathbb{Z}_{\geq1}^d}} b_{\mathbf{m},\mathbf{k}} u^{\mathbf{m}}v^{\mathbf{k}-\mathbf{1}},
\]
where for $\mathbf{m}=(m_1,\ldots,m_d),\ \mathbf{k}=(k_1,\ldots,k_d)$ we abbreviate $u^\mathbf{m}=u_1^{m_1}\cdots u_d^{m_d}$ and $v^{\mathbf{k}-\mathbf{1}}=v_1^{k_1-1}\cdots v_d^{k_d-1}$, and $b_{\mathbf{m},\mathbf{k}}\in\mathbb{Q}$. Then, with the calculations in the proof of Proposition \ref{prop:urit_polynomial} we have
\begin{align*}
&urit(B)(P_{i,j})\binom{u_1,\ldots,u_n}{v_1,\ldots,v_n}\\
&=(u_1+\cdots+u_n)^i\Bigg[v_n^j B\binom{u_1,\ldots,u_{n-1}}{v_1-v_n,\ldots,v_{n-1}-v_n}-v_1^j B\binom{u_2,\ldots,u_n}{v_2-v_1,\ldots,v_n-v_1}\Bigg]\\
&+ (u_1+\cdots+u_n)^iv_1^j \sum_{\substack{\mathbf{m}\in\mathbb{Z}_{\geq0}^d \\ \mathbf{k}\in \mathbb{Z}_{\geq1}^d}} b_{\mathbf{m},\mathbf{k}}\sum_{\mathbf{l}\in\mathbb{Z}_{\geq0}^d} m_{\mathbf{k},\mathbf{l}} \sum_{\substack{d+r+1=n \\ d,r\geq1}} \sum_{\substack{|\mathbf{s}|+s_{r+1}=|\mathbf{k}|-|\mathbf{l}|+1 \\ \mathbf{s}\in \mathbb{Z}_{\geq1}^r, s_{r+1}\geq1}} \\
&\hspace{3cm} \cdot\Big[u[r+1]^\mathbf{m}v[r+1]^{\mathbf{l}-\mathbf{1}} v^{\mathbf{s}-\mathbf{1}}v_n^{s_{r+1}-1}-u[r+2]^\mathbf{m}v[r+2]^{\mathbf{l}-\mathbf{1}}v^{\mathbf{s}-\mathbf{1}}v_{r+1}^{s_{r+1}-1}\Big]
\end{align*}
where for $\mathbf{k}=(k_1,\ldots,k_d),\ \mathbf{l}=(l_1,\ldots,l_d)$ we write $|\mathbf{k}|=k_1+\cdots+k_d$ and
\begin{align*}
m_{\mathbf{k},\mathbf{l}}=(-1)^{|\mathbf{k}|+|\mathbf{l}|}\binom{k_1-1}{l_1-1}\cdots \binom{k_d-1}{l_d-1},
\end{align*}
and $v[r+1]^{\mathbf{l}-\mathbf{1}}$ means a shift of the indices of the $v_i$ by $r$, i.e., 
\begin{align*}
v[r+1]^{\mathbf{l}-\mathbf{1}}=v_{r+1}^{l_1-1}\cdots v_{r+d}^{l_d-1}.
\end{align*}
\end{Remark}

\begin{Theorem} \label{thm:ganit_with_urit}
 Let $A,B \in \BARI^{\operatorname{pol}}$, then we have 
 \begin{align} \label{eq:uri_preuri_conj_new}
 urit(B)(A)=  ganit_{pic} \big( arit(ganit_{poc}(B))(ganit_{poc}(A))\big).    
 \end{align} 
\end{Theorem}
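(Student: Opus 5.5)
The plan is to recognise both sides of \eqref{eq:uri_preuri_conj_new} as derivations of $(\BARI,mu)$ in the argument $A$, and then compare them on the depth~$1$ generators $P_{i,j}$ from \eqref{eq:P_km}.

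For the right-hand side, set $\beta=ganit_{poc}(B)$. By Remark \ref{rem:arit_depth1}, $arit(\beta)$ is a derivation for $mu$. By Lemma \ref{lem:ganit_mu_auto}, $ganit_{pic}$ and $ganit_{poc}$ are mutually inverse $mu$-automorphisms. Hence the conjugate $D:=ganit_{pic}\circ arit(\beta)\circ ganit_{poc}$ is again a $mu$-derivation. The left-hand side $urit(B)$ is a $mu$-derivation by its construction in Definition \ref{def:urit}. Since $(\BARI^{\operatorname{pol}},mu)$ is generated by depth~$1$ bimoulds, it suffices to show $urit(B)(A)=D(A)$ for every depth~$1$ bimould $A$.

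Now take $A$ of depth~$1$. The first observation is that $ganit_{poc}(A)=A$. In the defining sum of Definition \ref{def:ganit}, the factor $A(b_1\rceil\cdots b_s\rceil)$ vanishes unless $s=1$, and then $c_1=\emptyset$, so only $w=b_1$ survives. Hence $D(A)=ganit_{pic}\big(arit(\beta)(A)\big)$.

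Next I compute $arit(\beta)(A)$ with Remark \ref{rem:arit_depth1}. In depth $n$ it equals
\[
A\binom{u_1+\cdots+u_n}{v_n}\,\beta\binom{u_1,\ldots,u_{n-1}}{v_1-v_n,\ldots}-A\binom{u_1+\cdots+u_n}{v_1}\,\beta\binom{u_2,\ldots,u_n}{v_2-v_1,\ldots}.
\]
Then I expand $\beta=ganit_{poc}(B)$. Its components are sums over contractions of $B$ at the upper $\rceil$-flexions, weighted by products of $poc$ evaluated on lower flexions. Applying $ganit_{pic}$ to this expression gives a further sum over contractions (sums of consecutive $u$'s, keeping the first $v$ of each block), now weighted by products of $pic$. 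This produces a double sum indexed by two nested segmentations of $(1,\ldots,n)$.

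The main obstacle is the cancellation in this double sum. The target is the following. The terms where no genuine contraction meets the $A$-block reduce to $arit(B)(A)$, because the $pic$ and $poc$ contractions of the $B$-part cancel through $ganit_{pic}\circ ganit_{poc}=\mathrm{id}$ (Proposition \ref{prop:ganit_pic_poc}). The remaining terms, where a $pic$-block swallows the position carrying $A$ together with $r$ neighbouring letters, assemble into the terms $urit(B)^{(d,r)}(A)$.

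For this assembly I would organise the terms by the block of $ganit_{pic}$ containing the $A$-letter. Fixing $r$ and the depth $d$ of $B$, the accumulated rational factors from $poc$ (of the shape $1/(v_1(v_1-v_2)\cdots)$) and from $pic$ (of the shape $1/(v_1\cdots v_r)$), evaluated at differences $v_k-v_s$, must combine into the Lagrange-type products $\prod_{k\neq s}1/(v_k-v_s)$. I expect this to follow from the partial-fraction identity
\[
\sum_s\prod_{k\neq s}\frac{1}{v_k-v_s}=0,
\]
applied iteratively, together with telescoping of the chain products coming from $poc$. I would verify this first in depth $n=3$ (so $d=r=1$) by direct computation, and then set up an induction on $r$ that peels off the last contracted letter.

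A cleaner alternative I would try in parallel works with generating functions in the noncommutative model (cf.\ \cite{BuKu_postlie}): there $ganit_{pic}$ and $ganit_{poc}$ become the exponential and logarithmic changes between the shuffle and stuffle letters, and the identity reduces to a compatibility of that change of variables with the post-Lie derivation $arit$.
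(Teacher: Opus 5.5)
Your overall strategy (both sides are $mu$-derivations in $A$, so compare them on depth-one generators) is the same as the paper's first step. The next step, however, contains a concrete error: $ganit_{poc}(A)\neq A$ for a depth-one bimould $A$.

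In Definition~\ref{def:ganit}, ``only $c_s=\emptyset$ is allowed'' means that $c_1,\ldots,c_{s-1}$ must be nonempty while $c_s$ may be empty or not; it does not force $c_s=\emptyset$. For depth-one $A$ the term $s=1$, $b_1=w_1$, $c_1=(w_2,\ldots,w_n)$ therefore survives, and
\[
ganit_{poc}(A)\binom{u_1,\ldots,u_n}{v_1,\ldots,v_n}=\frac{A\binom{u_1+\cdots+u_n}{v_1}}{(v_1-v_2)\cdots(v_{n-1}-v_n)} .
\]
This agrees with the example after Definition~\ref{def:ganit}, with \eqref{eq:specialform} and with \eqref{ganitpocA}. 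It has to be so: a bimould concentrated in depth one is always alternal but in general not alternil, so $ganit_{pic}$ cannot fix it. Consequently Remark~\ref{rem:arit_depth1} does not apply to $arit(\beta)(ganit_{poc}(A))$. That expression contains contributions from all decompositions $w=abc$, including those with $a,b,c$ all nonempty and those with $c=\emptyset$, $|a|\ge 2$. These terms exist only because $ganit_{poc}(A)$ lives in all depths, and they are the source of the Lagrange sums $\prod_{k\neq s}(v_k-v_s)^{-1}$ in $urit(B)^{(d,r)}(A)$.

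The simplified identity you set out to prove, $urit(B)(A)=ganit_{pic}\big(arit(ganit_{poc}(B))(A)\big)$, is actually false, so no cancellation argument can rescue it. Take $A=B=P_{0,0}$. Then $arit(ganit_{poc}(B))(A)$ vanishes in depths $1$ and $2$, and in depth $3$ it equals
\[
\frac{1}{v_1-v_2}-\frac{1}{v_2-v_3},
\]
which $ganit_{pic}$ leaves unchanged there. This is not even a polynomial, contradicting Proposition~\ref{prop:urit_polynomial}. By contrast, $urit(P_{0,0})(P_{0,0})$ vanishes in depth $3$, and so does the correct expression built with $ganit_{poc}(A)$.

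To repair the argument you must work with $P=ganit_{poc}(A)$ in all depths. The paper does this as follows:
\begin{itemize}
\item it reduces $B$ by linearity to $mu$-products of the $P_{k,l}$;
\item it writes $ganit_{poc}(B)$ explicitly as a sum over compositions $0=s_0<\cdots<s_d=n$;
\item it splits $arit(Q)(P)$ into the boundary terms $T_{0,q}$, $T_{n-q,q}$ and the interior terms $T_{p,q}$;
\item it evaluates the resulting sums with the generalized Lagrange identity \eqref{Lagrange}, \eqref{gensympol}.
\end{itemize}

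Even apart from this error, your treatment of the decisive cancellation is a plan, not an argument. You write ``I expect this to follow from \ldots'', the induction on $r$ is still to be set up, and the noncommutative reformulation is only named. Your claim that the terms not meeting the $A$-block cancel ``through $ganit_{pic}\circ ganit_{poc}=\mathrm{id}$'' is also unjustified, since the flexions couple the $A$- and $B$-parts. The core of the proof is therefore still missing.
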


\begin{proof}
See Appendix \ref{subsec:proof_urit}.
\end{proof}
A consequence of this result is Theorem \ref{thm:intro_ilpol} from the introduction.
\begin{Corollary} \label{cor:il_pol_Lie}
The pair $(\BARI_{il}^{\operatorname{pol}},uri)$ is a Lie algebra.
\end{Corollary}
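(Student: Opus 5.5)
Since $(\BARI_{il}^{\operatorname{rat}},uri)$ is already a Lie algebra by Proposition \ref{prop:uri_alternil}, and $\BARI_{il}^{\operatorname{pol}}$ is a linear subspace of it, it suffices to show that $uri(A,B)$ is polynomial whenever $A,B\in\BARI_{il}^{\operatorname{pol}}$. The Lie bracket axioms and alternility of $uri(A,B)$ are then inherited from the ambient Lie algebra. The plan is to rewrite $uri$ in terms of $urit$ and $lu$, whose polynomiality is controlled by Proposition \ref{prop:urit_polynomial}.

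First, expand the definition using $ari(A',B')=arit(B')(A')-arit(A')(B')+lu(A',B')$ with $A'=ganit_{poc}(A)$ and $B'=ganit_{poc}(B)$:
\[
uri(A,B)=ganit_{pic}\big(arit(B')(A')\big)-ganit_{pic}\big(arit(A')(B')\big)+ganit_{pic}\big(lu(A',B')\big).
\]
By Theorem \ref{thm:ganit_with_urit}, the first two terms equal $urit(B)(A)$ and $urit(A)(B)$. For the last term we use Lemma \ref{lem:ganit_mu_auto}: $ganit_{pic}$ is a $mu$-automorphism with inverse $ganit_{poc}$ (Proposition \ref{prop:ganit_pic_poc}; the inversion is an identity of formal operators, so it holds on all of $\BARI^{\operatorname{rat}}$). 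Hence
\[
ganit_{pic}\big(mu(ganit_{poc}A,ganit_{poc}B)\big)=mu(A,B),
\]
and therefore $ganit_{pic}(lu(A',B'))=lu(A,B)$. Altogether,
\[
uri(A,B)=urit(B)(A)-urit(A)(B)+lu(A,B).
\]

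Polynomiality now follows termwise. The product $lu(A,B)$ of polynomial bimoulds is clearly polynomial, and $urit(B)(A)$ and $urit(A)(B)$ are polynomial by Proposition \ref{prop:urit_polynomial}. So $uri(A,B)\in\BARI^{\operatorname{pol}}$, and it is alternil by Proposition \ref{prop:uri_alternil}, which proves the claim.

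The main obstacle is making sure Theorem \ref{thm:ganit_with_urit} applies as used here. It is stated for $A,B\in\BARI^{\operatorname{pol}}$, while $urit(B)$ is only defined as a $mu$-derivation through its values on depth-one bimoulds. We therefore need $urit(B)(A)$ for a general polynomial $A$ to mean the derivation extended from the generators $P_{k,m}$, which is the form given in the Remark after Definition \ref{def:urit}, and the theorem to be read in that sense. A secondary point is checking that the $mu$-automorphism property of $ganit$ persists for the rational bimould $pic$; this holds because $ganit_B$ is defined by the same formal sum for any $B\in\operatorname{GBARI}$.
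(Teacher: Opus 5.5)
Your proposal is correct and matches the paper's proof: the paper also expands $ari$ inside $uri$ and uses Theorem~\ref{thm:ganit_with_urit} together with Lemma~\ref{lem:ganit_mu_auto} to obtain $uri(A,B)=urit(B)(A)-urit(A)(B)+lu(A,B)$. It then gets polynomiality from Proposition~\ref{prop:urit_polynomial} and inherits the Lie axioms and alternility from Proposition~\ref{prop:uri_alternil}, as you do; your caveat about reading $urit(B)$ as the $mu$-derivation determined by its values on depth-$1$ bimoulds is exactly how Definition~\ref{def:urit} sets it up.
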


\begin{proof} 
By Proposition \ref{prop:uri_alternil} the $uri$ bracket is a Lie bracket on $\BARI^{\operatorname{rat}}_{il}$. Moreover, we have for $A,B\in\BARI_{il}^{\operatorname{pol}}$
\begin{align*}
&uri(A,B)=ganit_{pic}(ari(ganit_{poc}(A),ganit_{poc}(B))\\
&=ganit_{pic}\big(arit(ganit_{poc}(B))(ganit_{poc}(A))\big)-ganit_{pic}\big(arit(ganit_{poc}(A))(ganit_{poc}(B))\big)\\
&\hspace{0.4cm}+ganit_{pic}\big(lu(ganit_{poc}(A),(ganit_{poc}(B))\big) \\
&=urit(B)(A)-urit(A)(B)+lu(A,B),
\end{align*}
where the last step follows from Theorem \ref{thm:ganit_with_urit} and Lemma \ref{lem:ganit_mu_auto}. As $urit(B)(A)$ and $urit(A)(B)$ both lie in $\BARI^{\operatorname{pol}}$ by Proposition \ref{prop:urit_polynomial} and $lu$ trivially preserves polynomial bimoulds, we deduce that $uri(A,B)\in \BARI_{il}^{\operatorname{pol}}$.
\end{proof}

\begin{Remark}
Similar to Remark \ref{rem:ari_postLie}, it is shown in \cite[Key Lemma 3.18, Theorem 5.52]{Bu_thesis} that $(\BARI^{\operatorname{pol}},lu, -urit )$ is a post-Lie algebra. By Theorem \ref{thm:ganit_with_urit} the post-Lie structure restricts to $\BARI_{il}^{\operatorname{pol}}$. This also implies that $\BARI_{il}^{\operatorname{pol}}$ is a Lie algebra with the bracket
\[
urit(B)(A)-urit(A)(B)+lu(A,B).
\]
\end{Remark} 

\begin{Definition} \label{def:preuri}
For bimoulds $A,B\in\BARI$, define the bimould $preuri(A,B)\in\BARI$ by
\[preuri(A,B)=urit(B)(A)+mu(A,B).\]
\end{Definition}
By Proposition \ref{prop:urit_polynomial}, we get that $preuri$ also preserves the space $\BARI^{\operatorname{pol}}$.

\begin{Conjecture}\label{conj:preuri_with_swap}
Let $A,B \in \BARI_{\underline{il},swap}^{\operatorname{pol}}$, then $preuri(A,B)$ is swap-invariant.
\end{Conjecture}
Actually, we do not expect that we need the alternility of $A$ in this conjecture. An equivalent conjecture is given in Appendix \ref{app:uri_swap_alternative}.

\begin{Proposition}\label{prop:swap_upto_3}
 Conjecture \ref{conj:preuri_with_swap} holds up to depth $3$.   
\end{Proposition}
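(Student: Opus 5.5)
We verify the claim directly, depth by depth, for $n\le 3$, using the explicit formula $preuri(A,B)=urit(B)(A)+mu(A,B)$ of Definition \ref{def:preuri}. Since $preuri$ is bilinear and all defining conditions are homogeneous in depth and weight, we may assume $A$ and $B$ are homogeneous of depths $p$ and $q$. Only the pairs with $p+q\le 3$ contribute to $preuri(A,B)$ in depth $\le 3$:
\[
(p,q)\in\{(1,1),(1,2),(2,1)\}.
\]
In depth $2$, the correction terms $urit(B)^{(d,r)}$ do not occur, since they need $d,r\ge 1$ and hence depth $\ge3$. So $preuri(A,B)_2 = arit(B)(A)_2+mu(A,B)_2$, and by Remark \ref{rem:arit_depth1} this equals
\[
A\binom{u_1+u_2}{v_1}\Big(B\binom{u_1}{v_1-v_2}\ \text{and}\ B\binom{u_2}{v_2-v_1}\ \text{terms}\Big)+A\binom{u_1}{v_1}B\binom{u_2}{v_2}.
\]
We substitute the $swap$ variables $\binom{v_2,\,v_1-v_2}{u_1+u_2,\,u_1}$ and use:
\begin{itemize}
\item swap invariance in depth $1$, which is the symmetry $A\binom{u}{v}=A\binom{v}{u}$;
\item evenness of $A_1$ and $B_1$.
\end{itemize}
The first is what makes the arguments of $A$ and $B$ in the swapped expression match; the second is needed to absorb the sign changes such as $v_1-v_2$ versus $v_2-v_1$. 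We then check that the three terms permute among themselves. Alternility in depth $\le 2$ should not be needed here.

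In depth $3$, two groups of contributions occur.
\begin{itemize}
\item $(p,q)=(1,2)$: this gives $mu(A_1,B_2)$, the $arit(B_2)(A_1)$ terms, and the single correction term $urit(B_2)^{(d,r)}$ with $d=r=1$, $n=3$ of Definition \ref{def:urit}. The latter is a divided-difference expression in $v_1,v_2,v_3$ applied to $B_2$.
\item $(p,q)=(2,1)$: this gives $mu(A_2,B_1)$ and $arit(B_1)(A_2)$. Their $urit$ correction vanishes, because the correction requires $d=$ depth of $B\ge1$ and $r\ge1$ with $d+r+1=3$; for $B$ of depth $1$ this forces the $A$ on which it acts to have depth $1$. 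This should be rechecked carefully using the general formula in the Remark after Definition \ref{def:urit}.
\end{itemize}
For each group we write out $preuri(A,B)_3$ explicitly, apply $swap$, and reduce using three ingredients: depth-$1$ swap invariance and parity, depth-$2$ swap invariance of $A_2$ (respectively $B_2$), and the depth-$2$ alternility relation. That relation reads $B_2(w_1,w_2)+B_2(w_2,w_1)$ equals a divided-difference term in $B_1$ involving $\frac{1}{v_1-v_2}\big(B_1\binom{u_1+u_2}{v_1}-B_1\binom{u_1+u_2}{v_2}\big)$, coming from the stuffle.

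We do not expect the groups $(1,2)$ and $(2,1)$ to be swap invariant separately. In particular, the polar pieces of the $urit^{(1,1)}$ correction should be exactly what compensates the non-swap-invariant part of $arit(B_2)(A_1)$, after the alternility relation rewrites some $B_2$-terms through $B_1$. This is where alternility of $B$, though not of $A$, enters. If direct matching becomes unwieldy, we would use the following reduction instead. By Corollary \ref{cor:il_pol_Lie} and Theorem \ref{thm:ganit_with_urit}, $urit(B)(A)=ganit_{pic}(arit(ganit_{poc}B)(ganit_{poc}A))$. With this, we transport to the alternal side and check the depth-$3$ identity as an equality of rational functions after multiplying by the common denominator $(v_1-v_2)(v_2-v_3)(v_1-v_3)$.

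We expect the main obstacle to be the depth-$3$, $(p,q)=(1,2)$ case. There the swap transformation turns the divided differences in the $v_i$ into expressions in partial sums of the $u_i$, so the cancellation is not term-by-term. Our first attempt would be a reduction to a spanning set. By linearity and weight-homogeneity, $B$ ranges over the finite-dimensional space $\BARI_{\underline{il},swap}$ in depths $\le2$ of a fixed weight, and $A_1$ over even symmetric polynomials $u^a v^b+u^b v^a$. We would then verify the polynomial identity with generic coefficients, parametrizing the space of pairs $(B_1,B_2)$ via the alternility relation, and finally confirm that the identity holds uniformly in the weight by an argument on generating functions in the exponents $a$ and $b$.
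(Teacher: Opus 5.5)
Your route is the paper's: a direct depth-by-depth check. In depth $2$, $swap$ permutes the three terms, and you are right that evenness of $B_1$ is needed there. In depth $3$, the check uses swap invariance together with the depth-$2$ alternility relation of $B$. However, your depth-$3$ bookkeeping is wrong in two places.

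First, the reduction to depth-homogeneous $A$ and $B$ is invalid. Alternility is not homogeneous in depth: in depth $2$ it ties $B_2$ to divided differences of $B_1$. So you may only split $preuri(A,B)_3$ into its pieces bilinear in $(A_p,B_q)$, and you must keep that relation, which is exactly what the check needs.

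Second, the unique depth-$3$ correction $urit(B)^{(1,1)}(A)$ has $d=1$. It is therefore built from $A_1$ and $B_1$, not from $B_2$:
\[
A\binom{u_1+u_2+u_3}{v_1}\left(\frac{B\binom{u_2}{v_2-v_1}-B\binom{u_2}{v_2-v_3}}{v_3-v_1}-\frac{B\binom{u_3}{v_3-v_1}-B\binom{u_3}{v_3-v_2}}{v_2-v_1}\right).
\]
The pair $(A_1,B_2)$ gets no correction before depth $4$. This changes your expectations about the groups:
\begin{itemize}
\item The $(A_2,B_1)$ piece $mu(A_2,B_1)+arit(B_1)(A_2)$ is swap invariant on its own. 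Swap invariance of $A_2$ and $B_1$ plus evenness of $B_1$ suffice.
\item The $(A_1,B_2)$ piece and the $(A_1,B_1)$ correction must be handled jointly.
\end{itemize}

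You anticipate that $swap$ turns $v$-divided differences into $u$-divided differences, so the cancellation would not be term-by-term. This difficulty disappears if you use alternility in the other direction, as the paper does. Replace each divided difference of $B_1$ by minus a symmetrized pair of $B_2$-values. Use the splittings $u_2=(u_2+u_3)+(-u_3)$ and, after using evenness of $B_1$, $-u_3=u_2+(-u_2-u_3)$. The correction then becomes
\[
-A\binom{u_1+u_2+u_3}{v_1}\Bigl(B\binom{u_2+u_3,-u_3}{v_2-v_1,v_2-v_3}+B\binom{-u_3,u_2+u_3}{v_2-v_3,v_2-v_1}+B\binom{u_2,-u_2-u_3}{v_2-v_3,v_1-v_3}+B\binom{-u_2-u_3,u_2}{v_1-v_3,v_2-v_3}\Bigr).
\]
Now $swap$ simply permutes all terms of the joint piece:
\begin{itemize}
\item $mu(A_1,B_2)$ is exchanged with $A\binom{u_1+u_2+u_3}{v_3}B\binom{u_1,u_2}{v_1-v_3,v_2-v_3}$.
\item $-A\binom{u_1+u_2+u_3}{v_1}B\binom{u_2,u_3}{v_2-v_1,v_3-v_1}$ is exchanged with the last new term.
\item The first three new terms are permuted among themselves.
\end{itemize}
So the cancellation is term-by-term. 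Neither your transport to the alternal side nor the generic-coefficient and generating-function verification is needed.
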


\begin{proof}
The bimould $preuri(A,B)$ does not have a depth $1$ component and the depth $2$ calculation is straight forward only using the swap invariance of $A$ and $B$ term by term. We note that the swap invariance of the depth $2$ component of $preuri(A,B)$ also follows from Theorem \ref{thm:ari_al_swap}, as the lowest depth component of $A$ and $B$ is alternal and swap invariant and the lowest depth component of $preuri(A,B)$ is just $preari(A,B)$. 

For depth $3$, we use the alternility of $B$ and rewrite $urit(B)(A)$ as
\begin{align*}
&urit(B)(A)\binom{u_1,u_2,u_3}{v_1,v_2,v_3}=arit(B)(A)\binom{u_1,u_2,u_3}{v_1,v_2,v_3}-A\binom{u_1+u_2+u_3}{v_1}\\
&\cdot\Bigg(B\binom{u_2+u_3,-u_3}{v_2-v_1,v_2-v_3}+B\binom{-u_3,u_2+u_3}{v_2-v_3,v_2-v_1}+B\binom{u_2,-u_2-u_3}{v_2-v_3,v_1-v_3}\\
&\hspace{11cm}+B\binom{-u_2-u_3,u_2}{v_1-v_3,v_2-v_3}\Bigg).    
\end{align*}
Then, one checks term by term using the swap invariance of $A$ and $B$ that $preuri(A,B)-swap(preuri(A,B))$ vanishes in depth $3$.
\end{proof}

Combining Corollary \ref{cor:il_pol_Lie} and Conjecture \ref{conj:preuri_with_swap} leads to Conjecture \ref{conj:intro_ilswap} from the introduction.

\begin{Conjecture}\label{conj:uri_il_swap}
The pair $\big(\BARI_{\underline{il},swap}^{\operatorname{pol}},uri\big)$ is a Lie algebra.
\end{Conjecture}
 
By the results above the conjecture holds for depth up to $3$ and in the introduction we already mentioned the numerical evidences for this conjecture obtained by testing the non-commutative version of it with the help of a computer.

In Appendix \ref{app:BvI-stuff} we considered the map $\delta_{BvI}$ on bimoulds. From Example \ref{exa:bvi_one}, we recall
\[
\delta_{BvI}(\mathbf{1}_{mu}) = (  -\frac{u_1 +v_1}{2}, \frac{1}{4}, 0, \ldots ) \in \BARI.
\]
Here $\mathbf{1}_{mu}=(1,0,0,\ldots)$ is the neutral element of $\big(\operatorname{GBARI}, mu\big)$. 

\begin{Definition}\label{def:mystic_der}
 We define the \emph{mystic derivation} to be the map $\widehat\delta: \BARI\to \BARI$ given by
\[
\widehat\delta (A) = \delta_{BvI}(A) - preuri(\delta_{BvI}(\mathbf{1}_{mu}),A)
.
\]   
\end{Definition}

\begin{Remark}
Our definition of $\widehat\delta$ is motivated by  considerations in post-Hopf algebras. More details will be presented in \cite{BuKue26}.
Another description for such a derivation in the language of non-commutative free algebras will be given in \cite{BaBuvI}.  
Finally, we remark that the map $ganit_{pic}\circ  \delta\circ ganit_{poc}$, where $\delta$ is the ari derivation from Definition \ref{def:ari_derivation}, is naturally a derivation for the uri bracket, but  it does not preserve the swap invariance. A correction term is needed, which was found in \cite{Ba_sl2}. 
\end{Remark}

From the construction of the mystic derivation $\widehat\delta$ we immediately get the following property.

\begin{Lemma}  Let $\gr_D$ be the map given in \eqref{def:gr_D_map}, then we have for $A\in\BARI$
\[
\gr_D \widehat\delta(A) =\delta \gr_D(A).
\]
\end{Lemma}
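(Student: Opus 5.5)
The plan is to compare lowest-depth components on both sides. Let $A\in\operatorname{Fil}_d(\BARI)$ with $A_d\neq 0$, so that $\gr_D(A)=(0,\ldots,0,A_d,0,\ldots)$. Since $\delta$ is multiplication by the nonzero polynomial $u_1v_1+\cdots+u_dv_d$, we have $\delta(A_d)\neq 0$, and hence $\delta\gr_D(A)=(0,\ldots,0,(u_1v_1+\cdots+u_dv_d)A_d,0,\ldots)$. It therefore suffices to show two things. First, $\widehat\delta(A)\in\operatorname{Fil}_d(\BARI)$. Second, the depth-$d$ component of $\widehat\delta(A)$ equals $(u_1v_1+\cdots+u_dv_d)A_d$. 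This nonzero lowest component is then exactly what $\gr_D$ picks out, so the non-linearity of $\gr_D$ causes no trouble.

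\textbf{Step 1 (the correction term).} I will show that $preuri(\delta_{BvI}(\mathbf{1}_{mu}),A)\in\operatorname{Fil}_{d+1}(\BARI)$. Write $X=\delta_{BvI}(\mathbf{1}_{mu})=(-\tfrac{u_1+v_1}{2},\tfrac14,0,\ldots)$, which has no depth-$0$ part. By Definition \ref{def:preuri}, $preuri(X,A)=urit(A)(X)+mu(X,A)$.

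The term $mu(X,A)$ has depth-$n$ component $\sum_j X(w_1,\ldots,w_j)A(w_{j+1},\ldots,w_n)$. Because $X_0=0$, only $j\geq 1$ contributes, and then $A$ is evaluated in depth $n-j$, which must be at least $d$. So $mu(X,A)$ has depth $\geq d+1$.

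For $urit(A)(X)$, I use Definition \ref{def:urit} together with the formula in the subsequent Remark. In the $arit$ part and in every $urit^{(d',r)}$ part, each term is a product of an evaluation of $X$ in depth $\geq 1$ and an evaluation of $A$ in some depth $d'\geq d$. The total depth of each such term is strictly larger than $d'$. Hence $urit(A)(X)$ also lies in $\operatorname{Fil}_{d+1}$. This is the same filtration argument that makes $\gr_D uri=ari$ in Lemma \ref{lem:grD_ilswap_alswap}.

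\textbf{Step 2 (the main term).} I will show that $\delta_{BvI}$ preserves $\operatorname{Fil}_d$ and that, on the lowest component, it acts as multiplication by $u_1v_1+\cdots+u_dv_d$. To do this I will go back to the explicit definition of $\delta_{BvI}$ in Appendix \ref{app:BvI-stuff} and split it into two kinds of pieces. The first kind are depth-preserving pieces: the multiplicative part together with any differential-operator parts in $u_i,v_i$. The second kind are pieces that feed lower-depth components of $A$ into higher depths, for example via $mu$ with the constant $\tfrac14$ in depth $2$, or via the substitutions responsible for the value of $\delta_{BvI}(\mathbf{1}_{mu})$. Every piece of the second kind sends $\operatorname{Fil}_d$ into $\operatorname{Fil}_{d+1}$ by the same counting as in Step 1. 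For the first kind, I expect the depth-preserving part to be exactly $\delta$ from Definition \ref{def:ari_derivation}, possibly plus terms that are themselves depth-raising.

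\textbf{Main obstacle.} The hard part is Step 2, since it depends entirely on the precise shape of $\delta_{BvI}$. The danger is a depth-preserving contribution other than $\delta$, such as derivative terms $\partial_{u_i}$ or $\partial_{v_i}$ acting on $A_d$. If such a term appears, I would check whether it cancels against the depth-$d$ part of $preuri(X,A)$. By Step 1 that part vanishes, so any such term would have to cancel within $\delta_{BvI}$ itself. I would first test this on $\delta_{BvI}(\mathbf{1}_{mu})$: it has depth-$1$ part $-\tfrac{u_1+v_1}{2}$, which arises from depth $0$, consistent with the contribution being depth-raising from $\mathbf{1}_{mu}$ rather than depth-preserving. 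Combining Steps 1 and 2 gives $\widehat\delta(A)_d=(u_1v_1+\cdots+u_dv_d)A_d$ with $\widehat\delta(A)\in\operatorname{Fil}_d$, which is the claim.
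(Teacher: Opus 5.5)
Your strategy is the one behind the paper's remark that the lemma follows ``immediately from the construction''. Step~1 is correct: $\delta_{BvI}(\mathbf{1}_{mu})$ has no depth-$0$ part, so both $mu(\delta_{BvI}(\mathbf{1}_{mu}),A)$ and $urit(A)(\delta_{BvI}(\mathbf{1}_{mu}))$ lie in $\operatorname{Fil}_{d+1}$. Your remark that $(u_1v_1+\cdots+u_dv_d)A_d\neq 0$ is exactly what is needed to get around the non-linearity of $\gr_D$.

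The only incomplete point is Step~2. You leave it as an expectation, and your guessed shape of $\delta_{BvI}$ (derivative terms, a $mu$-product with $\frac14$) is not what the definition in Appendix~\ref{app:BvI-stuff} says. Reading that definition off directly: apart from $\delta(A)$, every term is a polynomial coefficient times $A(\varphi_i^{\pm}(w))$ or $A((\varphi_i^{\pm})^2(w))$, and each $\varphi_i^{\pm}$ shortens a word by one letter. Hence in depth $n$,
\[
\delta_{BvI}(A)_n=(u_1v_1+\cdots+u_nv_n)A_n+(\text{terms involving only } A_{n-1}, A_{n-2}).
\]
So for $A\in\operatorname{Fil}_d$, the bimould $\delta_{BvI}(A)$ lies in $\operatorname{Fil}_d$ with depth-$d$ component $(u_1v_1+\cdots+u_dv_d)A_d$. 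There are no derivative terms, so your ``main obstacle'' does not arise, and with this observation your argument is complete.
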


\begin{Conjecture}\label{conj:BILS_derivation} 
The mystic derivation  $\widehat\delta$ is a derivation of   weight 2  on the Lie algebra $\big(\BARI_{\underline{il},swap}^{\operatorname{pol}},uri\big)$.
\end{Conjecture}

The Conjecture \ref{conj:BILS_derivation} actually splits in two claims. The first is that 
\[
\widehat\delta:  \BARI_{\underline{il},swap}^{\operatorname{pol}} \to \BARI_{\underline{il},swap}^{\operatorname{pol}} 
\]
is a well-defined map. The second claim is that
\[
\widehat\delta 
\big( uri(A,B)\big)= uri(  \widehat\delta (A),B) + uri(A,  \widehat\delta (B)).
\]
We tested these claims  within the range of our computing resources. We observe neither $\delta_{BvI}$ preserves $\BARI_{\underline{il},swap}^{\operatorname{pol}}$ nor $preuri$ preserves $\BARI_{\underline{il},swap}^{\operatorname{pol}}$  but their difference does.

\section{Lie subalgebras of \texorpdfstring{$(\ARI_{al},ari)$}{ARIalari}}

In this section, we recall results and conjectures on certain Lie subalgebras of $(\ARI_{al},ari)$, which we will generalize in the following chapters.

\subsection{On the structure of \texorpdfstring{$(\ARI^{\operatorname{pol}}_{\underline{al}\ast\underline{il}},ari)$}{ARIalil}}
 
The structure of the Lie algebra $(\ARI^{\operatorname{pol}}_{\underline{al}\ast\underline{il}},ari)$ is expected to be very simple. Precisely, it is isomorphic 
the double shuffle Lie algebra coming from the theory of multiple zeta values. 
The latter Lie algebra is expected 
to be the free Lie algebra generated by some specific elements, see e.g. \cite{brown_annals}, \cite{ecalle}, \cite{Racinet_thesis}. 

\begin{Conjecture}\label{conj:ALIL_generators}
The Lie algebra $(\ARI^{\operatorname{pol}}_{\underline{al}\ast\underline{il}},ari)$ is freely generated by elements of the form
\[
\widehat{\xi}_{2n+1} = (u_1^{2n},\ldots ), \quad n\geq 1.
\]
With $\mathsf{O}_3(x)=\frac{x^3}{1-x^2}$ we have
\begin{align*}
\sum_{k\geq0} \dim \mathcal{U}(\ARI^{\operatorname{pol}}_{\underline{al}\ast\underline{il}})_{k}\, x^k =\frac{1}{1-\mathsf{O}_3(x)}.
\end{align*}

\end{Conjecture}

The elements $\widehat{\xi}_{2n+1}$ are not unique and an effective construction of those elements is computational difficult \cite{ENR}. A  proposal with canonical choices is presented in \cite{canonicalzeta}. There are also some general approaches by Ecalle \cite{ecalle}, which are not completely understood yet. The number of these generators $\widehat{\xi}_{2n+1}$ is counted by the term $\mathsf{O}_3(x)$.

\subsection{The \texorpdfstring{$\ekma$}{ekma} Lie algebra} \label{subsec:ekma}

To understand the structure of the Lie algebra $(\ARI^{\operatorname{pol}}_{\underline{al}/\underline{al}},ari)$, we first study certain Lie subalgebras.

\begin{Definition}
Let $\ekma = \Lie( \{\xi_{2n+1}\}_{n\ge 1}\,;\,ari)$ be the Lie subalgebra of $(\ARI^{\operatorname{pol}}_{\underline{al}/\underline{al}},ari)$ generated by the ekma moulds $\xi_{2n+1}$ from Example \ref{ex:ekma}.
\end{Definition}

The number of generators of $\ekma$ is given by the generating series $\mathsf{O}_3(x)$.

\begin{Theorem} \label{thm:al_al_cusp}
The space of relations in the Lie algebra  $(\ekma,ari)$ in weight $k$ and depth $2$ is isomorphic to the space of even period polynomials $\widetilde W_k^+ = W_k^+ \slash p_k$ (see Appendix \ref{app:period}). 

The generating series of the dimensions is therefore given by $\mathsf{S}(x)$.
\end{Theorem}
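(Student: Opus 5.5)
This is the classical result of Baumard–Schneps (relations in depth 2 among $ari$-brackets of ekmas correspond to period polynomials). I would derive it by computing the depth 2 component of $\sum a_{ij}\, ari(\xi_{2i+1},\xi_{2j+1})$ explicitly and identifying the kernel with $\widetilde W_k^+$.

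\emph{Step 1: the depth 2 bracket.} Since each $\xi_{2n+1}$ is homogeneous of depth $1$, $ari(\xi_{a},\xi_b)$ is homogeneous of depth $2$. The depth-$2$ example after \eqref{eq:ari_expl}, restricted to moulds (drop the $v$'s), gives for $f(u)=u^{a-1}$, $g(u)=u^{b-1}$
\[
ari(\xi_a,\xi_b)(u_1,u_2)=\big[f(u_1)g(u_2)-f(u_2)g(u_1)\big]+\big[f(u_1+u_2)(g(u_1)-g(u_2))\big]-\big[g(u_1+u_2)(f(u_1)-f(u_2))\big],
\]
checked directly from the flexion formula. Thus a depth-$2$, weight-$k$ element of $\ekma$ is determined by an antisymmetric coefficient array $(a_{ij})$ with $2i+1+2j+1=k$, equivalently by the polynomial $P(x,y)=\sum a_{ij}(x^{2i}y^{2j}-x^{2j}y^{2i})$. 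The depth-$2$ component of $\ekma$ in weight $k$ is spanned by these brackets, so the relation space is the kernel of the linear map $P\mapsto \Phi(P)$, where $\Phi(P)(u_1,u_2)=P(u_1,u_2)+P(u_1+u_2,u_1)-P(u_1+u_2,u_2)$ up to signs determined by the formula above. (The middle terms combine because $f(u_1+u_2)g(u_1)-g(u_1+u_2)f(u_1)$ is the antisymmetrization evaluated at $(u_1+u_2,u_1)$.)

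\emph{Step 2: identify the kernel.} I would rewrite the condition $\Phi(P)=0$ in the variables $(x,y)$ with $u_1=x$, $u_2=y-x$ or similar, so that it becomes
\[
P(x,y)+P(x+y,\dots)\text{-type relation} \iff p(X,Y)=P(X,Y) \text{ satisfies } p|(1+S)=p|(1+U+U^2)=0,
\]
the defining equations of period polynomials for $\mathrm{SL}_2(\mathbb Z)$ in degree $k-2$ (Appendix \ref{app:period}). The parity of $P$ in each variable (only even exponents) matches the even subspace $W_k^+$; antisymmetry of $P$ matches the $S$-relation, and $\Phi(P)=0$ is the three-term $U$-relation. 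One must account for the element $p_k=x^{k-2}-y^{k-2}$, which is excluded since $\xi_1$ is not a generator (exponents $2i,2j\ge 2$): the monomials $x^0$, $x^{k-2}$ cannot appear, so the map from relations to $W_k^+$ lands in a complement of $p_k$, giving $\widetilde W_k^+=W_k^+/p_k$.

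\emph{Main obstacle.} The delicate point is Step 2: matching the precise linear substitution so the three-term identity is literally the Lewis/Zagier period relation, and checking the boundary terms $x^{k-2},y^{k-2}$ to see the quotient by $p_k$ is exactly right (an isomorphism, not just injection). I would first verify on weight $12$, where $\dim \widetilde W_{12}^+=1$ corresponds to the known relation $28[\xi_3,\xi_9]+150[\xi_5,\xi_7]=0$ analog. Finally, $\dim \widetilde W_k^+=\dim S_k$ (Eichler–Shimura) gives the generating series $\mathsf S(x)$.
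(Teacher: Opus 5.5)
Your proposal is correct and follows the paper's proof: factor $ari$ through $\ekma_1\wedge\ekma_1$, identify the kernel with antisymmetric polynomials that are even in each variable, lie in $u_1u_2\Q[u_1,u_2]$, and satisfy $P(u_1,u_2)+P(u_1+u_2,u_1)+P(u_2,u_1+u_2)=0$, and then match this space with $W_k^+$ modulo $p_k$. Your handling of the boundary monomials is more explicit than the paper's, and it is completed by the $S$-relation, which forces the coefficients of $u_1^{k-2}$ and $u_2^{k-2}$ to be negatives of each other. Two small corrections: no change of variables is needed in Step 2, because for $P$ even in each variable the three-term identity is literally the paper's $U$-relation. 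Also, in this normalization the weight-$12$ check is $ari(\xi_3,\xi_9)-3\,ari(\xi_5,\xi_7)=0$, not the double-zeta coefficients $28,150$.
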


For later purposes, we recall the key steps of the proof of this theorem, see \cite{brown_depth}, \cite{ecalle}.

\begin{proof}
The Lie bracket $ari: \ekma_1 \times \ekma_1 \to \ekma_2$ factors
through the space $\ekma_1 \wedge \ekma_1$, which is spanned by the elements
$ \xi_{r_1}(u_1) \xi_{r_2}(u_2) - \xi_{r_1}(u_2)   \xi_{r_2}(u_1)$, $r_1,r_2\geq3$ odd. Thus, we can identify
$\ekma_1 \wedge \ekma_1$ with the space of homogeneous polynomials $P \in  u_1 u_2 \Q[u_1,u_2]$ of even degree, such that
\begin{align*}
P(u_1,u_2)+P(u_2,u_1) &=0, \\
P(u_1,u_2) =P(\pm u_1,u_2) &=P(u_1,\pm u_2).
\end{align*}
We have a short exact sequence 
\begin{align}\label{ari_l2}
\xymatrix{
0 \ar[r] &\ar[r] \ker (ari) & \ar[r]^{\qquad ari} \ekma_1 \wedge \ekma_1 &\ar[r] \ekma_2 &0 ,
}
\end{align}
where we have by definition of the map $ari$ an identification
\begin{align*}
\ker(ari)=& \{P \in  \ekma_1 \wedge \ekma_1 \mid
P(u_1,u_2)+P(u_1+u_2,u_1)+P(u_2,u_1+ u_2)=0 \}.
\end{align*}
Together with the above description of the space $\ekma_1\wedge\ekma_1$, we conclude that $\ker(ari)$ can be identified with the space of homogeneous polynomials $P \in u_1u_2\Q[u_1,u_2]$ of even degree, such that
\begin{align*}
&P(u_1,u_2)+P(u_2,u_1) =0,\\
&P(u_1,u_2) =P(-u_1,u_2), \\
&P(u_1,u_2)+P(u_1+u_2,u_1)+P(u_2,u_1+ u_2) =0.
\end{align*}
In Appendix \ref{app:period} we identify this space with the even period polynomials coming from cusp forms.   
\end{proof}

It is expected that these cusp form relations between the generators $\xi_{2n+1}$ in the Lie algebra $\ekma$ are all relations and that these relations are free of dependencies. This results into the following dimension conjecture \cite{brown_depth}, \cite{ecalle}.

\begin{Conjecture} We have for the homogeneous subspaces $\mathcal{U}(\ekma)_{k,d}$ of weight $k$ and depth $d$ that
\[ \sum_{k,d\geq0} \dim \mathcal{U}(\ekma)_{k,d} x^ky^d=\frac{1}{1-\mathsf{O}_3(x)y+\mathsf{S}(x)y^2}.\]
\end{Conjecture}

\subsection{The \texorpdfstring{$\carma$}{carma} Lie algebra} \label{subsec:carma}

The Lie algebra $\ekma$ is a strict Lie subalgebra of $(\ARI^{\operatorname{pol}}_{\underline{al}/\underline{al}},ari)$. There are two different constructions to obtain the complementary Lie algebra. 

On the one hand side there is an explicit construction due to Brown \cite{brown_depth}. Starting with a period polynomial $f$ some averaging process yields canonical  elements $e_f \in(\ARI^{\operatorname{pol}}_{\underline{al}/\underline{al}},ari) $  in depth 4.

On the other hand there are three constructions proposed by Ecalle \cite{ecalle}. We restrict ourselves to the carma moulds. 
By Theorem \ref{thm:al_al_cusp}, each even period polynomial $f \in \widetilde{W}^+$ corresponds to a relation of depth $2$
\[
\sum c_{n,m} ari(\xi_{2n+1},\xi_{2m+1}) =0
\]
in the Lie algebra $\ekma$. We denote
\[
\widehat\chi_f = \sum c_{n,m} ari(\widehat\xi_{2n+1},\widehat\xi_{2m+1})  \in \ARI^{\operatorname{pol}}_{\underline{al}*\underline{il}}.
\]
In depth $1$ the generators $\widehat\xi_{2n+1}\in \ARI^{\operatorname{pol}}_{\underline{al}*\underline{il}}$ and $\xi_{2n+1}\in \ekma$ agree, hence the depth $2$ component of $\widehat\chi_f$ is zero. Moreover, the depth $3$ component of $\widehat\chi_f$ must be a polynomial of odd degree and hence by Lemma \ref{lem:al_swap_parity} is must also be zero. Thus, $\widehat\chi_f$ is a mould, which starts at least in depth $4$. We denote by 
$\chi_f  \in \ARI^{\operatorname{pol}}_{\underline{al}/\underline{al}}$ its projection on the depth $4$ component, which is conjecturally nonzero, i.e., conjecturally $\chi_f= \gr_D \widehat \chi_f$. Now the following is believed \cite{brown_depth}, \cite{ecalle}.

\begin{Conjecture}
The Lie algebra $\operatorname{CARMA}=\Lie(\{\chi_f\}_{f\in \widetilde{W}^+}; ari)$ is free and has the generating series
\[
\sum_{k,d\geq0} \dim \mathcal{U}(\operatorname{CARMA})_{k,d} x^k y^d=\frac{1}{1-\mathsf{S}(x) y^4}.
\]
\end{Conjecture}

\subsection{On the structure of \texorpdfstring{$(\ARI^{\operatorname{pol}}_{\underline{al}/\underline{al}},ari)$}{ARIalal}}

It is expected that the ekma moulds from Subsection \ref{subsec:ekma} and the carma moulds from Subsection \ref{subsec:carma} together generate the whole Lie algebra $(\ARI^{\operatorname{pol}}_{\underline{al}/\underline{al}},ari)$. Moreover, no nontrivial relations between ekma and carma moulds are expected. This  yields the following conjecture, which might be seen as a structural refinement of the Broadhurst-Kreimer conjecture \cite{broadhurst-kreimer}, \cite{brown_depth}, \cite{ecalle}. 

\begin{Conjecture}\label{conj:BK_lie} We have
  \[
(\ARI^{\operatorname{pol}}_{\underline{al}/\underline{al}},ari) \cong \Lie(  \{\xi_{2n+1}\}_{n \ge 1} \cup \{ \chi_f \}_{f \in \widetilde{W}^+}\,;\, ari).
  \]  
The generating series of the dimensions of the homogeneous subspaces is given by
\[
\sum_{k,d\geq0} \dim \mathcal{U}(\ARI^{\operatorname{pol}}_{\underline{al}/\underline{al}})_{k,d} x^ky^d=\frac{1}{ 1 - \mathsf{O}_3(x) y +\mathsf{S}(x) y^2 -\mathsf{S}(x) y^4}.
\]
\end{Conjecture}

Conjecturally, $\ARI^{\operatorname{pol}}_{\underline{al}/\underline{al}}$ is isomorphic to the associated depth-graded Lie algebra of $\ARI^{\operatorname{pol}}_{\underline{al}\ast\underline{il}}$.

We now consider the Lie algebra 
\[
(\ARI^{\operatorname{pol}}_{\underline{al}/\underline{al}})^+ = \Lie(  \{\xi_{2n+1}\}_{n \ge 0} \cup \{ \chi_f \}_{f \in \widetilde{W}^+}\,;\, ari).
\]
i.e. the Lie subalgebra  of $\ARI_{al}$ generated by the ekma moulds $\xi_{2n+1}$, $n\geq1$, the carma moulds $\chi_f$, $f\in \widetilde{W}^+$, together with the element $\xi_1=(1,0,0,\ldots)\in \ARI$. 
Then we get a reformulation of Conjecture \ref{conj:BK_lie}, which bears even more similarities to Conjecture \ref{conj:bari_ilsw}.

\begin{Proposition}
If Conjecture \ref{conj:BK_lie} holds, then we have for the homogeneous subspaces $\mathcal{U}((\ARI^{\operatorname{pol}}_{\underline{al}/\underline{al}})^+)_{k,d}$  that
\begin{align*}
\sum_{k,d\geq0} \dim \mathcal{U}( (\ARI^{\operatorname{pol}}_{\underline{al}/\underline{al}})^+)_{k,d} x^ky^d=\frac{1}{1-\mathsf{O}_1(x)y+\mathsf{M}(x) y^2-x\mathsf{S}(x)y^3 - \mathsf{S}(x)y^4  + x\mathsf{S}(x)y^5   },
\end{align*}
where $\mathsf{O}_1(x)=\frac{x}{1-x^2}$ and 
$\mathsf{M}(x) = \sum_{k\ge 4} \dim M_k x^k$.
\end{Proposition}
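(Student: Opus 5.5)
The plan is to determine the structure of $\mathfrak{g}^+:=(\ARI^{\operatorname{pol}}_{\underline{al}/\underline{al}})^+$ from that of $\mathfrak{g}:=\ARI^{\operatorname{pol}}_{\underline{al}/\underline{al}}$, which is given by Conjecture \ref{conj:BK_lie}, and then to compute the generating series of its enveloping algebra. By Lemma \ref{lem:orth_rel}, $ari(A,\xi_1)=0$ for every mould $A$. Hence $\xi_1$ is central in $\mathfrak{g}^+$, and we get $\mathfrak{g}^+=\mathfrak{g}+\Q\xi_1$. I will first check that this sum is direct. The element $\xi_1$ has weight $1$ and depth $1$, whereas every depth $1$ element of $\mathfrak{g}$ is even of degree $\geq 2$; so $\xi_1\notin\mathfrak{g}$. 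It follows that $\mathfrak{g}^+\cong\mathfrak{g}\oplus\Q\xi_1$ is a direct product of Lie algebras, and therefore $\mathcal{U}(\mathfrak{g}^+)\cong\mathcal{U}(\mathfrak{g})\otimes\Q[\xi_1]$ as bigraded vector spaces.

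Writing $H_{\mathfrak{g}}$ for the Hilbert series of $\mathcal{U}(\mathfrak{g})$ given by Conjecture \ref{conj:BK_lie}, this yields
\[
\sum_{k,d}\dim\mathcal{U}(\mathfrak{g}^+)_{k,d}x^ky^d=\frac{1}{1-xy}\cdot\frac{1}{1-\mathsf{O}_3(x)y+\mathsf{S}(x)y^2-\mathsf{S}(x)y^4}.
\]
It then remains to show that this product equals the claimed expression. Since $\mathsf{O}_1(x)=x+\mathsf{O}_3(x)$ and $\mathsf{M}(x)=\mathsf{O}_3(x)\,x+\mathsf{S}(x)$, I will expand the product of denominators:
\[
(1-xy)\big(1-\mathsf{O}_3y+\mathsf{S}y^2-\mathsf{S}y^4\big)=1-(x+\mathsf{O}_3)y+(x\mathsf{O}_3+\mathsf{S})y^2-x\mathsf{S}y^3-\mathsf{S}y^4+x\mathsf{S}y^5.
\]

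The main step to verify is the identity $\mathsf{M}(x)=x\,\mathsf{O}_3(x)+\mathsf{S}(x)$, that is, $x\mathsf{O}_3(x)=\sum_{k\geq4,\,k \text{ even}}x^k$. This holds because $x\cdot\frac{x^3}{1-x^2}=\frac{x^4}{1-x^2}$, and $\dim M_k-\dim S_k=1$ for every even $k\geq4$ (the Eisenstein series), while both dimensions vanish for odd $k$. With this identity the coefficients match term by term, which proves the proposition. The only subtle point is the directness of the sum $\mathfrak{g}+\Q\xi_1$, and this is settled by the weight argument above.
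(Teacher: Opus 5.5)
Your proposal is correct and follows the paper's proof. The paper likewise uses Lemma \ref{lem:orth_rel} to see that $\xi_1$ is central, and then multiplies the Hilbert series from Conjecture \ref{conj:BK_lie} by $\frac{1}{1-xy}$. You also supply two details the paper leaves implicit: that the sum with $\Q\xi_1$ is direct (by weight), and that the product reduces to the stated denominator via $\mathsf{O}_1=x+\mathsf{O}_3$ and $\mathsf{M}=x\mathsf{O}_3+\mathsf{S}$.
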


\begin{proof} From Lemma \ref{lem:orth_rel}, we know that $\xi_1$ is orthogonal to all other elements in $\ARI$ with respect to the ari bracket. Thus
assuming Conjecture \ref{conj:BK_lie}, we get 
\[
\sum_{k,d\geq0} \dim \mathcal{U}( (\ARI^{\operatorname{pol}}_{\underline{al}/\underline{al}})^+)_{k,d} x^ky^d= 
\frac{1}{1-xy} \cdot 
\sum_{k,d\geq0} \dim \mathcal{U}( \ARI^{\operatorname{pol}}_{\underline{al}/\underline{al}})_{k,d} x^ky^d. \hfill\qedhere
\]
\end{proof}
For later purpose we give an interpretation of the individual terms. We call the relations 
\begin{align} \label{eq:ekma_eis_rel}
ari(\xi_1,\xi_{2n+1})=0,\quad n\geq1,
\end{align}
Eisenstein relations, those are counted by the term  
$\mathsf{E}_4(x)=\frac{x^4}{1-x^2}$. Thus taking into account Theorem \ref{thm:al_al_cusp} the ekma moulds satisfy at least
\[ \mathsf{M}(x) = \mathsf{S}(x)+\mathsf{E}_4(x) \]
quadratic relations. 
The Eisenstein relations intersect with the cusp forms relations from Theorem \ref{thm:al_al_cusp} in depth $3$. For example, we have the cusp form relation
\begin{align*}
ari(\xi_3,\xi_9)-3ari(\xi_5,\xi_7)=0,
\end{align*}
and hence
\begin{align*}
&0=ari\big(\xi_1,ari(\xi_3,\xi_9)-3ari(\xi_5,\xi_7)\big)\\
&=-ari\big(\xi_3,ari(\xi_9,\xi_1)\big)-ari\big(\xi_9,ari(\xi_1,\xi_3)\big)+3ari\big(\xi_5,ari(\xi_7,\xi_1)\big)\\
&\hspace{0.4cm}+3ari\big(\xi_7,ari(\xi_1,\xi_5)\big).
\end{align*}
The first row is a Lie product with a cusp form relations and the second row consists of Lie products with Eisenstein relations. These intersections of relations are counted by the term $x\mathsf{S}(x)$.
The carma mould describe the $\mathsf{S}(x)$ generators in depth $4$ and that these are orthogonal to $\xi_1$ is reflected by the 
relations in depth $5$.

\section{Lie subalgebras of \texorpdfstring{$(\BARI_{al,swap},ari)$}{BARIal}}

We study Lie subalgebras of $(\BARI_{al,swap},ari)$, which generalize the Lie algebras studied in Subsection \ref{subsec:ekma} and \ref{subsec:carma}.

\begin{Theorem} \label{thm:ari_al_swap}
The pair $(\BARI_{\underline{al},swap},ari)$ is a bi-graded Lie algebra. 
\end{Theorem}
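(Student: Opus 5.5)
The claim has three parts: bi-gradedness, closure under $ari$ of the three defining conditions (alternality, swap invariance, evenness in depth $1$), and the Lie axioms. The Lie axioms are inherited from $(\BARI,ari)$ once closure is known. Bi-gradedness is immediate from \eqref{eq:ari_expl}. The flexions are linear substitutions that preserve total degree. A term $A(a\lceil c)B(b\rfloor)$ with $A\in\BARI_{k,d}$, $B\in\BARI_{k',d'}$ lives in depth $d+d'$. Its degree is $(k-d)+(k'-d')=(k+k')-(d+d')$, so $ari(\BARI_{k,d},\BARI_{k',d'})\subset\BARI_{k+k',d+d'}$.

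For closure, alternality is preserved by Proposition \ref{prop:al_ari}. The depth $1$ condition is vacuous, since $ari$ of two elements of depth $\ge1$ lands in depth $\ge2$. By bilinearity and bi-gradedness, we may assume $A,B$ homogeneous. The real content is therefore swap invariance: for $A,B$ alternal and swap invariant, we must show $swap(ari(A,B))=ari(A,B)$.

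The plan is to imitate Proposition \ref{prop:pushinv_swap}, i.e.\ the identity $swap(ari(A,B))=ari(swap A,swap B)$ for push invariant moulds. This requires two steps.
\begin{enumerate}
\item Show that an alternal, swap invariant bimould is push invariant. Following the argument behind Proposition \ref{prop:alal_push}: alternality gives invariance under the reversal-type antipode combined with $neg$. Swap invariance lets us conjugate this into the swapped form. Composing the two yields $push$, since $push$ is expressible through $swap$, $anti$ and $neg$ in Ecalle's relation $push=mantar\circ swap\circ mantar\circ swap$ with $mantar=-anti\circ neg$ up to sign on alternal elements. So $push(A)=A$.
\item Prove the bimould analogue of Proposition \ref{prop:pushinv_swap}: for push invariant bimoulds, $swap(ari(A,B))=ari(swap A,swap B)$. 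I would do this via the standard decomposition $ari(A,B)=arit(B)A-arit(A)B+lu(A,B)$. Ecalle's identity intertwines $swap\circ arit(B)$ with $irat(swap B)\circ swap$, where $irat$ differs from $arit$ by $push$-conjugation. Push invariance removes this discrepancy, and the $lu$ term is handled the same way, as in the proof of \cite[Lemma 2.4.1]{SchnepsARI}. That proof is flexion-formal and treats the $u$- and $v$-flexions symmetrically, so it should transfer to bimoulds verbatim.
\end{enumerate}
Then $swap(ari(A,B))=ari(swap A,swap B)=ari(A,B)$.

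The main obstacle is step 2. We must check that the mould proof in \cite{SchnepsARI} never uses that the lower variables are absent. Where it does, it has to be redone with both upper and lower flexions, tracking each of the terms $A(a\lceil c)B(b\rfloor)$ and $A(a\rceil c)B(\lfloor b)$ through the swap substitution. If that bookkeeping becomes unwieldy, I would fall back to a direct term-by-term verification in low depth (depth $2$ as in the proof of Proposition \ref{prop:swap_upto_3}). I would then reduce the general case to generators via the post-Lie structure of Remark \ref{rem:ari_postLie}: prove that $arit(B)$ commutes appropriately with $swap$ on depth-$1$ generators $P_{i,j}$ using Remark \ref{rem:arit_depth1}, and extend by the derivation property.
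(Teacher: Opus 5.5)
The gap is in step 1. Your overall route is the standard one behind \cite[Theorem 2.5.6]{SchnepsARI}, which the paper cites: first push invariance of the inputs, then $swap(ari(A,B))=ari(swap(A),swap(B))$. Step 2 is true; it follows from \cite[(2.4.10)]{SchnepsARI}, which holds at the level of $preari$, since $arit$ and $mu$ are not separately compatible with $swap$.

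Step 1 fails because an alternal, swap invariant bimould is \emph{not} push invariant in general. Alternality gives $anti(A_d)=(-1)^{d-1}A_d$, and this contains no $neg$. Inserting it together with $swap(A)=A$ into Ecalle's identity $push=neg\circ anti\circ swap\circ anti\circ swap$ yields only $push(A)=neg(A)$. The $neg$ cannot be absorbed ``up to sign''. This is exactly where the hypothesis that $A_1$ is even enters, and you declared it vacuous. It is not vacuous. Take $A=(u_1+v_1,0,\ldots)$, which is alternal and swap invariant but odd, and $B=\xi_{1,0}=(1,0,\ldots)\in\BARI_{\underline{al},swap}$. Then \eqref{eq:ari_expl} gives
\[
ari(A,B)=\big(0,\,2(v_2-v_1),\,0,\ldots\big),\qquad swap\big(ari(A,B)\big)=\big(0,\,-2u_2,\,0,\ldots\big).
\]
So an argument that never uses evenness would prove a false statement.

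What is missing is the parity lemma: every component $A_d$ of $A\in\BARI_{\underline{al},swap}$ is even. This is Lemma \ref{lem:al_swap_parity}, i.e.\ \cite[Lemma 2.5.5]{SchnepsARI}, whose proof carries over to bimoulds as noted in the footnote in Subsection \ref{subsec:bcarma}.
\begin{itemize}
\item For $d=1$ it is the hypothesis.
\item For even $d$ it follows from what you already have. Since $push$ commutes with $neg$ and $push^{d+1}=\mathrm{id}$ in depth $d$, one gets $A_d=push^{d+1}(A_d)=neg^{d+1}(A_d)=neg(A_d)$.
\item For odd $d\ge 3$ this trick gives nothing, because $neg^{d+1}=\mathrm{id}$, and you need the parity lemma proper. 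Table \ref{tab:dim_bari_al_swap} shows that these odd-degree components do vanish, e.g.\ in depth $3$ for all even weights listed.
\end{itemize}
Only once $neg(A)=A$ holds in all depths do you get $push(A)=A$. Your step 2 then gives $swap(ari(A,B))=ari(A,B)$.
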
  

The bi-grading is meant with respect to the weight and depth given in Definition \ref{def:weight_depth}.
\begin{proof} See \cite[Section 2.5]{ecalle}, \cite[Theorem 2.5.6]{SchnepsARI}, or \cite{Bu_depthgraded}.
\end{proof} 
 
\begin{Remark}
We have the following table for $\dim \big(\BARI^{\operatorname{pol}}_{\underline{al},swap} \big)_{k,d}$  
	{\small
		\begin{table}[H]  
			\begin{center}
				\begin{tabular}{c|c|c|c|c|c|c|c|c|c|c|c|c|c|c|c|c|c|} 
					$d \backslash k$&1&2&3&4&5&6&7&8&9&10&11&12&13&14&15&16&17\\ \hline
					1&1&0&2&0&3&0&4&0&5&0&6&0&7&0&8&0&9\\ \hline
					2&-&0&0&1&0&2&0& 8&0&14&0&23&0&38&0&58&0\\ \hline
					3&-&-&0&0&1&0&3&0&9&0&27&0&62&0&125&0&238\\ \hline
					4&-&-&-&0&0&1&0&3&0&12&0&37&?&?&?&?&?\\ \hline
					5& -&-&-&-&0&0&1&0&4&0&15&?&?&?&?&?&?\\ \hline
					6& -&-&-&-&-&0&0&1&0&4&0&?&?&?&?&?&? \\ \hline
				\end{tabular}
				\caption{ $  \dim \big(\BARI^{\operatorname{pol}}_{\underline{al},swap} \big)_{k,d}$  \label{tab:dim_bari_al_swap}}
			\end{center}
	\end{table}}
We were not able to deduce any meaningful  pattern out of this table and therefore we did not tried to compute more of these numbers, which would require high performance computers.  The most efficient way to obtain these numbers for $d>3$ is to use that $\BARI^{\operatorname{pol}}_{\underline{al},swap}$ is isomorphic
to  a subspace of the free Lie algebra
  $\Lie(V)$ determined by some involution $\tau$ as  in \cite{Bu_depthgraded}.  
The direct approach by determining those swap invariant multivariate polynomials that satisfy the  alternality equations is limited by the computing resources much earlier.  
\end{Remark}

\begin{Theorem} \label{thm:ALAL_iota} 
 We have a natural embedding of Lie algebras
\[
\iota: \, (\ARI_{\underline{al}/\underline{al}},ari) \to (\BARI_{\underline{al},swap},ari)
\]
given by $\iota(A)(w)=A(u)+swap(A)(v)$.
\end{Theorem}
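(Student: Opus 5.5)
To prove Theorem \ref{thm:ALAL_iota} I would show three things in turn: $\iota$ lands in $\BARI_{\underline{al},swap}$, $\iota$ is injective, and $\iota$ respects the $ari$ bracket. Throughout, write $\bar A := swap(A)$, which is a $v$-mould, so that $\iota(A)=A+\bar A$.

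\emph{Membership.} $A$ is alternal by assumption, and $\bar A$ is alternal because $A\in\ARI_{\underline{al}/\underline{al}}$. Alternality is linear, so the sum $\iota(A)$ is alternal. Since $swap$ is an involution on bimoulds, $swap(\iota(A))=\bar A+A=\iota(A)$, so $\iota(A)$ is swap invariant. In depth $1$ we have $\iota(A)_1=A_1(u_1)+A_1(v_1)$, which is even because $A_1$ is even.

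\emph{Injectivity.} For $d\ge1$, setting all $v_i=0$ in $\iota(A)_d$ gives $A_d(u)+\bar A_d(0)$. The constant $\bar A_d(0)=A_d(0)$ vanishes for homogeneous components of positive degree. In depth $1$ it would only matter if $A_1$ were constant, which is excluded. Hence $A$ can be recovered from $\iota(A)$, and $\iota$ is injective.

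\emph{Bracket compatibility.} By bilinearity,
\[
ari(\iota A,\iota B)=ari(A,B)+ari(\bar A,\bar B)+ari(A,\bar B)+ari(\bar A,B).
\]
By Proposition \ref{prop:alal_push}, $A$, $B$, $\bar A$ and $\bar B$ are all push invariant, with $\bar A,\bar B\in\overline{\ARI}_{\underline{al}/\underline{al}}$. Lemma \ref{lem:ari_vs_push} then gives $ari(A,\bar B)=0$. For the other mixed term, antisymmetry gives $ari(\bar A,B)=-ari(B,\bar A)$, and this vanishes by the same lemma. Proposition \ref{prop:pushinv_swap} yields $ari(\bar A,\bar B)=swap(ari(A,B))$. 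Combining these,
\[
ari(\iota A,\iota B)=ari(A,B)+swap(ari(A,B))=\iota(ari(A,B)),
\]
which is well defined since $ari(A,B)\in\ARI_{\underline{al}/\underline{al}}$ by the theorem that $(\ARI_{\underline{al}/\underline{al}},ari)$ is a Lie algebra.

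\emph{Main obstacle.} The delicate step is applying Proposition \ref{prop:pushinv_swap}, since it is stated for push invariant moulds $A,B\in\ARI$. I would check that in \cite[Lemma 2.4.1]{SchnepsARI} the identity holds for the bimould $ari$ restricted to moulds, with the flexions being the bimould ones. For $u$-moulds the lower flexions are invisible, so the bimould bracket of $A$ and $B$ coincides with the mould bracket. Similarly, for $v$-moulds only the lower flexions matter, so $ari(\bar A,\bar B)$ computed in $\BARI$ is the swap of the mould bracket.

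A second point to verify is that Lemma \ref{lem:ari_vs_push} applies in the needed generality: its hypothesis is that $M\in\overline{\ARI}$ is push invariant, and $\bar B$ satisfies this. No condition on $A$ beyond $A\in\ARI$ is required.
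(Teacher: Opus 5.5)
Your argument is correct, but it takes a different route from the paper. The paper gives no internal proof of Theorem \ref{thm:ALAL_iota}; it refers to \cite{Bu_depthgraded}, where the statement is proved for non-commutative polynomials and then translated into bimoulds.

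You stay inside mould theory and combine Proposition \ref{prop:alal_push}, Lemma \ref{lem:ari_vs_push} and Proposition \ref{prop:pushinv_swap}. These are the same ingredients the paper uses for Lemma \ref{lem:uri_alil} and Theorem \ref{thm:alil_ilswap_embedding}, with the transport by $ganit_{poc}=\operatorname{Ad}_{ari}(pil)$ and Ecalle's fundamental identity removed. Your proof therefore shows that Theorem \ref{thm:ALAL_iota} is the depth-graded analogue of Theorem \ref{thm:alil_ilswap_embedding}: $ari(A,swap(B))=0$ mirrors $uri(A,swap(B))=0$, and $ari(swap(A),swap(B))=swap(ari(A,B))$ mirrors the corresponding $uri$ identity.

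Your route gives a short argument built from results already quoted in the paper. Its cost is that it rests entirely on Schneps' results: push invariance of bialternal moulds, compatibility of $swap$ with $ari$ on push-invariant moulds, and closure of $\ARI_{\underline{al}/\underline{al}}$ under $ari$. The cited proof instead works with the combinatorics of non-commutative polynomials, which fits the post-Lie viewpoint of Remark \ref{rem:ari_postLie}.

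Your injectivity step needs a small repair.
\begin{itemize}
\item Setting $v=0$ in depth $1$ gives $A_1(u_1)+A_1(0)$, and a non-constant even $A_1$ may still have a constant term, so this does not recover $A_1$.
\item The vanishing of $A_d(0)$ for $d\ge 2$ comes from alternality, since the shuffle relation evaluated at $0$ gives $d\,A_d(0)=0$. Homogeneity is not the reason.
\item For rational moulds the substitution $v=0$ may not even be defined.
\end{itemize}
A cleaner argument: if $\iota(A)=0$, then each $A_d(u)=-swap(A)_d(v)$ depends on neither $u$ nor $v$, so it is a constant $c$. Alternality kills $c$ for $d\ge 2$, and in depth $1$ evaluating $A_1(u_1)+A_1(v_1)=0$ at $0$ gives $2c=0$.
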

\begin{proof} A proof of this statement in terms of non-commutative polynomials together with a translation into bimoulds is given in \cite{Bu_depthgraded}.
\end{proof}

\begin{Remark} \label{rem:induced_rel_xi}
We point to the fact that because of Theorem \ref{thm:ALAL_iota} any relation satisfied by elements in  $\ARI_{\underline{al}/\underline{al}}$ 
lifts to a relation in $\BARI_{\underline{al},swap}$ satisfied by the images of these elements.
\end{Remark}

\subsection{The \texorpdfstring{$\biekma$}{bekma} Lie algebra} \label{subsec:bekma}

\begin{Definition}\label{def:q-ekma-algebra}
We define the elements $\xi_{1,0}=\iota(\xi_{2n+1})=(1,0,0,\ldots)$ and
\[
\xi_{2n+1,0}=\iota(\xi_{2n+1})=(u_1^{2n}+v_1^{2n},0,0,\ldots),\quad n\geq1,
\]
and their images under the ari derivation $\delta$ 
\begin{align} \label{def:ekma_bimoulds}
\xi_{2n+1+m,m}&=\delta^m(\xi_{2n+1,0})=\big(  (u_1v_1)^m (u_1^{2n}+v_1^{2n}),0,0,\ldots\big),\quad m,n\geq1,
\end{align} 
and similarly 
$\xi_{m+1,m}=\delta^m(\xi_{1,0})=\big(  (u_1v_1)^m,0,0,\ldots\big)$.

Then, let $\biekma \subset \BARI^{\operatorname{pol}}_{\underline{al},swap}$ be the Lie algebra given by
\[
\biekma = \Lie \big( \{\xi_{2n+1+m,m}\}_{m,n\ge 0}; ari \big).
\]
\end{Definition}

\begin{Lemma} \label{lem:HPS_eq_depth1} The number of generators of $\biekma$ is given by the generating series
\[
b_1(x)= \mathsf{D}(x) \mathsf{O}_1(x)
\]
where $\mathsf{D}(x)=\frac{1}{1-x^2},\ \mathsf{O}_1(x)=\frac{x}{1-x^2}$.
\end{Lemma}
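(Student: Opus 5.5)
We count the generators of $\biekma$ by weight, and check that they are linearly independent.

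By Definition \ref{def:q-ekma-algebra}, the generators are the depth $1$ bimoulds
\[
\xi_{2n+1+m,m}=\big((u_1v_1)^m(u_1^{2n}+v_1^{2n}),0,\ldots\big),\qquad m,n\ge 0.
\]
For $n=0$ we use $\xi_{m+1,m}=((u_1v_1)^m,0,\ldots)$. Note that $u_1^0+v_1^0=2$ would only change this by a scalar, so we normalise accordingly.

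By Definition \ref{def:weight_depth}, a depth $1$ component of degree $2m+2n$ has weight $k=2m+2n+1$. Hence each generator is homogeneous of weight $2n+2m+1$ and depth $1$.

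\textbf{Linear independence.} Distinct pairs $(m,n)$ give linearly independent polynomials. Inside a fixed weight $k$ we have $m+n=(k-1)/2$ fixed. For $n\ge1$ the monomial $u_1^{m+2n}v_1^m$ appears only in $\xi_{2n+1+m,m}$. Indeed, this is the unique generator with $u_1$-exponent minus $v_1$-exponent equal to $2n$, and the exponents $2n$ are distinct for distinct $n$. For $n=0$ the generator $(u_1v_1)^m$ is the only one with equal exponents. So the generators are independent, and the number of them in weight $k$ equals the number of pairs $(m,n)\in\mathbb{Z}_{\ge0}^2$ with $2m+2n+1=k$.

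\textbf{Generating series.} Counting these pairs gives
\[
\sum_{m,n\ge0}x^{2m+2n+1}=\frac{1}{1-x^2}\cdot\frac{x}{1-x^2}=\mathsf{D}(x)\,\mathsf{O}_1(x)=b_1(x).
\]

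\textbf{Minimality of the generating set.} To justify calling this "the number of generators", we also need that no generator lies in the subalgebra generated by the others. The $ari$ bracket of homogeneous elements of depths $d,d'$ has depth $d+d'\ge2$, by \eqref{eq:ari_expl} and the bigrading of Theorem \ref{thm:ari_al_swap}. So the depth $1$ part of $\biekma$ is exactly the span of the generators, and its dimension in weight $k$ is the coefficient of $x^k$ in $b_1(x)$.

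There is no real obstacle. The only care needed is the normalisation of the $n=0$ elements and noting that brackets cannot land in depth $1$. This also gives the first check $\dim\biekma_{k,1}=(k+1)/2$ for odd $k$, which agrees with the first row of Table \ref{tab:dim_bari_al_swap}.
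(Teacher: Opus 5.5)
Your proposal is correct and follows the paper's argument: $\mathsf{O}_1(x)$ counts the $\xi_{2n+1,0}$, $n\ge 0$, and $\mathsf{D}(x)$ counts the powers $\delta^m$, which is exactly your count of pairs $(m,n)$ with weight $2m+2n+1$. The extra checks you supply make explicit what the paper's one-line proof leaves implicit, namely linear independence inside a fixed weight and the fact that $ari$ brackets land in depth at least $2$, so the depth $1$ part of $\biekma$ is exactly the span of the generators.
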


\begin{proof} The generating series $\mathsf{O}_1(x)$ counts the generators $\xi_{2n+1,0}$ and the term $\mathsf{D}(x)$ encodes the derivation $\delta$.
\end{proof}

The elements $\xi_{2n+1}\in \ARI^{\operatorname{pol}}_{\underline{al}/\underline{al}}$ satisfy the Eisenstein relations from \eqref{eq:ekma_eis_rel} and the cusp form relations from Theorem \ref{thm:al_al_cusp}. Therefore, their images
\begin{align*}
\xi_{2n+1,0}=\iota(\xi_{2n+1}),\quad n\geq1,
\end{align*}
satisfy the same relations.

Moreover, if we have a relation in $\biekma$ in weight $n$, then applying the derivation $\delta$ we get a relation in weight $n+2$. For example, for all odd $k \ge 3$ we have in weight $k+1$ the Eisenstein relation 
$ari( \xi_{1,0}, \xi_{k,0})=0$  and therefore we have in weight $k+3$ the relation 
\[
0=ari( \delta  (\xi_{1,0}), \xi_{k,0}) + ari(   \xi_{1,0}, \delta (\xi_{k,0}))=ari(\xi_{2,1},\xi_{k,0})+ari(\xi_{1,0},\xi_{k+1,1}).
\]
We say a relation is primitive if it is non-trivial modulo the image of $\delta$.

In the following theorem we use the notation from Appendix \ref{app:period}.

\begin{Theorem} \label{thm:HPS_eq_depth2}
The vector space  spanned by the primitive relations in weight $k$ and depth $2$ in the Lie algebra $\biekma$ correspond to the vector space of primitive even bi-period polynomials
$\mathcal{P}^{\epsilon}_k$. 
In particular, for the dimensions  
of all relations in depth $2$ we have the generating series
\[ 
b_2(x) = 
\mathsf{D}(x) \, \sum_{k \ge 4} \dim ( M_{k} ( \operatorname{Sl}_2(\mathbb{Z})))^2  \, x^k.
\]  
\end{Theorem}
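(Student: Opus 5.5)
The plan is to mimic the proof of Theorem \ref{thm:al_al_cusp} and compute the kernel of the bracket map in depth $2$. By Lemma \ref{lem:HPS_eq_depth1}, the depth $1$ part of $\biekma$ in weight $k$ is spanned by the bimoulds $(u_1v_1)^m(u_1^{2n}+v_1^{2n})$ with $2n+1+m=k$. For $m,n\ge0$ these are exactly the polynomials $f(u_1,v_1)$ of degree $k-1$ that are symmetric under $u_1\leftrightarrow v_1$ and whose monomials $u_1^av_1^b$ have both $a$ and $b$ even, or both odd. Equivalently, $f$ is invariant under the swap in depth $1$ and under $(u_1,v_1)\mapsto(-u_1,-v_1)$.

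The bracket $ari:\biekma_1\wedge\biekma_1\to\biekma_2$ factors through the antisymmetrisation. I will therefore identify $\biekma_1\wedge\biekma_1$ with the space of polynomials
\[
P\binom{u_1,u_2}{v_1,v_2}
\]
satisfying three conditions: $P$ is antisymmetric under exchanging the columns $\binom{u_1}{v_1}\leftrightarrow\binom{u_2}{v_2}$; in each column separately, $P$ is symmetric under $u_i\leftrightarrow v_i$; and $P$ is invariant under the sign changes $(u_i,v_i)\mapsto(-u_i,-v_i)$. The depth $2$ formula for $ari$ from the Example in Section 2 then shows that the kernel consists of those $P$ with
\[
P\binom{u_1,u_2}{v_1,v_2}+P\binom{u_1+u_2,u_1}{v_2,v_1-v_2}+P\binom{u_2,u_1+u_2}{v_2-v_1,v_1}=0 .
\]
This is a two-variable analogue of the period relation. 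The variables $(u_1,u_2)$ and $(v_2,v_1-v_2)$ transform contragrediently under $\mathrm{Sl}_2(\mathbb{Z})$.

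Next I will separate off the $\delta$-images. Multiplication by $u_1v_1+u_2v_2$ preserves all the conditions, including the kernel condition, because this form is invariant under the substitutions above (as shown in the proof that $\delta$ is a derivation). So the kernel is a module over $\Q[u_1v_1+u_2v_2]$, and this accounts for the factor $\mathsf{D}(x)$. The primitive relations form the quotient by this multiplication. I then want to show that this quotient is isomorphic to the space $\mathcal{P}^\epsilon_k$ of primitive even bi-period polynomials of Appendix \ref{app:period}. The dictionary is as follows: a polynomial in $u_1,u_2,v_1,v_2$ of bidegree $(a,b)$ corresponds to an element of $V_a\otimes V_b$, where $V_a$ denotes degree $a$ polynomials in two variables. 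The kernel condition is then the period relation for the diagonal action on $V_a\otimes V_b$. Decomposing $V_a\otimes V_b$ by Clebsch–Gordan, the contraction corresponding to $u_1v_1+u_2v_2$ gives exactly the non-primitive part, and the highest components correspond to pairs of period polynomials.

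Finally I will count. By Eichler–Shimura, even and odd period polynomials of weight $k$ each have dimension roughly $\dim M_k$ (respectively $\dim S_k$ after correction). The remaining parity and symmetry constraints cut the pairs down to (even, even) or (odd, odd) with a fixed symmetry type. Summing the contributions $\dim(W^+_k)^2$ and $\dim(W^-_k)^2$ over the two parity classes should give $(\dim M_k)^2$. I will also check that the Eisenstein relation $ari(\xi_{1,0},\xi_{k,0})$ and the images of the cusp relations of Theorem \ref{thm:al_al_cusp} under $\iota$ appear in this count, using Remark \ref{rem:induced_rel_xi}.

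The main obstacle is the middle step: proving that the primitive part of the kernel is exactly $\mathcal{P}^\epsilon_k$, in particular that no primitive solutions exist beyond the tensor-product ones and that the symmetry conditions match the parity pairing. My first attempt will be to decompose the kernel equation into $\mathrm{Sl}_2$-isotypic pieces with respect to the transvectant decomposition. After that I will verify small weights numerically against Table \ref{tab:dim_bari_al_swap}.
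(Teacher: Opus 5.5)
Your first half matches the paper: the description of $\biekma_1\wedge\biekma_1$, the three-term kernel condition, and the invariance of $q=u_1v_1+u_2v_2$ under all substitutions are exactly what the paper uses. One addition is needed there. To get the splitting $\ker_k=(\ker_k\cap\ker\Delta)\oplus q\ker_{k-2}$, you must also note that all the substitutions are isometries of $q$ and hence commute with $\Delta$; being a $\Q[q]$-module is not enough.

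\textbf{The gap in the middle step.} The tool you propose cannot close this step by itself. Your bidegree/Clebsch--Gordan dictionary only uses the diagonal action $(u_1,u_2)\mapsto(u_1,u_2)g$, $(v_1,v_2)\mapsto(v_1,v_2)g^{-t}$. Under this action the harmonic part of degree $k-2$ is $k-1$ copies of one irreducible, namely the top component in each bidegree $(a,b)$ with $a+b=k-2$. The period relations therefore give one copy of $W_k$ per bidegree, a space isomorphic to $V_k\otimes W_k$. They do not give pairs of period polynomials.

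What cuts this down to pairs are the column symmetries $u_i\leftrightarrow v_i$. These do not preserve bidegree, so they cannot be imposed bidegree by bidegree; you have to know how they act on the multiplicity space. That is the missing idea. Write $X=\left(\begin{smallmatrix}u_1&u_2\\-v_2&v_1\end{smallmatrix}\right)$. Then:
\begin{itemize}
\item your diagonal action is right multiplication on $X$;
\item left multiplication is a second, commuting $\operatorname{Sl}_2$ that mixes $u$- and $v$-variables and acts on the multiplicity space;
\item the column symmetries together with the sign changes yield $P(X^t)=P(X)$, which interchanges the two factors.
\end{itemize}
Concretely, $P\mapsto P(ac,ad,bd,-bc)$ identifies $\ker\Delta$ with $V_k\otimes V_k$ (Lemma \ref{lem:decomp}). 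The $S$- and $U$-relations put the right factor in $W_k$. Transposition carries this to the left factor and forces the tensor to be symmetric. The $\epsilon$-condition then forces both factors into $W_k^+$ or both into $W_k^-$ (Theorem \ref{app:bi_period_thm}). This is the paper's route.

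\textbf{The count is wrong as written.} Because the tensor must be symmetric, the primitive part has dimension $\binom{\dim W_k^+ +1}{2}+\binom{\dim W_k^- +1}{2}$. This equals $(\dim M_k)^2$ precisely because $\dim M_k=\dim S_k+1$ for even $k\ge 4$. Your sum $\dim(W_k^+)^2+\dim(W_k^-)^2$ equals $(\dim M_k)^2+(\dim S_k)^2$ instead. In weight $12$ it gives $5$, not the $4$ primitive relations of Example \ref{exa:bekma_relations}.

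Two smaller points:
\begin{itemize}
\item $(u_1v_1)^m(u_1^{2n}+v_1^{2n})$ has weight $2n+2m+1$, not $2n+1+m$.
\item Table \ref{tab:dim_bari_al_swap} lists $\dim\BARI^{\operatorname{pol}}_{\underline{al},swap}$, which already exceeds $\dim\biekma$ in weight $8$, depth $2$ ($8$ versus $7$). A numerical check should use Table \ref{tab:dim_biekma} instead.
\end{itemize}
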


\begin{proof}
The multiplication $ari: \biekma_1 \times \biekma_1 \to \biekma_2$ factors through
the space $\biekma_1 \wedge \biekma_1$, which is spanned by the elements 
\[
\xi_{r_1,s_1}(w_1)  \xi_{r_2,s_2}(w_2)-\xi_{r_1,s_1}(w_2)   \xi_{r_2,s_2}(w_1)\]
for $r_1,r_2\geq1,\ s_1,s_2\geq0$, $r_1+s_1,r_2+s_2$ odd.
In particular, the space $\biekma_1\wedge \biekma_1$ can be identified with the space of all homogeneous polynomials $P\in \Q[u_1,v_1,u_2,v_2]$ of even degree, such that
\begin{align*}
&P\binom{u_1,u_2}{v_1,v_2}+P\binom{u_2,u_1}{v_2,v_1}=0, \\
&P\binom{u_1,u_2}{v_1,v_2}=P\binom{\pm u_1,u_2}{\pm v_1,v_2}=P\binom{u_1,\pm u_2}{v_1, \pm v_2}, \\
&P\binom{u_1,u_2}{v_1,v_2}=P\binom{v_1,u_2}{u_1,v_2}=P\binom{u_1,v_2}{v_1,u_2}.
\end{align*}
With the notation from Appendix \ref{app:bi_period}, we can rewrite this as
\begin{align*}
\biekma_1\wedge\biekma_1
&=
\left\{\begin{matrix}P\in \Q[u_1,u_2, v_1, v_2] \text{ homogeneous} \\ \text{and of even degree} \end{matrix}\ \middle| \ \begin{matrix} P(X) + P(XS) = 0 \\ 
P( X^t) - P(X) = 0 \\ P ( \epsilon X \epsilon) -P(X) = 0 \end{matrix}\right\},
\end{align*}
where
\[
X= \left(\begin{matrix} u_1 & u_2\\ -v_2 & v_1  \end{matrix} \right),\quad   
S= \left(\begin{matrix} 0& 1  \\ -1& 0 \end{matrix}\right) , \quad  
\epsilon=  \left(\begin{matrix} -1& 0  \\ 0& 1 \end{matrix}\right).
\]
We have a short exact sequence 
\begin{align}\label{eq:ari_l2}
\xymatrix{
0 \ar[r] &\ar[r] \ker (ari) & \ar[r]^{\qquad ari} \biekma_1 \wedge \biekma_1 &\ar[r] \biekma_2 &0 ,
}
\end{align}
where by definition of the bracket $ari$ the space $\ker(ari)$ can be identified with polynomials $P\in \biekma_1 \wedge \biekma_1$ satisfying
\[
P\binom{u_1,u_2}{v_1,v_2} +
P\binom{u_1+u_2,u_1}{v_2,v_1- v_2} +
P\binom{u_2,u_1+u_2}{v_2-v_1,v_1} =0.
\]
As before, we rewrite this as
\begin{align*}
\ker(ari)= \{P \in \biekma_1 \wedge \biekma_1 \mid
P(X)+P(XU)+P(XU^2)=0\},
\end{align*}
where $U=\left(\begin{smallmatrix} 0 & 1 \\ -1 & 1 \end{smallmatrix}\right)$. Together with the description of $\biekma_1\wedge\biekma_1$, we conclude that $\ker(ari)$ can be identified with the homogeneous polynomials $P\in \Q[u_1,u_2, v_1, v_2]$ of even degree, such that
\begin{align*}
&P(X)+P(XS)=P(X)+P(XU)+P(XU^2)=P(X^t)-P(X)=0, \\ 
&P (\epsilon X\epsilon)-P(X)=0.
\end{align*}
In Appendix \ref{app:bi_period} we explain that this space in degree $k-2$ is equal to the space $\mathcal{W}_k^\epsilon$ of even bi-period polynomials. The primitive relations correspond to primitive bi-period polynomials.
\end{proof}

\begin{Example} \label{exa:bekma_relations} 
Recall from  Appendix \ref{app:bi_period} that
a basis for the vector space of primitive even bi-periodpolynomials  $\mathcal{P}^{\epsilon}_k$ is given by the elements $P_{f,g}$, where $f,g$ are either elements of a basis of $W_k^+$  or the same for $W_k^-$.
By Theorem \ref{thm:HPS_eq_depth2} we therefore  have $4$ primitive relations in $\biekma$ in weight $12$. 
The first one is the Eisenstein relation 
\[
ari(\xi_{1,0},\xi_{11,0})=0,
\]
corresponding to the polynomials
\[f=g =  x^{10} - y^{10}.\]

The second one is the cusp form relation induced from $\ekma$, cf. Remark \ref{rem:induced_rel_xi},
\begin{align*}
 0&= ari( \xi_{3,0}, \xi_{9,0}) -3\, ari( \xi_{5,0}, \xi_{7,0}), 
\end{align*}
corresponding to the polynomials
\[
f= (y^2x^8-y^8x^2) - 3(y^4x^6 - y^6x^4),\quad g=x^{10} - y^{10}.
\]
The other two are given by
\begin{align*}
0=&  14 \,ari( \xi_{1, 0}, \xi_{9, 2}) + 27 \,ari( \xi_{1, 0}, \xi_{7, 4})-252 \,ari( \xi_{3, 0},\xi_{ 7, 2}) -405 \,ari( \xi_{3, 0},\xi_{ 5, 4})\\ &-224 \,ari( \xi_{2, 1}, \xi_{8, 1})
-648 \,ari( \xi_{2, 1}, \xi_{6, 3})+630 \,ari( \xi_{5, 0}, \xi_{5, 2})+1008 \,ari( \xi_{4, 1}, \xi_{6, 1})\\
&+2160 \,ari( \xi_{4, 1}, \xi_{4, 3})
+392 \,ari( \xi_{3, 2}, \xi_{7, 0})+2430 \,ari( \xi_{3, 2},\xi_{ 5, 2}),
\end{align*}
corresponding to the polynomials
\[
f=g=(y^2x^8-y^8x^2) -3(y^4x^6 - y^6x^4),
\]
and
\begin{align*}
 0&= 192 \,ari(\xi_{1, 0}, \xi_{10, 1})+  625 \,ari( \xi_{1, 0}, \xi_{8, 3})+ 840 \,ari( \xi_{1, 0}, \xi_{6, 5}) -3600 \,ari( \xi_{3, 0}, \xi_{8, 1}) \\
 &-10500 \,ari( \xi_{3, 0}, \xi_{6, 3}) -1728 \,ari( \xi_{2, 1}, \xi_{9, 0}) -13125 \,ari( \xi_{2, 1}, \xi_{7, 2})-21000 \,ari( \xi_{2, 1}, \xi_{5, 4})\\
 &+ 10080 \,ari( \xi_{5, 0}, \xi_{6, 1})+ 21875 \,ari( \xi_{5, 0}, \xi_{4, 3})+8400 \,ari( \xi_{4, 1}, \xi_{7, 0})+ 52500 \,ari( \xi_{4, 1},\xi_{ 5, 2})\\
 &+ 39375 \,ari( \xi_{3, 2},\xi_{ 6, 1})+ 84000 \,ari( \xi_{3, 2}, \xi_{4, 3}),
\end{align*}
corresponding to the polynomials 
\[f=g=4(yx^9+y^9x) - 25(y^3x^7+y^7x^3) + 42y^5x^5.
\]
\end{Example}

As we have seen in the previous example, the relations in depth $2$ in $\biekma$ contain two special families. Applying the derivation $\delta$ from Definition \ref{def:ari_derivation} to the relations in Lemma \ref{lem:orth_rel}, we obtain 
\[
\delta^m ari(\iota(A),\xi_{1,0})=0 \qquad \text{ for } m\geq0,\ A\in \ARI^{\operatorname{pol}}_{\underline{al}/\underline{al}}.
\]
We call those relations the Eisenstein relations and denote the vector space spanned by them by $\mathfrak{R}_{Eis}$. On the other hand, there are the cusp form relations between the generators $\xi_{2n+1,0}\in \iota\big(\ARI^{\operatorname{pol}}_{\underline{al}/\underline{al}}\big)$, cf. Remark \ref{rem:induced_rel_xi}. By applying also the derivation $\delta$ to those relations, we obtain a second space of relations in depth $2$ which we denote by $\mathfrak{R}_{Cusp}$.

Because of the Jacobi identity the ideals of  the relations 
$\mathfrak{R}_{Cusp}$  and $\mathfrak{R}_{Eis}$ have  a non-trivial intersection in depth $3$. For example, we have
\begin{align*}
0&=ari\big( ari(\xi_{3,0}, \xi_{9,0})    -  3  ari( \xi_{5,0}, \xi_{7,0} ) \big), \xi_{1,0} \big)\\
 &=  - ari\big(   ari( \xi_{9,0} , \xi_{1,0} ) , \xi_{3,0} \big) -ari\big(   ari( \xi_{1,0} , \xi_{3,0} ) , \xi_{9,0} \big) \\
 &\qquad +3 ari\big(   ari( \xi_{7,0} , \xi_{1,0} ) , \xi_{5,0} \big) 
 +3 ari\big(   ari( \xi_{1,0} , \xi_{5,0} ) , \xi_{7,0} \big)
\end{align*}
where the first row is a Lie product with a relation in $\mathfrak{R}_{Cusp}$ and the second row consists of Lie products with relations in $\mathfrak{R}_{Eis}$.

More precisely we have
\[
ari\big(\xi_{1,0}, \mathfrak{R}_{Cusp} \big)  \subset ari\big(\biekma,\mathfrak{R}_{Eis}\big).
\]
Denote by $i_k$ the number of this kind of intersections in $\biekma$ in depth $3$ and weight $k$.

\begin{Lemma} \label{lem:HPS_eq_depth 3}
We have
\[
b_3(x)=\sum_{k\geq3} i_k x^k=\mathsf{D}(x)x\mathsf{S}(x)
\]  
where $\mathsf{S}(x)=\sum_{k \ge 12} \dim S_k \,x^k =\frac{x^{12}}{(1-x^4)(1-x^6)}$.
\end{Lemma}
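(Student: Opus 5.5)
We plan to count the intersections $i_k$ by describing them explicitly as the relations obtained by bracketing cusp form relations with the Eisenstein generator $\xi_{1,0}$, closed under $\delta$. First, fix a weight $k'$ and a primitive cusp form relation $R_f=\sum c_{n,m}\,ari(\xi_{2n+1,0},\xi_{2m+1,0})$ attached to $f\in\widetilde W_{k'}^+$. These are counted by $\mathsf{S}(x)$, by Theorem \ref{thm:al_al_cusp} together with Remark \ref{rem:induced_rel_xi}. Applying the Jacobi identity to $ari(R_f,\xi_{1,0})$, as in the displayed example, rewrites it as a combination of $ari(\xi_{2n+1,0},\cdot)$ applied to Eisenstein relations $ari(\xi_{2m+1,0},\xi_{1,0})$. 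This gives an element of $ari(\xi_{1,0},\mathfrak{R}_{Cusp})\cap ari(\biekma,\mathfrak{R}_{Eis})$ of weight $k'+1$, which accounts for the factor $x\mathsf{S}(x)$. Then we apply $\delta^m$ and use the Leibniz rule: $\delta$ is a derivation of $ari$ and $\delta(\xi_{1,0})=\xi_{2,1}$. This produces, in each weight $k'+1+2m$, an intersection element
\[
\delta^m\big(ari(\xi_{1,0},R_f)\big)=\sum_{j}\binom{m}{j}ari\big(\xi_{j+1,j},\delta^{m-j}R_f\big),
\]
and likewise on the Eisenstein side. This gives the factor $\mathsf{D}(x)$. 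Hence the lower bound $\sum_k i_k x^k\geq \mathsf{D}(x)x\mathsf{S}(x)$ holds coefficientwise, provided these elements are linearly independent as syzygies.

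For independence and the upper bound, we work in the free Lie algebra on the generators $\xi_{2n+1+m,m}$ in depth $3$. There we compute the dimension of the intersection of the degree-$3$ parts of the two ideals generated by $\mathfrak R_{Cusp}$ and $\mathfrak R_{Eis}$. Via Theorem \ref{thm:HPS_eq_depth2} and its proof, these relations live in $\ker(ari)\subset\biekma_1\wedge\biekma_1$, identified with even bi-period polynomials. The Eisenstein part corresponds to pairs $(f,g)$ with one entry equal to $x^{k-2}-y^{k-2}$, and the cusp part to pairs with $g$ Eisenstein and $f$ cuspidal. An element of the intersection in depth $3$ is a syzygy $\sum ari(\xi_a,r_a)=\sum ari(\xi_b,s_b)$ with $r_a\in\mathfrak R_{Cusp}$ and $s_b\in\mathfrak R_{Eis}$. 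Taking Koszul-type homology of the quadratic relation space, such syzygies are governed by $\biekma_1\otimes\mathfrak R\to \Lambda^3\biekma_1$. We will show that the only classes come from $\xi_{1,0}$-twists of cusp relations and their $\delta$-images. For this we use that $\xi_{1,0}$ is the unique generator of weight $1$, and that weight and the $\delta$-grading (the power of $u_1v_1$) separate the contributions.

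The main obstacle is this upper bound and independence step: we must rule out accidental further intersections, for example $\delta$-images of distinct cusp relations combining to zero, or non-Eisenstein primitive bi-period relations (the $f=g$ cuspidal ones) entering the intersection. We would first try to prove injectivity of $R\mapsto ari(\xi_{1,0},R)$ on $\mathfrak R_{Cusp}$ modulo the free part. To do this, we would evaluate depth-$3$ components in the variables $\binom{u_1,u_2,u_3}{v_1,v_2,v_3}$ and specialize to $v$-terms of lowest degree, which reduces to the mould statement in $\ekma$ from the discussion after Proposition 3.? on $(\ARI^{\operatorname{pol}}_{\underline{al}/\underline{al}})^+$ (the $x\mathsf{S}(x)y^3$ term). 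The remaining $\delta$-multiplicity would then be checked by the observation that $\delta$ is injective, since it is multiplication by the nonzero polynomial $u_1v_1+u_2v_2+u_3v_3$, and that it commutes with the identification of relations.
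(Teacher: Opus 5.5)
Your first paragraph is essentially the paper's proof. The paper establishes the lemma only by observing three things: the cusp-form relations inherited via $\iota$ are counted by $\mathsf{S}(x)$; bracketing them with $\xi_{1,0}$ and applying the Jacobi identity shifts the weight by one, giving the factor $x$; and the $\delta$-derivatives of these intersections give $\mathsf{D}(x)$. Since the paper defines $i_k$ as the number of intersections \emph{of this kind}, that count is the entire content of the lemma.

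Your Leibniz expansion $\delta^m\big(ari(\xi_{1,0},R_f)\big)=\sum_j\binom{m}{j}ari(\xi_{j+1,j},\delta^{m-j}R_f)$ is the right way to make the paper's ``derivations of the intersections'' precise. Formally it is this element, not $ari(\xi_{1,0},\delta^m R_f)$, that lies in both ideals.

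The independence you leave as a proviso is easy to close. Work in the free Lie algebra on the generators. The multidegree part of $\sum c_{f,m}\,\delta^m ari(\xi_{1,0},R_f)$ containing $\xi_{1,0}$ is $[\xi_{1,0},\sum c_{f,m}\delta^m R_f]$, since the Leibniz terms with $j\ge 1$ carry $\xi_{j+1,j}$ instead. Bracketing with a free generator is injective on degree-$2$ elements. Finally, the $\delta^m R_f$ are independent: the $R_f$ are primitive, and $\delta$ acts on bi-period polynomials as multiplication by $u_1v_1+u_2v_2$, so Lemma \ref{lem:primitive_subspace} applies.

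Everything after that aims at a stronger claim: that these elements exhaust \emph{all} depth-$3$ intersections, i.e.\ all relations among the quadratic relations of $\biekma$ in depth $3$. The lemma does not assert this, and the paper does not prove it. It is exactly what the paper only expects: it is built into Conjecture \ref{conj:q_ekma_dim} and checked numerically in depth $3$ up to weight $35$.

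As a proof of that stronger claim, your sketch has a genuine gap:
\begin{itemize}
\item Specializing to $v=0$ kills every generator $\xi_{a,b}$ with $b\ge 1$. It therefore cannot detect intersections involving $\delta$-images, or the primitive relations $P_{f,g}$ with $f,g$ both cuspidal.
\item The mould-side $x\mathsf{S}(x)y^3$ term you want to reduce to is itself obtained in the paper only under Conjecture \ref{conj:BK_lie}.
\item Injectivity of $\delta$ on bimoulds does not exclude new syzygies in a given weight.
\end{itemize}
A minor correction: the primitive Eisenstein relations correspond to $f=g=p_k$ (Example \ref{exa:bekma_relations}). Pairs with exactly one Eisenstein entry are the cusp relations. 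For the lemma as stated, simply drop the upper-bound part.
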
     

\begin{proof} The series $\mathsf{S}(x)$ is exactly the Hilbert-Poincare series of the cusp forms and thus counts the number of period polynomial relations induced by the embedding $\iota$. Since the intersections occur when considering the Lie product of these period polynomial relations and $\xi_{1,0}$, the series $\mathsf{S}(x)$ needs to be multiplied with $x$. Finally, there exist derivations of the intersections of relations illustrated above, this is encoded in the term $\mathsf{D}(x)$. 
\end{proof}

We expect that all relations and intersections of relations in the Lie algebra $\biekma$ are described in Theorem \ref{thm:HPS_eq_depth2}, and Lemma \ref{lem:HPS_eq_depth 3}. Together with Lemma \ref{lem:HPS_eq_depth1}, this leads to the following dimension conjecture.

\begin{Conjecture}\label{conj:q_ekma_dim}
Define the coefficients $e_{k,d}$ by
\begin{align*}
\sum_{k,d\geq0} e_{k,d} x^k y^d 
&= \frac{1}{1-b_1(x)y+b_2(x)y^2-b_3(x)y^3}.
\end{align*}
Then, we have
\[
\dim \mathcal{U}(\biekma)_{k,d}=e_{k,d}.
\]
\end{Conjecture}

A direct consequence of Lemma \ref{lem:HPS_eq_depth1} and Theorem \ref{thm:HPS_eq_depth2} is the following.

\begin{Theorem}\label{thm:q_ekma_dim}
The numbers $e_{k,d}$ from Conjecture \ref{conj:q_ekma_dim} satisfy 
\begin{align*}
\dim \mathcal{U}(\biekma)_{k,d} &= e_{k,d}   \quad \mbox{ for }  d=1,2  \mbox{ and all } k\ge 0\, .\\
\end{align*}
\end{Theorem}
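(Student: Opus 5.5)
We extract the coefficients of $y^1$ and $y^2$ from the generating series on both sides and compare them.

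\emph{Right-hand side.} Write the series as $1/(1-T)$ with
\[
T=b_1(x)y-b_2(x)y^2+b_3(x)y^3 .
\]
Since $b_3$ only contributes from $y^3$ on, expanding $1/(1-T)$ gives
\[
\sum_k e_{k,1}x^k=b_1(x),\qquad \sum_k e_{k,2}x^k=b_1(x)^2-b_2(x).
\]

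\emph{Left-hand side.} $\biekma$ is bigraded by weight and depth: the generators are homogeneous, and $ari$ adds weights and depths by Theorem \ref{thm:ari_al_swap}. It is also positively graded in depth, because every generator has depth $1$. By the Poincar\'e--Birkhoff--Witt theorem, the bigraded Hilbert series of $\mathcal{U}(\biekma)$ is
\[
\prod_{k,d}(1-x^ky^d)^{-\dim(\biekma)_{k,d}}.
\]
Here we must use the super-sign convention for odd-weight pieces if $x$ carries parity. I would sidestep this by noting that weight parity equals depth parity for nonzero pieces, so the sign can be absorbed consistently. Alternatively one can simply work with the tensor-algebra presentation described next.

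\emph{Degree-$1$ and degree-$2$ parts of $\mathcal{U}$.} A cleaner route uses that $\mathcal{U}(\biekma)$ is the quotient of the tensor algebra $T(V)$ on the generators by the two-sided ideal generated by the Lie relations, where $V=\biekma_1$. In depth $1$ we get $\mathcal{U}_1=V$, and Lemma \ref{lem:HPS_eq_depth1} gives its Hilbert series $b_1(x)$. In depth $2$ we get
\[
\mathcal{U}_2=V\otimes V\big/ \text{(relations of depth 2)} .
\]
Equivalently, by PBW, $\mathcal{U}_2\cong \operatorname{Sym}^2V\oplus\biekma_2$. The exact sequence \eqref{eq:ari_l2} gives
\[
\dim\biekma_2=\dim(V\wedge V)-\dim\ker(ari).
\]
Since $\operatorname{Sym}^2V\oplus\wedge^2V=V\otimes V$, this yields the Hilbert series
\[
b_1(x)^2-\big(\text{series of }\ker(ari)\big).
\]

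\emph{Identifying the kernel.} Theorem \ref{thm:HPS_eq_depth2} identifies $\ker(ari)$ with the even bi-period polynomials, whose series is $b_2(x)$. The primitive part is counted by $(\dim M_k)^2$, and the factor $\mathsf{D}(x)$ accounts for the $\delta$-images. Hence
\[
\sum_k\dim\mathcal{U}(\biekma)_{k,2}x^k=b_1^2-b_2=\sum_k e_{k,2}x^k .
\]

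\emph{Main obstacle.} The only delicate point is the graded-sign bookkeeping in $\operatorname{Sym}^2$ versus $\wedge^2$ when $V$ has a mixed weight-parity. It must match the convention implicit in the generating function $1/(1-b_1y+\dots)$, which corresponds to the tensor-algebra count $\dim V^{\otimes 2}$. I will handle it by working directly in $T(V)$, so no sign issue arises: the depth-$2$ relations are exactly the elements $a\otimes b-b\otimes a-[a,b]$, and the relations of the Lie algebra contribute precisely the dimension of $\ker(ari)$.
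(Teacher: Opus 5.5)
Your argument is correct and is essentially the paper's: the paper states the theorem as a direct consequence of Lemma \ref{lem:HPS_eq_depth1} (depth-$1$ generators counted by $b_1$) and Theorem \ref{thm:HPS_eq_depth2} (depth-$2$ relations counted by $b_2$), and this is exactly your comparison of the $y$- and $y^2$-coefficients $b_1$ and $b_1^2-b_2$ with $\dim V$ and $\dim V^{\otimes 2}-\dim\ker(ari)$. The super-sign worry is unfounded, because $ari$ is an ordinary Lie bracket and PBW holds with the usual symmetric powers; your tensor-algebra count avoids the issue in any case.
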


Experiments using Pari/GP with parallel algorithms support the dimension conjecture, for example
$\dim \biekma_{k,d}$ coincides with the expected number from Conjecture \ref{conj:q_ekma_dim} for
up to weight 35 in depth 3 and for higher depth as shown in the following table.
\begin{table}[H]\footnotesize 
\resizebox{1.0\textwidth}{!}{
\begin{tabular} {c|ccccccccccccccccccccccccccc} $d \backslash k$&1&2&3&4&5&
6&7&8&9&10&11&12&13&14&15&16&17&18&19&20&21&22&23&24&25&26&27
\\ \hline \noalign{\medskip}1&1&0&2&0&3&0&4&0&5&0&6&0&7&0&8&0&9&0&10&0&11&0&
12&0&13&0&14\\ \noalign{\medskip}2&0&0&0&1&0&2&0&7&0&12&0&20&0&31&0&47
&0&63&0&89&0&115&0&148&0&186&0\\ \noalign{\medskip}3&0&0&0&0&1&0&3&0&8
&0&22&0&47&0&89&0&161&0&270&0&430&0&663&0&983&0&1414
\\ \noalign{\medskip}4&0&0&0&0&0&1&0&3&0&11&0&30&0&80&0&182&0&392&0&
764&0&1427&0&2507&0&4246&0\\ \noalign{\medskip}5&0&0&0&0&0&0&1&0&4&0&
14&0&44&0&124&0&324&0&780&0&1746&0&3666&0&\bb{7285}&0&\bb{13797}
\\ \noalign{\medskip}6&0&0&0&0&0&0&0&1&0&4&0&17&0&58&0&184&0&519&0&
\bb{1386}&0&\bb{3417}&0&\bb{7969}&0&\bb{17509}&0  \\
 \end{tabular}}
\caption{$\dim \biekma_{k,d}$, computed / \bb{conjectured}  }\label{tab:dim_biekma}
\end{table}
The numbers $e_{k,d}$ and the 
entries in the table are related via a Möbius transform.

\subsection{The \texorpdfstring{$\bicarma$}{bicarma} Lie algebra} \label{subsec:bcarma}

Here we report on a computational approach to obtain bimoulds in $\BARI_{\underline{il},swap}^{\operatorname{pol}}$ of depth $4$ using the Conjecture \ref{conj:intro_ilswap}.

For this construction the obvious refinement of Lemma \ref{lem:grD_ilswap_alswap} to alternil, swap invariant, polynomial bimoulds turns out to be crucial, i.e. that we have
\[
\gr_D\big(\BARI_{\underline{il},swap}^{\operatorname{pol}},uri\big) \subset \big( \BARI_{\underline{al},swap}^{\operatorname{pol}},ari\big).
\]
We expect $\biekma \subset \gr_D\big(\BARI_{\underline{il},swap}^{\operatorname{pol}},uri\big)$. In particular, for each ekma bimould $\xi_{a,b} \in \biekma$ an extension to a swap invariant, alternil bimould 
\[
\widehat\xi_{a,b} = \big( \xi_{a,b}(w_1) ,\, \xi_{a,b}^{(2)}(w_1,w_2),\, \xi_{a,b}^{(3)}(w_1,w_2,w_3), \ldots \big) \in \BARI^{\operatorname{pol}}_{\underline{il},swap}
\]
is expected. It depends on the choices made for
$\widehat\xi_{a-b,0}= \iota\big(\widehat\xi_{a-b}\big)$ and then applying $\widehat\delta^b$ from Conjecture \ref{conj:BILS_derivation} might be a natural choice. 

The main idea is to lift the relations  satisfied by the ekma bimoulds $\xi_{a,b}$ to equations for their lifts $\widehat\xi_{a,b}$ in $\BARI^{\operatorname{pol}}_{\underline{il},swap}$ and then project to the component in depth $4$.   
If we apply this procedure to the second relation of Example \ref{exa:bekma_relations}, then we obtain 
a bimould $\chi_{f,g}\in \BARI^{\operatorname{pol}}_{\underline{al},swap}$ in depth $4$ and weight $12$ from
\begin{align} \label{eq:def_bcarma}
uri\big( \widehat\xi_{3,0} , \widehat\xi_{9,0} \big)
- 3 \,uri\big(  \widehat\xi_{5,0}, \widehat\xi_{7,0} \big) =
\big( 0,0,0, \chi_{f,g},\ldots \big).
\end{align}

Observe the similarity to the construction of the carma moulds we described in Subsection \ref{subsec:carma}. Actually, by Theorem \ref{thm:uri_alil} we get a compatibility to the construction described in Subsection \ref{subsec:carma}. In particular
$\chi_{f,g} = \iota(\chi_\Delta)$ where 
$\Delta$ is the discriminant, i.e. the only cusp form of weight $12$. Note that Conjecture \ref{conj:uri_il_swap} was used  to make sense of the left hand side of \eqref{eq:def_bcarma}.

By Theorem \ref{thm:HPS_eq_depth2} each  primitive quadratic relation in $\biekma$ corresponds to a pair of period polynomials $f,g\in W^+$ or $f,g\in W^-$. 
As $\widehat{\xi}_{a,b}$ and $\xi_{a,b}$ agree in depth $1$, the component in depth $2$ vanishes. Then, the depth $3$ component must be of odd degree and hence vanishes by Lemma \ref{lem:al_swap_parity}\footnote{The statement actually holds for bimoulds, see the proof of \cite[Lemma 2.5.5]{SchnepsARI}}. By Remark \ref{rem:il_to_al} the first nontrivial component $\chi_{f,g}$ is alternal, and by construction also swap invariant. We refer to $\chi_{f,g}$ as carma bimoulds.

By Theorem \ref{thm:HPS_eq_depth2}, we have $( \dim M_k )^2$ primitive quadratic relations of weight $k$ in $\biekma$.
In Conjecture \ref{conj:bari_ilsw} we expect 
the dimension of the space spanned by the new  bimoulds $\chi_{f,g}$ in depth $4$ is
only $(\dim S_{k})^2$.   Thus,  there should be  $2 \dim S_{k} +1$  linear combinations within   these quadratic relations in $\biekma$ that extend to relations in the Lie algebra $\BARI_{\underline{il},swap}^{\operatorname{pol}}$. 
Finally, we observe that the non-primitive quadratic relations in $\biekma$ correspond to $\delta ^m \chi_{f,g}$ with $f,g$ period polynomials of lower weight.

If the quadratic relation in $\biekma$ with respect to the ari brackets lifts to a relations in $\BARI^{\operatorname{pol}}_{\underline{il},swap}$ with respect to the uri bracket, then of course also the depth 4 component, and any higher depth component, vanishes. 
In particular, this holds for the Eisenstein relation given in Lemma \ref{lem:esr_relation_uri}. 
However, since there are so many choices possible in this construction, e.g. choosing a basis for the space of period polynomials, choosing the isotropic vector needed for mapping a pair of period polynomials to a bi-period polynomial or choosing the generators $\widehat \xi_{2n+1}$ for $\ARI^{\operatorname{pol}}_{\underline{al}*\underline{il}}$, 
we can only expect, that the spaces
\begin{align}\label{eq:carma-spaces}
\Big( \langle \delta^m \chi_{f,g} \,| \,f,g\in W^{\pm}_{k-2m} ,\, m \ge 0 \,
\rangle_\Q  + \biekma_{k,4} \,\Big) \Big \slash  \,   \biekma_{k,4}
\end{align}
have the correct dimensions. 

We calculated in depth $4$ the dimension of the spaces spanned by the $\delta^m\chi_{f,g}$ for weight $k\le 26$. 
\begin{center}
 \begin{tabular}{c|ccccccccc}
    weight $k$ & 12&14&16&18&20&22&24&26 \\
   $\dim$ & 1 & 1 & 2 & 3 & 4 & 5 & 9 &10 
  \end{tabular}   
\end{center}


For these experiments we used a computer at DESY Hamburg with 128 cores and 1 terabyte RAM. For $k=26$ the calculation took about a week.  

Let $\bicarma$ 
be the Lie subalgebra generated by the 
basis for \eqref{eq:carma-spaces} for each $k\ge 12$ considered as a subspace of $\big( \BARI^{\operatorname{pol}}_{\underline{al},swap}, ari\big)$.


\begin{Conjecture} \label{conj:bcarma}
The Lie algebra $\bicarma$ is free and we have
 \begin{align*}
\sum_{k,d\geq0} \dim \mathcal{U}(\bicarma)_{k,d} \, x^k y^d &=
\frac{1}{1-b_4(x) y^4},
\end{align*}
where 
\begin{align*} b_4(x) &=  \mathsf{D}(x) \, \sum_{k \ge 12} (\dim  S_{k})^2   \, x^k.
\end{align*}
\end{Conjecture}

\section{Lie subalgebras of \texorpdfstring{$(\BARI_{il}^{\operatorname{rat}},uri)$}{BARIil}}

\subsection{An embedding of
\texorpdfstring{$(\ARI^{\operatorname{pol}}_{\underline{al}*\underline{il}}, ari)$}{ARIalil} 
into  \texorpdfstring{$(\BARI_{il }^{\operatorname{rat}},uri)$}{BARIil}
}

For $A\in \ARI_{\underline{al}\ast\underline{il}}^{\operatorname{pol}}$, denote by $C_A$ the constant mould such that $swap(A)+C_A$ is alternil. Define the map
\begin{align} \label{eq:def_iota_map}
\iota:\ARI_{\underline{al}*\underline{il}}^{\operatorname{pol}} &\rightarrow \BARI_{\underline{il},swap}^{\operatorname{pol}} \\
A&\mapsto A+swap(A)+C_A.  \nonumber
\end{align} 
In this section, we show that $\iota$ is compatible with the $ari$ and $uri$ bracket, i.e., 
\[
\iota(ari(A,B))=uri(\iota(A),\iota(B).
\]
In particular, assuming Conjecture \ref{conj:intro_ilswap} we deduce that \eqref{eq:def_iota_map} would be a Lie algebra morphism $\iota:(\ARI_{\underline{al}*\underline{il}}^{\operatorname{pol}},ari) \rightarrow (\BARI_{\underline{il},swap}^{\operatorname{pol}},uri)$. To prove the statement, we first need some lemmas.

\begin{Lemma}\label{aritu_of_v}
Let $A\in \ARI$ and $Q\in \overline{\ARI}$. Then
\begin{equation}\label{aritandlu}
\arit(A)(Q)=lu(A,Q).
\end{equation}
\end{Lemma}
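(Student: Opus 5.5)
I will prove the identity directly from Definition \ref{def:arit}, using that $A$ depends only on the upper variables $u_i$ and $Q$ only on the lower variables $v_i$. Here $arit(A)(Q)$ is $arit$ applied with $B=A$ to the bimould $Q$. The definition reads
\[
arit(A)(Q)(w)=\sum_{\substack{w=abc\\ b,c\neq\emptyset}} Q(a\lceil c)\,A(b\rfloor)-\sum_{\substack{w=abc\\ a,b\neq\emptyset}} Q(a\rceil c)\,A(\lfloor b).
\]

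The first step is to strip the flexions. The flexions $b\rfloor$ and $\lfloor b$ only alter lower variables, so $A(b\rfloor)=A(\lfloor b)=A(b)$. The flexions $\lceil c$ and $a\rceil$ only alter upper variables, so $Q(a\lceil c)=Q(a\rceil c)=Q(ac)$. After this, both sums have the same summand $Q(ac)A(b)$ and differ only in their index sets.

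The next step is to compare the index sets. The first sum runs over decompositions with $b,c\neq\emptyset$, the second over those with $a,b\neq\emptyset$. Their common part, where $a,b,c$ are all nonempty, cancels. What survives is:
\begin{itemize}
\item from the first sum, the terms with $a=\emptyset$ and $b,c\neq\emptyset$, giving $A(b)Q(c)$;
\item from the second sum, the terms with $c=\emptyset$ and $a,b\neq\emptyset$, giving $-Q(a)A(b)$.
\end{itemize}

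I also need to account for empty factors. Bimoulds in $\ARI$ and $\overline{\ARI}$ have vanishing depth-$0$ component, so $A(\emptyset)=Q(\emptyset)=0$. Hence $\sum_{w=bc}A(b)Q(c)$ restricted to $b,c\neq\emptyset$ is exactly $mu(A,Q)(w)$. Likewise the second family gives $mu(Q,A)(w)$. The difference is $mu(A,Q)-mu(Q,A)=lu(A,Q)$, as claimed.

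The only point needing care is the convention that the flexions are trivial when the neighbouring block is empty. This is consistent with the above, because the flexion merely adds or subtracts variables of adjacent blocks, and in every case only variables on which the respective mould does not depend are affected. I do not expect any real obstacle; the argument is a direct bookkeeping of the index sets.
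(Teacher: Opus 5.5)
Your proof is correct and follows the paper's argument. Like the paper, you strip the flexions (since $A$ sees only upper and $Q$ only lower variables), cancel the decompositions with $a,b,c$ all nonempty, and identify the surviving $a=\emptyset$ and $c=\emptyset$ terms with $mu(A,Q)-mu(Q,A)=lu(A,Q)$. Your explicit remark that the boundary terms of $mu$ vanish because $A(\emptyset)=Q(\emptyset)=0$ is a detail the paper leaves implicit. It is harmless in any case, since those boundary terms also cancel inside $lu$ by commutativity of $R$.
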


\begin{proof} For arbitrary bimoulds $A,Q\in \BARI$, the mould 
$arit(A)(Q)$ is given by
\begin{equation}\label{aritdef}
arit(A)(Q)(w)=\sum_{\substack{w=abc\\b,c\ne\emptyset}}
Q(a\lceil c)A(b\rfloor)-\sum_{\substack{w=abc \\ a,b\ne \emptyset}}
Q(a\rceil c)A(\lfloor b).
\end{equation}
But if $A$ is in $\ARI$ the lower flexions disappear since they concern
only the $v$-variables, and similarly if $Q\in \overline{\ARI}$ the
upper flexions disappear since they concern only the $u$-variables. So
we have
$$arit(A)(Q)(w)=\sum_{\substack{w=abc \\ b,c\ne\emptyset}}
Q(ac)A(b)-\sum_{\substack{w=abc\\ a,b\ne \emptyset}}
Q(ac)A(b).$$
All the terms in this sum with $a,b,c\ne \emptyset$ cancel out, leaving
only the terms in the first sum with $a=\emptyset$ and the terms in the 
second sum with $c=\emptyset$, so we have
$$arit(A)(Q)(w)=\sum_{\substack{w=bc\\ b,c\ne\emptyset}}
Q(c)A(b)-\sum_{\substack{w=ab\\ a,b\ne \emptyset}} Q(a)A(b).$$
Renaming the decomposition $w=bc$ as $w=ab$ in the first sum, this becomes
\[arit(A)(Q)(w)=\sum_{\substack{w=ab \\ a,b\ne\emptyset}}
Q(b)A(a)-\sum_{\substack{w=ab\\a,b\ne \emptyset}} Q(a)A(b)=lu(A,Q).
\qedhere \]
\end{proof}

\begin{Lemma}\label{aritarit}
Let $A\in \ARI$ and $Q\in \overline{\ARI}^{\operatorname{rat}}$, and let $D=\arit(\lopil)$. Then
$$\arit\bigl(D^rA\bigr)(Q)=
lu\bigl(D^rA,Q\bigr).$$
\end{Lemma}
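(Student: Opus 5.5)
The plan is to isolate the one property of $A$ that the proof of Lemma \ref{aritu_of_v} really uses, and then to show that $D=\arit(\lopil)$ preserves this property. Once $r\geq1$, the bimould $D^rA$ is no longer a mould: the lower flexions of $\lopil$ bring in $v$-variables. So Lemma \ref{aritu_of_v} cannot be quoted directly. However, its proof only uses that the $v$-flexions acting on $A$ have no effect. We therefore call a bimould $M\in\BARI^{\operatorname{rat}}$ \emph{$v$-translation invariant} if
\[
M\binom{u_1,\ldots,u_d}{v_1+t,\ldots,v_d+t}=M\binom{u_1,\ldots,u_d}{v_1,\ldots,v_d}
\]
for all $d$ and all $t$. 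Every $A\in\ARI$ is trivially $v$-translation invariant.

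\emph{Step 1: for every $v$-translation invariant $M$ and every $Q\in\overline{\ARI}^{\operatorname{rat}}$, we have $\arit(M)(Q)=lu(M,Q)$.} We expand $\arit(M)(Q)$ via \eqref{aritdef}. Since $Q$ depends only on the $v$'s, its upper flexions disappear, so every occurrence of $Q$ is $Q(ac)$. The lower flexions satisfy $b\rfloor=b-(v_{\text{first of }c})$ and $\lfloor b=b-(v_{\text{last of }a})$, where the shift is applied to all lower entries of $b$; these are genuine shifts whenever $c\neq\emptyset$, respectively $a\neq\emptyset$. By translation invariance, $M(b\rfloor)=M(b)=M(\lfloor b)$ in every term, including the boundary terms with $a=\emptyset$ in the first sum and $c=\emptyset$ in the second. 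We are then in exactly the situation of the proof of Lemma \ref{aritu_of_v}. The terms with $a,b,c\neq\emptyset$ cancel. The first sum contributes $\sum_{w=bc}M(b)Q(c)$ and the second contributes $-\sum_{w=ab}Q(a)M(b)$, which together give $mu(M,Q)-mu(Q,M)=lu(M,Q)$.

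\emph{Step 2: $D$ preserves $v$-translation invariance.} Let $M$ be $v$-translation invariant, and consider
\[
D(M)(w)=\sum_{b,c\neq\emptyset}M(a\lceil c)\,\lopil(b\rfloor)-\sum_{a,b\neq\emptyset}M(a\rceil c)\,\lopil(\lfloor b).
\]
Under a simultaneous shift $v_i\mapsto v_i+t$, the words $a\lceil c$ and $a\rceil c$ keep their original lower entries $v_i$, since $\lceil$ and $\rceil$ only modify $u$'s. Hence they are shifted uniformly, and the factor $M$ is unchanged. The arguments of $\lopil(b\rfloor)$ and $\lopil(\lfloor b)$ are differences $v_i-v_j$ when the flexion is nontrivial, so these factors are invariant. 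In the degenerate cases ($c=\emptyset$ in the first sum, which is excluded anyway, or $a=\emptyset$ in the first sum) the argument is $b$ itself. There we use that $\lopil$ is itself translation invariant as a function of $v_1,\ldots,v_d$, which I expect to be the main point to check. The factor $v_1(v_1-v_2)\cdots(v_{d-1}-v_d)v_d$ and the numerator $v_1+\cdots+v_d$ are not translation invariant individually. So if this fails, the fallback is to avoid using it. In the term $M(\lceil c)\,\lopil(b)$ with $a=\emptyset$, the decomposition is $w=bc$, and I would try to pair these terms with the $a\neq\emptyset$ terms of the second sum whose flexed $\lopil$ argument matches. Alternatively, I would replace translation invariance by the weaker property that is actually needed in Step 1, namely $M(b\rfloor)=M(b)=M(\lfloor b)$ for the specific shifts appearing there, and verify that $D$ preserves this weaker property.

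Granting Steps 1 and 2, induction on $r$ shows that $D^rA$ is $v$-translation invariant for all $r\geq0$, starting from $A\in\ARI$. Step 1 applied to $M=D^rA$ then gives $\arit(D^rA)(Q)=lu(D^rA,Q)$, which is the claim.
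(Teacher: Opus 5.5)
Your route is correct and genuinely different from the paper's. However, Step~2 as written invokes a false statement, in a case that never actually occurs.

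\textbf{The degenerate case in Step 2.} In $\arit(\lopil)(M)$ the $\lopil$-factor of the first sum is $\lopil(b\rfloor)$. The flexion $b\rfloor$ is trivial only when $c=\emptyset$, and that sum excludes $c=\emptyset$. The choice $a=\emptyset$ only affects $\lfloor b$, which occurs only in the second sum, where $a\neq\emptyset$. So there is no degenerate case: every $\lopil$-factor is a function of differences $v_i-v_j$. You already observed this correctly in Step~1, where you noted the shifts are genuine because $c\neq\emptyset$, resp.\ $a\neq\emptyset$. The correction matters, because the property you proposed to check is false. $\lopil$ is not $v$-translation invariant: in depth $1$ it equals $c_1/v_1=-\frac{1}{2v_1}$. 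With the case analysis fixed, Step~2 needs no property of $\lopil$ at all, and your fallbacks are unnecessary.

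\textbf{Comparison with the paper's proof.} The paper argues by operator induction.
\begin{enumerate}
\item[(a)] It sets $F_r=\arit(D^rA)-\operatorname{ad}_{lu}(D^rA)$.
\item[(b)] It applies the induction hypothesis at $Q=\lopil$ to get $\ari(\lopil,D^rA)=-D^{r+1}A$.
\item[(c)] It combines this with $[\arit(P),\arit(Q)]=-\arit(\ari(P,Q))$ and the fact that $\arit(\lopil)$ is a $\lu$-derivation, obtaining $F_{r+1}=[\arit(\lopil),F_r]$.
\item[(d)] It concludes because $\arit(\lopil)$ maps $\overline{\ARI}^{\operatorname{rat}}$ into itself.
\end{enumerate}
That argument uses essentially that $\lopil$ is itself a $v$-mould.

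You replace all of this with an invariant: $v$-translation invariance. The computation of Lemma~\ref{aritu_of_v} goes through verbatim for this class. The class is also preserved by $\arit(P)$ for every bimould $P$, since your Step~2 never uses what $P$ is.

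What your route buys:
\begin{enumerate}
\item[(i)] It is more elementary, with no commutator identity for $\arit$.
\item[(ii)] It gives a stronger statement: $\arit(M)(Q)=\lu(M,Q)$ for $M=\arit(P_r)\cdots\arit(P_1)(A)$ with arbitrary bimoulds $P_1,\ldots,P_r$.
\item[(iii)] It isolates the real reason the lemma holds.
\end{enumerate}
The paper's version instead stays within the $\arit$/$\ari$ operator calculus that is reused in Lemma~\ref{lem:Adaripil_is_mu_aut}.
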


\begin{proof} Let $F_r$ denote the operator on $\BARI^{\operatorname{rat}}$ given by
$$F_r=\arit(D^rA)- \operatorname{ad}_{lu}(D^rA).$$
Then we claim that restricted to the space $\overline{\ARI}^{\operatorname{rat}}$ of 
$v$-moulds, the operator $F_r$ is zero.  We prove it by induction on $r$.
The base case $r=0$ is given by the previous lemma. Suppose now that
$F_r$ is zero on $\overline{\ARI}^{\operatorname{rat}}$, and let us compute $F_{r+1}$.
Since $\lopil\in \overline{\ARI}^{\operatorname{rat}}$, we have
$$\arit(D^rA)(\lopil)=lu(D^rA,\lopil)$$
by the induction hypothesis. This shows that
\begin{align}\label{id1}
\ari(\lopil,D^rA)&=\arit(D^rA)(\lopil)-\arit(\lopil)(D^rA)+lu(\lopil,D^rA)\notag\\
&=-\arit(\lopil)(D^rA)\notag\\
&=-D^{r+1}A.
\end{align}
Using the standard operator identity
$$[\arit(P),\arit(Q)]=-\arit\bigl(\ari(P,Q)\bigr)$$
with $P=\lopil$, $Q=D^rA$, we find that
\begin{align}\label{id2}
[\arit(\lopil),\arit(D^rA)]&=-\arit\bigl(\ari(\lopil,D^rA)\bigr)\notag\\
&=\arit\bigl(D^{r+1}A\bigr)
\end{align}
where the last line follows from \eqref{id1}.
Now consider $[\arit(\lopil),\operatorname{ad}_{lu} (D^rA)]$ as an operator on $\BARI^{\operatorname{rat}}$. For $Q\in \BARI^{\operatorname{rat}}$, we have
\begin{align*}
[\arit(\lopil),\operatorname{ad}_{lu}(D^rA)](Q)&=\arit(\lopil)\big(\lu(D^rA,Q)\big)
-lu(D^rA,\arit(\lopil)(Q))\\
&=\lu\bigl(\arit(\lopil)(D^rA),Q)\\
&=\lu\bigl(D^{r+1}A,Q),
\end{align*}
since $\arit(\lopil)$ is a $\lu$-derivation.
This shows that we have the equality of operators
\begin{equation}\label{id3}
[\arit(\lopil),\operatorname{ad}_{lu}(D^rA)]=\operatorname{ad}_{lu}(D^{r+1}A).
\end{equation}
Subtracting \eqref{id3} from \eqref{id2} allows us to calculate the bracket
of the operator $\arit(\lopil)$ with $F_r$ as 
follows:
\begin{equation}\label{id4}
[\arit(\lopil),F_r]=\arit(D^{r+1}A)-\operatorname{ad}_{lu}(D^{r+1}A)=F_{r+1}.
\end{equation}
Now let us apply the left-hand operator to a mould $Q\in \overline{\ARI}^{\operatorname{rat}}$.
By the induction hypothesis, $F_r(Q)=0$ and thus $\arit(\lopil)(F_r(Q))=0$. 
Also, since $\arit(\lopil)(Q)$ also lies in $\overline{\ARI}^{\operatorname{rat}}$, we have
$F_r\bigl(\arit(\lopil)(Q)\bigr)=0$. Thus $F_{r+1}(Q)=0$ for all
$Q\in \overline{\ARI}^{\operatorname{rat}}$, which completes the proof.
\end{proof}

\begin{Lemma}\label{lem:Adaripil_is_mu_aut}
Restricted to the subspace $\ARI\subset
\BARI$, the operator $\Adari(\pil)$ is an automorphism for the $mu$ multiplication.
\end{Lemma}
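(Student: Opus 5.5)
We need to show that $\Adari(\pil)$ restricted to $\ARI$ is a $mu$-automorphism, i.e.\ $\Adari(\pil)(mu(A,B))=mu(\Adari(\pil)(A),\Adari(\pil)(B))$ for $A,B\in\ARI$. The plan is to pass to the Lie algebra level. Since $\pil=\exp_{ari}(\lopil)$, identity \eqref{eq:Ad_ad_ari_exp} gives $\Adari(\pil)=\exp(\adari(\lopil))$. So it suffices to show that $\adari(\lopil)$ acts on $\ARI$ (and on the moulds it generates from $\ARI$) as a derivation for $mu$. The exponential of a $mu$-derivation is then a $mu$-automorphism, by the usual Leibniz/binomial argument, working in the completion where the series converges by depth/weight filtration.

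To compute $\adari(\lopil)$ on $\ARI$, we write, for $P\in\ARI$,
\[
\ari(\lopil,P)=\arit(P)(\lopil)-\arit(\lopil)(P)+lu(\lopil,P).
\]
By Lemma \ref{aritu_of_v} (with $Q=\lopil\in\overline{\ARI}^{\operatorname{rat}}$) the first term equals $lu(P,\lopil)$, which cancels the third. Hence $\adari(\lopil)(P)=-\arit(\lopil)(P)=-D(P)$ for $P\in\ARI$. The operator $D=\arit(\lopil)$ is a $lu$-derivation. Inspecting Definition \ref{def:arit}, it is in fact a $mu$-derivation on all of $\BARI$ (Remark \ref{rem:arit_depth1} says it is determined on generators, and the defining sum splits over concatenation). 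So $-D$ is a $mu$-derivation.

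The main obstacle is iteration: $D(P)$ is no longer in $\ARI$, so we cannot directly claim $\adari(\lopil)^n=(-D)^n$ on $\ARI$. Lemma \ref{aritarit} handles exactly this. For $A\in\ARI$ it gives $\arit(D^rA)(\lopil)=lu(D^rA,\lopil)$, and the computation \eqref{id1} in its proof shows $\ari(\lopil,D^rA)=-D^{r+1}A$. By induction on $r$ this yields $\adari(\lopil)^r(A)=(-D)^rA$ for all $A\in\ARI$. The same holds for $mu(A,B)$ with $A,B\in\ARI$, since $mu(A,B)\in\ARI$. Therefore on $\ARI$ we have $\Adari(\pil)=\exp(-D)$, and $\exp(-D)$ is a $mu$-automorphism of $\BARI^{\operatorname{rat}}$ (completed) because $D$ is a $mu$-derivation. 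This gives
\[
\Adari(\pil)(mu(A,B))=\exp(-D)(mu(A,B))=mu(\exp(-D)A,\exp(-D)B)=mu(\Adari(\pil)A,\Adari(\pil)B).
\]
Convergence is not an issue, since $D$ raises depth by at least one (because $\lopil$ has no depth-$0$ part), so each component is a finite sum. Bijectivity follows from the inverse $\exp(D)=\Adari(invgari(\pil))$.
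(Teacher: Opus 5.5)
Your proof is correct and follows the paper's argument: using Lemma \ref{aritu_of_v} and the identity \eqref{id1} from Lemma \ref{aritarit}, you show $\adari(\lopil)^n(A)=(-1)^n\arit(\lopil)^n(A)$ for $A\in\ARI$, hence $\Adari(\pil)=\exp(-\arit(\lopil))$ on $\ARI$, which is multiplicative because $\arit(\lopil)$ is a $mu$-derivation. Your remarks on depth-wise convergence and on inverses are fine additions. Note, though, that your opening reduction to ``$\adari(\lopil)$ is a $mu$-derivation on the iterates'' is not what you actually use; the real content is the identification $\Adari(\pil)=\exp(-D)$ on $\ARI$.
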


\begin{proof} By \eqref{eq:Ad_ad_ari_exp} the operator
$\Adari(\pil)$ is given by
\begin{equation}\label{Adaripildef}
\Adari(\pil)=\sum_{n\ge 0} \frac{1}{n!}\adari(\lopil)^n.
\end{equation}
Let $A\in \ARI$. Then taking $Q=\lopil$ in Lemma \ref{aritarit},
we find that
\begin{equation}\label{Lemmaaritarit}
\arit\bigl(\arit(\lopil)^r(A)\bigr)(\lopil)=lu(\arit(\lopil)^r(A),
\lopil).
\end{equation}
We know that
$$\ari(\lopil,B)=\arit(B)(\lopil)-\arit(\lopil)(B)+lu(\lopil,B)$$
for any bimould $B$, so taking $B=\arit(\lopil)^r(A)$, we compute
\begin{align}\label{adarilopil}
\adari&(\lopil)\big( \arit(\lopil)^r(A)\big)=
\ari(\lopil,\arit(\lopil)^r(A))\notag\\
&=\arit\bigl(\arit(\lopil)^r(A)\bigr)\cdot \lopil
-\arit(\lopil)^{r+1}(A)+
lu\bigl(\lopil, \arit(\lopil)^r(A)\bigr)\notag\\
&= 
lu\bigl(\arit(\lopil)^r(A),\lopil)
-\arit(\lopil)^{r+1}(A)
+lu\bigl(\lopil,\arit(\lopil)^r(A)\bigr)  \notag\\
&=-\arit(\lopil)^{r+1}(A).
\end{align}
For the second to last identity we used \eqref{Lemmaaritarit}. 
Using \eqref{adarilopil}, we now show that for $A\in \ARI$, we have
\begin{equation}\label{adarilopiln}
\adari(\lopil)^n(A)=(-1)^n\arit(\lopil)^n(A).
\end{equation}
To see this, we start with $r=0$ in \eqref{adarilopil} to obtain
$$\adari(\lopil)(A)=-\arit(\lopil)(A),$$
proving \eqref{adarilopiln} for $n=1$. Now suppose that \eqref{adarilopiln}
holds for $n$, and let us show it for $n+1$. Applying 
$\adari(\lopil)$ to both sides of \eqref{adarilopiln} gives
\begin{equation*}
\adari(\lopil)^{n+1}(A)=(-1)^n\adari(\lopil)\big( \arit(\lopil)^n(A)\big).
\end{equation*}
Applying \eqref{adarilopil} with $r=n$ to the right-hand side of this, we 
thus find that 
\begin{equation*}
\adari(\lopil)^{n+1}(A)=(-1)^{n+1}\arit(\lopil)^{n+1}(A),
\end{equation*}
proving that \eqref{adarilopiln} is valid for all $n\ge 1$.
This allows us to rewrite $\Adari(\pil)$ from \eqref{Adaripildef} as
\begin{equation}\label{newAdaripildef}
\Adari(\pil)=\sum_{n\ge 0} \frac{(-1)^n}{n!}\arit(\lopil)^n
=\exp\bigl(-\arit(\lopil)\bigr),
\end{equation}
noting that this equality of operators is only valid on the subspace
of $u$-moulds $\ARI$, not on all of $\BARI$. Since $\arit(P)$ is
a $mu$-derivation of $\BARI$ for any bimould $P$, this shows that
restricted to $\ARI$, the operator $\Adari(\pil)$ is a $mu$-automorphism.
\end{proof}

\begin{Proposition}\label{prop:ganit_poc_Ad_ari}
Let $M \in \operatorname{ARI}^{\operatorname{pol}}$. Then
\begin{align}\label{eq:keyidentity}
ganit_{poc}(M) = \operatorname{Ad}_{ari}(pil)(M) \in \BARI^{\operatorname{rat}}. 
\end{align}
\end{Proposition}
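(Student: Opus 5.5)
By Lemma \ref{lem:ganit_mu_auto}, $ganit_{poc}$ is an automorphism for $mu$. By Lemma \ref{lem:Adaripil_is_mu_aut}, $\Adari(\pil)$ restricted to $\ARI$ is $mu$-multiplicative, with values in $\BARI^{\operatorname{rat}}$. The moulds in $\ARI^{\operatorname{pol}}$ are generated under $mu$ by the depth $1$ moulds $P_{k,0}=(u_1^k,0,\ldots)$. So both sides of \eqref{eq:keyidentity} are $mu$-algebra morphisms on $\ARI^{\operatorname{pol}}$, and the plan is to check \eqref{eq:keyidentity} only on the generators $P_{k,0}$. One checks that the product of generators stays inside $\ARI$, so Lemma \ref{lem:Adaripil_is_mu_aut} applies at every stage. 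Linearity and multiplicativity then finish the argument.

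\textbf{Left side on a generator.} Let $A=P_{k,0}$. In Definition \ref{def:ganit}, the argument $b_1\rceil\cdots b_s\rceil$ must have length $1$, so only $s=1$ and $b_1=w_1$ contribute. This gives
\[
ganit_{poc}(A)\binom{u_1,\ldots,u_d}{v_1,\ldots,v_d}=(u_1+\cdots+u_d)^k\, poc(v_2-v_1,\ldots,v_d-v_1),
\]
and in particular $A(u_1)$ itself in depth $1$.

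\textbf{Right side on a generator.} By \eqref{newAdaripildef}, $\Adari(\pil)(A)=\exp(-\arit(\lopil))(A)$. Remark \ref{rem:arit_depth1} extends, by the derivation property, to bimoulds of the form $(u_1+\cdots+u_n)^k Q(v_1,\ldots,v_n)$. I would show by induction that $\arit(\lopil)^n(A)$ keeps this shape: the factor $(u_1+\cdots+u_n)^k$ is unchanged, and only the $v$-factor $Q$ is transformed. The transformation is a linear operator $T$ on $v$-moulds built from $\lopil$, with a subtraction-and-shift of the form $v_i-v_n$ or $v_i-v_1$. Consequently both sides equal $(u_1+\cdots+u_d)^k$ times a $v$-mould independent of $k$. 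The statement is therefore reduced to the single $v$-mould identity
\[
\exp(-T)(\mathbf 1) = \big(1,\ poc(v_2-v_1,\ldots,v_d-v_1)\big)_{d\ge1},
\]
where $\mathbf 1$ is the $v$-mould equal to $1$ in depth $1$ and $0$ elsewhere.

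\textbf{Main obstacle.} The hard step is this last identity. It should be a consequence of the defining property \eqref{eq:def_coeffs_lopil} of the coefficients $c_d$. My approach would be to first compute $T$ on $v$-moulds of the special shape $c\cdot poc$-type products $1/\big((v_2-v_1)(v_3-v_2)\cdots\big)$. The aim is to show that $T$ preserves a one-parameter family of such moulds, with depth-$d$ coefficient $a_d$. On that family $T$ should act on the generating series $\sum_d a_d x^{d}$ as the vector field $\sum_d c_d x^{d+1}\frac{d}{dx}$. The identity $\exp(\sum_d c_d x^{d+1}\frac{d}{dx})x=1-e^{-x}$ would then produce exactly the coefficients of $poc$, after a sign check coming from $\exp(-T)$ versus $\exp(T)$.

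If this direct computation becomes unwieldy, the fallback is Ecalle's identity \eqref{eq:ecalle_fundamental_applied}. I would apply it to push-invariant inputs, for example $\xi_1$ and its images, which are annihilated by $ari$ against moulds by Lemma \ref{lem:orth_rel}. The goal there is to extract the same depth-$1$ identity, using $ganit_{poc}$ and $\Adari(\pil)$ on the constant mould.
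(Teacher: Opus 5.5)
Your reduction is sound and matches the paper. Both sides are $mu$-morphisms on $\ARI^{\operatorname{pol}}$, and $\ganitpoc(P_{k,0})$ equals $(u_1+\cdots+u_d)^k/\big((v_1-v_2)\cdots(v_{d-1}-v_d)\big)$ in depth $d$. The flexed words $a\lceil c$ and $a\rceil c$ keep the total upper sum $u_1+\cdots+u_n$, and $\lopil$ only sees the $v$'s. So $\arit(\lopil)$ simply carries the factor $(u_1+\cdots+u_n)^k$ along, and everything reduces to $k=0$. This follows directly from the definition of $\arit$, not from Remark~\ref{rem:arit_depth1}.

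\textbf{The primary route fails as stated.} Write $H_d=1/\big((v_1-v_2)\cdots(v_{d-1}-v_d)\big)$. The actual action is
\[
\arit(\lopil)(H_d)=\sum_{r>d} r\,c_{r-d}\,H_r ,
\]
so the factor is the \emph{target} depth $r$. For example, the depth-$2$ component of $\arit(\lopil)(H_1)$ is $\lopil(v_1-v_2)-\lopil(v_2-v_1)=2c_1H_2$.

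The vector field $V=\sum_m c_mx^{m+1}\frac{d}{dx}$ acting on $\sum_d a_dx^d$ would give the factor $d$ instead. Running your plan with $V$ gives $\exp(-V)x=-\log(1-x)$, i.e.\ $\sum_d\frac1d H_d$ rather than $\sum_d H_d=\ganitpoc(\xi_1)$. That is an error of $1/d$ in depth $d$, which no sign check (and no choice of the $(v_{i+1}-v_i)$ sign convention) can repair.

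The paper instead uses the identification $x^{d-1}\leftrightarrow H_d$. Under it, $\arit(\lopil)$ becomes $T(g)=\frac{d}{dx}\big(F(x)g\big)$ with $F=\sum_m c_mx^{m+1}$. The relation $T\circ\frac{d}{dx}=\frac{d}{dx}\circ V$ then gives $\exp(-T)(1)=\big(f^{-1}\big)'(x)=\frac{1}{1-x}$ for $f(x)=1-e^{-x}$. Equivalently, you could use $x^d\leftrightarrow d\,H_d$. Note also that the displayed formula is the real content of the proof: the paper proves it by a partial-fraction telescoping over the $2d$ surviving decompositions, and your proposal does not supply it.

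\textbf{Your fallback does work} and gives a genuinely different, much shorter proof. The steps are:
\begin{enumerate}[(i)]
\item $\xi_1$ is push invariant.
\item Since $pal$ is a mould, $\log_{ari}(pal)\in\ARI^{\operatorname{rat}}$. By \eqref{eq:Ad_ad_ari_exp} and Lemma~\ref{lem:orth_rel}, $\Adari(pal)(\xi_1)=\exp\big(\adari(\log_{ari}(pal))\big)(\xi_1)=\xi_1$.
\item $\swap(\xi_1)=\xi_1$.
\end{enumerate}
Hence \eqref{eq:ecalle_fundamental_applied} with $P=\xi_1$ reads $\ganitpoc(\xi_1)=\Adari(\pil)(\xi_1)$. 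This is exactly your $k=0$ identity, and the $u$-factor reduction finishes the proof.

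This route buys brevity, but it imports Ecalle's second fundamental identity as a black box. The paper's computation is self-contained: it in effect verifies this special case of that identity directly from the defining property \eqref{eq:def_coeffs_lopil} of the $c_d$. You should make the fallback your main argument, or else fix the normalization in the first route and carry out the telescoping.
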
 

\begin{proof} The space $\ARI^{\operatorname{pol}}$ is generated by the moulds $P_i=(u^i,0,\ldots)$ for  $i\ge0$ with respect to the $mu$ multiplication. By Lemma \ref{lem:ganit_mu_auto}, the operator $\ganitpoc$ is an automorphism with respect to $mu$, and by  Lemma \ref{lem:Adaripil_is_mu_aut}, the same is true for
$\Adari(\pil)$. Direct computation with the defining formula of
$\ganitpoc$ shows that the mould $\ganitpoc(P_i)$ is given in depth 
$d\ge 1$ by 
\begin{equation}\label{eq:specialform}
\frac{(u_1+\cdots+u_d)^i}{(v_1-v_2)\cdots (v_{d-1}-v_d)}.
\end{equation}
Since $\Adari(\pil)$ and $\ganitpoc$ are both $mu$-automorphisms, if we can
show that they agree on the generators $P_i$, then they agree on all of
$\ARI^{\operatorname{pol}}$. It remains thus only to show that
\begin{equation*} 
\Adari(\pil)(P_i)\binom{u_1,\ldots,u_d}{v_1,\ldots,v_d}=\frac{(u_1+\cdots+u_d)^i}{(v_1-v_2)\cdots (v_{d-1}-v_d)}.
\end{equation*}

Let $H_d$ be the bimould concentrated in depth $d$, taking the value 
\eqref{eq:specialform} in depth $d$. By \eqref{newAdaripildef} it suffices to show that
\begin{align} \label{eq:exp(-arit(lopil))_and H_d}
\exp(-arit(lopil))(P_i)\binom{u_1,\ldots,u_d}{v_1,\ldots,v_d}=H_d\binom{u_1,\ldots,u_d}{v_1,\ldots,v_d}.
\end{align}
Let $T:\Q[[x]]\to \Q[[x]]$ be the operator on power series given by
\begin{align*} 
T(g(x))=\frac{d}{dx}(F(x)g(x)),
\end{align*}
where $F(x)=\sum_{m\geq1} c_m x^{m+1}$ and the $c_m$ are the coefficients of $lopil$ given in \eqref{eq:def_coeffs_lopil}. We first show that under the linear map
\[
\rho: \Q[[x]] \to \Q[H_1,H_2,\ldots],\quad x^{d-1} \mapsto H_d,
\]
we have
\begin{align} \label{eq:T_and_exp(lopil)}
\rho \circ \exp(-T)=\exp(-arit(lopil))\circ \rho.    
\end{align} 
On the one hand, we calculate
\begin{equation}\label{eq:T(x^{d-1})}
T(x^{d-1})=\frac{d}{dx}\bigl(F(x) x^{d-1}\bigr)=\sum_{m\ge 1} (d+m)c_mx^{d+m-1}=
\sum_{r>d} rc_{r-d}x^{r-1}.
\end{equation}
Thus, we have to show that for all $r>d$,
\begin{align} \label{eq:arit(lopil)(H_d)}
arit(lopil)(H_d)\binom{u_1,\ldots,u_r}{v_1,\ldots,v_r}=rc_{r-d}H_r\binom{u_1,\ldots,u_r}{v_1,\ldots,v_r}.
\end{align}
Then, we get from \eqref{eq:T(x^{d-1})} and \eqref{eq:arit(lopil)(H_d)} that $\rho\circ T=\arit(lopil)\circ \rho$ and hence also
$\rho\circ T^n=\arit(\lopil)^n\circ \rho$ and thus \eqref{eq:T_and_exp(lopil)}.

To prove \eqref{eq:arit(lopil)(H_d)} we simply apply the defining formula \eqref{aritdef}:
$$\arit(\lopil)(H_d)(w)=\sum_{\substack{w=abc \\ b,c\ne\emptyset}}
H_d(a\lceil c)\lopil(b\rfloor)-\sum_{\substack{w=abc\\a,b\ne \emptyset}}
H_d(a\rceil c)\lopil(\lfloor b).$$
Since $H_d$ is concentrated in depth $d$, when $w=(w_1,\ldots,w_r)$,
the first decomposition sums reduces to the
$d$ decompositions $abc=(w_1,\ldots,w_k)(w_{k+1},\ldots,w_{k+r-d})
(w_{k+r-d+1},\ldots,w_r)$ for $k=0,\ldots,d-1$, and the second sum
reduces to the same $d$ decompositions but for $k=1,\ldots,d$. Writing
\[
(w_1,\ldots,w_r)=\binom{u_1,\ldots,u_r}{v_1,\ldots,v_r}
\]
the expression for $\arit(\lopil)(H_d)(w_1,\ldots,w_r)$ becomes
\begin{align*}
&\sum_{k=0}^{d-1} H_d\binom{u_1,\ldots,u_k,u_{k+1}+\cdots+u_{k+r-d+1},\ldots,u_r}{v_1,\ldots,v_k,v_{k+r-d+1},\ldots,v_r} \\
&\hspace{7cm} 
\cdot\lopil(v_{k+1}-v_{k+r-d+1},\ldots,v_{k+r-d}-v_{k+r-d+1})\\ 
&-\sum_{k=1}^d H_d\binom{u_1,\ldots,u_{k-1},
u_k+\cdots+u_{k+r-d},\ldots,u_r}{v_1,\ldots,v_{k-1},v_k,\ldots,v_r}\lopil(v_{k+1}-v_k,\ldots,v_{k+r-d}-v_k)\\
&=\sum_{k=0}^{d-1} \frac{(u_1+\cdots+u_r)^i}{(v_1-v_2)\cdots(v_{k-1}-v_k)
(v_k-v_{k+r-d+1})(v_{k+r-d+1}-v_{k+r-d+2})\cdots (v_{r-1}-v_r)}\\
&\hspace{2cm} \cdot
\frac{c_{r-d}\bigl(v_{k+1}+\cdots+v_{k+r-d}-(r-d)v_{k+r-d+1}\bigr)}{(v_{k+1}-v_{k+r-d+1})
(v_{k+1}-v_{k+2})\cdots(v_{k+r-d-1}-v_{k+r-d})(v_{k+r-d}-v_{k+r-d+1})}\\ 
&-\sum_{k=1}^{d} \frac{(u_1+\cdots+u_r)^i}{(v_1-v_2)\cdots(v_{k-1}-v_k)
(v_k-v_{k+r-d+1})(v_{k+r-d+1}-v_{k+r-d+2})\cdots (v_{r-1}-v_r)}\\
&\hspace{2cm}
\cdot\frac{c_{r-d}\bigl(v_{k+1}+\cdots+v_{k+r-d}-(r-d)v_k\bigr)}{(v_{k+1}-v_k)
(v_{k+1}-v_{k+2})\cdots(v_{k+r-d-1}-v_{k+r-d})(v_{k+r-d}-v_k)}.
\end{align*}
Note in particular that the flexion terms $H_d(a\rceil c)=H_d(a\lceil c)$
are equal. This sum divides into three parts: the $k=0$ term from
the first sum
\begin{equation}\label{firstpart}
c_{r-d} \frac{(u_1+\cdots+u_r)^i}{(v_{ 1}-v_{ 2})\cdots(v_{r-1}-v_r)}\cdot
\frac{v_1+\cdots+v_{r-d}-(r-d)v_{r-d+1}}{v_1-v_{r-d+1}},
\end{equation}
the $k=d$ term from the second sum
\begin{equation}\label{secondpart}
c_{r-d} \frac{(u_1+\cdots+u_r)^i}{(v_{ 1}-v_{ 2})\cdots(v_{r-1}-v_r)} 
\cdot
\frac{v_{d+1}+\cdots+v_r-(r-d)v_d}{v_r-v_d},
\end{equation}
and finally the sums of inner terms
\begin{align} \label{thirdpart}
& c_{r-d}\frac{(u_1+\cdots+u_r)^i}{(v_1-v_2)(v_2-v_3)\cdots (v_{r-1}-v_r)}\\
&\cdot  \sum_{k=1}^{d-1}\Bigg(\frac{(v_k-v_{k+1})(v_{k+1}+\cdots+v_{k+r-d}-(r-d)v_{k+r-d+1})}{(v_k-v_{k+r-d+1})(v_{k+1}-v_{k+r-d+1})} \nonumber\\
&\hspace{5.5cm}-\frac{(v_{k+r-d}-v_{k+r-d+1})(v_{k+1}+\cdots+v_{k+r-d}-(r-d)v_k)}{(v_k-v_{k+r-d+1})(v_k-v_{k+r-d})}\Bigg). \nonumber
\end{align}
For the second line in \eqref{thirdpart}, we observe 
\[
\frac{v_k-v_{k+1}}{(v_k-v_{k+r-d+1})(v_{k+1}-v_{k+r-d+1})}=
\frac{1}{v_{k+1}-v_{k+r-d+1}}-\frac{1}{v_k-v_{k+r-d+1}},
\]
and similarly for the third line in \eqref{thirdpart}, we get
\[
\frac{v_{k+r-d}-v_{k+r-d+1}}{(v_k-v_{k+r-d+1})(v_k-v_{k+r-d})}=\frac{1}{v_k-v_{k+r-d}}-\frac{1}{v_k-v_{k+r-d+1}}.
\]
Hence, we get
\begin{align*}
& arit(lopil)(H_d)(w)= c_{r-d}\frac{(u_1+\cdots+u_r)^i}{(v_1-v_2)(v_2-v_3)\cdots (v_{r-1}-v_r)}\\
&\hspace{1cm}\cdot \Bigg( \frac{v_1+\cdots+v_{r-d}-(r-d)v_{r-d+1}}{v_1-v_{r-d+1}} + \sum_{k=1}^{d-1}\Bigg(-\frac{v_{k+1}+\cdots+v_{k+r-d}-(r-d)v_k}{v_k-v_{k+r-d}} \\
&\hspace{1.4cm}+ \frac{v_{k+1}+\cdots+v_{k+r-d}-(r-d)v_k}{v_k-v_{k+r-d+1}}-\frac{v_{k+1}+\cdots+v_{k+r-d}-(r-d)v_{k+r-d+1}}{v_k-v_{k+r-d+1}} \\
&\hspace{1.4cm}+\frac{v_{k+1}+\cdots+v_{k+r-d}-(r-d)v_{k+r-d+1}}{v_{k+1}-v_{k+r-d+1}}\Bigg)+ \frac{v_{d+1}+\cdots+v_r-(r-d)v_d}{v_r-v_d}\Bigg).
\end{align*}
First, we see that the terms in the third row reduce to $-(r-d)$. Secondly, the second term in the second row for $k+1$ together with the first term in the fourth row for $k$ simplify to $r-d+1$ (taking also the boundary terms in the second and fourth row into account for $k=1$ and $k=d-1$).
As we get $(d-1)$-times the term $-(r-d)$, $d$-times the term $r-d+1$, and
\[
-(r-d)(d-1)+(r-d+1)d=r,
\]
we deduce that 
\begin{align*}
arit(lopil)(H_d)(w)= rc_{r-d}\frac{(u_1+\cdots+u_r)^i}{(v_1-v_2)(v_2-v_3)\cdots (v_{r-1}-v_r)}=rc_{r-d}H_r\binom{u_1,\ldots,u_r}{v_1,\ldots,v_r},
\end{align*}
which proves \eqref{eq:arit(lopil)(H_d)}.

With $f(x)=1-e^{-x}$, $V=F(x)\frac{d}{dx}$, and $F(x)$ as before, we can rewrite \eqref{eq:def_coeffs_lopil} as
\begin{equation}\label{new1}
\bigl({\mathrm{exp}}(V)\bigr)x=f(x).
\end{equation}
For any power series $h(x)\in \Q[[x]]$, we have
\begin{equation*}
T(\frac{d}{dx}h(x))=\frac{d}{dx}\bigl(F(x) \frac{d}{dx}h(x)\bigr)=\frac{d}{dx}\Bigl(V(h)\Bigr).
\end{equation*}
Applying this repeatedly, we obtain for $g=\frac{d}{dx}h(x)$ that $T^n(g)=\frac{d}{dx}(V^n(h))$, so
\begin{equation}\label{new5}
\mathrm{exp}\bigl(T(g)\bigr)=\frac{d}{dx}\bigl(\mathrm{exp}\bigl(V(h)\bigr).
\end{equation}
Now, $V$ is a derivation and thus $\mathrm{exp}(V)$ is an automorphism,
so by \eqref{new1}, we have
\begin{equation}\label{new6}
\mathrm{exp}(V)\bigl(h(x)\bigr)=h\bigl(\mathrm{exp}(V)(x)\bigr)=h\bigl(f(x)\bigr).
\end{equation}
Then \eqref{new5} and \eqref{new6} together show that
\begin{equation*}
\mathrm{exp}(T)(g)=\frac{d}{dx}\Bigl(h\bigl(f(x)\bigr)\Bigr)
=h'\bigl(f(x)\bigr)f'(x)=g\bigl(f(x)\bigr)f'(x).
\end{equation*}
Letting $f^{-1}(x)=-\mathrm{log}(1-x)$ be the inverse function of $f(x)$, we deduce
\begin{equation*}
\mathrm{exp}(-T)(g)=g\bigl(f^{-1}(x)\bigr)(f^{-1})'(x).
\end{equation*}
In particular, when $g=1$ we have
\begin{equation}\label{eq:exp(-T)(1)}
\mathrm{exp}(-T)(1)=(f^{-1})'(x)=\frac{1}{1-x}=\sum_{d\ge 1} x^{d-1}.
\end{equation}
With \eqref{eq:T_and_exp(lopil)} and \eqref{eq:exp(-T)(1)}, we get
\begin{align*}
\exp(-arit(lopil))(H_1)=\rho \circ \exp(-T)(1)=\rho(\sum_{d\geq1} x^{d-1})=\sum_{d\geq1} H_d,
\end{align*}
which is equal to \eqref{eq:exp(-arit(lopil))_and H_d}.
\end{proof}

\begin{Lemma}\label{lem:uri_alil}
Let $A,B \in\ARI^{\operatorname{pol}}_{\underline{al}*\underline{il}}$, then we have
\begin{align}
uri(A, B) &= ari(A, B), \label{eq:uri_AB}\\
uri (A, swap(B) ) &= 0, \label{eq:uri_AswapB}\\
uri(swap(A), swap(B)) &= swap (ari(A, B)).\label{eq:uri_swapAswapB}
\end{align}
  
\end{Lemma}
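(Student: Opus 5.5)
The plan is to reduce all three identities to statements about $ari$ by conjugating with $ganit_{poc}$. On $u$-moulds we use Proposition \ref{prop:ganit_poc_Ad_ari}. On swaps of elements of $\ARI_{\underline{al}\ast\underline{il}}$ we use Ecalle's second fundamental identity in the form \eqref{eq:ecalle_fundamental_applied}. Throughout we work in $\BARI^{\operatorname{rat}}$, where $uri$ is defined, and we use that $ganit_{pic}$ and $ganit_{poc}$ are mutually inverse. Proposition \ref{prop:ganit_pic_poc} gives this on the alternal/alternil pieces. More generally, $ganit_B$ composes according to the $mu$-multiplicative structure and $pic$, $poc$ are inverse in that sense; if only the restricted statement is available, note that every argument of $ganit_{pic}$ below is alternal, so Proposition \ref{prop:ganit_pic_poc} suffices.

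\emph{Identity \eqref{eq:uri_AB}.} Since $A,B\in\ARI^{\operatorname{pol}}$, Proposition \ref{prop:ganit_poc_Ad_ari} gives $ganit_{poc}(A)=\Adari(pil)(A)$ and $ganit_{poc}(B)=\Adari(pil)(B)$. By Lemma \ref{lem:Ad_ari}, $\Adari(pil)$ respects $ari$, so
\[
ari\big(ganit_{poc}(A),ganit_{poc}(B)\big)=\Adari(pil)\big(ari(A,B)\big).
\]
Since $ari(A,B)\in\ARI^{\operatorname{pol}}$, Proposition \ref{prop:ganit_poc_Ad_ari} applies once more and identifies the right-hand side with $ganit_{poc}(ari(A,B))$. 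Applying $ganit_{pic}$ then gives $uri(A,B)=ari(A,B)$; here $ari(A,B)$ is alternal by Proposition \ref{prop:al_ari}.

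\emph{Identity \eqref{eq:uri_AswapB}.} By Proposition \ref{prop:Ad_ari_alal_alil} we write $B=\Adari(pal)(M_B)$ with $M_B\in\ARI^{\operatorname{rat}}_{\underline{al}\ast\underline{al}}$. This $M_B$ is push invariant by Proposition \ref{prop:alal_push}. Then \eqref{eq:ecalle_fundamental_applied} gives $ganit_{poc}(swap(B))=\Adari(pil)(swap(M_B))$. Using Lemma \ref{lem:Ad_ari} again,
\[
ari\big(ganit_{poc}(A),ganit_{poc}(swap\,B)\big)=\Adari(pil)\big(ari(A,swap(M_B))\big).
\]
Now $swap(M_B)$ is a $v$-mould in $\overline{\ARI}_{\underline{al}\ast\underline{al}}$, hence push invariant by Proposition \ref{prop:alal_push}. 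Lemma \ref{lem:ari_vs_push} therefore gives $ari(A,swap(M_B))=0$, and so $uri(A,swap(B))=0$.

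\emph{Identity \eqref{eq:uri_swapAswapB}.} In the same way we write $A=\Adari(pal)(M_A)$ and $B=\Adari(pal)(M_B)$, and get
\[
ari\big(ganit_{poc}(swap\,A),ganit_{poc}(swap\,B)\big)=\Adari(pil)\big(ari(swap\,M_A,swap\,M_B)\big).
\]
Since $M_A,M_B$ are push invariant, Proposition \ref{prop:pushinv_swap} rewrites the inner term as $swap(ari(M_A,M_B))$. Proposition \ref{prop:Ad_ari_alal_alil} says $\Adari(pal)$ is a Lie isomorphism, so $M:=ari(M_A,M_B)$ lies in $\ARI^{\operatorname{rat}}_{\underline{al}\ast\underline{al}}$ and satisfies $\Adari(pal)(M)=ari(A,B)$. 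In particular $M$ is again push invariant. The fundamental identity \eqref{eq:ecalle_fundamental} applied to $M$ now gives
\[
uri(swap\,A,swap\,B)=ganit_{pic}\big(\Adari(pil)(swap\,M)\big)=swap\big(\Adari(pal)(M)\big)=swap\big(ari(A,B)\big).
\]

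The main obstacle is bookkeeping rather than computation. First, the results used are stated for $\ARI^{\operatorname{rat}}$ while $A,B$ are polynomial, so one must check that Propositions \ref{prop:Ad_ari_alal_alil} and \ref{prop:pushinv_swap} and Lemma \ref{lem:ari_vs_push} apply to rational moulds. Second, the preimages $M_A,M_B$ must be genuinely push invariant; this requires that the constant moulds $C_A$ play no role, so the definition of $\ARI_{\underline{al}\ast\underline{al}}$ with constant corrections must be matched carefully with Proposition \ref{prop:alal_push}. I would check these points first, testing Lemma \ref{lem:ari_vs_push} on $swap(M_B)$ in low depth if necessary.
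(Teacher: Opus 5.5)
Your proposal is correct and follows the paper's proof essentially step by step. You conjugate by $ganit_{poc}$ via Proposition \ref{prop:ganit_poc_Ad_ari} and \eqref{eq:ecalle_fundamental_applied}, pull out $\operatorname{Ad}_{ari}(pil)$ with Lemma \ref{lem:Ad_ari}, and conclude with Lemma \ref{lem:ari_vs_push} for the vanishing and with Proposition \ref{prop:pushinv_swap} plus \eqref{eq:ecalle_fundamental} for the swap identity. The only cosmetic difference is in the third identity: you set $M=ari(M_A,M_B)$ and use that $\operatorname{Ad}_{ari}(pal)$ is a Lie isomorphism, while the paper sets $M=\operatorname{Ad}_{ari}(pal)^{-1}(ari(A,B))$ and invokes Theorem \ref{thm:alil_lie}, which amounts to the same thing.
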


\begin{proof}
In order to prove \eqref{eq:uri_AB}, we first note that
\begin{align}
ganit_{poc}\bigl(ari(A,B)\bigr)=ari\bigl(ganit_{poc}( A),ganit_{poc}(B)\bigr).\label{eq:leila7}
\end{align}
Indeed by Proposition \ref{prop:ganit_poc_Ad_ari}, the left-hand
side is equal to $\operatorname{Ad}_{ari}(pil)(ari(A,B))$ and the right-hand side
to $ari\bigl(\operatorname{Ad}_{ari}(pil)(A),\operatorname{Ad}_{ari}(pil)(B)\bigr)$.  
By Lemma \ref{lem:Ad_ari}, the operator $Ad_{ari}(pil)$ respects the 
ari bracket, so these two expressions are equal, proving \eqref{eq:leila7}. The equality \eqref{eq:leila7} implies that
\begin{align*}
uri(A,B)&=ganit_{pic}\bigl( ari\bigl(ganit_{poc}(A),ganit_{poc}(B)\bigr)\bigr)=ganit_{pic}\bigl(ganit_{poc}( ari(A,B))\bigr)\\
&=ari(A,B),
\end{align*}
which proves the  identity   \eqref{eq:uri_AB}.

Next, we prove \eqref{eq:uri_AswapB}. By Proposition \ref{prop:alal_push}, all moulds in $\ARI^{\operatorname{rat}}_{\underline{al}\ast\underline{al}}$ are push invariant, and by Proposition \ref{prop:Ad_ari_alal_alil} the map $\operatorname{Ad}_{ari}(pal)$ is an isomorphism from $\ARI^{\operatorname{rat}}_{\underline{al}\ast\underline{al}}$ to $\ARI^{\operatorname{rat}}_{\underline{al}\ast\underline{il}}$. In particular, we can apply \eqref{eq:ecalle_fundamental_applied} to the moulds in $A, B\in \ARI^{\operatorname{pol}}_{\underline{al}\ast\underline{il}}\subset \ARI^{\operatorname{rat}}_{\underline{al}\ast\underline{il}}$ and obtain
\begin{align*}
ganit_{poc}\bigl(uri\bigl(A,swap(B)\bigr)\bigr)
&=ari\bigl(ganit_{poc}(A),ganit_{poc}(swap(B))\bigr)\cr
&=ari\bigl(ganit_{poc}(A),
Ad_{ari}(pil)\bigl(swap(Ad_{ari}(pal)^{-1}(B))\bigr)\bigr).
\end{align*}
With  Proposition \ref{prop:ganit_poc_Ad_ari} and \ref{lem:Ad_ari} this simplifies to
\begin{align} 
ganit_{poc}\bigl(uri\bigl(A,swap(B)\bigr)\bigr)&= ari\bigl(\operatorname{Ad}_{ari}(pil)(A),
Ad_{ari}(pil)\bigl(swap(Ad_{ari}(pal)^{-1}(B))\bigr)\bigr) \nonumber \\
\label{eq:ganitpoc_uri(A,swap(B))}
&=\operatorname{Ad}_{ari}(pil)\Big(ari\bigl(A,
swap(Ad_{ari}(pal)^{-1}(B))\bigr)\Big).
\end{align}
By Proposition \ref{prop:Ad_ari_alal_alil}, we have $\operatorname{Ad}_{ari}(pal)^{-1}(B) \in \ARI^{\operatorname{rat}}_{\underline{al}\ast\underline{al}}$, and hence 
\[M:=swap\bigl(\operatorname{Ad}_{ari}(pal)^{-1}(B)\bigr)\in \overline{ARI}^{\operatorname{rat}}_{\underline{al}\ast\underline{al}}.\] 
In particular, by Proposition \ref{prop:alal_push} $M$ is push invariant and we can apply Lemma \ref{lem:ari_vs_push} to obtain $
ari(A,M)=0$. We deduce with \eqref{eq:ganitpoc_uri(A,swap(B))} that
\[
ganit_{poc}\bigl(uri\bigl(A,swap(B)\bigr)\bigr)=0,
\]
and hence $uri\bigl(A,swap(B)\bigr)=0$ as desired.

Finally we study \eqref{eq:uri_swapAswapB}. We have for $A,B\in \ARI^{\operatorname{pol}}_{\underline{al}\ast\underline{il}}$ that
\begin{align*}
ari\Bigl(&ganit_{poc}(swap(A)),ganit_{poc}(swap(B))\Bigr) \\
&=ari\Bigl(\operatorname{Ad}_{ari}(pil)\bigl( swap\bigl(\operatorname{Ad}_{ari}(pal)^{-1}(A)\bigr)\bigr),
\operatorname{Ad}_{ari}(pil)\bigl( swap\bigl(\operatorname{Ad}_{ari}(pal)^{-1}(B)\bigr)\bigr)\Bigr) \\
&=\operatorname{Ad}_{ari}(pil)\Bigl(ari\Bigl(swap\bigl(\operatorname{Ad}_{ari}(pal)^{-1}(A)\bigr),
swap\bigl(\operatorname{Ad}_{ari}(pal)^{-1}(B)\bigr)\Bigr)\Bigr),
\end{align*}
where the first equality follows from \eqref{eq:ecalle_fundamental_applied} and the second equality from Lemma \ref{lem:Ad_ari}. With Proposition \ref{prop:pushinv_swap} (note that $\operatorname{Ad}_{ari}(pal)^{-1}(A)\in \ARI^{\operatorname{rat}}_{\underline{al}\ast\underline{al}}$ is  push  invariant) and again Lemma \ref{lem:Ad_ari} we deduce
\begin{align*}
ari\Bigl(&ganit_{poc}(swap(A)),ganit_{poc}(swap(B))\Bigr) \\
&=\operatorname{Ad}_{ari}(pil)\Bigl( swap\Bigl(ari\bigl(\operatorname{Ad}_{ari}(pal)^{-1}(A),
\operatorname{Ad}_{ari}(pal)^{-1}(B)\bigr)\Bigr)\Bigr) \\
&=\operatorname{Ad}_{ari}(pil)\Bigl( swap\Bigl(\operatorname{Ad}_{ari}(pal)^{-1}\bigl( ari(A,B)\bigr)\Bigr)\Bigr).
\end{align*}
Therefore, applying $ganit_{pic}$ to both sides of this identity, we have
\begin{align}
uri(swap(A),swap(B))
&=ganit_{pic}\Bigl(\operatorname{Ad}_{ari}(pil)\Bigl(swap\Bigl(\operatorname{Ad}_{ari}(pal)^{-1}\bigl( 
ari(A,B)\bigr)\Bigr)\Bigr)\Bigr) \nonumber \\
&=ganit_{pic}\bigl(\operatorname{Ad}_{ari}(pil)\bigl(swap(M)\bigr)\bigr) 
\label{eq:uri(swap(A),swap(B))}
\end{align}
with $M=\operatorname{Ad}_{ari}(pal)^{-1}\bigl(ari(A,B)\bigr)$.
By Theorem \ref{thm:alil_lie}, we have $ari(A,B)\in\ARI^{\operatorname{pol}}_{\underline{al}*\underline{il}}$, and hence by Proposition \ref{prop:Ad_ari_alal_alil} we get $M\in \ARI^{\operatorname{rat}}_{\underline{al}\ast\underline{al}}$. In particular, by Proposition \ref{prop:alal_push} $M$ is push invariant and we can apply \eqref{eq:ecalle_fundamental}
to the previous equation \eqref{eq:uri(swap(A),swap(B))} and get
\begin{align*}
uri(swap(A),swap(B))&=swap\bigl(\operatorname{Ad}_{ari}(pal)(M)\bigr) \\
&=swap\Bigl(\operatorname{Ad}_{ari}(pal)\bigl(\operatorname{Ad}_{ari}(pal)^{-1}\bigl(ari(A,B)\bigr)\bigr)\Bigr)\\
&=swap\bigl(ari(A,B)\bigr). \qedhere
\end{align*}
\end{proof}

\begin{Lemma}\label{lem:uri_alil_const}
For $A \in\ARI^{pol}_{\underline{al}*\underline{il}}$ 
and constant moulds $C,C'$, we have
\begin{align}
uri(A , C) = uri( swap(A) , C) = uri(C', C) &= 0. \label{eq:uri_alil_const}
\end{align}
  
\end{Lemma}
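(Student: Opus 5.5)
The plan is to reduce everything to the $ari$ bracket via $ganit_{poc}$. First I check what $ganit_{poc}$ does to a constant mould $C=(c_1,c_2,\ldots)$, where $c_d\in R$ is the (constant) depth $d$ component. Since $ganit_{poc}$ is a $mu$-automorphism (Lemma \ref{lem:ganit_mu_auto}), it suffices to compute on depth-$1$ pieces. From Definition \ref{def:ganit}, every term has the form $C(b_1\rceil\cdots b_s\rceil)\,poc(\lfloor c_1)\cdots$. The value of $C$ ignores its arguments, so each term is a constant times a rational function in the differences $v_i-v_j$ only, and involves no $u$'s. Hence $ganit_{poc}(C)=:\overline{C}$ is a $v$-mould in $\overline{\ARI}^{\operatorname{rat}}$.

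Next I show that $\overline{C}$ is push invariant. Push acts on $v$-variables by $v_i\mapsto v_{i-1}-v_d$ (with $v_0:=0$), a translation combined with a cyclic relabelling. The cleanest route is to avoid an explicit check. A constant mould is trivially invariant under the $u$-part of any flexion operation, so I would argue that $ganit_{poc}$ commutes with $push$ on bimoulds that are independent of $u$. If this turns out to be awkward, the fallback is to verify $push(\overline{C})=\overline{C}$ directly from the explicit form. For example, in depth $2$, $\overline C\binom{\cdot,\cdot}{v_1,v_2}=c_2-\frac{c_1}{v_2-v_1}$, and push sends $(v_1,v_2)\mapsto(-v_2,v_1-v_2)$, under which $v_2-v_1\mapsto v_1$. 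This shows the step is not automatic, so I expect it to be the main obstacle.

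A more robust alternative avoids push altogether. The relation $uri(C',C)=0$ should follow because both $ganit_{poc}(C)$ and $ganit_{poc}(C')$ are $v$-moulds. For $v$-moulds $P,Q$, the $ari$ bracket removes the upper flexions, and then the argument of Lemma \ref{aritu_of_v}, run with the roles of $u$ and $v$ exchanged, gives $arit(P)(Q)=lu(P,Q)$ up to lower flexions. So I would check directly that $ari(P,Q)=0$ whenever $P,Q$ are both images of constants.

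With $\overline C$ in hand, the three vanishings go as follows.

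\textbf{First}, $ganit_{poc}(uri(A,C))=ari(ganit_{poc}A,\overline C)$. By Proposition \ref{prop:ganit_poc_Ad_ari} this equals $ari(\Adari(pil)A,\overline C)$. I would write $\overline C=\Adari(pil)(\overline C')$, where $\overline C'=\Adari(pil)^{-1}\overline C$ is still a push-invariant $v$-mould, since $pil$ is a $v$-mould and $\Adari(pil)$ preserves $\overline{\ARI}$ and push invariance. Then Lemma \ref{lem:Ad_ari} together with Lemma \ref{lem:ari_vs_push} gives zero.

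\textbf{Second}, for $swap(A)$ I use \eqref{eq:ecalle_fundamental_applied}: $ganit_{poc}(swap A)=\Adari(pil)(swap(M))$ with $M=\Adari(pal)^{-1}A$. Here $swap(M)$ is a push-invariant $v$-mould, so the bracket of two push-invariant $v$-moulds must be shown to vanish. I would establish this by the same flexion cancellation as in Lemma \ref{lem:ari_vs_push}, with both arguments being $v$-moulds.

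\textbf{Third}, $uri(C',C)=0$ is the same statement with both arguments images of constants.
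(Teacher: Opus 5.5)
There is a genuine gap: two properties your argument relies on are false.

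\textbf{Step 2 fails.} Your own depth-$2$ computation refutes it: $push(\overline C)_2=c_2-c_1/v_1$, whereas $\overline C_2=c_2-c_1/(v_2-v_1)$. So $ganit_{poc}(C)$ is push invariant only when $c_1=0$. For the same reason, your claim that $\operatorname{Ad}_{ari}(pil)$ preserves push invariance is false: by Proposition \ref{prop:ganit_poc_Ad_ari} it sends the push-invariant mould $C$ to $\overline C$.

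\textbf{The $swap(A)$ step fails.} It rests on the assertion that the $ari$ bracket of two push-invariant $v$-moulds vanishes, which is false in general. For example, $swap(\xi_3)=(v_1^2,0,\ldots)$ and $swap(\xi_5)=(v_1^4,0,\ldots)$ are push invariant. Yet by Proposition \ref{prop:pushinv_swap} their bracket is $swap(ari(\xi_3,\xi_5))\neq 0$. The cancellation in Lemma \ref{lem:ari_vs_push} genuinely needs one argument to be a $u$-mould, so that its lower flexions disappear; with two $v$-moulds the flexion terms no longer pair off. Your fallback for $uri(C',C)$ (``$arit(P)(Q)=lu(P,Q)$ up to lower flexions'') likewise leaves exactly the nontrivial part unproved.

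\textbf{The missing observation.} A constant mould is itself a polynomial $u$-mould, and at the same time a push-invariant $v$-mould. Hence Proposition \ref{prop:ganit_poc_Ad_ari} applies to $C$ directly: $ganit_{poc}(C)=\operatorname{Ad}_{ari}(pil)(C)$. So your $\overline C'$ is simply $C$, and push invariance of $\overline C$ is never needed. Lemma \ref{lem:ari_vs_push} then gives:
\begin{enumerate}
\item $ari(A,C)=ari(C',C)=0$, with $C$ in the $v$-slot;
\item $ari(C,swap(A'))=0$ for $A'=\operatorname{Ad}_{ari}(pal)^{-1}(A)$, with $C$ in the $u$-slot and $swap(A')$ push invariant by Proposition \ref{prop:alal_push}.
\end{enumerate}
By Proposition \ref{prop:ganit_poc_Ad_ari} and \eqref{eq:ecalle_fundamental_applied}, $ganit_{poc}$ of $A$, $C'$, $C$, $swap(A)$ equals $\operatorname{Ad}_{ari}(pil)$ of $A$, $C'$, $C$, $swap(A')$ respectively. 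Since $\operatorname{Ad}_{ari}(pil)$ respects $ari$ (Lemma \ref{lem:Ad_ari}), all three $uri$ brackets vanish. This is the paper's proof. Once you insert $\overline C'=C$, the skeleton of your first and second steps becomes correct.
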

\begin{proof} We start by showing that $\uri(A,C)=\uri(C',C)=0$. Since 
a constant mould can be considered as a push-invariant mould in 
$\overline{\ARI}$, we can apply Lemma \ref{lem:ari_vs_push} to see that 
$\ari(A,C)=\ari(C',C)=0$. Now, by
Proposition \ref{prop:ganit_poc_Ad_ari} we have
$$\ganitpoc(M)=\Ad_{ari}(\pil)(M)$$
for all moulds $M\in \ARI^{\operatorname{pol}}$. Therefore, letting $M=A$ or $M=C'$, we have
\begin{align*}
\ari\bigl(\ganitpoc(M),\ganitpoc(C)\bigr)&=
\ari\bigl(\Ad_{ari}(\pil)(M),\Ad_{ari}(\pil)(C)\bigr)\\
&=\Ad_{ari}(\pil)\big(\ari(M,C)\big)\\
&=0,
\end{align*}
since $\Ad_{ari}(\pil)$ is an automorphism for the $\ari$-bracket by Lemma \ref{lem:Ad_ari}. Then by the Definition \ref{def:uri} for the $\uri$-bracket, we see that
$\uri(M,C)=0$ for $M=A$ or $M=C'$.

It remains to show that $\uri\bigl(swap(A),C\bigr)=0$. For this, we
note that since $A\in \ARI^{\operatorname{pol}}_{\underline{al}*\underline{il}}$, 
the mould $A':=\Ad_{ari}(pal)^{-1}\cdot A$
lies in $\ARI^{\operatorname{rat}}_{\underline{al}*\underline{al}}$, so $A'$ is push-invariant and
$A$ is the image of a push-invariant mould under $\Ad_{ari}(pal)$. 
So we can apply \eqref{eq:ecalle_fundamental_applied} to get
\begin{equation}\label{useful}
ganit_{poc}(swap(A)) = \Ad_{ari}(pil)\big( swap(A')\big).   
\end{equation}
We take the expression for $\uri\bigl(swap(A),C\bigr)$ as in Definition 
\ref{def:uri}, and we replace the term $\ganitpoc\bigl(swap(A)\bigr)$ by the
right-hand side of \eqref{useful} and $\ganitpoc(C)$ by 
$\Ad_{ari}(\pil)(C)$ thanks to Proposition \ref{prop:ganit_poc_Ad_ari},
obtaining
\begin{align*}
uri&(swap(A),C)=ganit_{pic}\Bigl(\ari\Bigl(\ganitpoc\bigl(swap(A)\bigr),\ganitpoc(C)\Bigr)\Bigr)\\
&=ganit_{pic}\Bigl(\ari\Bigl(\operatorname{Ad}_{ari}(pil)\bigl(swap(A')\bigr), \Ad_{ari}(\pil)(C)\Bigr)\Bigr)\\
&=ganit_{pic}\Bigl(\Ad_{ari}(\pil)\big(\ari\bigl(swap(A'), C\bigr)\big)\Bigr).
\end{align*}
Now, since as said above we have $A'\in \ARI^{\operatorname{rat}}_{\underline{al}*\underline{al}}$,
we have $swap(A')\in \overline{\ARI}^{\operatorname{rat}}_{\underline{al}*\underline{al}}$, so
like all bialternal bimoulds, both $A'$ and $swap(A')$ are push invariant.
Thus we can again apply Lemma \ref{lem:ari_vs_push} to deduce that
$\ari\bigl(swap(A'),C\bigr)=0$, and therefore $\uri\bigl(swap(A),C\bigr)=0$.
\end{proof}

\begin{Theorem}\label{thm:alil_ilswap_embedding} For $A,B\in \ARI_{\underline{al}*\underline{il}}^{\operatorname{pol}}$, we have
\[
\iota\big(ari(A,B)\big)=uri\big(\iota(A),\iota(B)\big).
\]
In particular, the tuple
$\big( \iota\big(\ARI^{\operatorname{pol}}_{\underline{al}*\underline{il}}\big), uri \big)$ is a Lie algebra.    
\end{Theorem}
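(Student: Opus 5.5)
We expand $uri(\iota(A),\iota(B))$ by bilinearity, writing $\iota(A)=A+swap(A)+C_A$ and $\iota(B)=B+swap(B)+C_B$. This gives nine terms, and each is handled by Lemma \ref{lem:uri_alil} or Lemma \ref{lem:uri_alil_const}.

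\emph{Terms involving a constant.} The four terms with a constant in the second argument are
\[
uri(A,C_B),\quad uri(swap(A),C_B),\quad uri(C_A,C_B),
\]
together with, and these vanish by Lemma \ref{lem:uri_alil_const}. The terms with a constant in the first argument vanish as well, by antisymmetry of $uri$ (Proposition \ref{prop:uri_alternil}, or directly from Definition \ref{def:uri}, since $ari$ is antisymmetric). Bilinearity and antisymmetry hold on $\BARI^{\operatorname{rat}}$ because $ganit_{pic}$ and $ganit_{poc}$ are linear. Thus all five terms containing $C_A$ or $C_B$ drop out.

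\emph{Mixed terms.} By \eqref{eq:uri_AswapB}, $uri(A,swap(B))=0$. Applying the same identity with the roles of $A$ and $B$ exchanged, $uri(B,swap(A))=0$, so antisymmetry gives $uri(swap(A),B)=0$.

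\emph{Remaining terms.} By \eqref{eq:uri_AB} and \eqref{eq:uri_swapAswapB},
\[
uri(\iota(A),\iota(B))=ari(A,B)+swap\big(ari(A,B)\big).
\]
By Theorem \ref{thm:alil_lie}, $ari(A,B)\in\ARI^{\operatorname{pol}}_{\underline{al}*\underline{il}}$, so
\[
\iota(ari(A,B))=ari(A,B)+swap(ari(A,B))+C_{ari(A,B)}.
\]
It therefore remains to show that $C_{ari(A,B)}=0$. This is the step I expect to need the most care.

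\emph{The constant.} The plan is a depth/weight argument. If $A$ and $B$ are homogeneous of weights $k$ and $l$, then $ari(A,B)$ is homogeneous of weight $k+l\ge 2$, because $A_1$ and $B_1$ are non-constant, so every component has weight at least $2$. A constant mould of weight $k+l$ lives only in depth $d=k+l$ with degree $0$. The constant needed to make $swap(M)+C_M$ alternil must have the weight of $M$, since the alternility conditions are homogeneous. Such constant corrections can be nonzero; the intended argument is that they are forced to vanish here.

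I would establish this by comparing both sides. The left side $uri(\iota(A),\iota(B))$ is alternil by Proposition \ref{prop:uri_alternil}, since $\iota(A)$ and $\iota(B)$ are alternil: $A$ is alternal in the $u$ variables only, hence alternil, and $swap(A)+C_A$ is alternil by definition. Hence $ari(A,B)+swap(ari(A,B))$ is alternil. Since $ari(A,B)$ is an alternal $u$-mould, it is alternil, so $swap(ari(A,B))$ is itself alternil.

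The constant $C_M$ is unique: the difference of two choices would be a constant mould that is alternil, and a nonzero constant in depth $d\ge2$ is not alternil. So $C_{ari(A,B)}=0$ and $\iota(ari(A,B))=uri(\iota(A),\iota(B))$.

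\emph{The Lie algebra claim.} The image $\iota(\ARI^{\operatorname{pol}}_{\underline{al}*\underline{il}})$ is a linear subspace of $\BARI^{\operatorname{rat}}_{il}$, and by the identity just proved it is closed under $uri$. Hence it is a Lie subalgebra of $(\BARI^{\operatorname{rat}}_{il},uri)$ by Proposition \ref{prop:uri_alternil}.
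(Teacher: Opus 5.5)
Your proof is correct. Up to the final step it is the paper's argument: the same bilinear expansion, with Lemma \ref{lem:uri_alil} (plus antisymmetry of $uri$ for the term $uri(swap(A),B)$) and Lemma \ref{lem:uri_alil_const} removing everything except $ari(A,B)+swap(ari(A,B))$. Your count of the constant terms is garbled: three terms carry a constant in the second slot and two more carry one only in the first, giving the five you mention later.

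The real difference is how you show $C_{ari(A,B)}=0$.

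The paper cites \cite[\S3.4]{SchnepsARI}: the constant correction of a mould in $\ARI_{\underline{al}*\underline{il}}$ is determined by its depth-one component, and $ari(A,B)$ has no depth-one component.

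You instead bootstrap from the identity you have just proved. $\iota(A)$ and $\iota(B)$ are alternil, because an alternal $u$-mould is alternil: every stuffle contraction term is a divided difference in the $v_i$ of a function that does not depend on $v$. So $uri(\iota(A),\iota(B))=ari(A,B)+swap(ari(A,B))$ is alternil by Proposition \ref{prop:uri_alternil}. Subtracting the alternil $u$-mould $ari(A,B)$ shows that $swap(ari(A,B))$ is already alternil. Uniqueness of the constant then forces $C_{ari(A,B)}=0$.

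This keeps the argument inside the paper and avoids the external citation. Its one delicate point is uniqueness. Constants concentrated in depth $1$ are alternil, so $C_M$ is unique only modulo depth-one constants. Your weight argument settles this: the correction for a mould of weight $\ge 2$ lives in depth $\ge 2$. This matches the normalization implicit in the paper's definition of $\iota$.

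The paper's citation gives more: an explicit description of $C_A$ in terms of $A_1$, which is also what makes $\iota$ well defined to begin with.
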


\begin{proof}
We compute
\begin{align*}
 uri\bigl(\iota(A),\iota(B)\bigr) &= uri( A +swap(A) +C_A, B +swap(B)+C_B) \\
 & \hspace{-0.5cm}= uri(A,B) + uri(A,swap(B)) + uri(swap(A),B) + uri(swap(A),swap(B)) \\
 & 
+ uri( A+swap(A) +C_A,C_B) + uri(C_A , B+swap(B) +C_B).
\end{align*}
By Lemma \ref{lem:uri_alil}, the first term in the second line is equal to $ari(A, B)$, both the middle terms are zero, and  the fourth term is $swap(ari(A, B))$. The terms in the third line vanish by Lemma \ref{lem:uri_alil_const}. Therefore, we derive
\[
uri\bigl(\iota(A),\iota(B)\bigr) = ari(A,B) + swap\bigl(ari(A,B)\bigr). 
\]
Now, since $A,B\in \ARI^{\operatorname{pol}}_{\underline{al}*\underline{il}}$, we deduce from Theorem \ref{thm:alil_lie} that
$\ari(A,B)\in \ARI^{\operatorname{pol}}_{\underline{al}*\underline{il}}$. Now, as shown in \S 3.4
of \cite{SchnepsARI}, the constant correction $C_A$ for a mould
$A\in \ARI^{\operatorname{pol}}_{\underline{al}*\underline{il}}$ is determined entirely
by the depth-one component of $A$. Since $\ari(A,B)$ has no depth one term,
its associated constant correction term is zero, 
i.e.~$swap\bigl(\ari(A,B)\bigr)$ is already alternil. Thus we have
\[ \iota\bigl(\ari(A,B)\bigr)=\ari(A,B)+swap\bigl(\ari(A,B)\bigr). \qedhere 
\]
\end{proof}

\subsection{Conjectures for  
\texorpdfstring{$\big( \BARI_{\underline{il},swap}^{\operatorname{pol}},uri\big)$}{BARIilswap}} \label{subsec:bari_ilswap}

Throughout this subsection we assume Conjecture \ref{conj:uri_il_swap}, i.e. that $\big(\BARI_{\underline{il},swap}^{\operatorname{pol}},uri\big)$ is a Lie algebra, and Conjecture \ref{conj:BILS_derivation}, i.e. that $\widehat\delta$ is a derivation on the Lie algebra $\big(\BARI_{\underline{il},swap}^{\operatorname{pol}},uri\big)$.

By Conjecture \ref{conj:ALIL_generators}, the Lie algebra $\ARI^{\operatorname{pol}}_{\underline{al}\ast\underline{il}}$ is generated by elements $\widehat\xi_{2n+1},\ n\geq1$. We define the elements 
\[
\widehat\xi_{2n+1,0} = \iota( \widehat\xi_{2n+1} ),\quad n\geq1,
\]
where $\iota:\ARI^{\operatorname{pol}}_{\underline{al}\ast\underline{il}}\to \BARI_{\underline{il},swap}^{\operatorname{pol}}$ is defined in \eqref{eq:def_iota_map}. Moreover, we set $\widehat\xi_{1,0}=(1,0,0,\ldots)$ and define
\begin{align}\label{eq:biekma_extension}
\widehat\xi_{2n+1+m,m} 
= \widehat{\delta}^{m}( \widehat\xi_{2n+1,0}), \quad n,m\geq0.
\end{align}
Observe that $\gr_D\widehat\xi_{a,b}=\xi_{a,b}$, therefore they are alternil, swap invariant extensions of the ekma bimoulds $\xi_{a,b}\in \BARI_{\underline{al},swap}^{\operatorname{pol}}$ given in \eqref{def:ekma_bimoulds}.

\begin{Conjecture}\label{conj:biekma_il}
We have
\[
\big(\BARI_{\underline{il},swap}^{\operatorname{pol}},uri\big)  =  \Lie\big(\{\widehat\xi_{2n+1+m,m}\}_{m,n\ge 0}; uri\big).
\]
The generating series of the dimensions of the homogeneous subspaces of the universal enveloping algebra is given by
\begin{align*}
\sum_{k\geq0} \dim \mathcal{U}(\BARI_{\underline{il},swap}^{\operatorname{pol}})_{k} \,x^k  &=\frac{1}{1-\mathsf{D}(x)\mathsf{O}_1(x)+\mathsf{D}(x)\sum_{k\geq4} \dim(M_k\oplus S_k)x^k}.
\end{align*}
\end{Conjecture}

As mentioned several times before we have numerical evidences for this conjecture also.


\begin{Lemma} \label{lem:esr_relation_uri}
We have the Eisenstein relations
\[
uri(\widehat\xi_{2n+1,0},\widehat\xi_{1,0})=0, \qquad n\geq1.
\]
\end{Lemma}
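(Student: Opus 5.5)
The plan is to reduce the claim to the results for $\iota$ and for constant moulds proved just above. By definition, $\widehat\xi_{2n+1,0}=\iota(\widehat\xi_{2n+1})=\widehat\xi_{2n+1}+swap(\widehat\xi_{2n+1})+C$, where $C=C_{\widehat\xi_{2n+1}}$ is a constant mould and $\widehat\xi_{2n+1}\in\ARI^{\operatorname{pol}}_{\underline{al}*\underline{il}}$. The element $\widehat\xi_{1,0}=(1,0,0,\ldots)$ is itself a constant mould: it is constant in depth one and zero elsewhere, so it is a $u$-free $v$-mould independent of all variables. Bilinearity of $uri$, which is clear from Definition \ref{def:uri} since $ganit_{poc}$, $ari$ and $ganit_{pic}$ are linear, gives
\[
uri(\widehat\xi_{2n+1,0},\widehat\xi_{1,0})=uri(\widehat\xi_{2n+1},\widehat\xi_{1,0})+uri\big(swap(\widehat\xi_{2n+1}),\widehat\xi_{1,0}\big)+uri(C,\widehat\xi_{1,0}).
\]

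I then apply Lemma \ref{lem:uri_alil_const} with $A=\widehat\xi_{2n+1}$, constant mould $C'=C$, and constant mould $\widehat\xi_{1,0}$ in the second slot. The three identities $uri(A,\cdot)=0$, $uri(swap(A),\cdot)=0$ and $uri(C',\cdot)=0$ kill the three terms, so the bracket vanishes.

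The only point needing care is the meaning of ``constant mould'' in that lemma. Its proof uses only two facts. First, $C$ lies in $\overline{\ARI}$ and is push invariant, which is trivial for a mould whose components are constants: push merely substitutes variables. Second, Proposition \ref{prop:ganit_poc_Ad_ari} applies to $C$ as an element of $\ARI^{\operatorname{pol}}$, which holds because constants are polynomial moulds in the $u_i$.

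So $(1,0,\ldots)$ qualifies, as does any constant correction $C_A$, which is also concentrated in constants. There is no real obstacle, apart from confirming that $C_A$ is of this type so that it falls under Lemma \ref{lem:uri_alil_const}.

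As a sanity check, I would note that by Lemma \ref{lem:orth_rel} the ari bracket already satisfies $ari(A,\xi_1)=0$. This is consistent with the first term, since $uri(A,\xi_1)=ari(A,\xi_1)$ would follow as in Lemma \ref{lem:uri_alil}.
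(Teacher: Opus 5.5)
Your proof is correct, but it takes a slightly different route from the paper's. The paper deduces the lemma from Theorem \ref{thm:alil_ilswap_embedding} and Lemma \ref{lem:orth_rel}. There $\widehat\xi_{1,0}$ is viewed, up to a scalar, as $\iota(\xi_1)$, and the relation $ari(\widehat\xi_{2n+1},\xi_1)=0$ is transported through the compatibility $\iota(ari(A,B))=uri(\iota(A),\iota(B))$. In that reading the $uri$ Eisenstein relation is the image of the $ari$ one.

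You instead observe that $\widehat\xi_{1,0}$ is itself a constant mould. You then kill the three terms of $\widehat\xi_{2n+1,0}=\widehat\xi_{2n+1}+swap(\widehat\xi_{2n+1})+C$ directly with Lemma \ref{lem:uri_alil_const}. That is the same lemma that removes the constant terms inside the proof of Theorem \ref{thm:alil_ilswap_embedding}, so the vanishing ultimately rests on Lemma \ref{lem:ari_vs_push} rather than on the explicit cancellation in Lemma \ref{lem:orth_rel}.

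Your version has the advantage of using every result within its stated hypotheses. The paper's one-line citation tacitly applies Theorem \ref{thm:alil_ilswap_embedding} to $\xi_1$. But $\xi_1$ does not lie in $\ARI^{\operatorname{pol}}_{\underline{al}*\underline{il}}$, since its depth-one component is constant. So strictly one must check that the proof of that theorem still goes through for $B=\xi_1$.

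Your argument also gives the slightly stronger statement $uri(\iota(A),C)=0$ for every $A\in\ARI^{\operatorname{pol}}_{\underline{al}*\underline{il}}$ and every constant mould $C$. The paper's phrasing, in exchange, shows that this relation is inherited from the Eisenstein relation in $\ARI$ via $\iota$. That is the same viewpoint the paper uses for the other lifted relations.
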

\begin{proof}
This follows from Theorem \ref{thm:alil_ilswap_embedding} and Lemma \ref{lem:orth_rel}.
\end{proof}

Beside the Eisenstein relations and their derivatives no other explicit general description for the relations in $\BARI^{\operatorname{pol}}_{\underline{il},swap}$ has been found so far, cf. Subsection \ref{subsec:bcarma}.

\subsection{Conjectures for
\texorpdfstring{$\gr_D \big(\BARI_{\underline{il},swap}^{\operatorname{pol}},uri\big)$}{grDBARI}}

We use the two Lie algebras $\biekma$ and $\bicarma$ from Subsection \ref{subsec:bekma} and \ref{subsec:bcarma} to describe the conjectural structure of $\gr_D\big( \BARI_{\underline{il},swap}^{\operatorname{pol}},uri\big)$.

\begin{Conjecture} We have
\[
\gr_D\big( \BARI_{\underline{il},swap}^{\operatorname{pol}},uri\big)= \Lie\big(\{\xi_{2n+1+m,m})\}_{m,n \ge 0} \cup \{\delta^m(\chi_{f,g})\}_{f,g\in W^\pm,m\geq0};ari\big).
\]    
\end{Conjecture}

By Lemma \ref{lem:orth_rel}, we have
\[
ari(\xi_1,\carma)=0.
\]
Those relations give rise to relations in $\gr_D\big( \BARI_{\underline{il},swap}^{\operatorname{pol}},uri\big)$ in depth $5$, precisely we have
\begin{align} \label{eq:rel_xi1_iota(carma)}
ari(\xi_{1,0},\iota(\carma))=0,
\end{align}
and also derivatives of those relations
\begin{align} \label{eq:rel_deriv_xi1_iota(carma)}
\delta^m\big(ari(\xi_{1,0},\iota(\carma))\big)=0.
\end{align}
Denote the number of these kind of relations in weight $k$ by $j_k$.
 
\begin{Lemma} \label{lem:HPS_eq_depth 5}
We have 
\begin{align*}
b_5(x) &=  \sum_{k\geq5} j_kx^k= \mathsf{D}(x) \,x \mathsf{S}(x).
\end{align*} 
\end{Lemma}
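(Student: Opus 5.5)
This lemma is a counting statement, in the same spirit as Lemma \ref{lem:HPS_eq_depth 3}. I would prove it by exhibiting the relations \eqref{eq:rel_xi1_iota(carma)} and \eqref{eq:rel_deriv_xi1_iota(carma)} and counting them by weight.

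\textbf{Step 1 (the relations hold).} Take $f\in \widetilde W_k^+$ and its carma mould $\chi_f\in \ARI^{\operatorname{pol}}_{\underline{al}/\underline{al}}$, homogeneous of weight $k$ and depth $4$. Lemma \ref{lem:orth_rel} gives $ari(\chi_f,\xi_1)=0$ in $\ARI$. Theorem \ref{thm:ALAL_iota} says $\iota$ is a Lie algebra morphism, and $\iota(\xi_1)=\xi_{1,0}$. So Remark \ref{rem:induced_rel_xi} yields
\[
ari(\xi_{1,0},\iota(\chi_f))=0 ,
\]
a relation of weight $k+1$ and depth $5$. Since $\delta$ is a derivation of $(\BARI,ari)$, applying $\delta^m$ gives relations in weight $k+1+2m$. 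Each of these is a sum of brackets of the generators $\delta^a(\xi_{1,0})=\xi_{a+1,a}$ with $\delta^{b}(\iota(\chi_f))$, where $a+b=m$.

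\textbf{Step 2 (counting).} The elements $\chi_f$ are indexed by a basis of $\widetilde W_k^+$, whose dimension is $\dim S_k$, so their weight generating series is $\mathsf S(x)$. Bracketing with $\xi_{1,0}$ raises the weight by one, which contributes the factor $x$. The $\delta^m$-images, $m\ge0$, contribute the factor $\mathsf D(x)=\frac{1}{1-x^2}$. Hence the series is
\[
\sum_k j_kx^k=\mathsf D(x)\,x\,\mathsf S(x).
\]
The count starts at $k\geq 13$, which is consistent with summing over $k\ge5$. One point needs care: the carma elements in $\gr_D(\BARI_{\underline{il},swap}^{\operatorname{pol}})$ are the $\chi_{f,g}$ and their $\delta$-images, indexed by pairs $(f,g)$, so I would have to specify which of them enter \eqref{eq:rel_xi1_iota(carma)}. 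I would take $\iota(\carma)$ to be spanned by $\iota(\chi_f)$, which by Theorem \ref{thm:uri_alil} agrees with $\chi_{f,g}$ for the distinguished choice $g=x^{k-2}-y^{k-2}$. The images $\iota(\chi_f)$ are again indexed by $\widetilde W^+$, which gives exactly $\mathsf S(x)$.

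\textbf{Main obstacle.} The hard part is showing that these relations are linearly independent, i.e.\ genuinely new, so that $j_k$ equals the coefficient rather than just bounding it from below. Concretely, one must show that for a fixed weight the elements
\[
\delta^{a}(\xi_{1,0})\otimes \delta^{b}\iota(\chi_f),\qquad a+b=m,
\]
are not already forced to satisfy any linear dependence by the depth-$2$ and depth-$3$ relations counted in $b_2$ and $b_3$. I would define $j_k$ as the number of relations of the displayed form and check independence through the images of $\iota(\chi_f)$ in $\BARI_{\underline{al},swap}$. Beyond this, a full independence proof rests on the freeness Conjecture \ref{conj:bcarma}, so I would present the lemma at the same heuristic level as Lemma \ref{lem:HPS_eq_depth 3}: the generating series counts the distinct relations produced by the construction above.
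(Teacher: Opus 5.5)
Your proposal follows the paper's argument: the paper's proof is exactly this count, with $\mathsf{S}(x)$ for the carma generators, a factor $x$ for bracketing with $\xi_{1,0}$, and $\mathsf{D}(x)$ for the $\delta^m$-derivatives, and, like you, it does not prove that these relations are independent, staying at the same heuristic level as Lemma \ref{lem:HPS_eq_depth 3}. One small repair in Step 1: $\xi_1\notin\ARI_{\underline{al}/\underline{al}}$ because its depth-one part is constant, and the formula would in any case give $\iota(\xi_1)=2\xi_{1,0}$, so Theorem \ref{thm:ALAL_iota} does not literally apply; instead obtain $ari(\xi_{1,0},\iota(\chi_f))=0$ from Lemma \ref{lem:ari_vs_push}, using that the constant $\xi_{1,0}$ is both a $u$-mould and a push-invariant $v$-mould, and that $swap(\chi_f)$ is push invariant by Proposition \ref{prop:alal_push}.
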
      

\begin{proof}
The number of generators in $\carma$ are counted by the term $\mathsf{S}(x)$. As we consider the ari bracket with $\xi_{1,0}$ in \eqref{eq:rel_xi1_iota(carma)}, we have to multiply with $x$. Finally, we also consider derivatives in \eqref{eq:rel_deriv_xi1_iota(carma)}, which are encoded in the term $\mathsf{D}(x)$.
\end{proof}

We do not expect any further relations or dependencies of relations between the Lie algebras $\biekma$ and $\bicarma$. With Lemma \ref{lem:HPS_eq_depth1}, Theorem \ref{thm:HPS_eq_depth2}, Lemma \ref{lem:HPS_eq_depth 3}, Conjecture \ref{conj:bcarma}, and Lemma \ref{lem:HPS_eq_depth 5}, this results in the following dimension conjecture.

\begin{Conjecture}
We have for the dimensions of the homogeneous subspaces of the universal enveloping algebra $\mathcal{U}\big(\gr_D\big( \BARI_{\underline{il},swap}^{\operatorname{pol}},uri\big)\big)$ that
\begin{align*}
\sum_{k,d\geq0} \dim \mathcal{U}(\gr_D &\big( \BARI_{\underline{il},swap}^{\operatorname{pol}},uri\big))_{k,d} \,x^k y^d \\
&=\frac{1}{1-b_1(x)y + b_2(x) y^2  - b_3(x) y^3 -  b_4(x)y^4 +b_5(x) y^5}.
\end{align*}
\end{Conjecture}

\begin{Remark}
We have a proper inclusion $ \gr_D \big( \BARI_{il,swap}^{pol} , uri \big) \subsetneq \big( \BARI_{al,swap}^{pol} , ari \big)$. For example, the calculated dimensions of those spaces in weight $8$ and depth $2$ differ, cf. table \ref{tab:dim_bari_al_swap} and \ref{tab:dim_biekma}.
\end{Remark}

\newpage
\appendix

\section{Bimoulds and quasi-shuffle products} \label{app:bimoulds_quasish}

\subsection{Symmetries via the coefficient map}
Let $A$ be an \emph{alphabet}, so $A$ is a countable set whose elements are called \emph{letters}. By $\Q A$ we denote the $\Q$-module spanned by the letters of $A$ and we let $\Q\langle A\rangle $ be the free non-commutative algebra generated by the alphabet $A$. The monic monomials in $\Q\langle A\rangle$ are called \emph{words} with letters in $A$, and the set of all words is denoted by $A^*$.

We now relate different algebraic structures on $\Q\langle A\rangle$ to the theory of bimoulds. In the context of alphabets we often use $u,v$ to denote words. Therefore, to distinguish those clearly from the variables $u_i,v_i$ used for bimoulds before, we will now identify $u_i=Y_i$ and $v_i=X_i$, $i\ge 1$, in the following. 

\begin{Definition} \label{def:quasi-shuffle}
Let $\diamond:\Q A\times \Q A\to \Q A$ be an associative and commutative product. Define the \emph{quasi-shuffle product} $\ast_\diamond$ on $\Q\langle A\rangle$ recursively by $1\ast_\diamond w=w\ast_\diamond 1=w$ and 
\begin{align*}
au\ast_\diamond bv=a(u\ast_\diamond bv)+b(au\ast_\diamond v)+(a\diamond b)(u\ast_\diamond v)
\end{align*}
for all $u,v,w\in \Q\langle A\rangle $ and $a,b\in A$.
\end{Definition} 

There are several well-known quasi-shuffle products. If
\begin{align} \label{eq:def_shuffle}
a\diamond b=0 \text{ for all } a,b\in A, 
\end{align}
then we get the \emph{shuffle product}, which is usually denoted by $\shuffle$.

More specific, consider the bi-alphabet $Y=\{y_{k,m}\mid k\geq1,\ m\geq0\}$. On $\Q Y$ define the product 
\begin{align} \label{eq:def_stuffle}
y_{k_1,m_1}\diamond y_{k_2,m_2}= y_{k_1+k_2,m_1+m_2}.
\end{align}
We call the associated quasi-shuffle product the \emph{stuffle product} and denote it by $\ast$. 

For a given quasi-shuffle algebra $(\Q\langle A\rangle,\ast_\diamond)$, define the \emph{generic diagonal series} of $\Q\langle A\rangle$ by
\begin{align*}
\mathcal{W}(A)=\sum_{w\in A^*} w\otimes w.
\end{align*}
Let $\dep:A^*\to \mathbb{Z}_{\geq0}$ be a \emph{depth map} compatible with concatenation, i.e., we have 
\[\dep(uv)=\dep(u)+\dep(v),\qquad u,v\in A^*.\] 
Denote by $(A^*)^{(d)}$ the set of all words in $A^*$ of depth $d$ and by $\Q\langle A\rangle^{(d)}$ the space spanned by $(A^*)^{(d)}$. 
\begin{Definition} \label{def:gen_series_words} Let $\rho_A:\Q\langle A\rangle\to\Q[Z_1,Z_2,\ldots]$ be a $\Q$-linear map having the following properties with respect to the depth map:
\begin{itemize}
\item[(i)] There is a strictly increasing sequence $\ell(0)<\ell(1)<\ell(2)<\dots$ of non-negative integers, such that for each $d\geq0$ the restriction of $\rho_A$ to $\Q\langle A\rangle^{(d)}$ is an injective map
\[\rho_A|_{\Q\langle A\rangle^{(d)}}:\Q\langle A\rangle^{(d)}\to \Q[Z_1,\ldots,Z_{\ell(d)}].\] 
\item[(ii)] We have
\[\rho_A(uv)=\rho_A(u)\rho_A^{[\ell(n)]}(v), \qquad u\in\Q\langle A\rangle^{(n)},\ v\in \Q\langle A\rangle,\]
where $\rho_A^{[n]}$ denotes the $\Q$-linear map obtained from $\rho_A$ by shifting the variables $Z_i$ to $Z_{n+i}$.
\end{itemize}
The (commutative) \emph{generating series of words} in $\Q\langle A\rangle$ associated to $\rho_A$ are given by
\[\rho_A(\mathcal{W})_d(Z_1,\ldots,Z_{\ell(d)})=\sum_{w\in (A^*)^{(d)}} w\rho(w)\in \Q\langle A\rangle\llbracket Z_1,\ldots,Z_{\ell(d)}\rrbracket , \qquad d\geq0.\]
\end{Definition}

\begin{Proposition} 
Let $\rho_A:\Q\langle A\rangle \to\Q[Z_1,Z_2,\ldots]$ be a $\Q$-linear map as in Definition \ref{def:gen_series_words} with $\ell(d_1)+\ell(d_2)=\ell(d_1+d_2)$ for all $d_1,d_2\geq0$. Then the $\Q\llbracket Z_1,Z_2,\ldots\rrbracket $-linear extension of the concatenation product satisfies
\[ \rho_A(\mathcal{W})_n(Z_1,\ldots,Z_{\ell(n)})\cdot \rho_A(\mathcal{W})_{d-n}(Z_{\ell(n)+1},\ldots,Z_{\ell(d)})=\rho_A(\mathcal{W})_d(Z_1,\ldots,Z_{\ell(d)})
\]
for all $0\leq n\leq d$.
\end{Proposition}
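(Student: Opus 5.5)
This is a statement about concatenating two generating series of words. I would work directly from the definitions: expand the product on the left as a double sum, rewrite each term using property (ii) of $\rho_A$, and identify the result with the depth-$d$ series on the right.

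First, I would write
\[
\rho_A(\mathcal{W})_n(Z_1,\ldots,Z_{\ell(n)}) = \sum_{u\in (A^*)^{(n)}} u\,\rho_A(u)
\]
and, by the definition of the shift,
\[
\rho_A(\mathcal{W})_{d-n}(Z_{\ell(n)+1},\ldots,Z_{\ell(n)+\ell(d-n)}) = \sum_{v\in (A^*)^{(d-n)}} v\,\rho_A^{[\ell(n)]}(v).
\]
The additivity hypothesis $\ell(n)+\ell(d-n)=\ell(d)$ is exactly what makes this shifted series live in the variables $Z_{\ell(n)+1},\ldots,Z_{\ell(d)}$, as in the statement. Extending concatenation $\Q\llbracket Z\rrbracket$-linearly, the product equals
\[
\sum_{u,v} uv\,\rho_A(u)\rho_A^{[\ell(n)]}(v).
\]
By property (ii), with $u$ of depth $n$, each summand is $uv\,\rho_A(uv)$.

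Second, I would reindex the sum. Because the depth map is additive, $uv$ has depth $d$. The key combinatorial step is to show that the map
\[
(u,v)\mapsto uv,\qquad (A^*)^{(n)}\times (A^*)^{(d-n)}\to (A^*)^{(d)},
\]
is a bijection. Injectivity and surjectivity both rest on one fact: every word $w$ of depth $d$ has a unique prefix of depth exactly $n$. To see this, note that depth is additive on letters, so the depth of the prefixes of $w$ is monotone in the prefix length, and an exact value $n$ is attained.

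Once the bijection is in hand, the sum becomes $\sum_{w\in (A^*)^{(d)}} w\,\rho_A(w)$, which is $\rho_A(\mathcal{W})_d$.

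The main obstacle is the uniqueness of the depth-$n$ prefix. If some letters have depth $0$, several prefixes share depth $n$, and the map $(u,v)\mapsto uv$ fails to be injective. I would handle this by assuming, as in the intended applications such as the alphabet $Y$ with every letter of depth $1$, that each letter has positive depth. Then the depth of prefixes is strictly increasing along $w$. Existence of a prefix of depth exactly $n$ is a separate issue: it holds when letter depths are $1$. In general I would use the hypothesis implicit in the proposition, which factorizes $\rho_A$ via (ii), and restrict the statement accordingly, checking that both sides are computed over the same set of factorizations.
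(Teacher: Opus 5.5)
The paper states this proposition without proof, so there is no argument to compare with. Your first step is correct and is the natural one: you expand the product, use $\ell(n)+\ell(d-n)=\ell(d)$ to identify the shifted series with $\sum_v v\,\rho_A^{[\ell(n)]}(v)$, and then apply (ii) to get $\sum_{u,v}uv\,\rho_A(uv)$. Everything then rests on the reindexing. For its injectivity half you do not need to \emph{assume} that letters have positive depth, because the hypotheses force it. Taking $d_1=d_2=0$ in the additivity gives $\ell(0)=0$, so condition (i) says $\rho_A$ embeds $\mathbb{Q}\langle A\rangle^{(0)}$ into the polynomial ring in zero variables, i.e.\ into $\mathbb{Q}$. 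Since the empty word already has depth $0$, this forces $(A^*)^{(0)}=\{1\}$. Hence every letter has depth $\ge 1$, and a word has at most one prefix of each depth.

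The surjectivity half is a genuine gap, and your last sentence does not close it. Property (ii) is an identity about factorizations $w=uv$ that already exist; it does not produce a prefix of depth exactly $n$. The gap cannot be closed, because the statement is false as soon as some letter has depth $\ge 2$. Here is a counterexample:
\begin{itemize}
\item Take $A=\{a,b\}$ with $\operatorname{dep}(a)=1$, $\operatorname{dep}(b)=2$, and $\ell(d)=d$.
\item Let $\rho_A$ send a word to the product over its letters of $Z_{p+1}$ for each $a$ and $Z_{p+1}^2$ for each $b$, where $p$ is the depth of the prefix preceding that letter.
\item Then (ii) holds because $\ell$ is additive. Condition (i) holds because distinct words give distinct monomials: you can decode the word by reading the exponents of $Z_1,Z_2,\ldots$ in turn.
\item For $n=1$, $d=2$ the left-hand side is $(a\,Z_1)\cdot(a\,Z_2)=aa\,Z_1Z_2$, while the right-hand side is $aa\,Z_1Z_2+b\,Z_1^2$.
\end{itemize}
More generally, given the positivity above, the identity holds for all $0\le n\le d$ if and only if every letter has depth one. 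Indeed, if $\operatorname{dep}(x)=k\ge 2$, then for $(n,d)=(1,k)$ the word $x$ occurs on the right with coefficient $\rho_A(x)\neq 0$, but it does not occur on the left. The correct repair is therefore to add the hypothesis that $\operatorname{dep}$ is word length, as for the alphabet $Y$, which is the only case used in the paper. Under that hypothesis a word $w=a_1\cdots a_d$ factors uniquely as $(a_1\cdots a_n)(a_{n+1}\cdots a_d)$, and your bijection argument is complete.
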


For the alphabet $Y=\{y_{k,m}\mid k\geq 1, m\geq0\}$, a depth map is given by
\begin{align*}
\dep: Y^*\to \mathbb{Z}_{\geq0},\quad y_{k_1,m_1}\cdots y_{k_d,m_d}\mapsto d.
\end{align*}
The map 
\begin{align*}
\rho_Y:\Q\langle Y\rangle&\to \Q[X_1,Y_1,X_2,Y_2,\ldots], \\
y_{k_1,m_1}\dots y_{k_d,m_d}&\mapsto X_1^{k_1-1}\frac{Y_1^{m_1}}{m_1!}\dots X_d^{k_d-1}\frac{Y_d^{m_d}}{m_d!}
\end{align*}
satisfies the conditions from Definition \ref{def:gen_series_words} with $\ell(d)=2d$. The associated generating series of words are given by $\rho_Y(\mathcal{W})_0=1$ and for $d\geq1$ by
\begin{align} \label{eq:def_rhoY}
\rho_Y(\mathcal{W})_d\binom{X_1,\ldots,X_d}{Y_1,\ldots,Y_d}=\sum_{\substack{k_1,\ldots,k_d\geq1 \\ m_1,\ldots,m_d\geq0}} y_{k_1,m_1}\dots y_{k_d,m_d}X_1^{k_1-1}\frac{Y_1^{m_1}}{m_1!}\dots X_d^{k_d-1}\frac{Y_d^{m_d}}{m_d!}.
\end{align}

Extend the quasi-shuffle product $\ast_\diamond$ on $\Q\langle A\rangle$ to $\Q\langle A\rangle\llbracket Z_1,Z_2,\ldots\rrbracket$ by $\Q\llbracket Z_1,Z_2,\ldots\rrbracket$-linearity, then by definition of quasi-shuffle products we have for all $0\leq n\leq d$ that
\[\rho_A(\mathcal{W})_n(Z_1,\ldots,Z_{\ell(n)})\quasish \rho_A(\mathcal{W})_{d-n}(Z_{\ell(n)+1},\ldots,Z_{\ell(d)})\in\Q\langle A\rangle\llbracket Z_1,\ldots,Z_{\ell(d)}\rrbracket.\]
For some quasi-shuffle products, in particular the ones mentioned before, it is possible to describe this product on the generating series of words by a recursive explicit formula with respect to concatenation. This will explain the origin of a particular kind of symmetries occurring in Ecalle's theory of bimoulds. 

\begin{Proposition} \label{prop:quasish_gen_series}
Let $Y=\{y_{k,m}\mid k\geq1,m\geq0\}$
\begin{enumerate}
\item For the shuffle product $\shuffle$ on $\Q\langle Y\rangle$ from \eqref{eq:def_shuffle}, one obtains for all $0<n<d$ that $1\shuffle \rho_Y(\mathcal{W})_n=\rho_Y(\mathcal{W})_n\shuffle 1=\rho_Y(\mathcal{W})_n$ and
\begin{align*} 
&\rho_Y(\mathcal{W})\binom{X_1,\ldots,X_n}{Y_1,\ldots,Y_n}\shuffle \rho_Y(\mathcal{W})\binom{X_{n+1},\ldots,X_d}{Y_{n+1},\ldots,Y_d}\\
&=\rho_Y(\mathcal{W})\binom{X_1}{Y_1}\cdot\left(\rho_Y(\mathcal{W})\binom{X_2,\ldots,X_n}{Y_2,\ldots,Y_n}\shuffle \rho_Y(\mathcal{W})\binom{X_{n+1},\ldots,X_d}{Y_{n+1},\ldots,Y_d} \right)\\
&+\rho_Y(\mathcal{W})\binom{X_{n+1}}{Y_{n+1}}\cdot\left(\rho_Y(\mathcal{W})\binom{X_1,\ldots,X_n}{Y_1,\ldots,Y_n}\shuffle \rho_Y(\mathcal{W})\binom{X_{n+2},\ldots,X_d}{Y_{n+2},\ldots,Y_d}\right).
\end{align*} 
\item For the stuffle product $\ast$ on $\Q\langle Y\rangle$ from \eqref{eq:def_stuffle}, one obtains for all $0<n<d$ that $1\ast \rho_Y(\mathcal{W})_n=\rho_Y(\mathcal{W})_n\ast 1=\rho_Y(\mathcal{W})_n$ and 
\begin{align*} 
&\rho_Y(\mathcal{W})\binom{X_1,\ldots,X_n}{Y_1,\ldots,Y_n}\ast \rho_Y(\mathcal{W})\binom{X_{n+1},\ldots,X_d}{Y_{n+1},\ldots,Y_d}\\
&=\rho_Y(\mathcal{W})\binom{X_1}{Y_1}\cdot\left(\rho_Y(\mathcal{W})\binom{X_2,\ldots,X_n}{Y_2,\ldots,Y_n}\ast \rho_Y(\mathcal{W})\binom{X_{n+1},\ldots,X_d}{Y_{n+1},\ldots,Y_d} \right)\\
&+\rho_Y(\mathcal{W})\binom{X_{n+1}}{Y_{n+1}}\cdot\left(\rho_Y(\mathcal{W})\binom{X_1,\ldots,X_n}{Y_1,\ldots,Y_n}\ast \rho_Y(\mathcal{W})\binom{X_{n+2},\ldots,X_d}{Y_{n+2},\ldots,Y_d}\right) \\ &+\frac{\rho_Y(\mathcal{W})\binom{X_1}{Y_1+Y_{n+1}}-\rho_Y(\mathcal{W})\binom{X_{n+1}}{Y_1+Y_{n+1}}}{X_1-X_{n+1}}\\
&\hspace{3cm}\cdot \left(\rho_Y(\mathcal{W})\binom{X_2,\ldots,X_n}{Y_2,\ldots,Y_n}\ast \rho_Y(\mathcal{W})\binom{X_{n+2},\ldots,X_d}{Y_{n+2},\ldots,Y_d}\right).
\end{align*} 
\end{enumerate}
\end{Proposition}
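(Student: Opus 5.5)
The plan is to derive both identities directly from the recursive definition of the quasi-shuffle product in Definition \ref{def:quasi-shuffle}, extended $\Q\llbracket X_1,Y_1,\ldots\rrbracket$-linearly, and then regroup coefficients. The identities $1\shuffle \rho_Y(\mathcal{W})_n=\rho_Y(\mathcal{W})_n\shuffle 1=\rho_Y(\mathcal{W})_n$ (and the same for $\ast$) are immediate from $1\ast_\diamond w=w\ast_\diamond 1=w$. For $0<n<d$, I would first use property (ii) of Definition \ref{def:gen_series_words} with $\ell(d)=2d$ to split off the first letter of each factor:
\[
\rho_Y(\mathcal{W})\binom{X_1,\ldots,X_n}{Y_1,\ldots,Y_n}=\rho_Y(\mathcal{W})\binom{X_1}{Y_1}\cdot\rho_Y(\mathcal{W})\binom{X_2,\ldots,X_n}{Y_2,\ldots,Y_n}.
\]
Thus the left-hand side becomes a sum over $k_1,m_1,k_{n+1},m_{n+1}$ and over words $u,v$ of the terms $y_{k_1,m_1}u\ast_\diamond y_{k_{n+1},m_{n+1}}v$, each multiplied by the corresponding monomials.

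Next I would apply the defining recursion
\[
au\ast_\diamond bv=a(u\ast_\diamond bv)+b(au\ast_\diamond v)+(a\diamond b)(u\ast_\diamond v)
\]
termwise. The first summand, summed over all indices, recombines by bilinearity and property (ii) into $\rho_Y(\mathcal{W})\binom{X_1}{Y_1}\cdot\big(\rho_Y(\mathcal{W})\binom{X_2,\ldots,X_n}{Y_2,\ldots,Y_n}\ast_\diamond\rho_Y(\mathcal{W})\binom{X_{n+1},\ldots,X_d}{Y_{n+1},\ldots,Y_d}\big)$. The second summand gives the analogous term with the letter $\binom{X_{n+1}}{Y_{n+1}}$ pulled out front. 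Here the variables attached to each letter must stay attached to it, which holds because the coefficient monomials are just carried along. In the shuffle case $a\diamond b=0$, so we are done after these two terms.

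For the stuffle case, the third summand contributes
\[
\sum y_{k_1+k',m_1+m'}\,X_1^{k_1-1}X_{n+1}^{k'-1}\frac{Y_1^{m_1}Y_{n+1}^{m'}}{m_1!\,m'!}\cdot(u\ast v)\cdot(\text{remaining monomials}).
\]
I would regroup this by $K=k_1+k'$ and $M=m_1+m'$. The sum over $m_1+m'=M$ gives $(Y_1+Y_{n+1})^M/M!$ by the binomial theorem. The sum over $k_1+k'=K$ with $k_1,k'\ge1$ gives the divided difference $(X_1^{K-1}-X_{n+1}^{K-1})/(X_1-X_{n+1})$. This also covers $K=1$, where that expression vanishes, matching the absence of such terms. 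The regrouped sum is therefore
\[
\frac{\rho_Y(\mathcal{W})\binom{X_1}{Y_1+Y_{n+1}}-\rho_Y(\mathcal{W})\binom{X_{n+1}}{Y_1+Y_{n+1}}}{X_1-X_{n+1}},
\]
concatenated with the stuffle of the two tails.

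The only delicate point is this regrouping together with the boundary cases $n=1$ or $n=d-1$, where a tail is the empty word. Those cases are handled by the convention $\rho_Y(\mathcal{W})_0=1$ and the unit property. The rest is bookkeeping of the shifted variables under property (ii).
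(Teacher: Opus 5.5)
Your proof is correct. You split off the first letter of each factor, apply the defining recursion of $\ast_\diamond$ termwise, and regroup the $a\diamond b$ contribution using the binomial theorem in $Y_1+Y_{n+1}$ and the identity $\sum_{k_1+k'=K,\,k_1,k'\geq 1}X_1^{k_1-1}X_{n+1}^{k'-1}=(X_1^{K-1}-X_{n+1}^{K-1})/(X_1-X_{n+1})$; this gives both formulas, including the boundary cases via $\rho_Y(\mathcal{W})_0=1$.

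The paper states this proposition without a written proof, treating it as an immediate consequence of the recursive definition of the quasi-shuffle product on generating series. Your computation is exactly that implicit argument.
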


\begin{Definition} \label{def:rho_phi_gen_series}
Let $\rho_A:\Q\langle A\rangle\to \Q[Z_1,Z_2,\ldots]$ be as in Definition \ref{def:gen_series_words}, $R$ be any commutative $\QQ$-algebra and $\varphi:\Q\langle A\rangle\to R$ be a $\Q$-linear map. The \emph{(commutative) generating series with coefficients in $R$} associated to $(\varphi,\rho_A)$ are given by
\[(\varphi\otimes\rho_A)(\mathcal{W})_d(Z_1,\ldots,Z_{\ell(d)})=\sum_{w\in (A^*)^{(d)}} \varphi(w)\rho_{\A}(w)\in R\llbracket Z_1,\ldots,Z_{\ell(d)}\rrbracket ,\qquad d\geq0.\]
\end{Definition} 

If $\rho_A:\Q\langle A\rangle \to \Q[X_1,X_2,\ldots]$ is a map as in Definition \ref{def:gen_series_words} with $\ell(d)=d$ for all $d\geq0$, and $\varphi:\Q\langle A\rangle\to R$ is any linear map, then the family of generating series $(\varphi\otimes \rho_A)(\mathcal{W})=((\varphi\otimes\rho_A)(\mathcal{W})_d)_{d\geq0}$ is a mould with coefficients in $R$. 

Similarly, if $\rho_A:\Q\langle A\rangle\to \Q[X_1,Y_1,X_2,Y_2,\ldots]$ is a map as in Definition \ref{def:gen_series_words} with $\ell(d)=2d$ for every $d\geq0$, and $\varphi:\Q\langle A\rangle \to R$ is any $\Q$-linear map, then the family of generating series $(\varphi\otimes\rho_A)(\mathcal{W})=((\varphi\otimes\rho_A)(\mathcal{W})_d)_{d\geq0}$ is a bimould with coefficients in $R$.

\begin{Definition} \label{def:phi_rho_sym}
A sequence $M=(M_d)_{d\geq0}\in\prod_{d\geq0} R\llbracket Z_1,\ldots,Z_{\ell(d)}\rrbracket $ is called \emph{$(\varphi_{\ast_\diamond},\rho_A)$-symmetric} if there exists a $\Q$-algebra morphism $\varphi_{\ast_\diamond}:(\Q\langle A\rangle,\quasish)\to R$ and a $\Q$-linear map $\rho_A:\Q\langle A\rangle \to\Q[ Z_1,Z_2,\ldots]$ satisfying the conditions in Definition \ref{def:gen_series_words}, such that for all $d\geq0$
\[M_d=(\varphi_{\ast_\diamond}\otimes\rho_A)(\mathcal{W})_d.\]
\end{Definition} 
Let $M=(M_d)_{d\geq0}\in \prod_{d\geq0} R\llbracket Z_1,\ldots,Z_{\ell(d)}\rrbracket $ be a $(\varphi_{\ast_\diamond},\rho_A)$-symmetric sequence. Then, one obtains immediately from the definition that for $0<n<d$
\begin{align} \label{formula phi,rho sym}
&M_n(Z_1,\ldots,Z_{\ell(n)})M_{d-n}(Z_{\ell(n)+1},\ldots,Z_{\ell(d)})\\
&\hspace{3cm}=\varphi_{\ast_\diamond}\Big(\rho_A(\mathcal{W})_n(Z_1,\ldots,Z_{\ell(n)})\quasish \rho_A(\mathcal{W})_{d-n}(Z_{\ell(n)+1},\ldots,Z_{\ell(d)})\Big). \nonumber
\end{align}
The right-hand side is an element in $\Qnca{\A}\llbracket Z_1,\ldots,Z_{\ell(d)}\rrbracket $, so the map $\varphi_{\quasish}:\Qnca{\A}\to R$ needs to be extended by $\QQ\llbracket Z_1,Z_2,\ldots\rrbracket $-linearity. 

For the shuffle product from \eqref{eq:def_shuffle} and the stuffle product from \eqref{eq:def_stuffle}, the symmetries from Definition \ref{def:phi_rho_sym} reduces to well-known ones among bimoulds.

\begin{Definition} \label{def:symmtral_il}
\begin{enumerate}
\item A bimould $M=(M_d)_{d\geq0}\in \prod_{d\geq0}R[X_1,Y_1,\ldots,X_d,Y_d]$ is \emph{symmetral} if there is an algebra morphism $\varphi_\shuffle:(\Q\langle Y\rangle,\shuffle)\to R$, such that $M$ is $(\varphi_\shuffle,\rho_Y)$-symmetric, i.e., we have $M_0=1$ and $M_d$ is for $d\geq 1$ given 
\[M_d\binom{X_1,\ldots,X_d}{Y_1,\ldots,Y_d}=\sum_{\substack{k_1,\ldots,k_d\geq1 \\ m_1,\ldots,m_d\geq0}} \varphi_\shuffle(y_{k_1,m_1}\dots y_{k_d,m_d})X_1^{k_1-1}\frac{Y_1^{m_1}}{m_1!}\dots X_d^{k_d-1}\frac{Y_d^{m_d}}{m_d!}.\]
We refer to $\varphi_\shuffle$ as the \emph{coefficient map} of $M$.
\item A bimould $M=(M_d)_{d\geq0}\in \prod_{d\geq0}R[X_1,Y_1,\ldots,X_d,Y_d]$ is \emph{symmetril} if there is an algebra morphism $\varphi_\ast:(\Q\langle Y\rangle,\ast)\to R$, such that $M$ is $(\varphi_\ast,\rho_Y)$-symmetric.
\end{enumerate}
\end{Definition} 
\begin{Example}
Applying the recursive formula for $\ast$ on generating series of words given in Proposition \ref{prop:quasish_gen_series}, one obtains that symmetrility in depths $2$ and $3$ is equivalent to 
\begin{align*}
M\binom{X_1}{Y_1}\cdot M\binom{X_2}{Y_2} =&\ M\binom{X_1,X_2}{Y_1,Y_2}+M\binom{X_2,X_1}{Y_2,Y_1}+\frac{M\binom{X_1}{Y_1+Y_2}-M\binom{X_2}{Y_1+Y_2}}{X_1-X_2}, \\
M\binom{X_1}{Y_1}\cdot M\binom{X_2,X_3}{Y_2,Y_3} = & \ M\binom{X_1,X_2,X_3}{Y_1,Y_2,Y_3}+M\binom{X_2,X_1,X_3}{Y_2,Y_1,Y_3}+M\binom{X_2,X_3,X_1}{Y_2,Y_3,Y_1} \\
&\hspace{-0.5cm}+\frac{M\binom{X_1,X_3}{Y_1+Y_2,Y_3}-M\binom{X_2,X_3}{Y_1+Y_2,Y_3}}{X_1-X_2}+\frac{M\binom{X_2,X_1}{Y_2, Y_1+Y_3}-M\binom{X_2,X_3}{Y_2,Y_1+Y_3}}{X_1-X_3}.
\end{align*}
\end{Example}

\begin{Definition} \label{def:alternal_il}
\begin{enumerate}
\item A bimould $M=(M_d)_{d\geq0} \in \prod_{d\geq0} R[X_1,Y_1,\ldots,X_d,Y_d]$ is \emph{alternal} if there is a $\Q$-linear map $\varphi_\shuffle:\Q\langle Y\rangle \to R$ with
\[
\varphi_\shuffle(u\shuffle v)=0 \quad \text{ for all } u,v \in Y^*\backslash\{1\},
\]
such that $M$ is $(\varphi_\shuffle,\rho_Y)$-symmetric.
\item A bimould $M=(M_d)_{d\geq0} \in \prod_{d\geq0} R[X_1,Y_1,\ldots,X_d,Y_d]$ is \emph{alternil} if there is a $\Q$-linear map $\varphi_\ast:\Q\langle Y\rangle \to R$ with
\[
\varphi_\ast(u\ast v)=0 \quad \text{ for all } u,v \in Y^*\backslash\{1\},
\]
such that $M$ is $(\varphi_\ast,\rho_Y)$-symmetric.
\end{enumerate}
\end{Definition}

\begin{Example}
Alternility in depth $2$ is equivalent to 
\begin{align*}
0 =&\ M\binom{X_1,X_2}{Y_1,Y_2}+M\binom{X_2,X_1}{Y_2,Y_1}+\frac{M\binom{X_1}{Y_1+Y_2}-M\binom{X_2}{Y_1+Y_2}}{X_1-X_2}.
\end{align*}
\end{Example}

\makeinvisible{
\section{A general approach to moulds and their symmetries}

Let $A$ be an alphabet and denote by $A^*$ its words. A mould $M$ is a map from $A^*$ to a 
ring $R$ \cite{Sauzin_intro}. Observe that there is a bijection between moulds and non-commutative power series
\[
\{ M: A^* \to R  \,\}   \cong    R \langle \langle A \rangle \rangle,
\] 
since the coefficients of   $ \sum_{a \in A^*}  ( M |a) \, a \in R \langle \langle A \rangle \rangle$ 
determine a mould $M$ uniquely.

\begin{Example} 
Let $A=\{a_1,a_2, ...\}$, 
$R=k[[ u_1, u_2, u_3, ...]]$ and 
\[
M=(M_0, M_1, M_2, ...  )
\]  
with $M_0 \in k$ and $M_l(u_1,u_2,..., u_d)\in k[[u_1,u_2, ...,u_d]]$. Then we view $M$ as  mould via 
\begin{align*}
M:  \,A^* &\to R \\ 
a_{i_1}a_{i_2}...a_{i_d} & \mapsto M_l ( u_{i_1},u_{i_2},...,u_{i_d}).
\end{align*} 
\end{Example}

By abuse of notation we just write $M( u_{i_1},u_{i_2},...,u_{i_d})$ instead of 
$ (M  |{a_{i_1}a_{i_2}...a_{i_d}})$. 
Thus, the example explains the meaning of Ecalle's definition: 
\begin{center}
\emph{A mould is a collection of functions depending on a variable number of variables.}
\end{center}
 
\begin{Definition}
A mould  
\[
M: A^* \to k[u_1,u_2,u_3,...]
\]
is called a polynomial mould. 
\end{Definition}

A particular example is the sequence $\{M_d\}_{d \in \N}$ with 
\[
M_d= \begin{cases}  M_l(u_1,...,u_l)  \in k[u_1, u_2, ..., u_l] \quad  &\mbox{ if } d=l \\
\qquad 0  &\mbox{ else}. 
\end{cases}
\] 
Then, we get a polynomial mould 
$ M: A^* \to k[u_1,u_2,u_3,...] $ via
\[
(M | a ) = \begin{cases}  M_d(  u_{i_1},u_{i_2},...,u_{i_d}) & \quad \mbox{if} \quad\underline{a}= a_{i_1}a_{i_2}...a_{i_d} \\
0  & \quad \mbox{ else}.
\end{cases}
\]
We say $M$ is concentrated in depth $l$. By abuse of notion we even say $M_l$ is a mould.
       
Most properties assigned to moulds correspond to functional equations. 
 
Given a mould $M=(M_1(u_1) , M_2(u_1,u_2) , M_3(u_1,u_2,u_3) ,...)$
we use  the notation\footnote{\color{cyan}{toldo:  clarify the ambiguities with this notation} {\color{orange} I think the main problem was that the operator needs to be extended $k[(v_i-v_j)^{-1}]$-linearly for alternility/symmetrility. As this space might be contained in the values of the moulds, the notation would become non unique}}
\begin{align}\label{eq:fcns_marriage}
M_1(u_{j_1}) \dplus M_{d-1}( u_{j_2}, ..., u_{j_d}) = M_{r}(u_{j_1}, u_{j_2}, ..., u_{j_d}).
\end{align}
We extend this notation by $M_1(u_{j_1}) \dplus M_0(\emptyset) = M_1(u_{j_1}) $. This allows us to define recursively a set of equations with the initial condition $M_0(\emptyset) \shuffle M =M \shuffle M_0(\emptyset)=M$  
\begin{align*}
M_d( &u_1, ..., u_d) \shuffle M_l(u_{d+1}, ..., u_{d+sl})=\\ 
&  M_1(u_1) \dplus \big( M_{d-1}( u_2, ..., u_d) \shuffle M_l(u_{d+1}, ..., u_{d+l}) \big) \\
&+ M_1(u_{d+1})\dplus \big( M_d( u_1, ..., u_d) \shuffle  M_{l-1}(u_{d+2}, ..., u_{d+l})\big). 
 \end{align*}
For example  we get
\[
M_1(u_1)\shuffle M_2(u_2,u_3)  = M_3(u_1,u_2,u_3) +  M_3(u_2,u_1,u_3) +  M_3(u_2,u_3,u_1).
\]  
  
\begin{Definition} 
We say a mould $M=(M_d)_{\d \geq0}$  
is alternal, if for all $d,l \ge 0$  we have
\[
M_d( u_1, ..., u_d) \shuffle M_l(u_{d+1}, ..., u_{d+l})=0 .
\]    
\end{Definition}      

\begin{Example}
The mould $M: A^* \to  k(u_1,u_2,u_3,...)$ 
given for $a=a_{i_1}a_{i_2}...a_{i_d}$ by 
\[
(M|a)= \begin{cases} 0 &
a = \emptyset \\
0 &  a \mbox{ has a repeated letter} \\   \frac{1}{u_{i_2}-u_{i_1}} \frac{1}{u_{i_3}-u_{i_2}}  \cdots \frac{1}{u_{i_d}-u_{i_{d-1}} }  & \mbox{else}
\end{cases}
\]
is alternal. 
For example we have  for $d=2,3,4$ 
\begin{align*}
0&= M_2(u_1,u_2) + M_2(u_2,u_1)\,,\\
0&= M_3(u_1,u_2,u_3) +M_3(u_2,u_1,u_3) +M_3(u_2,u_3,u_1) \,, \\
0&=M_4(u_1,u_2,u_3,u_4) +M_4(u_2,u_1,u_3,u_4) +M_4(u_2,u_3,u_1,u_4)+M_4(u_2,u_3,u_4,u_1)  \,, \\
0&=M_4(u_1,u_2,u_3,u_4) +M_4(u_1,u_3,u_2,u_4) +M_4(u_1,u_3,u_4,u_2)+ M_4(u_3,u_1,u_2,u_4)\\
&\,\, +M_4(u_3,u_1,u_4,u_2) +M_4(u_3,u_4,u_1,u_2) \, .
\end{align*}
\end{Example}
           
\begin{Definition}
A bimould $M=(M_0, M_1, M_2, ...  )$ in the  pairs of variables $w_i=\mk{u_i}{v_i}$ is a mould 
\[
M: A^* \to  R=k[[u_1,v_1, u_2, v_2, u_3, v_3, ... ]],
\]
such that  $M_0 \in k$ and  $M_d \in k[[ u_1,v_1, u_2, v_2,...,u_d,v_d]] $ for all $d\ge 1$.
  \end{Definition}      
We will use the notation  
 \begin{align*}
(M | {a_{i_1}a_{i_2}...a_{i_l}})=  M ({w_{i_1},w_{i_2},...,w_{i_l}}) = M \scalebox{1.2}{\mbox{$\mk{ u_{i_1},   ...,  u_{i_l} }{v_{i_1}, ...,    v_{i_l}}$ }}   =   M_d ( u_{i_1}, v_{i_1}, ...,   u_{i_l},v_{i_l}).
\end{align*} 
 
There are symmetries of moulds $M: A^* \to R$ induced by endomorphisms of $R$.  The involution \emph{swap} 
is of particular interest for us.
An alternal bimould $M$ is alternal 
with respect to to both set of variables simultaneously.
 
Any mould $M$  as before becomes a bimould by setting 
$M(w_{i_1},w_{i_2},\ldots,w_{i_d})= \binom{u_{i_1},u_{i_2},\ldots,u_{i_d}}{v_{i_1},v_{i_2},\ldots,v_{i_d}}= M(u_{i_1},u_{i_2},\ldots,u_{i_l}).$

A mould $M$ is bi-alternal if $M$ and 
$swap(M)$ are alternal. 

Given a bimould $M=(M_0, M_1(w_1) , M_2(w_1,w_2) , M_3(w_1,w_2,w_3) ,...)$
we use  the notation \eqref{eq:fcns_marriage}
to define recursively a set of equations with the initial condition $1*M =M*1=M$
\begin{align*}
M_d( &w_1, ..., w_d) * M_l(w_{d+1}, ..., w_{d+l})=\\ 
&  M_1(w_1) \dplus \big( M_{d-1}( w_2, ..., w_d) * M_l(w_{d+1}, ..., w_{d+l}) \big) \\
&+ M_1(w_{d+1})\dplus \big(M_d( w_1, ..., w_d) * M_{l-1}(w_{d+2}, ..., w_{d+l})\big) \\ 
&+ \frac{  \big( M_1 \scalebox{1.1}{\mbox{$\mk{ u_1+u_{d+1}}{v_1} $ }}
-M_1 \scalebox{1.1}{\mbox{$\mk{ u_1+u_{d+1}}{v_{d+1}} $ }} \big) \dplus \big( M_{d-1}( w_2, ..., w_d) * M_{l-1}(w_{d+2}, ..., w_{d+l}) \big)}{v_1-v_{d+1}}\,.
\end{align*}
For example, we have
\[
M_1(w_1)*M_1(w_2) = M_2 (w_1,w_2) +M_2(w_2,w_1) + \frac{M_1 \scalebox{1.1}{\mbox{$\mk{ u_1+u_2}{v_1} $ }}
-M_1 \scalebox{1.1}{\mbox{$\mk{ u_1+u_2}{v_2} $ }} }{v_1-v_2}
\]
and
\begin{align*}
  &  M_1(w_1)*M_2(w_2,w_3)\\
  &= 
    M_1(w_1) \dplus ( 1 * M_2(w_2,w_3)) 
    +M_1(w_2 ) \dplus ( M_1(w_1) * M_1(w_3))\\ 
   & + 
  \frac{ \big( M_1 \scalebox{1.1}{\mbox{$\mk{ u_1+u_2}{v_1} $ }}
-M_1 \scalebox{1.1}{\mbox{$\mk{ u_1+u_2}{v_2} $ }}\big)  \dplus (1 * M_1(w_3))}{v_1-v_2} \\
&= M_3( w_1, w_2, w_3) + M_3( w_2, w_1, w_3) +M_3( w_2, w_3, w_1) \\
&+ \frac{ M_2 \scalebox{1.1}{\mbox{$\mk{u_2, u_1+u_3 }{v_2, v_1} $ }}
-M_2 \scalebox{1.1}{\mbox{$\mk{ u_2, u_1+u_3}{v_2,v_3  }$ }}}{v_1-v_3}
+ \frac{ M_2 \scalebox{1.1}{\mbox{$\mk{u_1+u_2,  u_3 }{v_1, v_3} $ }}
-M_2 \scalebox{1.1}{\mbox{$\mk{ u_1+u_2, u_3}{v_2,v_3  }$ }}}{v_1-v_2}
\end{align*}

\begin{Definition} 
We say a bimould $M=(M_d)_{d\geq0}$  
is alternil, if for all $d,l \ge 1$  we have  
\[
M_d( w_1,\ldots, w_d) * M_l(w_{d+1},\ldots, w_{d+l})=0 .
\]    
\end{Definition}      

\begin{Example} The bimoulds $M$ and $N$ given by
\begin{align*}
M&= (u_1^2+v_1^2,\, v_1-2 v_2-u_1+u_2,\textstyle{ \frac{1}{3}}, 0,\ldots )\\
N&= (u_1v_1, -2 v_1+2 v_2-2 u_2, 0,\ldots)
\end{align*}
are alternil and swap invariant. 
\end{Example}

\begin{Lemma}
Let $ M=(0,...,0, M_d, M_{d+1}, ...)$ be an alternil bimould with $M_d \neq 0$, then $M_d$ is an alternal bimould.  
\end{Lemma}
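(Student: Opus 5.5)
Let $M=(0,\ldots,0,M_d,M_{d+1},\ldots)$ be alternil with $M_d\neq 0$. We must show that $M_d$ is alternal. The approach is to work directly with the functional equations that encode alternility. By Proposition \ref{prop:quasish_gen_series}, alternility says that for all $n,l\ge 1$ the quasi-shuffle expression $M_n(w_1,\ldots,w_n)\ast M_l(w_{n+1},\ldots,w_{n+l})$ vanishes. This expression is built recursively: we concatenate $M_1(w_1)$ or $M_1(w_{n+1})$ in front of a smaller product, or we insert the divided-difference term in which $w_1$ and $w_{n+1}$ merge into a single letter. Expanding the recursion fully, the equation in total depth $N=n+l$ is a sum over quasi-shuffles of the two index sequences. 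The pure shuffles contribute terms $M_N$ evaluated at permutations of $(w_1,\ldots,w_N)$. Every quasi-shuffle with $j\ge 1$ contractions contributes a term of the form
\[
\frac{\cdots}{(v_{i}-v_{i'})\cdots}
\]
involving only the components $M_{N-j}$. This is exactly the observation behind Remark \ref{rem:il_to_al}.

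First I would make this precise with the coefficient map. By definition there is a linear map $\varphi_\ast$ killing all nontrivial stuffles, with $M$ as its generating series. Since $M_e=0$ for $e<d$, we have $\varphi_\ast(w)=0$ for every word $w$ of depth $<d$. Now take words $u,v$ with $\dep(u)+\dep(v)=d$. The stuffle $u\ast v$ equals $u\shuffle v$ plus a linear combination of words of depth $<d$, since every contraction $y_{k_1,m_1}\diamond y_{k_2,m_2}$ lowers the depth by one. Applying $\varphi_\ast$ therefore gives
\[
0=\varphi_\ast(u\ast v)=\varphi_\ast(u\shuffle v).
\]

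Next, define $\varphi_\shuffle$ to agree with $\varphi_\ast$ on words of depth $d$ and to vanish on all other words. We check that $\varphi_\shuffle(u\shuffle v)=0$ for all nonempty $u,v$. The shuffle is homogeneous in depth, so if $\dep(u)+\dep(v)\ne d$ the value is $0$ trivially, and if the depths add up to $d$ the previous step gives $0$. The generating series of $\varphi_\shuffle$ is the bimould $(0,\ldots,0,M_d,0,\ldots)$, so $M_d$ is alternal in the sense of Definition \ref{def:alternal_il}.

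The only point needing care is the claim that the contraction terms land strictly in lower depth with no residual contribution in depth $d$. This follows from the recursive definition of $\ast_\diamond$: each $\diamond$-term replaces two letters by one, so words of $u\ast v$ containing at least one contraction have depth at most $\dep(u)+\dep(v)-1$, and the depth-$d$ part of $u\ast v$ is exactly $u\shuffle v$. With this in hand, no analytic issues about the rational divided differences arise, because the argument takes place entirely at the level of $\Q\langle Y\rangle$.
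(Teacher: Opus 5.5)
Your proof is correct and follows essentially the paper's own argument. The paper gives no separate proof of this lemma and relies on Remark \ref{rem:il_to_al}, namely that the alternility condition in depth $d$ involves only components of depth $\le d$. You make that precise at the level of the coefficient map: every $\diamond$-contraction lowers the depth, so the depth-$d$ part of $u\ast v$ is $u\shuffle v$, while $\varphi_\ast$ vanishes on all shorter words, and truncating $\varphi_\ast$ to depth-$d$ words then gives an admissible $\varphi_\shuffle$ whose generating series is $(0,\ldots,0,M_d,0,\ldots)$.
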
 
}
\subsection{The Bachmann-van Ittersum derivation} \label{app:BvI-stuff}

We first introduce the maps
\begin{align*}
\varphi_i^+ \binom{u_1,\ldots, u_d}{v_1,\ldots, v_d} &=    \binom{u_1,\ldots,u_{i-1}, u_i+u_{i+1}, u_{i+2},\ldots, u_d}{v_1,\ldots,v_{i-1}, v_{i+1},v_{i+2}, \ldots,  v_d} & \mbox{ if } 1 \le i < d,\\
\varphi_i^+ \binom{u_1,\ldots, u_d}{v_1,\ldots, v_d} &=
\binom{u_1,\ldots, u_{d-1}}{v_1,\ldots, v_{d-1}} & \mbox{ if } i = d,
\intertext{and} 
\varphi_i^- \binom{u_1,\ldots, u_d}{v_1,\ldots, v_d} &=    \binom{u_1,\ldots,u_{i-2}, u_{i-1}+u_i,u_{i+1}, \ldots, u_d}{v_1,\ldots,v_{i-2}, v_{i-1},  v_{i+1}, \ldots,  v_d}  &\mbox{ if } 2\le i \le d.
\end{align*}
We set $\varphi_1^+ \binom{u_1}{v_1}=\emptyset$.

\begin{Definition}
Let $\delta_{BvI}$ be the operator on bimoulds given by  
\begin{align*}
 \delta_{BvI}(A)&\binom{u_1,\ldots,u_d}{v_1,\ldots,v_d}=  
 \delta(A)\binom{u_1,\ldots,u_d}{v_1,\ldots,v_d}
 \\ 
 & -  \sum_{i= 1}^d  
\frac{ v_i - v_{i+1} + u_i   }{2}  
A \left(\varphi_i^+ \binom{u_1,\ldots, u_d}{v_1,\ldots, v_d} \right) 
+\frac{1}{4}  A \left( (\varphi_i^+)^2 \binom{u_1,\ldots, u_{d}}{v_1,\ldots, v_{d}} \right)
\\
&-\sum_{i= 2}^d \frac{v_{i-1} - v_{i} + u_i }{2}
A \left(\varphi_i^- \binom{u_1,\ldots, u_{d} }{v_1,\ldots, v_{d} } \right)
- \frac{1}{4}  A \left( (\varphi_i^-)^2 \binom{u_1,\ldots, u_{d}}{v_1,\ldots, v_{d}} \right),
\end{align*}
where we use the convention $v_{d+1}=0$ and $\delta$ is given in Definition \ref{def:ari_derivation}. 
    
\end{Definition}

\begin{Example}\label{exa:bvi_one}
Let $\mathbf{1}_{mu}$ be the neutral element of $\big(\operatorname{GBARI}, mu\big)$, i.e. $\mathbf{1}_{mu}$ is the bimould which equals $1$ in depth zero and vanishes for all other depths.
Then, we have 
\[
\delta_{BvI}(\mathbf{1}_{mu}) = (  -\frac{u_1 +v_1}{2}, \frac{1}{4}, 0, \ldots ) \in \BARI^{\operatorname{pol}}.
\]  
\end{Example}

The map $\delta_{BvI}$ was studied by Bachmann-van Ittersum in the context of swap invariant derivations on quasi-shuffle algebras.
\begin{Proposition}\cite[Proposition 4.11]{BvI} (i) The map $\delta_{BvI}$ commutes with $swap$.

(ii) The coefficient map of $\delta_{BvI}(\rho_Y(\mathcal{W}))$ is a derivation of weight $-2$ on the quasi-shuffle algebra $(\Q \langle Y\rangle,*)$.
\end{Proposition}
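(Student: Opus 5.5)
For part (i), the plan is a direct check of variable substitutions, using the fact that $swap$ is a substitution on the arguments. The part $\delta$ of $\delta_{BvI}$ commutes with $swap$ because $u_1v_1+\cdots+u_dv_d$ is $swap$-invariant; this was already computed in the proof that $\delta$ preserves $\BARI_{\underline{al},swap}$. For the correction terms, I will compute $swap$ applied to $A\circ\varphi_i^+$ and to $A\circ\varphi_i^-$. Under the $swap$ substitution, contracting two adjacent letters in the upper row (as $\varphi_i^+$ does to $u_i+u_{i+1}$) should become deleting a $v$-difference in the lower row, and conversely. The expected outcome is that $swap$ carries the $\varphi^+_i$-family of terms bijectively onto the $\varphi^-_{d+1-i}$-family (with the boundary case $\varphi_d^+$ treated separately). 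One then checks that the linear prefactors match:
\[
\tfrac{v_i-v_{i+1}+u_i}{2}\quad\text{should correspond to}\quad \tfrac{v_{j-1}-v_j+u_j}{2},
\]
and that $\pm\tfrac14$ applied to the squared maps is likewise exchanged. The conventions $v_{d+1}=0$ and $\varphi_1^+\binom{u_1}{v_1}=\emptyset$ are designed to make the boundary terms match. This is a finite, index-bookkeeping verification.

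For part (ii), I first translate $\delta_{BvI}$ on the generating series $\rho_Y(\mathcal W)$ into a linear operator $D$ on $\Q\langle Y\rangle$, read off coefficientwise.
\begin{itemize}
\item Multiplication by $u_iv_i$ gives the operator sending a word to the sum over positions of $(m_i+1)\,y_{k_i+1,m_i+1}$, or rather its dual, which lowers each letter $y_{k,m}$ to $y_{k-1,m-1}$ with a factor $m$. This operator has weight $-2$.
\item The $\varphi_i^\pm$ terms and their squares translate into operators that merge or delete adjacent letters, with coefficients coming from the factors $\frac{v_i-v_{i+1}+u_i}{2}$ and $\frac14$.
\end{itemize}
After this, the statement becomes $D(w_1\ast w_2)=D(w_1)\ast w_2+w_1\ast D(w_2)$. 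Equivalently, on the generating-series side, $\delta_{BvI}$ applied to the product formula of Proposition \ref{prop:quasish_gen_series}(2) must satisfy the Leibniz rule.

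I will prove the identity $D(w_1\ast w_2)=D(w_1)\ast w_2+w_1\ast D(w_2)$ by induction on $\dep(w_1)+\dep(w_2)$, using the recursive definition of the quasi-shuffle product $au\ast bv=a(u\ast bv)+b(au\ast v)+(a\diamond b)(u\ast v)$. Writing $D=D_0+D_1$, where $D_0$ acts letterwise and $D_1$ collects the merging and deleting terms at the first position, the letterwise part $D_0$ is a derivation of $\ast$ as soon as it is a derivation of $\diamond$ on letters. This holds because $\diamond$ adds indices and the letterwise operator is a derivation there. The induction step then reduces to verifying the Leibniz identity on the first-letter contributions.

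The main obstacle is the interaction between the nonlocal terms of $\delta_{BvI}$ (the $\varphi^\pm$ merges and the $\frac14$ double merges) and the diamond term $(a\diamond b)(u\ast v)$. In the generating-series picture this is the divided difference
\[
\frac{\rho\binom{X_1}{Y_1+Y_{n+1}}-\rho\binom{X_{n+1}}{Y_1+Y_{n+1}}}{X_1-X_{n+1}}.
\]
My first attempt would be to verify the depth $2$ case, $w_1=a$ and $w_2=b$, explicitly, where the divided difference must absorb the $\varphi$-terms. I would then hope that the recursion propagates this base identity, since all other terms are local at the first letter.
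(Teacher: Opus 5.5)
The paper gives no proof of its own here; it quotes \cite[Proposition 4.11]{BvI}. So your plan has to stand on its own, and for (ii) it does not. The load-bearing step, that the letterwise part $D_0$ is a $\ast$-derivation because it is a $\diamond$-derivation on letters, is false. Reading off coefficients, the $\delta$-part of $\delta_{BvI}$ gives $D_0(y_{k,m})=m\,y_{k-1,m-1}$ for $k\ge 2$. But $D_0(y_{1,m})=0$, because multiplication by $v_i$ never produces $v_i^0$ (there is no letter $y_{0,m}$). Consequently, for $k\ge 2$,
$D_0(y_{1,m_1}\diamond y_{k,m_2})-D_0(y_{1,m_1})\diamond y_{k,m_2}-y_{1,m_1}\diamond D_0(y_{k,m_2})=m_1\,y_{k,m_1+m_2-1}$, which is nonzero once $m_1\ge 1$. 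For instance $D_0(y_{1,1}\ast y_{1,0})=D_0(y_{2,1})=y_{1,0}$, whereas $D_0(y_{1,1})\ast y_{1,0}+y_{1,1}\ast D_0(y_{1,0})=0$. So neither $\delta$ nor the correction part is a derivation on its own; only their sum is. In this example the corrections contribute $D(y_{1,1})=-\tfrac12$ (a multiple of the empty word), $D(y_{1,1}y_{1,0})=-\tfrac12 y_{1,0}$ and $D(y_{1,0}y_{1,1})=-y_{1,0}$. The Leibniz rule holds only after these are combined with $D_0$, so your reduction to ``the Leibniz identity on the first-letter contributions'' starts from a false premise. It also misdescribes $\delta_{BvI}$. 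The $\varphi^\pm_i$- and $(\varphi^\pm_i)^2$-terms act at every adjacent pair of positions, and the deletions happen at the end of the word, which is where $v_{d+1}=0$ enters. A recursion $D(au)=a\,D(u)+L(a;u)$ does hold, with $L$ depending on $a$ and the next two letters. But the inductive step then requires controlling $L$ on the initial letters of $u\ast bv$, $au\ast v$, $u\ast v$ and matching it against the $\diamond$-terms. That is the whole content of (ii), and your proposal leaves it as a hope.

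What is missing is the mechanism by which the corrections repair $\delta$, which is clearest on the generating-series side you mention. Write $R=\rho_Y(\mathcal W)$. In the stuffle recursion of Proposition \ref{prop:quasish_gen_series}(2), take the divided-difference term. Applying the $\delta$-part coefficientwise multiplies its two summands by $(u_1+u_{n+1})v_1$ and $(u_1+u_{n+1})v_{n+1}$, whereas the Leibniz side multiplies by $u_1v_1+u_{n+1}v_{n+1}$. The discrepancy cancels the denominator $v_1-v_{n+1}$ and leaves the polynomial defect $u_{n+1}R\binom{u_1+u_{n+1}}{v_1}+u_1R\binom{u_1+u_{n+1}}{v_{n+1}}$, multiplied by the stuffle of the tails. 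The $u_i$-parts of the coefficients of the $\varphi^\pm$-terms must cancel exactly this, while their $v$-parts and the $\pm\tfrac14$-terms must cancel among themselves. In depth $1\times 1$ this is a short exact computation: the $v$-parts cancel between $R(w_1,w_2)$ and $R(w_2,w_1)$, and the constants give $\tfrac14+\tfrac14-\tfrac12=0$. A proof of (ii) has to establish this cancellation in general, including the three-letter terms.

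For (i), direct substitution is the right method, but your predicted matching is wrong. With $\sigma$ the swap substitution one finds $\sigma\circ\varphi^+_k=\varphi^+_{d+1-k}\circ\sigma$ and $\sigma\circ\varphi^-_k=\varphi^-_{d+2-k}\circ\sigma$. Likewise $(\varphi^+_i)^2$ corresponds to $(\varphi^+_{d-i})^2$ and $(\varphi^-_i)^2$ to $(\varphi^-_{d+1-i})^2$. So each family is mapped to itself, and the coefficients agree within it. The boundary term $\varphi^+_d$ needs no separate treatment: it pairs with $\varphi^+_1$, and $v_{d+1}=0$ is exactly what makes their coefficients agree. A $\varphi^+\leftrightarrow\varphi^-$ pairing cannot work, since the families have $d$ and $d-1$ members and the two kinds of $\tfrac14$-terms carry opposite signs.
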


The definition of the generating series $\rho_Y(\mathcal{W})$ is given in \eqref{eq:def_rhoY}.

\section{The space of bi-period polynomials} \label{sec:biperiod_pol}

\subsection{Period polynomials}\label{app:period} Let $V \subset \Q[u_1,u_2]$ be the subspace of homogeneous polynomials and for $k\geq2$ even set 
\[
V_k = \{ f \in V \mid \deg(f)=k-2\}. 
\]
\begin{Definition}
The space of period polynomials for $\operatorname{Sl}_2(\mathbb{Z})$ is given by  
\begin{align*}
W_k= \left\{ f \in V_k \, \middle| \,  \begin{matrix} f(u_1,u_2)+f(u_2,-u_1)=0 \\
f(u_1,u_2)+f(-u_1-u_2,u_1)+f(-u_2,u_1+u_2)=0 \end{matrix} \right\}.
\end{align*}
\end{Definition}
It is a well-known fact that $W_k$ is stable under the change of variables $(u_1,u_2) \mapsto (-u_1,u_2)$ and in fact 
it decomposes into the eigenspaces $W_k^+$ and $W_k^-$ corresponding to the eigenvalues $\pm 1$. 
The dimensions of this subspaces may be determined through the Eichler-Shimura isomorphisms
\begin{align*}
M_k( \operatorname{Sl}_2(\mathbb{Z}))  &\overset{\sim}{\longrightarrow} W_k^+\\
S_k( \operatorname{Sl}_2(\mathbb{Z})) &\overset{\sim}{\longrightarrow} W_k^-,
\end{align*}
where the spaces on the left are modular forms resp. the cusp forms for the  the modular group $\operatorname{Sl}_2(\mathbb{Z})$. In particular, we deduce
\begin{align*}
\sum_{k\geq 0} \dim W_k^+ x^k &=\sum_{k\geq0} \dim M_k(\operatorname{Sl}_2(\mathbb{Z}))\, x^k =\frac{1}{(1-x^4)(1-x^6)}, \\
\sum_{k\geq 12} \dim W_k^- x^k &=\sum_{k\geq 12} \dim S_k(\operatorname{Sl}_2(\mathbb{Z})) \,x^k =\frac{x^{12}}{(1-x^4)(1-x^6)}.
\end{align*}
\begin{Example} We have
\begin{align*}
    W_{12}^+ = \operatorname{span}_\Q\{ p_{12},\, p_\Delta^+\},\quad
    W_{12}^- = \operatorname{span}_\Q \{p_\Delta^-\},
\end{align*}
where
\begin{align*}
p_{12}(u_1,u_2) &=u_1^{10} - u_2^{10} \\
p_\Delta^+(u_1,u_2) &= (u_1^8u_2^2-u_1^2u_2^8) - 3(u_1^6u_2^4 - u_1^4u_2^6),\\
p_\Delta^-(u_1,u_2) &=  4(u_1^9u_2+u_1u_2^9) - 25(u_1^7u_2^3+u_1^3u_2^7) + 42u_1^5u_2^5.
\end{align*}
Via the Eichler-Shimura isomorpisms these polynomials correspond to the Eisenstein series $G_{12} \in M_{12}(\operatorname{Sl}_2(\mathbb{Z}))$ and the cusp form $\Delta \in S_{12}( \operatorname{Sl}_2(\mathbb{Z}))$. 
\end{Example} 
In general, we refer to the polynomial $p_k(u_1,u_2)=  u_1^{k-2} - u_2^{k-2}$ as the Eisenstein polynomial. We define
\[
\widetilde{W}_k^+=W_k^+/(p_k).
\]

\subsection{Bi-period polynomials} \label{app:bi_period}

We consider the quadratic space $\Q^4$ with
the quadratic form 
\[
q(u_1,u_2,v_1,v_2)= u_1 v_1 + u_2 v_2. 
\]
The  signature of $q$ is $(2,2)$, therefore 
the group of isometries is isomorphic to the orthogonal group 
$\operatorname{O}(2,2)(\Q)$. 

We can view 
$\operatorname{Sl}_2(\Q) \times\operatorname{Sl}_2(\Q) $ 
as a subgroup of $\operatorname{O}(2,2)(\Q)$ as follows. Write
\[
X=\begin{pmatrix} u_1 & u_2\\ -v_2 & v_1  \end{pmatrix},
\]
then we have
\begin{align} \label{eq:q_det}
q(u_1,u_2,v_1,v_2)=\det(X).
\end{align}
In particular, if $(A,B)\in\operatorname{Sl}_2(\Q)\times\operatorname{Sl}_2(\Q) $ then $\det(AXB^t)=\det(X)$ and hence by \eqref{eq:q_det} the map $X\mapsto AXB^t$ is an isometry of the quadratic form $q$. 

More generally, we view any homogeneous polynomial $P \in \Q[u_1,u_2, v_1, v_2] $ as a function on $\operatorname{Mat}(2\times 2,\Q)$ via 
\begin{align*}
P(A) := P(a_{11},a_{12},a_{22},-a_{21}), \qquad A=\begin{pmatrix} a_{11} & a_{12} \\ a_{21} & a_{22} \end{pmatrix} \in \operatorname{Mat}(2\times 2,\Q).
\end{align*}
Then, $\operatorname{Sl}_2(\Q)$ acts on the space of homogeneous polynomials  $\mathcal{V} \subset  \Q[u_1,u_2, v_1, v_2]$
by matrix multiplication. For $k\geq2$ even, set 
\[
\mathcal{V}_k = \{ P\in \mathcal{V} \mid \deg(P)=k-2\}.
\]
Let $\Delta$ be the Laplacian on $\mathcal{V}_k$, which is given by
\[
\Delta=\partial_{u_1}\partial_{v_1}+\partial_{u_2}\partial_{v_2}.
\]
For $P\in\mathcal{V}_k$, we use the notation 
\[
\partial(P)= P(\partial_{u_1},\partial_{u_2},\partial_{v_1},\partial_{v_2}).
\]
In particular, we have $\partial(q)=\Delta$. Finally, we define a symmetric and positive definite pairing on $\mathcal{V}_k$  by
\begin{align} \label{eq:pairing_Vk}
\langle P_1,P_2 \rangle = \left( \partial(P_1) P_2 \right) (0).
\end{align}
There are the following standard facts for spherical harmonics, see for example  \cite[Section 5.6.4]{goodman_wallach}.

\begin{Lemma}\label{lem:decomp}
(i)  There is a decomposition 
\[
\mathcal{V}_k  = \ker(\Delta) \oplus (u_1v_1+u_2v_2) \mathcal{V}_{k-2}. 
\]
(ii)  The map
\begin{align*}
\rho: \mathcal{V}_k &\to V_k \otimes V_k, \\
P(u_1,u_2,v_1,v_2) &\mapsto \rho(P)(a,b;c,d) =
P(ac,ad ,bd,- bc) 
\end{align*}
restricts to an isomorphism $\ker \Delta \xrightarrow{\sim}V_k \otimes V_k$.

(iii) The inverse map $V_k \otimes V_k \to \ker \Delta$ to the map $\rho$ from (ii) is given by 
\[
f( a,b; c,d) \mapsto  \frac{1}{(k-2)!^2} \left\langle f( a,b; c,d) ,  (u_1 a c +u_2 a d + v_1  b d - v_2 b c)^{k-2} \, \right\rangle.
\] 
\end{Lemma}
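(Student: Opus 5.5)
The plan is to treat this as the standard harmonic decomposition for the quadratic form $q$, using only the pairing \eqref{eq:pairing_Vk} and a dimension count. Throughout, write $n=k-2$ for the degree.

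\textbf{Step 1: proof of (i).} First I check that \eqref{eq:pairing_Vk} is positive definite on $\mathcal{V}_k$. Distinct monomials are orthogonal, and a monomial $u_1^{a}u_2^{b}v_1^{c}v_2^{d}$ pairs with itself to $a!\,b!\,c!\,d!>0$.

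The key identity is the adjointness $\langle qP,Q\rangle=\langle P,\Delta Q\rangle$ for $P\in\mathcal{V}_{k-2}$ and $Q\in\mathcal{V}_k$. It holds because $\partial(qP)=\partial(q)\partial(P)=\Delta\,\partial(P)$ and the constant-coefficient operators commute.

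It follows that $\ker\Delta=(q\,\mathcal{V}_{k-2})^{\perp}$ inside $\mathcal{V}_k$. Positive definiteness then gives $\mathcal{V}_k=\ker\Delta\oplus q\,\mathcal{V}_{k-2}$. Since multiplication by $q$ is injective, we also get
\[
\dim\ker\Delta=\binom{n+3}{3}-\binom{n+1}{3}=(n+1)^2=\dim(V_k\otimes V_k).
\]

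\textbf{Step 2: proof of (ii).} The substitution $(u_1,u_2,v_1,v_2)\mapsto(ac,ad,bd,-bc)$ sends each variable to a product of one variable from $\{a,b\}$ and one from $\{c,d\}$. Hence $\rho(P)$ is bihomogeneous of bidegree $(n,n)$, so $\rho$ indeed lands in $V_k\otimes V_k$.

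Moreover $\rho(q)=ac\cdot bd+ad\cdot(-bc)=0$, so $\rho$ kills $q\,\mathcal{V}_{k-2}$. By (i), $\rho$ therefore factors through $\ker\Delta$.

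Surjectivity is a Segre-embedding argument. A monomial $a^ib^{n-i}c^jd^{n-j}$ is obtained as follows. Choose $n$ entries of the rank-one matrix $\left(\begin{smallmatrix}ac&ad\\ bc&bd\end{smallmatrix}\right)$, with $i$ of them from the $a$-row and $j$ of them from the $c$-column. This is always possible, and only affects signs, which are harmless.

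Because the dimensions agree by Step 1, the restriction $\ker\Delta\to V_k\otimes V_k$ is an isomorphism. Alternatively, surjectivity can be replaced by Step 3, which exhibits an explicit right inverse.

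\textbf{Step 3: proof of (iii).} Set $L=u_1ac+u_2ad+v_1bd-v_2bc$. Define $\Phi(f)=\frac{1}{n!^2}\langle f,L^{n}\rangle$, where the pairing is taken in the variables $a,b,c,d$.

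First, $\Phi(f)$ is harmonic. Indeed,
\[
\Delta L^n=n(n-1)L^{n-2}\big(ac\cdot bd+ad\cdot(-bc)\big)=0.
\]

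Second, $\rho\circ\Phi=\mathrm{id}$. Applying $\rho$ in the $(u,v)$-variables, evaluated at $(a',b';c',d')$, turns $L$ into $(aa'+bb')(cc'+dd')$, after the two $b c$-type sign contributions combine. Then $\rho\Phi(f)$ becomes
\[
\frac{1}{n!^2}\big\langle f,(aa'+bb')^n(cc'+dd')^n\big\rangle .
\]
This equals $f(a',b';c',d')$ by the reproducing property $\langle g,(xx'+yy')^n\rangle=n!\,g(x',y')$ of the apolar pairing on $V_k$, applied separately to each tensor factor.

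Since $\rho$ is injective on $\ker\Delta$ by (ii), $\Phi$ is its inverse.

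\textbf{Main obstacle.} I expect the main difficulty to be bookkeeping rather than ideas. The pairing on $V_k\otimes V_k$ must be interpreted consistently with \eqref{eq:pairing_Vk} in the variables $a,b,c,d$. The signs coming from the $-a_{21}\leftrightarrow v_2$ convention must also be tracked, so that $\rho(L)$ factors exactly as claimed.
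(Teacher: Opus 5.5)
Your proof is correct. For (i) it coincides with the paper's argument: $\Delta$ and multiplication by $q$ are adjoint for the positive definite pairing, so $\ker\Delta=(q\mathcal{V}_{k-2})^{\perp}$.

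The routes differ in (ii).

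\emph{The paper} proves \emph{injectivity} of $\rho$ on $\ker\Delta$ geometrically. If $\rho(P)=0$, then $P$ vanishes on all rank-one matrices. These are Zariski dense in the quadric $\{q=0\}$, so the Nullstellensatz over $\overline{\Q}$ and primality of $(q)$ give $P\in q\mathcal{V}_{k-2}$, hence $P=0$. The paper then concludes with the dimension count $(k-1)^2$.

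\emph{You} prove \emph{surjectivity} instead, in one of two ways.
\begin{enumerate}
\item By the monomial observation $u_1^{\alpha}u_2^{\beta}v_1^{\gamma}v_2^{\delta}\mapsto \pm\, a^{\alpha+\beta}b^{\gamma+\delta}c^{\alpha+\delta}d^{\beta+\gamma}$. A $2\times 2$ table with margins $(i,n-i)$ and $(j,n-j)$ always exists, e.g.\ with $\alpha=\min(i,j)$.
\item By the explicit right inverse $\Phi$ of Step 3.
\end{enumerate}
Since $\rho$ kills the complement $q\mathcal{V}_{k-2}$, one has $\rho(\mathcal{V}_k)=\rho(\ker\Delta)$. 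So surjectivity passes to $\ker\Delta$, and the same dimension count finishes. It would be cleaner to state this explicitly, rather than saying $\rho$ ``factors through $\ker\Delta$''.

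Your route avoids the Nullstellensatz and the passage to $\overline{\Q}$. It also actually carries out (iii), which the paper only calls a straightforward calculation. The two key facts are:
\begin{itemize}
\item $\Delta L^{n}=0$, because $\partial_{u_1}L\,\partial_{v_1}L+\partial_{u_2}L\,\partial_{v_2}L=abcd-abcd=0$; this is the dual of $\rho(q)=0$.
\item $\rho(L)=(aa'+bb')(cc'+dd')$, which together with the reproducing property $\langle g,(xx'+yy')^{n}\rangle=n!\,g(x',y')$ gives $\rho\circ\Phi=\mathrm{id}$.
\end{itemize}
Step 3 alone already makes the monomial argument of Step 2 redundant, so (ii) and (iii) are really proved together.

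The paper's argument, in contrast, explains geometrically why $\ker\rho=q\mathcal{V}_{k-2}$. It does this without needing any formula for the inverse.
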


\begin{proof}
By definition of the pairing, we have
\begin{align*}
\langle  qP_1 , P_2 \rangle =  \langle  P_1 , \partial(q) P_2 \rangle  = \langle P_1 , \Delta P_2 \rangle.
\end{align*}
This means, the operator $\Delta$ and multiplication by $q$ are adjoint operators with respect to the pairing $\langle-,-\rangle$. As the pairing is positive definite, we deduce that 
\[\ker(\Delta)^\perp=q\mathcal{V}_{k-2},\]
where the right hand side is exactly the image of multiplication by $q$. This implies the claim.

Next, we prove (ii). If $P\in \mathcal{V}_k$ is homogeneous of degree $k-2$, then $\rho(P)$ is homogeneous of degree $k-2$ in $(a,b)$
and of degree $k-2$ in $(c,d)$. Hence, we get $\rho(P)\in V_k\otimes V_k$ and $\rho$ is well-defined. We have 
\[
\rho(q)=q(ac,ad,bd,-bc)=0,
\]
and hence $\rho(qP)=0$ for all $P\in\mathcal{V}_k$. Using the harmonic decomposition from (i)
\[
\mathcal V_k=\ker(\Delta)\oplus q\,\mathcal{V}_{k-2}, 
\]
it follows that $\rho$ restricts to
\[
\rho:\ker(\Delta)\rightarrow V_k\otimes V_k.
\]
We now prove the injectivity of $\rho$ on $\ker(\Delta)$. Suppose $P \in\ker(\Delta)$ and $\rho(P)=0$. Then $P(u_1,u_2,v_1,v_2)$ vanishes on all quadruples  
$
(u_1,u_2,v_1,v_2)
=
(ac,ad,bd,-bc)$, 
i.e. on all rank–one matrices
\begin{align}\label{eq:rho_rank1}
\begin{pmatrix}
u_1 & u_2 \\
-v_2 & v_1
\end{pmatrix}
=
\begin{pmatrix}
a\\ b
\end{pmatrix}
\begin{pmatrix}
c & d
\end{pmatrix}.
\end{align}

The Zariski closure of rank–one matrices is precisely the quadratic
hypersurface defined by the vanishing of the determinant, i.e. by $q=0$.
Hence $P$ vanishes on the variety $\{q=0\}$.
By Hilbert's Nullstellensatz (applied over $\overline{\Q}$),
this implies $P$ lies in the radical of $q\mathcal{V}$. Since $q\mathcal{V}$ is prime and $P$ is homogeneous, we get
\[
P\in q\mathcal{V}_{k-2}.
\]
But (i) implies $\ker(\Delta) \cap q\mathcal{V}_{k-2}=\{0\}$, and hence $P=0$. Thus $\rho$ is injective on $\ker(\Delta)$.

It remains show that the dimensions of $\ker(\Delta)$ and $V_k\otimes V_k$ agree, since this completes the proof of (ii). We have
\[
\dim \mathcal{V}_k=\binom{k+1}{3}.
\]
From the decomposition in (i), we deduce
\begin{align} \label{eq:dim_ker(Delta)}
\dim \ker(\Delta)=\dim \mathcal{V}_k - \dim q\mathcal{V}_{k-2}= \binom{k+1}{3}-\binom{k-1}{3}=(k-1)^2.
\end{align}
On the other hand, we have $\dim(V_k)=k-1$ and hence
\begin{align} \label{eq:dim_Vk_tensor_Vk}
\dim(V_k\otimes V_k)=(k-1)^2.
\end{align}
Comparing the dimensions in \eqref{eq:dim_ker(Delta)} and \eqref{eq:dim_Vk_tensor_Vk} yields the claim.

(iii) can be checked with a straight-forward calculation.
\end{proof}

\begin{Definition}\label{app:bi_period_def} 
A  bi-period polynomial is a homogeneous polynomial $P \in \mathcal{V}$ which satisfies
\begin{align*}
P(X)+P(XU)+P(XU^2) &=0,\\
P(X)+P(XS) &=0,\\  
P(X^t)-P(X) &=0.
\end{align*}
where  
$U= \begin{pmatrix} 0 & 1 \\ -1 & 1 \end{pmatrix}$ and
$S= \begin{pmatrix} 0 & 1 \\ -1 & 0 \end{pmatrix}$. We denote the space of bi-period polynomials of degree $k-2$ by $\mathcal{W}_k$.

If in addition 
\[
P(\epsilon X\epsilon)=P(X) ,\qquad \epsilon=  \begin{pmatrix} -1 & 0 \\ 0 & 1 \end{pmatrix},
\]
then it is called even. The space of 
even bi-period polynomials of degree $k-2$ is denoted by $\mathcal{W}^{\epsilon}_k$.

Moreover, if $P\in \mathcal{V}$ satisfies
\[
\Delta (P)=0,
\]
then it is called primitive. The space of 
primitive bi-period polynomials of degree $k-2$ is denoted by  
$\mathcal{P}_k$ and we set
$\mathcal{P}^{\epsilon}_k = \mathcal{P}_k  \cap \mathcal{W}^{\epsilon}_k$.
\end{Definition}

\begin{Lemma}\label{lem:primitive_subspace}
We have a decomposition
\[
\mathcal{W}^{\epsilon}_k = \mathcal{P}^{\epsilon}_k  \oplus 
(u_1v_1+u_2v_2) \mathcal{W}^{\epsilon}_{k-2}
\]   
\end{Lemma}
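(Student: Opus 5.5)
We need to prove $\mathcal{W}^{\epsilon}_k = \mathcal{P}^{\epsilon}_k \oplus q\,\mathcal{W}^{\epsilon}_{k-2}$ with $q=u_1v_1+u_2v_2$. The plan is to transport the harmonic decomposition of Lemma \ref{lem:decomp}(i), $\mathcal{V}_k=\ker(\Delta)\oplus q\mathcal{V}_{k-2}$, to the subspace cut out by the bi-period and evenness equations. For this it suffices to show that this decomposition is compatible with the linear operators defining $\mathcal{W}^{\epsilon}_k$.

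\textbf{Step 1: the substitutions are isometries of $q$.} Each defining condition of $\mathcal{W}^{\epsilon}_k$ is of the form $\sum_g c_g\, P(g\cdot X)=0$. Here $g$ ranges over the transformations
\[
X\mapsto XU,\quad X\mapsto XU^2,\quad X\mapsto XS,\quad X\mapsto X^t,\quad X\mapsto \epsilon X\epsilon .
\]
We check that each of these preserves $q(X)=\det X$:
\begin{itemize}
\item $\det U=\det S=1$, so right multiplication by $U$, $U^2$ or $S$ preserves the determinant.
\item $\det X^t=\det X$.
\item $\det(\epsilon X\epsilon)=(\det\epsilon)^2\det X=\det X$.
\end{itemize}
One must also verify that each map is a linear substitution on $(u_1,u_2,v_1,v_2)$ under the convention $P(A)=P(a_{11},a_{12},a_{22},-a_{21})$. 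It is, since these are linear maps on matrices. So each $g$ is an element of the orthogonal group $\operatorname{O}(q)$ acting linearly on $\Q^4$. Write $T_g P := P\circ g$. Then $T_g(qR)=q\,T_g(R)$.

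\textbf{Step 2: the $T_g$ commute with $\Delta$.} The Laplacian $\Delta=\partial(q)$ is the Laplace operator attached to the dual form of $q$. It commutes with any linear change of variables preserving $q$. Concretely, if $g$ preserves $q$, then $g^{-t}$ preserves the dual quadratic form, and the chain rule gives $\Delta(P\circ g)=(\Delta P)\circ g$. Equivalently, we can argue through the pairing \eqref{eq:pairing_Vk}: it is $\operatorname{O}(q)$-invariant, so $T_g$ preserves both $q\mathcal{V}_{k-2}$ and its orthogonal complement $\ker\Delta$. Either way, each $T_g$ preserves both summands of $\mathcal{V}_k=\ker\Delta\oplus q\mathcal{V}_{k-2}$. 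The same then holds for any linear combination $L=\sum c_gT_g$, for instance $L=1+T_U+T_{U^2}$.

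\textbf{Step 3: conclusion.} Take $P\in\mathcal{W}^{\epsilon}_k$ and write $P=H+qR$ with $H\in\ker\Delta$ and $R\in\mathcal{V}_{k-2}$. For each defining operator $L$ we have
\[
0=L(P)=L(H)+q\,L(R),
\]
with $L(H)\in\ker\Delta$ and $qL(R)\in q\mathcal{V}_{k-2}$. Since the sum is direct, $L(H)=0$ and $qL(R)=0$, hence $L(R)=0$ because $\Q[u_1,\dots]$ is a domain. Therefore $H\in\mathcal{P}^{\epsilon}_k$ and $R\in\mathcal{W}^{\epsilon}_{k-2}$.

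Conversely, $q\,\mathcal{W}^{\epsilon}_{k-2}\subset\mathcal{W}^{\epsilon}_k$ because $T_g(qR)=qT_g(R)$. Directness of the sum is inherited from Lemma \ref{lem:decomp}(i).

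The only step requiring real care is Step 2, namely the invariance of $\Delta$ under these substitutions. The transposition and the conjugation by $\epsilon$ act on the coordinates through the sign convention $v_2\leftrightarrow -a_{21}$, so I would verify directly that each $g$ maps the coordinate vector by a matrix $M$ with $M^tQM=Q$, where $Q$ is the Gram matrix of $q$. Invariance of the dual form, and hence of $\Delta$, then follows formally.
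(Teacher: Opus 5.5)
Your argument is correct and follows the paper's route: you restrict the harmonic decomposition of Lemma \ref{lem:decomp}(i) using $q\,\mathcal{W}^{\epsilon}_{k-2}\subset\mathcal{W}^{\epsilon}_k$. You also make explicit a step the paper's two-line proof leaves implicit: the inclusion $\mathcal{W}^{\epsilon}_k\subseteq\mathcal{P}^{\epsilon}_k\oplus q\,\mathcal{W}^{\epsilon}_{k-2}$ requires the defining substitutions to commute with $\Delta$, and your chain-rule argument for these $q$-isometries supplies exactly that.

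One correction: drop the ``equivalently, via the pairing'' remark. The pairing \eqref{eq:pairing_Vk} is the Fischer inner product, which is invariant only under Euclidean-orthogonal substitutions, not under all of $\operatorname{O}(q)$. It already fails for $X\mapsto XU$, i.e.\ $(u_1,u_2,v_1,v_2)\mapsto(-u_2,u_1+u_2,v_1-v_2,v_1)$, so the chain rule is the justification to keep.
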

\begin{proof} For $P\in \mathcal{W}_{k-2}^\epsilon$, we have $(u_1v_1+u_2v_2)P\in \mathcal{W}_k^\epsilon$. Hence, the decomposition follows from Lemma \ref{lem:decomp}.
\end{proof} 

For any polynomials $f,g\in V_k$, we define
\[
P_{f,g} = \frac{1}{(k-2)!^2}\left\langle f(a,b)g(c,d)+g(a,b)f(c,d),(u_1bd-u_ 2bc+v_1ac+v_2ad)^{k-2}\right\rangle\in \mathcal{V}_k.
\]
By Lemma \ref{lem:decomp}, the elements $P_{f,g}$ are primitive polynomials in $\mathcal{V}_k$.  

\begin{Theorem}\label{app:bi_period_thm}
(i) A basis for   $\mathcal{P}_k$ is given by the elements $P_{f,g}$, where $f,g$ are elements of a basis of $W_k$.

(ii) A basis for   $\mathcal{P}^{\epsilon}_k$ is given by the elements $P_{f,g}$, where $f,g$ are elements of a basis of $W_k^+$  or the same for $W_k^-$.
\end{Theorem}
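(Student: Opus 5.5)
The plan is to transport everything through the isomorphism $\rho:\ker\Delta\xrightarrow{\sim}V_k\otimes V_k$ of Lemma \ref{lem:decomp}. Since $\rho(P)(a,b;c,d)$ is $P$ evaluated on the rank-one matrix
\[
X=\begin{pmatrix} a\\ b\end{pmatrix}\begin{pmatrix} c & d\end{pmatrix},
\]
each of the operations in Definition \ref{app:bi_period_def} becomes an explicit operation on tensors. The three translations are as follows.

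\begin{enumerate}
\item Right multiplication $X\mapsto XM$ with $M\in\operatorname{Sl}_2(\Q)$ replaces the row vector $(c,d)$ by $(c,d)M$. So it acts on the second tensor factor only.
\item Transposition $X\mapsto X^t$ sends the rank-one matrix to $\binom{c}{d}\begin{pmatrix} a & b\end{pmatrix}$, hence exchanges the two tensor factors $(a,b)\leftrightarrow(c,d)$.
\item Conjugation $X\mapsto \epsilon X\epsilon$ acts as $(a,b;c,d)\mapsto(-a,b;-c,d)$.
\end{enumerate}

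\noindent Next I check that $\ker\Delta$ is stable under these operations, so that they can be pulled back through $\rho$. All three preserve $\det X=q$, hence are isometries of $q$. Each isometry commutes with the Laplacian $\Delta=\partial(q)$, so it preserves both $\ker\Delta$ and $q\mathcal{V}_{k-2}$. Because $\rho$ kills $q\mathcal{V}_{k-2}$ and is injective on $\ker\Delta$, I get the following: a primitive $P$ satisfies one of the bi-period identities if and only if $F=\rho(P)$ satisfies the translated identity in $V_k\otimes V_k$.

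I then compute the translated identities explicitly. One has $(c,d)U=(-d,c+d)$, $(c,d)U^2=(-c-d,c)$ and $(c,d)S=(-d,c)$, and $f(-d,c)=f(d,-c)$ because $k-2$ is even. Hence the relations $P(X)+P(XU)+P(XU^2)=0$ and $P(X)+P(XS)=0$ become exactly the two defining relations of $W_k$, imposed in the second variable pair $(c,d)$. These relations are linear conditions on one factor only. Writing $F=\sum_i e_i\otimes F_i$ in a basis $(e_i)$ of $V_k$, the conditions hold precisely when every $F_i\in W_k$, i.e.\ $F\in V_k\otimes W_k$. The condition $P(X^t)=P(X)$ says that $F$ is a symmetric tensor. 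Combining the two, $F$ lies in the symmetric part of $W_k\otimes W_k$. This space is spanned by the tensors $f\otimes g+g\otimes f$ with $f,g$ running over a basis of $W_k$, and these are linearly independent once one takes unordered pairs.

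The remaining point is to identify $\rho^{-1}(f\otimes g+g\otimes f)$ with $P_{f,g}$. The defining pairing of $P_{f,g}$ uses a linear form that differs from the one in Lemma \ref{lem:decomp}(iii) by a relabelling of $(a,b,c,d)$ (together with the symmetrisation). So I would verify directly that $\rho(P_{f,g})=f\otimes g+g\otimes f$, or at least that it differs from this by an automorphism of $W_k\otimes W_k$ preserving the symmetric part. This is a reproducing-kernel computation with the pairing \eqref{eq:pairing_Vk}, and it proves (i).

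For (ii), evenness $P(\epsilon X\epsilon)=P(X)$ translates into invariance of $F$ under $(a,b)\mapsto(-a,b)$ and $(c,d)\mapsto(-c,d)$ simultaneously. Using $W_k=W_k^+\oplus W_k^-$, the tensors $f\otimes g+g\otimes f$ with $f,g$ both in $W_k^+$ or both in $W_k^-$ have eigenvalue $+1$. The mixed tensors with one factor in $W_k^+$ and one in $W_k^-$ have eigenvalue $-1$. Hence the invariant subspace is spanned by the $P_{f,g}$ with $f,g$ taken from the same eigenspace, which is (ii).

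I expect the main obstacle to be the bookkeeping of conventions. This means matching the sign convention $P(A)=P(a_{11},a_{12},a_{22},-a_{21})$ with the row-vector action, so that the $U$- and $S$-relations land on exactly the $W_k$ relations and not on a twisted version. It also means checking that the particular linear form in the definition of $P_{f,g}$ really inverts $\rho$ on symmetric tensors.
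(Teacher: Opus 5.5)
Your proposal is correct and follows the same route as the paper. You transport everything through $\rho$, read the $U$- and $S$-relations as the $W_k$ relations on the right tensor factor, read transposition as the flip, invert via Lemma~\ref{lem:decomp}(iii), and get (ii) from the $\pm1$ eigenvalues of $(a,b;c,d)\mapsto(-a,b;-c,d)$. Your isometry argument for the converse direction (the operations preserve $\ker\Delta$, on which $\rho$ is injective) makes explicit a step the paper leaves implicit. The convention check you flag goes through: substituting $(u_1,u_2,v_1,v_2)=(a'c',a'd',b'd',-b'c')$ turns the linear form in $P_{f,g}$ into $(a'b-ab')(c'd-cd')$. Hence $\rho(P_{f,g})(a',b';c',d')=F(-b',a';-d',c')$ with $F=f\otimes g+g\otimes f$, and this equals $F(a',b';c',d')$ because $f(u_2,-u_1)=-f(u_1,u_2)$ on $W_k$ and $k-2$ is even.
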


\begin{proof} 
(i) By \eqref{eq:rho_rank1}, the map  $ \rho:\, \mathcal{V}_k \to V_k \otimes V_k$ from Lemma \ref{lem:decomp} is given by 
\[
P(X) \mapsto  
P\left( \left(\begin{smallmatrix} ac & ad \\ bc & bd  \end{smallmatrix} \right)\right) =
 P\left( \left(\begin{smallmatrix} a \\ b \end{smallmatrix} \right)
\!\! \begin{smallmatrix} \phantom{,}& \\(c & d )\end{smallmatrix}   \right).
\]
Now the first and second   defining condition for $\mathcal{W}_k$ given in Definition \ref{app:bi_period_def}  imply that the right tensor factors in $\rho(\mathcal{W}_k)$ belong to $W_k$. Then the
third defining condition implies the same conditions for the left tensor factor plus the symmetry. Therefore 
\[
\rho(\mathcal{P}_k ) =\left\langle  f(a,b)\otimes g(c,d) +  g(a,b)\otimes f(c,d)
\, \big|\, f,g \in W_k  \,\right\rangle_\Q
\subset  W_k \otimes W_k .
\]
Using the inverse map given in Lemma \ref{lem:decomp} the first claim follows.

(ii) The additional bi-even condition says that $P_{f,g}(a,b;c,d)=P_{f,g}(-a,b;-c,d)$. We have $W_k =  W_k^+ \oplus W_k^-$ and if either both $f,g\in W_k^+$ or both $f,g\in W_k^-$, then
\[
f(a,b)g(c,d)+g(a,b)f(c,d)=f(-a,b)g(-c,d)+g(-a,b)f(-c,d),
\]
and hence the claim follows.
\end{proof}

\begin{Corollary} \label{cor:dim_Pk}
We have
\begin{align*} 
(i) & \quad \sum_{k>0} \dim \mathcal{P}_k  x^k =    \sum_{k>0} \ \dim M_k(\operatorname{Sl}_2(\mathbb{Z})) \big(\dim M_k(\operatorname{Sl}_2(\mathbb{Z})) + \dim S_k(\operatorname{Sl}_2(\mathbb{Z}))\big) \, x^k, \\
(ii) & \quad \sum_{k>0} \dim \mathcal{P}^\epsilon_k  x^k = \sum_{k>0} \dim ( M_k(\operatorname{Sl}_2(\mathbb{Z})))^2 \, x^k.    
\end{align*}   
\end{Corollary}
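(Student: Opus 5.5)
The plan is to deduce both formulas from Theorem \ref{app:bi_period_thm} by pure counting, combined with the Eichler--Shimura dimension formulas from Appendix \ref{app:period}. The one point to be careful about is that $P_{f,g}$ is symmetric in $f$ and $g$ by its definition, since the input $f(a,b)g(c,d)+g(a,b)f(c,d)$ is symmetric. So the basis in Theorem \ref{app:bi_period_thm} must be read as indexed by \emph{unordered} pairs $\{f,g\}$ of basis elements, with $f=g$ allowed. Equivalently, via $\rho$ from Lemma \ref{lem:decomp}, the space $\mathcal{P}_k$ is identified with the symmetric square $\operatorname{Sym}^2(W_k)$. In case (ii) it is identified with $\operatorname{Sym}^2(W_k^+)\oplus\operatorname{Sym}^2(W_k^-)$. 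Linear independence of these elements is part of Theorem \ref{app:bi_period_thm}, so only the counting remains.

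For (i), set $m=\dim M_k(\operatorname{Sl}_2(\mathbb{Z}))$, $s=\dim S_k(\operatorname{Sl}_2(\mathbb{Z}))$ and $n=\dim W_k=m+s$. Then $\dim\mathcal{P}_k=n(n+1)/2$. I then use the classical fact that for even $k\ge 4$ the Eisenstein series spans a complement of $S_k$ in $M_k$. This can be read off from the generating series $1/((1-x^4)(1-x^6))$ and $x^{12}/((1-x^4)(1-x^6))$ quoted in Appendix \ref{app:period}, which give $s=m-1$. Hence
\[
n(n+1)/2=(2m-1)m=m(m+s),
\]
which is the coefficient claimed in (i).

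For (ii) the same count gives
\[
\dim\mathcal{P}^\epsilon_k=\frac{m(m+1)}{2}+\frac{s(s+1)}{2}=\frac{m(m+1)}{2}+\frac{(m-1)m}{2}=m^2.
\]

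Finally I check the degenerate weights separately. For odd $k$, and for $k=2$, both sides vanish. For $k=2$, $V_2$ consists of constants and the relation $f(u_1,u_2)+f(u_2,-u_1)=0$ forces $f=0$, so $W_2=0$ and also $M_2=0$. For even $k\ge4$ the relation $s=m-1$ holds throughout, since $m\ge1$ there (this includes $k=4,6,8,10$, where $s=0$ and $m=1$).

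I expect no real obstacle here. The only subtle step is justifying that the unordered pairs, and not the ordered pairs, form the basis. That is exactly the identification $\rho(\mathcal{P}_k)=\operatorname{Sym}^2 W_k$ established in the proof of Theorem \ref{app:bi_period_thm}(i), together with its restriction to the $\epsilon$-invariant part in (ii).
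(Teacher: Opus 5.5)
Your proposal is correct and is essentially the paper's proof: both count the basis elements $P_{f,g}$ of Theorem~\ref{app:bi_period_thm} as unordered pairs, giving $\binom{\dim W_k+1}{2}$ resp.\ $\binom{\dim W_k^{+}+1}{2}+\binom{\dim W_k^{-}+1}{2}$, and then simplify using $\dim W_k^{+}=\dim M_k$, $\dim W_k^{-}=\dim S_k$ and $\dim S_k=\dim M_k-1$. Your separate check of $k=2$, where that last identity fails but both sides are zero, covers a case the paper's argument passes over silently.
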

\begin{proof}
(i) From Appendix \ref{app:period}, we know that
\[
\dim W_k= \dim M_k(\operatorname{Sl}_2(\mathbb{Z}))+ \dim S_k(\operatorname{Sl}_2(\mathbb{Z})).
\]
Thus, by Theorem \ref{app:bi_period_thm} the number of linear independent $P_{f,g}$ in weight $k$ is given by 
\begin{align*}
\binom{\dim W_k+1}{2}&=\frac{(\dim M_k(\operatorname{Sl}_2(\mathbb{Z})) +\dim S_k(\operatorname{Sl}_2(\mathbb{Z}))+1) (\dim M_k(\operatorname{Sl}_2(\mathbb{Z})) +\dim S_k(\operatorname{Sl}_2(\mathbb{Z})))}{2} \\
&= \dim M_k(\operatorname{Sl}_2(\mathbb{Z}))(\dim M_k(\operatorname{Sl}_2(\mathbb{Z})) + \dim S_k(\operatorname{Sl}_2(\mathbb{Z}))). 
\end{align*}
In the last equality we made use of $\dim M_k(\operatorname{Sl}_2(\mathbb{Z}))=\dim S_k(\operatorname{Sl}_2(\mathbb{Z}))+1$.

(ii) We have by Appendix \ref{app:period} that
\[
\dim W_k^+=\dim M_k(\operatorname{Sl}_2(\mathbb{Z})),\qquad \dim W_k^-=\dim S_k(\operatorname{Sl}_2(\mathbb{Z})).
\]
Thus, by Theorem \ref{app:bi_period_thm} the number of linear independent $P_{f,g}$ in $\mathcal{P}_k^\epsilon$ is
\begin{align*}
&\binom{\dim W_k^++1}{2}+\binom{\dim W_k^-+1}{2}\\
&\hspace{0.7cm}=\frac{(\dim M_k(\operatorname{Sl}_2(\mathbb{Z}))+1)\dim M_k(\operatorname{Sl}_2(\mathbb{Z}))}{2}+\frac{(\dim S_k(\operatorname{Sl}_2(\mathbb{Z}))+1)\dim S_k(\operatorname{Sl}_2(\mathbb{Z}))}{2}\\
&\hspace{0.7cm}=\dim M_k(\operatorname{Sl}_2(\mathbb{Z}))^2.  \qedhere
\end{align*}
\end{proof}

\begin{Corollary}
We have
\[ 
 \sum_{k>0} \dim  \mathcal{W}^{\epsilon}_k \, x^k =    \mathsf{D}(x) \, \sum_{k>0} \dim (M_k(\operatorname{Sl}_2(\mathbb{Z})))^2  \, x^k.
\]   
where $\mathsf{D}(x) = \frac{1}{1-x^2}$. 
\end{Corollary}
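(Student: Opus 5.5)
The statement is the last corollary: $\sum_{k>0}\dim\mathcal{W}^{\epsilon}_k\,x^k=\mathsf{D}(x)\sum_{k>0}(\dim M_k)^2x^k$. The plan is to iterate the decomposition of Lemma \ref{lem:primitive_subspace} and then read off dimensions from Corollary \ref{cor:dim_Pk}(ii).

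\textbf{Step 1 (one step of the decomposition).} Write $q=u_1v_1+u_2v_2$. Lemma \ref{lem:primitive_subspace} gives
\[
\mathcal{W}^{\epsilon}_k=\mathcal{P}^{\epsilon}_k\oplus q\,\mathcal{W}^{\epsilon}_{k-2}.
\]
Multiplication by $q$ is injective, since $\Q[u_1,u_2,v_1,v_2]$ is a domain. Hence
\[
\dim\mathcal{W}^{\epsilon}_k=\dim\mathcal{P}^{\epsilon}_k+\dim\mathcal{W}^{\epsilon}_{k-2}.
\]

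\textbf{Step 2 (the $q$-part has the right size).} For this to be a genuine equality of dimensions, I must confirm that the lemma's decomposition really identifies the complement with all of $q\mathcal{W}^{\epsilon}_{k-2}$.

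First, $q$ preserves the defining relations. The quadratic form satisfies $q(XU)=q(XS)=q(X^t)=q(\epsilon X\epsilon)=q(X)$, because $q=\det X$ and $\det U=\det S=1$, $\det\epsilon^2=1$. So $P\in\mathcal{W}^{\epsilon}_{k-2}$ implies $qP\in\mathcal{W}^{\epsilon}_k$.

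Second, every element decomposes this way. Take $P\in\mathcal{W}^{\epsilon}_k$ and write $P=H+qR$ using Lemma \ref{lem:decomp}(i). This harmonic decomposition is $\operatorname{O}(q)$-equivariant: $\Delta$ and multiplication by $q$ commute with the isometries $X\mapsto AXB^t$, transposition and $\epsilon$-conjugation. The defining conditions of $\mathcal{W}^{\epsilon}_k$ are linear combinations of such isometry actions. Applying them componentwise shows $H\in\mathcal{P}^{\epsilon}_k$, and, after dividing by $q$ (which is injective), $R\in\mathcal{W}^{\epsilon}_{k-2}$.

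This equivariance check is the only step I expect to need real care. In particular, transposition $X\mapsto X^t$ has to be verified as an isometry commuting with $\Delta$. It is, because $\Delta=\partial(q)$ and $q$ is invariant under it.

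\textbf{Step 3 (iterate).} Iterating the recursion down to $\mathcal{W}^{\epsilon}_k=0$ for small $k$ gives
\[
\dim\mathcal{W}^{\epsilon}_k=\sum_{m\ge0}\dim\mathcal{P}^{\epsilon}_{k-2m}.
\]
In generating series this is multiplication by $\sum_m x^{2m}=\mathsf{D}(x)$.

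\textbf{Step 4 (conclude).} Substituting $\dim\mathcal{P}^{\epsilon}_k=(\dim M_k)^2$ from Corollary \ref{cor:dim_Pk}(ii) yields the claimed identity.
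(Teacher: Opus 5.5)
Your proposal is correct and follows the paper's argument: iterate the decomposition $\mathcal{W}^{\epsilon}_k=\mathcal{P}^{\epsilon}_k\oplus q\,\mathcal{W}^{\epsilon}_{k-2}$ of Lemma \ref{lem:primitive_subspace} to produce the factor $\mathsf{D}(x)$, then insert $\dim\mathcal{P}^{\epsilon}_k=(\dim M_k)^2$ from Corollary \ref{cor:dim_Pk}(ii). Your Step 2 usefully spells out a point the paper's one-line proof of that lemma leaves implicit: the substitutions $X\mapsto XU,\ XS,\ X^t,\ \epsilon X\epsilon$ are isometries of $q$, so they commute with both $\Delta$ and multiplication by $q$, and hence the harmonic decomposition of an element of $\mathcal{W}^{\epsilon}_k$ stays inside $\mathcal{W}^{\epsilon}_k$.
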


\begin{proof} By Corollary \ref{cor:dim_Pk}, we have
\[
\sum_{k>0} \dim \mathcal{P}^\epsilon_k  x^k = \sum_{k>0} \dim ( M_k(\operatorname{Sl}_2(\mathbb{Z}))^2 \, x^k. 
\]
Using the decomposition
\[
\mathcal{W}_k^\epsilon=\mathcal{P}_k^\epsilon\oplus q\mathcal{W}_{k-2}^\epsilon
\]
from Lemma \ref{lem:primitive_subspace}, we get the desired dimensions, as multiplying with the quadratic form $q$ corresponds to the term $\mathsf{D}(x)$.
\end{proof}

%


\section{More on the uri bracket}\label{app:uri_swap_alternative}

\subsection{Proof of Theorem \ref{thm:ganit_with_urit}} \label{subsec:proof_urit}

We want to prove that for bimoulds $A,B\in \BARI^{\operatorname{pol}}$, we have
\[
\urit(B)(A)=\ganit_{pic}\Bigl(\arit\bigl(\ganit_{poc}(B))(\ganit_{poc}(A)\bigr)\Bigr).
\]
\noindent {\bf Step 1.} Let $P_{i,j}$ denote the bimould in $\BARI^{\operatorname{pol}}$ concentrated
in depth $1$ given by
$$P_{i,j}\begin{pmatrix}u_1\\ v_1\end{pmatrix}=u_1^iv_1^j.$$
We first show that it is enough to prove 
Theorem \ref{thm:ganit_with_urit} for bimoulds $A=P_{i,j}\in \BARI^{\operatorname{pol}}$ and bimoulds $B\in \BARI^{\operatorname{pol}}$ that are products of depth $1$ bimoulds, i.e.
\begin{equation}\label{Bmould}
B=mu(B_1,\ldots,B_d)
\end{equation}
with $B_r=P_{k_r,l_r}$ for $r=1,\ldots,d$.

The key point is that as a conjugation of a derivation by an automorphism,
the operator $ganit_{pic}\circ \arit\bigl(\ganit_{poc}(B)\bigr)$ is a derivation, and by Definition \ref{def:urit}, $\urit(B)$ is also a derivation. By the remark
just before Definition \ref{def:urit}, every bimould $A,B\in \BARI^{\operatorname{pol}}$ is
a linear combination of $mu$-products of polynomial bimoulds concentrated in 
depth $1$, i.e.~of bimoulds $P_{i,j}$, so it is enough to check that the two derivations $\ganit_{pic}\circ arit\bigl(\ganit_{poc}(B)\bigr)$ and $\urit(B)$ are equal on the depth $1$ bimoulds $P_{i,j}$. Since every bimould in $\BARI^{\operatorname{pol}}$ is a linear combination of $mu$-products of these depth $1$ bimoulds and $\arit$ is also linear in its argument, it is enough to assume that
$B$ is a $mu$-product of depth $1$ bimoulds as in \eqref{Bmould}.

\vspace{.2cm}
\noindent {\bf Step 2.} Let us now explicitly compute the bimould
$\urit(B)(A)$ for a depth $1$ bimould $A=P_{i,j}$ and a bimould 
$B$ as in \eqref{Bmould}, directly from the formula in Definition \ref{def:urit}. The lowest depth that occurs is $n=d+1$. Letting
$$U_i=u_1+\cdots+u_i,$$ 
we find  
\begin{align}\label{uritBAlowestdep}
urit(B)(A)\binom{u_1,\ldots,u_{d+1}}{v_1,\ldots,v_{d+1}}&=arit(B)(A)\binom{u_1,\ldots,u_{d+1}}{v_1,\ldots,v_{d+1}} \\
&=U_{d+1}^i\Biggl(v_{d+1}^j
\prod_{a=1}^du_a^{k_a}(v_a-v_{d+1})^{l_a}
-v_1^j \prod_{a=1}^d u_{a+1}^{k_a}(v_{a+1}-v_1)^{l_a}\Biggr). \nonumber
\end{align}
For depths $n>d+1$, sums with products in the denominator appear, but
all such sums can be simplified using the following general formula:
for $m$ variables $x_1,\ldots,x_m$, we have
\begin{equation}\label{Lagrange}
\sum_{s=1}^m \frac{(a-x_s)^l} {\prod_{t\ne s}(x_t-x_s)} = 
\begin{cases}
0,& l<m-1,\\[2mm]
S^{l-m+1}(a-x_1,\dots,a-x_m),& l\ge m-1,
\end{cases},
\end{equation}
where $S^e(x_1,\ldots,x_m)$ denotes the complete homogeneous symmetric 
polynomial of degree $e$. Let $L=l_1+\cdots+l_d$. Let us generalize 
the definitions of the symmetric polynomials as follows: 
for variables $x_1,\ldots,x_m$ and
any one-variable polynomial $F(x)$ of fixed homogeneous degree $e+m-1$,
we define the associated symmetric polynomial $S^e_F(x_1,\ldots,x_m)$ 
as the degree $e$ polynomial
\begin{equation}\label{gensympol}
S^e_F(x_1,\ldots,x_m)=\sum_{s=1}^m \frac{F(x_s)}{\prod_{t\ne s}(x_t-x_s)}.
\end{equation}
Observe that when $m=1$, we have $S_F^e(x_1)=F(x_1)$, and that in the
case $n=d+1$, the degree $L$ polynomials 
$F_1(v_{d+1})$ and $F_2(v_1)$ are exactly the polynomials appearing in 
\eqref{uritBAlowestdep}.  Therefore we can rewrite \eqref{uritBAlowestdep}
as
\begin{equation}\label{uritBAlowestdep2}
urit(B)(A)\binom{u_1,\ldots,u_{d+1}}{v_1,\ldots,v_{d+1}}=U_{d+1}^i\Biggl(v_{d+1}^j
S^L_{F_1}(v_{d+1})\bigl(\prod_{a=1}^du_a^{k_a}\bigr)
-v_1^jS^L_{F_2}(v_1)\bigl(\prod_{a=1}^d u_{a+1}^{k_a}\bigr)\Biggr).
\end{equation}
Then the full expression for
$urit(B)(A)$ in depth $n$ (simplified using \eqref{Lagrange} and
\eqref{gensympol}) is given by
\begin{align}\label{uritBA}
&urit(B)(A)\binom{u_1,\ldots,u_n}{v_1,\ldots,v_n}=\\
&U_n^iv_1^j\Biggl(\Bigl(\prod_{a=1}^d u_{n-d+a-1}^{k_a}\Bigr)S_{F_1}^{L-n+d+1}(v_1,\ldots,v_{n-d-1},v_n)
-\Bigl(\prod_{a=1}^d u_{n-d+a}^{k_a}\Bigr)S_{F_2}^{L-n+d+1}(v_1,\ldots,v_{n-d})\Biggr) \nonumber
\end{align}
where 
\begin{equation*}
\begin{cases}
F_1(x)=\prod_{a=1}^d (v_{n-d+a-1}-x)^{l_a}\\
F_2(x)=\prod_{a=1}^d (v_{n-d+a}-x)^{l_a}.
\end{cases}
\end{equation*}

\vspace{.1cm}
Note in particular that since the symmetric polynomials are of degree
$L-n+d+1$, we must have $L-n+d+1\ge 0$ or
$n\le L+d+1$, so the bimould $urit(B)(A)$ is non-zero only in depths
$d+1\le n\le L+d+1$. Equation \eqref{uritBAlowestdep2} gives its value when $n=d+1$
and \eqref{uritBA} for $d+1<n\le L+d+1$.

\vspace{.2cm}
\noindent {\bf Step 3.}
Now let us compute $ganit_{pic}\Bigl(arit(ganit_{poc}(B))(ganit_{poc}(A))\Bigr)$ with $A=P_{i,j}$ and $B$ as in \eqref{Bmould}.
We start by computing 
$ganit_{poc}(A)$, which has a reasonably simple form.
Let $Q=ganit_{poc}(B)$, $P=ganit_{poc}(A)$ and $R=arit(Q)(P)$. Then
\begin{equation}\label{ganitpocA}
P\binom{u_1,\ldots,u_n}{v_1,\ldots,v_n}=
\frac{U_n^iv_1^j}{(v_1-v_2)\cdots(v_{n-1}-v_n)}.
\end{equation}
For $Q$, since $ganit_{poc}$ is a $mu$-automorphism, we simply
have to compute the $mu$-product of expressions like \eqref{ganitpocA} for
each $B_r$, $r=1,\ldots,d$. The bimould $Q=ganit_{poc}(B)$ is $0$ in depths
$n<d$, and for $n\ge d$ we obtain
\begin{equation*}\label{ganitpocB}
Q\binom{u_1,\ldots,u_n}{v_1,\ldots,v_n}=
\sum_{0=s_0<s_1<\cdots<s_d=n}\ \prod_{a=1}^d\ 
\frac{(u_{s_{a-1}+1}+\cdots+u_{s_a})^{k_a}v_{s_{a-1}+1}^{l_a}}
{\prod_{t=s_{a-1}+1}^{s_a-1} (v_t-v_{t+1})  }.
\end{equation*}

Recall that 
\begin{equation}\label{aritQP}
R(w)=\sum_{{{w=abc}\atop{b,c\ne\emptyset}}} P(a\lceil c)Q(b\rfloor)
-\sum_{{{w=abc}\atop{a,b\ne\emptyset}}} P(a\rceil c)Q(\lfloor b).
\end{equation}
Pick a decomposition
\begin{equation}\label{decomp}
w=(w_1,\ldots,w_n)=\binom{u_1,\ldots,u_n}{v_1,\ldots,v_n}
=(w_1,\ldots,w_p)(w_{p+1},\ldots,w_{p+q})(w_{p+q+1},\ldots,w_n)=abc,
\end{equation}
and note that because $Q$ starts in depth $d$, the $b$-part of the 
decomposition must be of length $\ge d$ in order for the $Q$-terms of
\eqref{aritQP} to be non-zero, so we may assume that $q\ge d$.

Let us compute the terms corresponding to this decomposition in \eqref{aritQP}.
Start with the terms where neither $a$ nor $c$ are empty, so $p\ge 1$ and 
$q\ge d$.
Let $U_n=u_1+\cdots+u_n$ and $U_b=u_{p+1}+\cdots+u_{p+q}$.
Then for the positive term we have
\begin{equation*}\label{genterm1}
P(a\lceil c)=
\frac{U^i v_1^j}
{\displaystyle
\left(\prod_{t=1}^{p-1}(v_t-v_{t+1})\right) (v_p-v_{p+q+1})
\left(\prod_{t=p+q+1}^{n-1}(v_t-v_{t+1})\right) }
\end{equation*}
and
\begin{equation*}\label{genterm2}
Q(b\rfloor)=\sum_{0=s_0<s_1<\cdots<s_d=q} \prod_{a=1}^d
\Biggl(
\frac{(u_{p+s_{a-1}+1}+\cdots+u_{p+s_a})^{k_a}
(v_{p+s_{a-1}+1}-v_{p+q+1})^{l_a}}
{\prod_{t=p+s_{a-1}+1}^{p+s_a-1} (v_t-v_{t+1})}
\Biggr).
\end{equation*}
For the negative term we have
\begin{equation*}\label{genterm3}
P(a\rceil c)
=
\frac{ U_n^i v_1^j }
{ \displaystyle
\left(\prod_{t=1}^{p-1}(v_t-v_{t+1})\right)
(v_p-v_{p+q+1})
\left(\prod_{t=p+q+1}^{n-1}(v_t-v_{t+1})\right) }
\end{equation*}
and
\begin{equation*}\label{genterm4}
Q(\lfloor b)
=
\sum_{0=s_0<s_1<\cdots<s_d=q}
\prod_{a=1}^d
\Biggl(
\frac{
\left(u_{p+s_{a-1}+1}+\cdots+u_{p+s_a}\right)^{k_a}
\left(v_{p+s_{a-1}+1}-v_p\right)^{l_a}
}{
\displaystyle
\prod_{t=p+s_{a-1}+1}^{p+s_a-1}(v_t-v_{t+1})
}
\Biggr).
\end{equation*}
Thus for $p\ge 1$, $q\ge d$ and $p+q<n$, 
setting
$$F_{\mathbf{s}}(x)=\prod_{a=1}^d (v_{p+s_{a-1}+1}-x)^{l_a},$$
for $\mathbf{s}=(0=s_0,s_1,\ldots,s_d=q)$, 
we find that the difference 
$$T^n_{p,q}:=P(a\lceil c)Q(b\rfloor)-P(a\rceil c)Q(\lfloor b)$$
is equal to
\begin{equation}\label{Tpq}
T^n_{p,q} =
U_n^i v_1^j \sum_{0=s_0<s_1<\cdots<s_d=q}
\frac{ \displaystyle \Bigl( \prod_{a=1}^d
(u_{p+s_{a-1}+1}+\cdots+u_{p+s_a})^{k_a} \Bigr)
S_{F_{\mathbf s}}^{L-1}(v_p,v_{p+q+1}) }
{\displaystyle
\Bigl(\prod_{t=1}^{p-1}(v_t-v_{t+1})\Bigr)
\Bigl( \prod_{a=1}^d
\prod_{t=p+s_{a-1}+1}^{p+s_a-1}(v_t-v_{t+1})
\Bigr)
\Bigl(\prod_{t=p+q+1}^{n-1}(v_t-v_{t+1})\Bigr) }.
\end{equation}
In order to compute $R=arit(Q)(P)=arit(ganit_{poc}(B))(ganit_{poc}(A))$ using \eqref{aritQP}, it remains to deal with the
positive terms where $a=\emptyset$, meaning decompositions
$w=abc$ as in \eqref{decomp} with $p=0$, and the negative terms
where $c=\emptyset$, meaning decompositions as in \eqref{decomp}
with $p+q=n$, i.e.~$p=n-q$. By the same method as above, we compute
\begin{equation*}\label{posspecialterm}
T_{0,q} = \frac{U_n^i v_{q+1}^j}{\displaystyle\prod_{t=q+1}^{n-1}(v_t-v_{t+1})}
\ \ \sum_{0=s_0<s_1<\cdots<s_d=q}
\ \prod_{a=1}^d
\Biggl(
\frac{
\left(u_{s_{a-1}+1}+\cdots+u_{s_a}\right)^{k_a}
\left(v_{s_{a-1}+1}-v_{q+1}\right)^{l_a}
}{
\displaystyle
\prod_{t=s_{a-1}+1}^{s_a-1}(v_t-v_{t+1})
}
\Biggr)
\end{equation*}
and
\begin{align*}\label{negspecialterm}
&T_{n-q,q} =
- \frac{U_n^i v_1^j} {\displaystyle\prod_{t=1}^{n-q-1}(v_t-v_{t+1})}
\\
& \hspace{1.8cm}\cdot \sum_{0=s_0<s_1<\cdots<s_d=q}
\ \prod_{a=1}^d
\Biggl(
\frac{
\left( u_{n-q+s_{a-1}+1}+\cdots+u_{n-q+s_a} \right)^{k_a}
\left( v_{n-q+s_{a-1}+1}-v_{n-q} \right)^{l_a} }
{
\displaystyle
\prod_{t=n-q+s_{a-1}+1}^{n-q+s_a-1}
(v_t-v_{t+1})
}
\Biggr),
\end{align*}
both expressions valid for $d\le q\le n-1$. Then we have
\begin{equation}\label{R}
R(w_1,\ldots,w_n)=\sum_{q=d}^{n-1} T^n_{0,q}+\sum_{p=1}^{n-d-1}\sum_{q=d}^{n-p-1} T^n_{p,q}+\sum_{q=d}^{n-1} T^n_{n-q,q}.
\end{equation}
Now we apply $\ganit_{pic}$ to $R$, to obtain the desired derivation
$$ganit_{pic}(R)=ganit_{pic}\big(arit(Q)(P)\big)
=ganit_{pic}\Big(arit(\ganit_{poc}(B))(\ganit_{poc}(A))\Big).$$
We will show that this is equal to \eqref{uritBA}, completing the proof.

We start by writing out the action of $\ganit_{pic}$ with explicit terms
instead of the flexions.  We do this by replacing the decomposition
$b_1c_1\cdots b_sc_s$ in the usual form for $\ganit_{pic}$ with
the tuple of numbers
$$1=i_1<i_2<\cdots<i_m\le n$$
defined by taking $\{i_1,\ldots,i_m\}\subset \{1,\ldots,n\}$ to be
the subset of numbers in $b_1\cup \cdots \cup b_s$. 

Let $i_{m+1}=n+1$, and set
$$\tilde u_r=u_{i_r}+\cdots+u_{i_{r+1}-1},\ \ \tilde v_r=v_{i_r}.$$
Then the $b$-flexion expression occurring in $\ganit_{pic}$ is given by
$$b_1\rceil\cdots b_s\rceil=\begin{pmatrix}\tilde u_1&\cdots&\tilde u_m\\
\tilde v_1&\cdots&\tilde v_m\end{pmatrix}.$$
Writing $\ganit_{pic}(R)$ in this notation, we have
\begin{align*}
\ganit_{pic}(R)\binom{u_1,\ldots,u_n}{v_1,\ldots,v_n}
&=\sum_{m=1}^n \ \sum_{1=i_1<i_2<\cdots<i_m\le n}\ 
R\begin{pmatrix}\tilde u_1&\cdots&\tilde u_m\\ \tilde v_1&\cdots&\tilde v_m
\end{pmatrix}\times\ \prod_{r=1}^m\prod_{t=i_r+1}^{i_{r+1}-1}
\frac{1}{(v_t-v_{i_r})}\notag\\
&=\sum_I\ 
R\begin{pmatrix}\tilde u_1&\cdots&\tilde u_m\\ \tilde v_1&\cdots&\tilde v_m
\end{pmatrix}\times\ \prod_{r=1}^m\prod_{t=i_r+1}^{i_{r+1}-1}
\frac{1}{(v_t-v_{i_r})},
\end{align*}
where the sum over $I$ runs over all subsets $\{i_1,\ldots,i_m\}\subset
\{1,\ldots,n\}$ with $i_1=1$.

To compute the right-hand side, we fix one subset $I$ and compute the
corresponding term by breaking up the depth $m$ piece $R_m$ into a sum
of pieces $T^m_{p,q}$ as in \eqref{R}.

We first consider a piece $T^m_{p,q}$ from the middle sum, i.e.~such that
$p\ge 1$, $q\ge d$ and $p+q<m$. Denote the $pic$ part corresponding to 
$I=\{i_1,\ldots,i_m\}$ by 
$$\Pi_{I}=\prod_{r=1}^m \prod_{t=i_r+1}^{i_{r+1}-1} \frac{1}{(v_t-v_{i_r})},$$
and let us compute
$$\Pi_I\,T^m_{p,q}\binom{\tilde u_1,\ldots,\tilde u_m}{\tilde v_1,\ldots,\tilde v_m}.$$
Recall from the expression for $T^m_{p,q}$ given in \eqref{Tpq} (with $n$
replaced by $m$ and $u_i,v_i$ by $\tilde u_i,\tilde v_i$) that 
$T^m_{p,q}$ is a sum over tuples $\mathrm{s}=(s_0=0,s_1,\ldots,s_d=q)$.
For each fixed $I$ and fixed $\mathbf{s}$ inside the $I$-term, let
$$\tilde F_{\mathbf{s},I}(x)=\prod_{a=1}^d(v_{i_{p+s_{a-1}+1}}-x)^{l_a}.$$
Then the corresponding $\mathbf{s}$-term of $T^m_{p,q}$ times
the $pic$ part $\Pi_I$ is given by
\begin{align}
&U_n^iv_1^j\prod_{a=1}^d\ (u_{i_{p+s_{a-1}+1}}+\cdots+
u_{i_{p+s_a+1}-1})^{k_a}S^{L-1}_{\tilde F_{\mathbf{s},I}}(v_{i_p},v_n) \label{sterm}
\\ &\hspace{6.5cm}\cdot\frac{\displaystyle{\prod_{r=1}^{m-1}\prod_{t=i_r+1}^{i_{r+1}-1}\frac{1}{(v_t-v_{i_r})}}}
{\displaystyle{\Bigl(\prod_{r=1}^{p-1}(v_{i_r}-v_{i_{r+1}})\Bigr)
\Bigl(\prod_{a=1}^d\prod_{r=p+s_{a-1}+1}^{p+s_a-1}(v_{i_r}-v_{i_{r+1}})\Bigr)}},\nonumber
\end{align}
with $i_m=n$ and $p+q+1=m$; all terms where $p+q+1<m$ vanish.

Next, we take the full sum over the ${\mathbf{s}}$-terms in \eqref{Tpq}
giving $T^m_{p,q}$. Let
$$G_{I,p}(x)=\sum_{p+1=r_1<\cdots<r_d<m}\ 
\Biggl(\prod_{a=1}^d\ (u_{i_{r_a}}+\cdots+u_{i_{r_{a+1}}-1})^{k_a}\Biggr)
\Biggl(\prod_{a=2}^d (v_{i_{r_a}-1}-v_{i_{r_a}})\Biggr)
\Biggl(\prod_{a=1}^d(v_{i_{r_a}}-x)^{l_a}\Biggr).$$
Then, adding up \eqref{sterm} over all the tuples $(s_0,\ldots,s_d)$, we 
end up with
$$\Pi_I\ T^m_{p,q}\binom{\tilde u_1,\ldots,\tilde u_m}{\tilde v_1,\ldots,\tilde v_m}=
\frac{U_n^iv_1^j\Pi_I}{\prod_{r=1}^{p-1}(v_{i_r}-v_{i_{r+1}})
\prod_{r=p+1}^{m-2}(v_{i_r}-v_{i_{r+1}})}
S^{L-1}_{G_{I,p}}(v_{i_p},v_n).$$
When these terms are summed over all $I$ and all $p,q$ with
$p\ge 1$, $q\ge d$ and $p+q<m$ (so the inner terms of \eqref{Tpq}),
many terms cancel, leaving the total inner sum
\begin{equation}\label{innersum}
U_n^iv_1^j\Biggl(\prod_{a=1}^d u_{n-d+a-1}^{k_a}\Biggr)S^{L-n+d+1}_{F_1}(v_1,
\ldots,v_{n-d-1},v_n).
\end{equation}
Note that this is precisely the first term of \eqref{uritBA}, and that
it is equal to zero when $n=d+1$.

It remains to add on the terms corresponding to $p=0$ and to $p+q=m$.
Adding up the explicit expressions for the terms $T^m_{0,q}\Pi_I$
over varying $I$ and $m$, we find that everything cancels and we obtain 0 
for $n>d+1$. When $n$ is the lowest possible depth, $n=d+1$, we find
the term
\begin{equation}\label{pequals0}
U_{d+1}^iv_{d+1}^j\prod_{a=1}^du_a^{k_a}(v_a-v_{d+1})^{l_a}.
\end{equation}
In the other boundary case $p+q=m$, we obtain the same expression for
all $n\ge d+1$:
\begin{equation}\label{pplusqequalsm}
-U_n^iv_1^j\Bigl(\prod_{a=1}^du_{n-d+a}^{k_a}\Bigr)S^{L-n+d+1}_{F_2}(v_1,
\ldots,v_{n-d}).
\end{equation}
This is exactly the second term of \eqref{uritBA}, which proves that
$$ganit_{pic}\big(arit(ganit_{poc}(B))(ganit_{poc}(A))\big)=urit(B)(A)$$
in depths $n>d+1$. For $n=d+1$, since \eqref{innersum} is zero, the
total is given by the remaining term \eqref{pequals0} coming from $p=0$ minus 
the term \eqref{pplusqequalsm} from $p+q=n=d+1$. This difference is precisely
equal to \eqref{uritBAlowestdep}.

\subsection{The uri bracket and the swap operator}

Let $anti$, $neg$, $anit$, $axit$, $ani$ and $gani$ be the operators defined in \cite{SchnepsARI}.
Denote by $asna$ the bimould operator $anti\circ swap\circ neg\circ anti$,
and write $asnapic=asna(pic)$ and $asnapoc=asna(poc)$, where $pic$ and $poc$ are introduced in Example \ref{ex:pic_poc}. Finally, let
$$Pi=gani(pic,asnapoc).$$

\vspace{.3cm}  
\begin{Theorem}\label{Equ} 
Let 
$A \in \BARI$ be swap invariant, $
B\in \BARI_{\underline{\mathrm{il}},\mathrm{swap}}$, and set 
$$ P=\ganit_{poc}(A), \qquad Q=\ganit_{poc}(B).$$
Then the following statements are equivalent: 
\begin{enumerate}[(i)]
    \item The bimould $\ganitpic\bigl(\preari(P,Q)\bigr)$ is swap invariant. 
    \item We have 
$$\ganit_{Pi}\bigl(\arit(Q)(P)\bigr) = \axit\bigl(\swap(Q),-\push(\swap(Q))\bigr)(\swap(P)).$$
\end{enumerate}
\end{Theorem}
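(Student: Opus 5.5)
Since $\ganit_{pic}$ and $\ganit_{poc}$ are mutually inverse (Proposition \ref{prop:ganit_pic_poc}) and $swap$ is an involution, statement (i) is equivalent to
\[
\preari(P,Q)=\ganit_{poc}\bigl(\swap(\ganit_{pic}(\preari(P,Q)))\bigr).
\]
The plan is to express both sides in terms of the flexion operators from \cite{SchnepsARI} and compare them.

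\textbf{Step 1: split off the $mu$-part.} Write $\preari(P,Q)=\arit(Q)(P)+mu(P,Q)$. By Lemma \ref{lem:ganit_mu_auto}, $\ganit_{pic}$ is a $mu$-automorphism, so
\[
\ganit_{pic}\bigl(mu(P,Q)\bigr)=mu(A,B).
\]
Condition (i) then reads
\[
\swap\bigl(\ganit_{pic}(\arit(Q)(P))\bigr)+\swap\bigl(mu(A,B)\bigr)=\ganit_{pic}(\arit(Q)(P))+mu(A,B).
\]

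\textbf{Step 2: rewrite $\swap\circ mu$.} For this I will use the standard swap/flexion identities of \cite{SchnepsARI}: $\swap$ intertwines $mu$ with a twisted product built from $\axit$, and $\swap$ conjugates $\ganit_{pic}$ into a $\gani$-type operator whose parameter is $asnapic$ (or $asnapoc$). The combination of the two mixed operators is what produces the bimould $Pi=gani(pic,asnapoc)$.

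\textbf{Step 3: transport to the $P,Q$ side.} I will apply the inverse automorphism and use swap invariance of $A=\ganit_{pic}(P)$ and $B=\ganit_{pic}(Q)$ to replace $\swap(A)$ by $A$ and $\swap(B)$ by $B$. The goal is to reduce the equation to
\[
\ganit_{Pi}\bigl(\arit(Q)(P)\bigr)=\mathcal{X},
\]
where $\mathcal{X}$ collects all terms that come from $\swap(mu(A,B))$ and $mu(A,B)$ after transport. The claim I aim to prove is
\[
\mathcal{X}=\axit\bigl(\swap(Q),-\push(\swap(Q))\bigr)(\swap(P)).
\]
This is where Ecalle's identity $\swap\circ\arit(Q)=\axit(\swap Q,-\push\swap Q)\circ\swap$, valid up to correction terms, enters. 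Alternility of $B$ is needed so that the relevant $\push$-correction is controlled; here I expect to use the push-type identity for alternal/alternil swap-invariant bimoulds together with Proposition \ref{prop:ganit_pic_poc}.

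\textbf{Step 4: reversibility.} Each step in this chain is an application of an invertible operator, or an identity valid for all $A$, $B$ with the stated symmetries. Hence the implications run in both directions and give the equivalence of (i) and (ii).

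\textbf{Main obstacle.} I expect the hardest part to be Step 2, namely establishing the exact conjugation formula
\[
\swap\circ\ganit_{pic}=\ganit_{Pi}\circ(\cdots)\circ\swap
\]
and tracking the $\push$ term. My first attempt would be to verify it on the $mu$-generators $P_{i,j}$, since every operator involved is a $mu$-automorphism or a derivation, and then to extend it multiplicatively.
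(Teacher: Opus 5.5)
Your overall architecture is the right one: split off the $mu$-part, conjugate $\ganit$ by $\swap$, cancel the $mu$-parts using swap invariance of $A$ and $B$, then apply $\ganit_{asnapoc}$. The gap is that the two identities that carry the proof are not correctly pinned down, and the method you propose for the main obstacle would fail.

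The conjugation formula you need is $\swap\circ\ganit_R=\ganit_{\asna(R)}\circ\swap$ for every bimould $R$, not a formula of the shape $\ganit_{Pi}\circ(\cdots)\circ\swap$. From it you get that $\ganit_{asnapic}$ and $\ganit_{asnapoc}$ are mutually inverse, and, by swap invariance, $\swap(P)=\ganit_{asnapoc}(A)$ and $\swap(Q)=\ganit_{asnapoc}(B)$. Your plan to check it on the generators $P_{i,j}$ and extend multiplicatively cannot work, because $\swap$ is not a $mu$-automorphism. For $X,Y$ concentrated in depth $1$ one has $\swap(mu(X,Y))\binom{u_1,u_2}{v_1,v_2}=X\binom{v_2}{u_1+u_2}Y\binom{v_1-v_2}{u_1}$, whereas $mu(\swap X,\swap Y)\binom{u_1,u_2}{v_1,v_2}=X\binom{v_1}{u_1}Y\binom{v_2}{u_2}$. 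So neither side of the conjugation identity is determined by its values on generators. It has to be proved directly on the decompositions $w=b_1c_1\cdots b_sc_s$; reversing the blocks under $\swap$ is exactly what produces $\asna$.

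The second identity is exact, concerns $\preari$ as a whole, and holds for arbitrary bimoulds: $\swap(\preari(P,Q))=mu(\swap P,\swap Q)+\axit(\swap Q,-\push(\swap Q))(\swap P)$, see \cite[(2.4.10)]{SchnepsARI}. In your version ``$\swap\circ\arit(Q)=\axit(\ldots)\circ\swap$ up to corrections'', the corrections are precisely $mu(\swap P,\swap Q)-\swap(mu(P,Q))$. They are absorbed by the $mu$-part and have nothing to do with push properties of $B$.

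This is why your $\mathcal X$ is misdefined. Applying $\ganit_{asnapoc}$ to (i) gives $\swap(\preari(P,Q))=\ganit_{Pi}(\preari(P,Q))$. Equivalently, $\ganit_{Pi}(\arit(Q)(P))=\swap(\arit(Q)(P))+\swap(mu(P,Q))-mu(\swap P,\swap Q)$. So the transported term $\swap(\arit(Q)(P))$ must be part of the right-hand side; the $mu$-terms alone do not yield the $\axit$ expression.

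Likewise, the appeal to a push-type identity for alternil swap-invariant bimoulds is unnecessary: only the swap invariance of $A$ and $B$ is used, and alternility of $B$ plays no role. It is also unavailable. The bimould $M=(u_1^2+v_1^2,\ v_1-2v_2-u_1+u_2,\ \frac{1}{3},0,\ldots)$ is alternil and swap invariant, yet $\push(M)$ in depth $2$ equals $2u_1+u_2-2v_1+v_2\neq M_2$.

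Once the two exact identities are available, the proof closes in one line. Since $\ganit_{Pi}(mu(P,Q))=mu(\ganit_{asnapoc}(A),\ganit_{asnapoc}(B))=mu(\swap P,\swap Q)$, statement (i) is equivalent to $\ganit_{Pi}(\arit(Q)(P))=\axit(\swap Q,-\push(\swap Q))(\swap P)$, which is (ii).
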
 

The key of the proof is the following technical result.

\begin{Proposition}\label{ganitprop} For all bimoulds $R\in \BARI$, we have
\begin{equation}\label{eq2}
\swap\circ\ganit_R = \ganit_{\asna(R)}\circ\swap.
\end{equation}
\end{Proposition}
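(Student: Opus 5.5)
We prove \eqref{eq2} by a direct term-by-term comparison of the two sides evaluated on a word $w=(w_1,\ldots,w_n)$. This is legitimate because both $\ganit_R$ and $\ganit_{\asna(R)}$ are given by the explicit flexion formula of Definition \ref{def:ganit}. The hypothesis $R\in\BARI$ enters in two concrete ways. First, every component of $R$ is a rational function in finitely many variables, so all substitutions below are well defined. Second, $R_0=0$, so every summand of the $\ganit$ formula whose last block is $c_s=\emptyset$ carries the factor $R(\lfloor c_s)=R(\emptyset)=R_0=0$ and drops out. We also check at the start that $\asna=\anti\circ\swap\circ\neg\circ\anti$ maps $\BARI$ to $\BARI$: each of the four operators preserves the depth-$0$ component, so $\asna(R)_0=0$. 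Consequently the same simplification applies on the right-hand side. Thus on both sides we sum only over decompositions $w=b_1c_1\cdots b_sc_s$ with all $b_i,c_i\neq\emptyset$.

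\textbf{Step 1: left-hand side.} We rewrite the surviving terms in the index notation used in the proof of Theorem \ref{thm:ganit_with_urit}: a subset $I=\{1=i_1<\cdots<i_m\}$ marks the $b$-letters, and the $c$-blocks are the maximal gaps between consecutive $b$-blocks. We then write $\swap(\ganit_R(A))(w)$ explicitly. Applying $\swap$ to the variables means that every upper sum $u_{i_r}+\cdots+u_{i_{r+1}-1}$ coming from $b\rceil$ becomes a difference of $v$'s, and every lower difference $v_t-v_{i_r}$ coming from $\lfloor c$ becomes a partial sum of $u$'s. After this, the left-hand side is a sum over the same index sets $I$ of products of an $A$-factor and some $R$-factors, now in swapped variables.

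\textbf{Step 2: right-hand side.} We expand $\ganit_{\asna(R)}(\swap(A))(w)$ in the same way. Here $\swap(A)$ is evaluated at $b_1\rceil\cdots b_s\rceil$, and each $c$-block contributes a factor $\asna(R)(\lfloor c)$. Unwinding $\asna$ explicitly gives
\[
\asna(R)\binom{u_1,\ldots,u_r}{v_1,\ldots,v_r}=R\binom{v_1-v_2,\ \ldots,\ v_{r-1}-v_r,\ v_r}{-(u_1+\cdots+u_r),\ \ldots,\ -u_1}
\]
up to the reversal and sign bookkeeping coming from $\anti$ and $\neg$. This exact formula is computed first, since everything after depends on it.

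\textbf{Step 3: bijection of terms and the main obstacle.} We match the terms of Step 1 and Step 2 by a bijection on the index sets $I$, most likely $I\mapsto$ its reflection $t\mapsto n+1-t$, adjusted so that $i_1=1$ is preserved. The reason to expect a reflection is that $\swap$ reverses the natural order of the partial sums $U_i$.

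For each matched pair we check two things. The $A$-arguments must agree: $\swap$ of $b_1\rceil\cdots b_s\rceil$ should equal the $\rceil$-flexion of the reflected decomposition. This is a short identity between partial sums and differences, analogous to the $\delta$ computation in the proof of the theorem stating that $\delta$ is a derivation of $(\BARI,ari)$. The $R$-factors must agree: $\swap$ applied to $R(\lfloor c)$ should equal $\asna(R)(\lfloor c')$ for the corresponding block $c'$.

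The main obstacle is the second check. The flexion $\lfloor c$ subtracts the lower variable of the preceding letter, and after $\swap$ this turns into a shift of upper partial sums by the neighbouring $b$-block. That shift has to be absorbed exactly by the $\anti$, $\neg$ and $\swap$ composed inside $\asna$. We will verify it first in depth $2$, where the right-hand side has the single correction term $\asna(R)\binom{u_2}{v_2-v_1}$ with $\swap(A)\binom{u_1+u_2}{v_1}$, to pin down the signs and the reflection. Then we carry out the general block-by-block verification, using that the factors attached to distinct $c$-blocks involve disjoint sets of variables once expressed in swapped coordinates. If the reflection bijection does not match the factors directly, the fallback is to first prove \eqref{eq2} when $R$ is concentrated in a single depth, and then extend to general $R$ by expanding the product over the $c$-blocks multilinearly in the components of $R$.
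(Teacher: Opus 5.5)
\textbf{Gap: the reduction to $R_0=0$.}
Your opening reduction is where the argument breaks. Using $R_0=0$ to discard every decomposition with $c_s=\emptyset$ does not simplify the problem. It makes the identity false, so no matching of terms can succeed afterwards.

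The reflection you guess is the right one. On the sets $I\ni 1$ of positions lying in the $b$-blocks it is $I\mapsto I'=\{1\}\cup\{n+2-i : i\in I,\ i\ge 2\}$. But it does not respect your restriction: $n\notin I$ (last $c$-block nonempty) corresponds to $2\notin I'$, not to $n\notin I'$. In block language, a first block $\{1,\ldots,k\}$ with $2\le k<n$ is split into $\{1\}$ and a final block $\{n+2-k,\ldots,n\}$. So a term with $c_s\neq\emptyset$ on one side is paired with a term that has one more block and an empty last $c$-block on the other. That is exactly a term you have discarded.

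Concretely, take $n=3$, $R=(0,1,0,\ldots)$, so that $\asna(R)=R$, and take $A$ concentrated in depth $2$ with $A\binom{u_1,u_2}{v_1,v_2}=u_1$. On both sides the only nonzero contribution comes from the decomposition $(w_1w_2)(w_3)$.
\begin{itemize}
\item Left-hand side: this term gives $\ganit_R(A)\binom{u_1,u_2,u_3}{v_1,v_2,v_3}=u_1$, so the left-hand side equals $v_3$.
\item Right-hand side: $\swap(A)\binom{u_1,u_2}{v_1,v_2}=v_2$, so the right-hand side equals $v_2$.
\end{itemize}
With $R=(1,1,0,\ldots)$ instead, both sides equal $v_2+v_3$. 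Your planned depth-$2$ check cannot detect this, since for $n=2$ the two conditions coincide.

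\textbf{Repair.}
Read the proposition for $R\in\operatorname{GBARI}$, i.e.\ $R_0=1$. This is the only setting in which $\ganit_R$ is defined (Definition \ref{def:ganit}). It is also how the proposition is used, namely for $\mathit{pic}$, $\mathit{poc}$ and their $\asna$-images.

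Write $\ganit_R(A)(w)$ as a sum over $I=\{1=i_1<\cdots<i_m\}$ of
\[
A\binom{u_{i_1}+\cdots+u_{i_2-1},\ \ldots,\ u_{i_m}+\cdots+u_n}{v_{i_1},\ \ldots,\ v_{i_m}}
\]
times one factor $R\binom{u_{i_r+1},\ldots,u_{i_{r+1}-1}}{v_{i_r+1}-v_{i_r},\ldots,v_{i_{r+1}-1}-v_{i_r}}$ for every $r=1,\ldots,m$, with $i_{m+1}=n+1$. Empty gaps contribute $R_0=1$.

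With this bookkeeping your reflection matches the terms one to one.
\begin{itemize}
\item Use the coordinates $U_k=u_1+\cdots+u_k$ and $v_k$. Then $\swap(M)$ at the point with partial sums $U_k$ and lower entries $v_k$ is $M$ at the point with partial sums $v_{n+1-k}$ and lower entries $U_{n+1-k}$.
\item From this, the $A$-arguments attached to $I$ and $I'$ agree.
\item The $r$-th gap on the left corresponds to the $(m+1-r)$-th gap on the right, and the two have the same length.
\item The gap factors then agree because
\[
\asna(R)\binom{u_1,\ldots,u_\ell}{v_1,\ldots,v_\ell}=R\binom{v_{\ell-1}-v_\ell,\ v_{\ell-2}-v_{\ell-1},\ \ldots,\ v_1-v_2,\ -v_1}{-u_\ell,\ -u_{\ell-1}-u_\ell,\ \ldots,\ -u_1-\cdots-u_\ell}.
\]
\end{itemize}
Your tentative formula for $\asna(R)$ has the reversal and the signs wrong. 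For instance, in depth $1$ one needs $\asna(R)\binom{u_1}{v_1}=R\binom{-v_1}{-u_1}$. With these corrections your Step 3 becomes exactly the block-reversal argument of the paper.
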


We will prove this proposition at the end of this section. First let us 
use the statement to prove Theorem \ref{Equ}. 

\begin{proof}  As $ganit_{pic}$ and $ganit_{poc}$ are inverse to each other by Proposition \ref{prop:ganit_pic_poc}, we get
$$swap\circ\ganitpic\circ\ganitpoc\circ swap={\rm id}.$$
But by using \eqref{eq2} twice, this is equal to
$$\ganit_{asnapic}\circ swap\circ\ganitpoc\circ swap=
\ganit_{asnapic}\circ \ganit_{asnapoc}\circ swap\circ swap,$$
and since $swap\circ swap={\rm id}$, we find that
\begin{equation}\label{eq3}
\ganit_{asnapic}\circ \ganit_{asnapoc}={\rm id}.
\end{equation}

Now, recall from Definition \ref{def:preari} that 
$$\preari(P,Q)=mu(P,Q)+\arit(Q)(P).$$
Since $\ganitpic$ is an automorphism for the $mu$-multiplication by Lemma \ref{lem:ganit_mu_auto} and we have
$\ganitpic\circ \ganitpoc={\rm id}$, we get 
\begin{equation}\label{eq4}
\ganitpic\bigl(\preari(P,Q)\bigr) = mu(A,B) + \ganitpic\bigl(\arit(Q)(P)\bigr). 
\end{equation}
We will need the identity
\begin{equation*}\label{eq5}
\swap\bigl(\preari(P,Q)\bigr) = mu(\swap (P),\swap (Q)) + \axit\bigl(\swap (Q),-\push(\swap (Q))\bigr)(\swap (P))
\end{equation*}
from \cite[(2.4.10)]{SchnepsARI}.
Applying \(ganit_{asnapic}\) to both sides and using \eqref{eq2} gives 
\begin{align}\label{eq6} 
\swap\Big( \ganitpic\bigl(\preari(P,Q)\bigr)\Big) &= 
\ganit_{asnapic}\Big(swap(preari(P,Q))\Big)\\
&=\ganit_{asnapic} \bigl(mu(\swap (P),\swap (Q))\bigr) \notag\\ 
&+ \ganit_{asnapic} \left( \axit\bigl(\swap (Q),-\push(\swap (Q))\bigr)(\swap (P)) \right). \notag
\end{align} 
Since $A$ is swap-invariant, \eqref{eq2} gives
$$swap(P)=swap(\ganitpoc(A))=\ganit_{asnapoc} (swap(A))=\ganit_{asnapoc}(A),$$
and similarly $swap(Q)=\ganit_{asnapoc}(B)$.
Since $\ganit$ is a $mu$-automorphism by Lemma \ref{lem:ganit_mu_auto}, by \eqref{eq3} we deduce
\begin{align*}
\ganit_{asnapic} \big(mu(\swap (P),\swap (Q))\big) &= 
\ganit_{asnapic}\big(mu\bigl(\ganit_{asnapoc}(A),\ganit_{asnapoc}(B)\bigr)\big) \\
&= mu(A,B).
\end{align*}
Consequently, comparison of \eqref{eq4} and \eqref{eq6} shows that 
$\ganitpic\bigl(\preari(P,Q)\bigr)$ is swap invariant if and 
only if 
\begin{equation}\label{eq7}
\ganitpic\bigl(\arit(Q)(P)\bigr) = \ganit_{asnapic} \left( \axit\bigl(\swap (Q),-\push(\swap (Q))\bigr)(\swap (P)) \right).
\end{equation}
By the definition of $Pi$, we have
$$\ganit_{Pi}=\ganit_{asnapoc} \circ\ganitpic,$$
so applying \(\ganit_{asnapoc}\) to both sides of \eqref{eq7} gives with \eqref{eq3}
$$\ganit_{Pi}\bigl(\arit(Q)(P)\bigr) = \axit\bigl(\swap Q,-\push(\swap Q)\bigr)(\swap P).$$
This proves the equivalence stated in the theorem. 
\end{proof}

It remains only to prove
Proposition \ref{ganitprop}.

\begin{proof}[Proof of Proposition \ref{ganitprop}.] Recall from Definition \ref{def:ganit} that 
\begin{equation*}
\ganit_R(S)(w) = \sum_{\substack{ w=b_1c_1\cdots b_sc_s\\ b_i\neq\emptyset,\;c_s=\emptyset }} S(b_1\rceil\cdots b_s\rceil) R(\lfloor c_1)\cdots R(\lfloor c_s).
\end{equation*}
We use the following ``block-transformation'' identities.
For every decomposition 
$$\swap(w)=b_1c_1\cdots b_sc_s, \qquad c_s=\emptyset,$$
there is a unique corresponding decomposition 
$$w=b'_1c'_1\cdots b'_sc'_s, \qquad c'_s=\emptyset$$ 
obtained by reversing the blocks and transferring the boundary points, 
such that 
\begin{equation}\label{eq8}
b_1\rceil\cdots b_s\rceil = \swap\bigl(b'_1\rceil\cdots b'_s\rceil\bigr)
\end{equation}
and, after the corresponding reversal of the nonempty $c$-blocks, 
\begin{equation}\label{eq9}
R(\lfloor c_i) = (\asna (R))(\lfloor c'_{s-i}).
\end{equation}

The identity \eqref{eq8} expresses the compatibility of $swap$
with the upper flexions of the $b$-blocks, while \eqref{eq9} expresses the 
transformation of a lower-flexed complementary block under the swap 
substitution. The reversal of the block, the change from differences 
measured from the left endpoint to differences measured from the right 
endpoint, and the restoration of the original order are precisely 
encoded by 
$$\asna = \anti\circ\swap\circ\neg\circ\anti.$$
Using the block-transformation identities, we compute 
\begin{align*} 
\swap\bigl(\ganit_R(S)\bigr)(w) &= \ganit_R(S)(\swap(w)) \\ 
&= \sum_{\substack{ \swap(w)=b_1c_1\cdots b_sc_s\\ 
b_i\neq\emptyset,\;c_s=\emptyset }} S(b_1\rceil\cdots b_s\rceil) \prod_{i=1}^{s} R(\lfloor c_i) \\ 
&= \sum_{\substack{ w=b'_1c'_1\cdots b'_sc'_s\\ 
b'_i\neq\emptyset,\;c'_s=\emptyset }} (\swap S)(b'_1\rceil\cdots b'_s\rceil) \prod_{i=1}^{s}(\asna R)(\lfloor c'_i) \\ 
&= \ganit_{\asna (R)}(\swap (S))(w). \qedhere
\end{align*} 
\end{proof}

\phantomsection

\addcontentsline{toc}{section}{References}
\bibliographystyle{amsalpha}
\bibliography{rdc_main}

\end{document}